\numberwithin{equation}{section}
\newtheorem{lemma}{Lemma}[section]
\newtheorem{theorem}[lemma]{Theorem}
\newtheorem{proposition}[lemma]{Proposition}
\newtheorem{corollary}[lemma]{Corollary}
\theoremstyle{definition}
\newtheorem{defn}[lemma]{Definition}
\newtheorem{rmk}[lemma]{Remark}
\newtheorem{notat}[lemma]{Notation}
\newcommand{\Z}{\mathbb{Z}}
\newcommand{\C}{\mathbb{C}}
\newcommand{\QQ}{\mathbb{Q}}
\newcommand{\R}{\mathbb{R}}
\newcommand{\Gal}{\textup{Gal}}
\newcommand{\mfp}{\mathfrak{p}}
\newcommand{\mfa}{\mathfrak{a}}
\newcommand{\mfb}{\mathfrak{b}}
\newcommand{\mfr}{\mathfrak{r}}
\newcommand{\Ind}{\textup{Ind}}
\newcommand{\res}{\textup{res}}
\title{Faithful Artin induction and the Chebotarev density theorem}
\author{Robert J. Lemke Oliver}
\email{robert.lemke\_{}oliver@tufts.edu}
\address{Department of Mathematics, Tufts University, 177 College Ave, Medford, MA 02155}
\author{Alexander Smith}
\email{asmith13@math.ucla.edu}
\address{Department of Mathematics, University of California-Los Angeles, 520 Portola Plaza, Los Angeles, CA 90095}
\begin{document}

\maketitle

\begin{abstract}
Given a finite group $G$, we prove that the vector space spanned by the faithful irreducible characters of $G$ is generated by the monomial characters in the vector space. As a consequence, we show that in any family of $G$-extensions of a fixed number field $F$, almost all are subject to a strong effective version of the Chebotarev density theorem.  We use this version of the Chebotarev density theorem to deduce several consequences for class groups in families of number fields.
\end{abstract}

\section{Introduction}
\subsection{Induction theorems and faithful characters}
Given a finite Galois extension of number fields $K/F$ and a character $\chi\colon \Gal(K/F) \to \C$, there is an  $L$-function associated with $\chi$ referred to as the Artin $L$-function $L(s, \chi)$. In 1924, Artin introduced these functions and showed that $L(s, \chi)^m$ had meromorphic continuation to all of $\C$ for some positive integer $m$. In his paper, Artin developed what is now termed the ``Artin formalism,'' after which the key input to Artin's theorem is the following result in character theory.
\begin{theorem}{\cite[Section 6]{Artin24}}
Given a finite group $G$, any character $\chi\colon G \to \C$ is a $\QQ$-linear combination of characters of the form $\Ind_H^G \psi$, where $H$  ranges over the cyclic subgroups of $G$ and $\psi$ ranges over the linear characters of $H$.
\end{theorem}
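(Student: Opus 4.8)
The plan is to work in the $\C$-vector space of class functions on $G$, equipped with the standard inner product $\langle f_1,f_2\rangle = \frac{1}{|G|}\sum_{g\in G}f_1(g)\overline{f_2(g)}$, and to show that the subspace $V$ spanned by the characters $\Ind_H^G\psi$ --- as $H$ runs over the cyclic subgroups of $G$ and $\psi$ over the linear characters of $H$ --- is the whole space; the rationality of the coefficients will then be recovered by a dimension count.

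First I would show that $V$ is everything by checking that $V^\perp=0$. Suppose a class function $f$ satisfies $\langle f,\Ind_H^G\psi\rangle=0$ for every cyclic $H\le G$ and every linear character $\psi$ of $H$. By Frobenius reciprocity this is equivalent to $\langle \res_H f,\psi\rangle_H=0$ for all such $\psi$, where $\res_H$ denotes restriction to $H$. Since $H$ is cyclic, hence abelian, its linear characters are precisely its irreducible characters and so form an orthonormal basis for the space of class functions on $H$; therefore $\res_H f=0$, i.e.\ $f$ vanishes identically on $H$. As every $g\in G$ lies in the cyclic subgroup $\langle g\rangle$, we conclude that $f(g)=0$ for all $g$, hence $f=0$. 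Thus every class function on $G$ --- in particular every character $\chi$ --- is a $\C$-linear combination of the $\Ind_H^G\psi$.

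To pass from $\C$-coefficients to $\QQ$-coefficients I would work in the virtual character ring $R(G)$, a free $\Z$-module whose rank equals the number of conjugacy classes of $G$, i.e.\ the $\C$-dimension of the space of class functions, and for which $R(G)\otimes_\Z\C$ is that space. Every $\Ind_H^G\psi$, and $\chi$ itself, lies in $R_\QQ(G):=R(G)\otimes_\Z\QQ$. Since the rank of a matrix with rational entries is unchanged under extension of scalars to $\C$, a finite subset of $R_\QQ(G)$ spans its complexification over $\C$ if and only if it spans $R_\QQ(G)$ over $\QQ$; hence the previous paragraph already shows that $\chi$ is a $\QQ$-linear combination of the $\Ind_H^G\psi$.

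There is no serious obstacle here; the only real content is the orthogonality computation, which is entirely standard. The point meriting a little care is the passage to $\QQ$-coefficients. An alternative to the dimension count is to use the projection formula $\Ind_H^G(\res_H\chi)=\chi\cdot\Ind_H^G\mathbf 1_H$ to reduce to expressing a positive rational multiple of the trivial character $\mathbf 1_G$ as a $\QQ$-linear combination of the permutation characters $\Ind_H^G\mathbf 1_H$ with $H$ cyclic --- an inclusion--exclusion over the poset of cyclic subgroups of $G$ --- and then to note that $\res_H\chi$ is a nonnegative integer combination of linear characters of the abelian group $H$; but the argument via $R_\QQ(G)$ is cleaner and purely formal.
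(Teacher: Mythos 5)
Your proof is correct and is the standard textbook argument for Artin's induction theorem (essentially the proof in Serre, \emph{Linear Representations of Finite Groups}, \S 9.4, or, in dual form, Isaacs, \emph{Character Theory of Finite Groups}, Theorem 8.9). The paper states this result as a citation to Artin's 1924 paper and does not prove it, so there is no in-paper argument to compare against. Both of your steps are sound: Frobenius reciprocity together with the completeness of linear characters on a cyclic group shows the induced characters span the full space of class functions over $\C$, and the descent to $\QQ$-coefficients is valid because both $\chi$ and each $\Ind_H^G\psi$ have integer coordinates in the basis of irreducible characters, so the rank of the relevant matrix is the same over $\QQ$ as over $\C$. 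The only cosmetic caveat is that $R(G)$ sits inside the space of class functions via the irreducible characters (which are not themselves rational-valued in general), but that does not affect the rank argument, which only uses that the transition matrix from the $\Ind_H^G\psi$ to the irreducibles has rational entries.
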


In 1947, Brauer proved that $L(s, \chi)$ itself was meromorphic. This too was a direct consequence of a result in character theory:
\begin{theorem}{\cite{Brauer47}}
Given a finite group $G$, any character $\chi\colon G \to \C$ is a $\Z$-linear combination of characters of the form $\Ind_H^G \psi$, where $H$ ranges over the elementary subgroups of $G$ and $\psi$ ranges over the linear characters of $H$.
\end{theorem}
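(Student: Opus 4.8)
The plan is to work inside the Grothendieck ring $R(G)$ of virtual characters of $G$, multiplication being the pointwise product of class functions. For a set $\mathcal{F}$ of subgroups of $G$, put $R^{\mathcal{F}}(G) := \sum_{H\in\mathcal{F}}\Ind_H^G R(H)$; by the projection formula $\Ind_H^G(\theta)\cdot\chi = \Ind_H^G\!\bigl(\theta\cdot\res^G_H\chi\bigr)$ this is an \emph{ideal} of $R(G)$. Taking $\mathcal{F}$ to be the elementary subgroups yields an ideal $R^{\mathrm{ell}}(G)$; taking it to be the cyclic subgroups, or the $p$-elementary subgroups (direct products of a $p$-group with a cyclic group of order prime to $p$) for a fixed prime $p$, yields ideals $R^{\mathrm{cyc}}(G)$ and $R^{p\text{-ell}}(G)$, each contained in $R^{\mathrm{ell}}(G)$. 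Since $R^{\mathrm{ell}}(G)$ is an ideal, it suffices to show $1_G\in R^{\mathrm{ell}}(G)$: then $\chi = 1_G\cdot\chi\in R^{\mathrm{ell}}(G)$ for every $\chi\in R(G)$, so $R^{\mathrm{ell}}(G)=R(G)$, and in particular every genuine character of $G$ is a $\Z$-linear combination of characters induced from elementary subgroups.

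To prove $1_G\in R^{\mathrm{ell}}(G)$ I would argue $p$-adically. A sharp form of Artin's theorem above gives $|G|\cdot R(G)\subseteq R^{\mathrm{cyc}}(G)\subseteq R^{\mathrm{ell}}(G)$, so $Q := R(G)/R^{\mathrm{ell}}(G)$ is a finite abelian group annihilated by $|G|$; in particular every prime dividing $|Q|$ divides $|G|$. It then suffices to produce, for each prime $p$ and each $m\ge 1$, an element $\xi_{p,m}\in R^{p\text{-ell}}(G)$ with $\xi_{p,m}-1_G\in p^mR(G)$: for then the image of $1_G$ in $Q$ lies in $p^mQ$ for every $m$ (as $\xi_{p,m}\mapsto 0$), hence in $\bigcap_m p^mQ$, and intersecting over the primes $p\mid|G|$ and invoking the primary decomposition of the finite group $Q$ forces that image to vanish. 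The $\xi_{p,m}$ in turn come from a pointwise congruence: given $\psi_p\in R^{p\text{-ell}}(G)$ with $\psi_p(s)\equiv 1\pmod p$ in $\Z[\zeta_{|G|}]$ for all $s\in G$, the elementary fact ``$\alpha\equiv\beta\pmod{p^k}$ implies $\alpha^p\equiv\beta^p\pmod{p^{k+1}}$ for algebraic integers'' gives $\psi_p^{\,p^j}(s)\equiv 1\pmod{p^{j+1}}$ for all $s$; since the coefficients $\langle\psi_p^{\,p^j}-1_G,\,\chi\rangle$ are rational integers lying in $|G|^{-1}p^{j+1}\Z[\zeta_{|G|}]$, they are divisible by $p^{\,j+1-v_p(|G|)}$, whence $\psi_p^{\,p^j}-1_G\in p^{\,j+1-v_p(|G|)}R(G)\subseteq p^mR(G)$ once $j$ is large, while $\psi_p^{\,p^j}\in R^{p\text{-ell}}(G)$ since $\psi_p$ lies in that ideal.

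Everything thus reduces to constructing, for each prime $p$, a virtual character $\psi_p\in R^{p\text{-ell}}(G)$ with $\psi_p(s)\equiv 1\pmod p$ for all $s\in G$; this is the heart of Brauer's argument and the step I expect to be the main obstacle. One assembles $\psi_p$ from local pieces indexed by conjugacy classes. Given $s\in G$, factor $s=s_ps_{p'}$ into its commuting $p$- and $p'$-parts, set $t=s_{p'}$, let $Z=C_G(t)$, and choose a Sylow $p$-subgroup $P$ of $Z$ containing $s_p$; then $E_s:=\langle t\rangle P=\langle t\rangle\times P$ is a $p$-elementary subgroup containing $s$ (the factors commute and intersect trivially, since $\langle t\rangle$ is central in $Z$ and has order prime to $p$). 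On $E_s$ one writes an explicit virtual character $\theta_s$ --- built from the virtual character of the cyclic group $\langle t\rangle$ concentrated on its generators, together with the trivial character of $P$ --- and the crux is a combinatorial identity showing that $\bigl(\Ind_{E_s}^G\theta_s\bigr)(s')\equiv\delta_{[s],[s']}\pmod p$ for all $s,s'\in G$, where $\delta_{[s],[s']}$ is $1$ if $s'$ is conjugate to $s$ and $0$ otherwise; summing over a set of class representatives then gives $\psi_p:=\sum_{[s]}\Ind_{E_s}^G\theta_s\in R^{p\text{-ell}}(G)$ with $\psi_p(s')\equiv 1\pmod p$ for every $s'$. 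Tracking how the induced class functions $\Ind_{E_s}^G\theta_s$ evaluate on an arbitrary $s'$ in terms of its $p$- and $p'$-parts is the technical core of the proof. Finally, to replace ``induced from elementary subgroups'' by ``induced from linear characters of elementary subgroups'' in the conclusion: an elementary group is nilpotent --- a product of a cyclic group and a $p$-group --- hence monomial, so every virtual character of an elementary group $H$ is a $\Z$-linear combination of characters $\Ind_L^H\mu$ with $L\le H$ and $\mu$ linear; since subgroups of elementary groups are again elementary and induction is transitive, feeding this into the statement just proved gives the theorem as stated.
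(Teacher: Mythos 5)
The paper does not prove Brauer's induction theorem: it cites \cite{Brauer47} and uses it as a black box, so there is no proof in the paper to compare against. Your sketch is a faithful outline of the standard modern route (Serre, Isaacs, Curtis--Reiner): $R^{\mathrm{ell}}(G)$ is an ideal by the projection formula, so it suffices to show $1_G$ lies in it; the sharp form of Artin's theorem makes $Q = R(G)/R^{\mathrm{ell}}(G)$ a finite abelian group killed by $|G|$; one kills the $p$-primary parts of $Q$ by producing $\psi_p\in R^{p\text{-ell}}(G)$ with $\psi_p\equiv 1\pmod p$ pointwise and then exploiting the powering-up congruence and the $|G|^{-1}$ bookkeeping on inner products; and monomiality of nilpotent groups, together with the closure of $p$-elementary groups under passage to subgroups, upgrades the conclusion to induced linear characters. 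All of these pieces are correctly identified and correctly argued.

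The genuine gap is the construction of $\psi_p$, which you yourself flag as the main obstacle, and your sketch of it needs two corrections even at the level of statements. The sum defining $\psi_p$ must range over conjugacy classes of $p$-regular elements only, not over all classes: the data $(E_s,\theta_s)$ depend only on the $p'$-part of $s$, so summing over all classes overcounts (already for $G=C_p$ one would get $\psi_p=p\cdot 1_G\equiv 0$, not $1$, modulo $p$). Relatedly, the pointwise identity must compare $s$ to the $p'$-part of $s'$, not to $s'$ itself: the correct statement is that $\Ind_{E_s}^G\theta_s$ is $\equiv 0\pmod p$ at any $s'$ whose $p'$-part is not conjugate to $s$, and contributes a nonzero residue otherwise, so that summing over $p$-regular classes yields a unit mod $p$ at every $s'$. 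Establishing this congruence --- via the induced-character formula and a careful analysis of how $C_G(t)$ and its Sylow $p$-subgroups interact with the $p$/$p'$-decomposition of $s'$ --- is the content-bearing step of Brauer's theorem, and your outline stops short of it, so this remains an outline rather than a proof.
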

Both of these results, and especially Brauer's theorem, have since assumed a fundamental role in the character theory of finite groups \cite[Chapter 8]{Isaacs76}.

The first major goal of this paper is to prove a strong form of Artin's theorem for faithful characters. Like Artin and Brauer, we will subsequently apply this result to the study of Artin $L$-functions.

\begin{theorem}
\label{thm:faithful}
Given a finite group $G$, any faithful irreducible character $\chi\colon G \to \C$ is a $\QQ$-linear combination of characters of the form $\Ind_H^G \psi$, where $H$  ranges over the nilpotent subgroups of $G$ and $\psi$  ranges over the linear characters of $H$ such that $\Ind_H^G \psi$ is a sum of irreducible faithful characters of $G$.
\end{theorem}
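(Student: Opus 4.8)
The plan is to prove the equivalent statement that the $\mathbb{Q}$-subspace $M_f(G)$ spanned by those monomial characters $\Ind_H^G\psi$ ($H$ nilpotent, $\psi$ linear) which are sums of faithful irreducible characters coincides with the subspace $V_f(G)$ spanned by all faithful irreducible characters. The inclusion $M_f(G)\subseteq V_f(G)$ is immediate, so only the reverse inclusion is at issue, and for that it suffices to show $\chi\in M_f(G)$ for each faithful irreducible $\chi$. I would argue by induction on $|G|$. If $G$ is nilpotent we are done at once: nilpotent groups are monomial, so $\chi=\Ind_H^G\psi$ for some $H\le G$ and linear $\psi$; here $H$ is nilpotent and $\Ind_H^G\psi=\chi$ is a one-term sum of faithful irreducible characters. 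So assume $G$ is not nilpotent; it still has a faithful irreducible character, hence $Z(G)$ is cyclic.

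For the inductive step, let $\pi^f$ denote the linear operation on class functions of discarding all non-faithful irreducible constituents (the orthogonal projection onto $V_f(G)$). By Artin's theorem $\chi$ is a $\mathbb{Q}$-linear combination of characters $\Ind_C^G\psi_C$ with $C$ cyclic and $\psi_C$ linear; applying $\pi^f$ and using $\pi^f(\chi)=\chi$, it is enough to prove $\pi^f(\Ind_H^G\psi)\in M_f(G)$ for every nilpotent $H\le G$ and every linear $\psi$ on $H$. Two reductions normalize the pair $(H,\psi)$, neither using the inductive hypothesis. First, if $\textup{core}_G(\ker\psi)\ne 1$ then $\Ind_H^G\psi$ is inflated from a proper quotient of $G$, has no faithful constituent, and $\pi^f(\Ind_H^G\psi)=0$; so we may assume $\textup{core}_G(\ker\psi)=1$. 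Second, writing $\Ind_H^G\psi=\sum_{\psi'}\Ind_{HZ(G)}^G\psi'$ with $\psi'$ ranging over the extensions of $\psi$ to $HZ(G)$ (which is nilpotent, being a central extension of the nilpotent group $H/(H\cap Z(G))$), the summands with $\psi'|_{Z(G)}$ non-faithful contribute a nontrivial central, hence normal, subgroup to every kernel and vanish under $\pi^f$; so after these reductions we may assume $Z(G)\le H$, $\psi|_{Z(G)}$ faithful, and $\textup{core}_G(\ker\psi)=1$.

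With these normalizations, every non-faithful irreducible constituent $\chi'$ of $\Ind_H^G\psi$ has $N:=\ker\chi'$ a \emph{non-central} nontrivial normal subgroup of $G$ with $N\cap H\subseteq\ker\psi$ (indeed $N\cap Z(G)=1$ because $\chi'$ has faithful central character, and $N\not\subseteq\ker\psi$ because $\textup{core}_G(\ker\psi)=1$). The remaining task is to show that, despite these $\chi'$, one has $\pi^f(\Ind_H^G\psi)\in M_f(G)$. The approach I would pursue is to fix a minimal normal subgroup $N$ of $G$ with $N\not\le Z(G)$ arising in this way and to apply Clifford theory relative to $N$: choosing $\theta\in\textup{Irr}(N)$ lying under one of these $\chi'$, with inertia group $T=G_\theta$, the Clifford correspondence carries the portion of $\Ind_H^G\psi$ supported over the $G$-orbit of $\theta$ down to the proper subgroup $T$ when $T\ne G$, where the inductive hypothesis is available and produces faithful-summy nilpotent monomials to be pushed back up; the case $T=G$, in which $\theta$ is $G$-invariant, is special and would be handled by a separate argument using the structure of $G$-invariant irreducible characters of $N$ and their extensions.

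The step I expect to be the main obstacle is the compatibility bookkeeping in this last reduction. Inflating a faithful-summy nilpotent monomial from a proper quotient $G/M$ back up to $G$ generally destroys both the nilpotency of the subgroup (the preimage of a nilpotent subgroup of $G/M$ need not be nilpotent) and the faithful-summy property; this is precisely why the two normalizations are arranged so that only non-central normal subgroups remain, to be dealt with by descent to a proper inertia group rather than by inflation. But inducing a faithful-summy nilpotent monomial of $T$ back up to $G$ is likewise not automatically faithful-summy, nor monomial from a nilpotent subgroup, so the Clifford-theoretic descent must be engineered so that precisely the monomials that induce correctly arise — and the $G$-invariant case closed by hand. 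Reconciling control of the kernels of induced characters with control of the nilpotency of the subgroups involved is, I expect, where the bulk of the work lies.
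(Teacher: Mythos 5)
Your proposal is a strategy sketch, not a proof: you explicitly flag the crux yourself (``the compatibility bookkeeping,'' ``the $G$-invariant case closed by hand,'' ``where the bulk of the work lies'') and leave it unresolved. There is also a structural reason the Clifford-theoretic descent, as you set it up, cannot get off the ground. If $\chi'$ is a non-faithful constituent of $\Ind_H^G\psi$ and $N$ is a minimal normal subgroup with $N \le \ker\chi'$, then the only $\theta \in \mathrm{Irr}(N)$ lying under $\chi'$ is the trivial character $1_N$, whose inertia group is $G_{1_N} = G$. So for precisely the constituents you need to control, the ``proper subgroup $T$'' collapses to $G$, the inductive hypothesis is unavailable, and the ``special'' $T=G$ case is not a corner case but the entire problem. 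If instead one organizes the constituents lying over nontrivial $G$-orbits in $\mathrm{Irr}(N)$, those are the ones one wants to keep, not discard, one must then iterate over all minimal normal subgroups simultaneously, and the bookkeeping you anticipate --- maintaining nilpotency of the inducing subgroup and faithfulness of the induced character through the descent to $T$ and re-induction to $G$ --- is exactly what the proposal supplies no mechanism to carry out.

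The paper's route avoids Clifford theory altogether. It first proves a single-constraint statement, Hypothesis $T_0(G,N)$ (Theorem~\ref{thm:T(G,N)}): the class functions vanishing on average over each coset of $N$ are spanned by nilpotent monomial characters $\Ind_H^G\psi$ with $H \cap N \not\le \ker\psi$. This is the hard combinatorial part, established via a $p$-local argument (chains of normalizers of growing $p$-subgroups above the Fitting subgroup, Proposition~\ref{prop:TGN_induction}), not via Clifford theory. To pass from a single $N$ to the full collection of minimal normal subgroups, it uses the socle decomposition $\mathrm{soc}(G)\cong A \times N_1\times\cdots\times N_k$, a tensor-product decomposition of class functions supported on a coset of a direct product (Lemma~\ref{lem:RG_tensor}), and a Mackey lemma (Lemma~\ref{lem:Mackey}) showing that the product of nilpotent monomials inflated from the two factor quotients is again a nilpotent monomial with the right kernel constraints: the inducing subgroup $H_1M\cap \tau^{-1}H_2N\tau$ embeds in the nilpotent group $(H_1N/N)\times(\tau^{-1}H_2M\tau/M)$ because $M\cap N=1$. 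That direct-product structure of the socle is precisely what lets nilpotency survive induction --- the compatibility you correctly identified as the bottleneck and which the Clifford descent does not provide.
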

We note that an equivalent condition to $\Ind_H^G \psi$ being a sum of irreducible faithful characters is that 
\begin{equation}
\label{eq:faithful_check}
N \cap H \not \le \ker \psi 
\quad\text{for every nontrivial normal subgroup } N \text{ of  }G.  
\end{equation}

It is convenient to isolate the role of a single normal subgroup $N$ in this condition. This leads to the following definition, which was first considered in work of the first author with Thorner and Zaman \cite{LOTZ}.

\begin{defn}
Given a finite group $G$ and a normal subgroup $N$ of $G$, we write that \emph{Hypothesis $T(G, N)$} holds if every irreducible character $\chi$ whose kernel does not contain $N$ is a a $\QQ$-linear combination of characters of the form $\Ind_H^G \psi$, where $H$  ranges over the  subgroups of $G$ and $\psi$  ranges over the linear characters of $H$ such that 
\[N \cap H \not \le \ker \psi. \]
\end{defn}
The following result then verifies \cite[Conjecture 3.3]{LOTZ}.
\begin{corollary}
\label{cor:TGN}
Given any finite group $G$ and any normal subgroup $N$ of $G$,
Hypothesis $T(G, N)$ holds.
\end{corollary}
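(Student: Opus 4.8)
The plan is to deduce Corollary~\ref{cor:TGN} from Theorem~\ref{thm:faithful} by passing to the quotient of $G$ by the kernel of the character. So let $\chi$ be an irreducible character of $G$ with $N \not\le \ker\chi$, write $\pi\colon G \to \bar G := G/\ker\chi$ for the quotient map, let $\bar\chi$ be the faithful irreducible character of $\bar G$ whose inflation to $G$ is $\chi$, and set $\bar N := \pi(N)$. Since $N$ is normal in $G$ and $N \not\le \ker\chi$, the subgroup $\bar N$ is a nontrivial normal subgroup of $\bar G$. Applying Theorem~\ref{thm:faithful} to the faithful irreducible character $\bar\chi$ of $\bar G$, we may write $\bar\chi$ as a $\QQ$-linear combination of characters $\Ind_{\bar H}^{\bar G}\bar\psi$ with $\bar H$ a nilpotent subgroup of $\bar G$, $\bar\psi$ a linear character of $\bar H$, and $\Ind_{\bar H}^{\bar G}\bar\psi$ a sum of faithful irreducible characters of $\bar G$. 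By the equivalence noted after the statement of Theorem~\ref{thm:faithful}, namely~\eqref{eq:faithful_check} applied in $\bar G$, this last condition in particular forces $\bar N \cap \bar H \not\le \ker\bar\psi$ (take the dummy normal subgroup there to be $\bar N$).

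Now I inflate the identity back to $G$. For each term appearing, set $H := \pi^{-1}(\bar H)$, a subgroup of $G$ containing $\ker\chi$, and let $\psi := \bar\psi\circ\pi$, a linear character of $H$. Because $\ker\pi = \ker\chi \le H$, induction commutes with inflation, so $\Ind_H^G\psi$ is precisely the inflation to $G$ of $\Ind_{\bar H}^{\bar G}\bar\psi$; inflating the $\QQ$-linear combination term by term therefore exhibits $\chi$ as a $\QQ$-linear combination of characters of the form $\Ind_H^G\psi$ with $H \le G$ and $\psi$ linear. It remains only to check that each such pair satisfies $N \cap H \not\le \ker\psi$, which is the condition in Hypothesis $T(G,N)$.

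For this, note that since $\ker\pi \le H$ one has $\pi(N\cap H) = \pi(N)\cap\pi(H) = \bar N \cap \bar H$: the inclusion $\subseteq$ is automatic, and given $\bar g = \pi(n) = \pi(h)$ with $n\in N$, $h\in H$, the element $nh^{-1}$ lies in $\ker\pi \le H$, so $n = (nh^{-1})h \in N\cap H$ maps to $\bar g$. Since $\ker\psi = \pi^{-1}(\ker\bar\psi)$, an inclusion $N\cap H \le \ker\psi$ would give $\bar N\cap\bar H = \pi(N\cap H) \le \ker\bar\psi$, contradicting the previous paragraph. Hence $N\cap H \not\le \ker\psi$, and Hypothesis $T(G,N)$ holds. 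I do not expect a genuine obstacle here: all the content is in Theorem~\ref{thm:faithful}, and the only points needing (routine) care are the compatibility of induction with inflation for the subgroups $H = \pi^{-1}(\bar H) \supseteq \ker\chi$, and the observation that the faithfulness condition of Theorem~\ref{thm:faithful}, which a priori concerns \emph{all} nontrivial normal subgroups of $\bar G$, already supplies the single condition on $\bar N$ that $T(G,N)$ requires. Note also that it is harmless that $H$ need not be nilpotent even though $\bar H$ is, since Hypothesis $T(G,N)$ permits induction from arbitrary subgroups of $G$.
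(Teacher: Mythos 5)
Your proof is correct, and it carries out precisely the deduction the paper describes as ``straightforward'' but does not spell out: pass to $\bar G = G/\ker\chi$, apply Theorem~\ref{thm:faithful} to the faithful irreducible $\bar\chi$, and inflate back, using that induction commutes with inflation when $\ker\pi\le H$ and that $\pi(N\cap H) = \bar N\cap\bar H$ because $\ker\pi\le H$. It is worth noting, however, that the paper's actual logical architecture runs in the opposite direction. The paper first proves Theorem~\ref{thm:T(G,N)}, i.e.\ Hypothesis $T_0(G,N)$ for all pairs, from which Corollary~\ref{cor:TGN} is an immediate special case (after a routine passage from $\C$-linear to $\QQ$-linear combinations, exactly as in the last paragraph of the paper's proof of Theorem~\ref{thm:faithful}); only afterwards does it deduce Theorem~\ref{thm:faithful} from $T_0$ via the socle decomposition in Section~3. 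One substantive difference between the two routes: $T_0(G,N)$ delivers the corollary with \emph{nilpotent} inducing subgroups $H\le G$, whereas your inflation $H=\pi^{-1}(\bar H)$ need not be nilpotent even though $\bar H$ is. You correctly observe this is harmless for $T(G,N)$ as stated, but it means your argument recovers the weaker $T(G,N)$ and not the sharper $T_0(G,N)$ that the paper's route gives for free. Your remark at the end that Theorem~\ref{thm:faithful} a priori concerns all nontrivial normal subgroups of $\bar G$, while you only need the single one $\bar N$, is also exactly the right observation to make explicit.
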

It is straightforward to prove this result from Theorem \ref{thm:faithful}. In this paper, though, we proceed in the opposite direction, first proving a variant of this corollary as Theorem \ref{thm:T(G,N)} before showing it implies Theorem \ref{thm:faithful}.

\begin{rmk}
    We have so far been unable to find a counterexample to the natural analogue of Theorem~\ref{thm:faithful} where we restrict our attention to $\Z$-linear combinations of induced characters.  We cautiously expect such a generalization to hold, but it does not directly follow from the ideas behind the proof of Theorem~\ref{thm:faithful}.  
\end{rmk}

\subsection{A Chebotarev density theorem for most field extensions}
In proving Theorem \ref{thm:faithful}, our eventual goal is to prove an averaged form of the Chebotarev density theorem in the style of \cite{PTBW}. Such results take the following form: given a number field $F$ and a group $G$, if $K/F$ is a $G$-extension outside a certain sparse set, we have an effective form of the Chebotarev density theorem for $K/F$. The primary application of this kind of result is to find unconditional proofs of theorems that previously relied on the extended Riemann hypothesis.

To state our averaged form the Chebotarev density theorem, we need to define our sparse set of bad fields. We begin by fixing some basic notation for the paper.
\begin{notat}
Given any number field $F$, we will write $\Delta_F$ for the magnitude of the absolute discriminant of $F$. Given an ideal $\mfa$ of $F$, we write $N\mfa$ for the rational norm of $\mfa$. We write $\pi_F(H)$ for the number of primes of $F$ of rational norm at most $H$.

Given a finite Galois extension $K/F$ and a conjugacy class $C$ of $\Gal(K/F)$, we take $\pi_C(H; K/F)$ to denote the number of primes $\mfp$ of $F$ of norm at most $H$ that are unramified in $K/F$ and whose corresponding Frobenius element $\text{Frob}\, \mfp$ lies in $C$.

Finally, given a character $\chi\colon \Gal(K/F) \to \C$ and a prime $\mfp$ of $F$, we take 
\[\chi(\mfp) = \begin{cases} \chi(\text{Frob}\, \mfp) &\text{ if } K/F \text{ is unramified at }\mfp \\ 0 &\text{ otherwise.}\end{cases}\]
\end{notat}

\begin{defn}
\label{defn:eps_bad}
Given a number field $F$ and a positive number $\epsilon$, we call a finite nontrivial Galois extension $K/F$ an \emph{$\epsilon$-bad extension of $F$} if there is an irreducible faithful character $\chi$ of $\Gal(K/F)$ such that we have
\begin{equation}
\label{eq:bad_defn}
\left|\sum_{N\mfp \le H} \chi(\mfp)\right| \ge \frac{H}{\log H} \cdot \exp\left(- c(\epsilon)\cdot \sqrt{\log H}\right)\quad\text{for some} \quad H \ge (\log \Delta_K)^{2 + \frac{[K:F]}{2\epsilon}},
\end{equation}
where we have taken
\begin{equation}
\label{eq:cepsilon}
c(\epsilon) = \min\left(\frac{\sqrt{\epsilon}}{18},\,\,\frac{1}{29 \cdot [K:\QQ]^{1/2}}\right).
\end{equation}
We take $\mathbb{X}_{\textup{bad}}(F, \epsilon)$ to be the set of $\epsilon$-bad extensions of $F$.
\end{defn}

The main number theoretic result of this paper is the following sparsity result for $\epsilon$-bad extensions of $F$.  It will be proved in Section~\ref{ssec:proof_sparse_bad}.
\begin{theorem}
\label{thm:sparse_bad}
For each number field $F$ and integer $d \geq 2$, there is a constant $C(F,d)>0$ such that for any positive $\epsilon < 1$ and $\Delta \geq 3$, we have
\[\left|\big\{ K \in \mathbb{X}_{\textup{bad}}(F, \epsilon)\,:\,\, \Delta_K \le \Delta\text{ and } [K:F] \le d\big\}\right| \le \Delta^{\epsilon(1 + \delta)} \cdot  (\log \Delta)^{C(F, d)},\]
where we have taken 
\[\delta = C(F, d) \cdot (\log \log \Delta)^{-1/2}.\]
Moreover, we may take $C(F, d) = 400d^2\cdot[F:\mathbb{Q}]$ so long as $\Delta \gg_{F, d} 1$.
\end{theorem}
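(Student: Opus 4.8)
\emph{Proof strategy.} The plan is to convert $\epsilon$-badness of $K/F$ into the statement that some Hecke $L$-function over a subfield has a zero abnormally close to $s=1$, and then to invoke a log-free zero-density estimate for the resulting family of $L$-functions to show this is a rare event. So, suppose $K/F$ is an $\epsilon$-bad extension, witnessed by a faithful irreducible character $\chi$ of $G=\Gal(K/F)$ and a height $H\ge (\log\Delta_K)^{2+[K:F]/(2\epsilon)}$ at which \eqref{eq:bad_defn} holds. By Theorem~\ref{thm:faithful} we may write $\chi=\sum_j a_j\Ind_{H_j}^G\psi_j$ with $a_j\in\QQ$, each $H_j$ nilpotent, and each $\psi_j$ a linear character for which $\Ind_{H_j}^G\psi_j$ is a sum of irreducible faithful characters, i.e.\ \eqref{eq:faithful_check} holds. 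Since the number of terms and the heights of the $a_j$ depend only on $G$, for one index $j$ the Frobenius sum of $\Ind_{H_j}^G\psi_j$ is $\gg_G \tfrac{H}{\log H}\exp(-c(\epsilon)\sqrt{\log H})$; passing to the fixed field $E=K^{H_j}$ and using $L(s,\Ind_{H_j}^G\psi_j,K/F)=L(s,\psi_j,K/E)$, a genuine Hecke $L$-function over $E$ because $\psi_j$ is linear, a standard contour argument (if $L(s,\psi_j)$ were zero-free in $\{\Re s>1-\eta,\ |\Im s|\le(\log H)^2\}$, then $\sum_{N\mfp\le H}\psi_j(\mfp)\ll H\exp(-c\eta\log H)+(\text{conductor-dependent errors})$, contradicting the lower bound once $\eta$ exceeds $\asymp c(\epsilon)/\sqrt{\log H}$) forces $L(s,\psi_j)$ to have a zero $\rho$ with
\[
1-\Re\rho\ \ll\ \frac{c(\epsilon)}{\sqrt{\log H}},\qquad |\Im\rho|\le(\log H)^2 .
\]
The precise threshold on $H$ is exactly what renders the conductor-dependent error terms (the conductor of $\psi_j$ being $\le\Delta_K^{O_{F,d}(1)}$ by conductor--discriminant) negligible against the main term, and since $\chi$ is faithful and irreducible, $L(s,\chi)$ is holomorphic and nonzero at $s=1$, so this genuinely produces a zero strictly inside the critical strip.

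\textbf{Conductor bookkeeping.} Two further facts tame the family of $L$-functions that can arise. First, the analytic conductor $\mathfrak q$ of $L(s,\psi_j)$ satisfies $\Delta_K^{\,c_0(F,d)}\le \mathfrak q\le \Delta_K^{\,C_0(F,d)}$: the upper bound is again conductor--discriminant, and the lower bound is where \eqref{eq:faithful_check} is essential, resting on the observation that a faithful irreducible $\chi$ controls the Artin conductor of \emph{every} irreducible character of $G$ — each occurs in a bounded tensor power $\chi^{\otimes k}$, whose local conductor exponents are bounded in terms of those of $\chi$ and of $|G|$ — so that $\mathfrak f(\chi)$, hence $\mathfrak f(\Ind_{H_j}^G\psi_j)$, hence $\mathfrak q$, is a genuine power of $\Delta_K$. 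Second, $\psi_j$ cuts out a cyclic subextension $L=K^{\ker\psi_j}$ of $E$, and applying \eqref{eq:faithful_check} to the normal closures of the inertia subgroups of $K/F$ shows that every prime ramifying in $K/F$ already ramifies in $L/F$; thus $\Delta_L$ is itself a power of $\Delta_K$, and the datum $(E,L,\psi_j)$ pins $K$ down up to a controlled ambiguity, with $\Gal(K/E)$ forced to be a nilpotent group admitting $\psi_j$ as a character.

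\textbf{The zero-density estimate (the crux).} The heart of the matter is a log-free zero-density estimate: for $Q,T\ge 3$ and $0<\lambda<\tfrac12$, with $n=d[F:\QQ]$,
\[
\sum_{\substack{E:\,F\subseteq E\\ [E:\QQ]\le n}}\ \ \sum_{\substack{\psi\ \mathrm{primitive}\\ \mathfrak q(\psi)\le Q}}\ \mathbf{1}\!\left[\,L(s,\psi)\text{ has a zero with }\Re s\ge 1-\lambda,\ |\Im s|\le T\,\right]\ \ll_n\ (QT)^{\,c_2 n\lambda}\,(\log QT)^{\,c_3 n^2}.
\]
This should follow from a hybrid large-sieve (duality) argument applied to the Dirichlet polynomials $\sum_{N\mfa\le y}\psi(\mfa)$ that must be abnormally large when such a zero exists, after reducing to primitive ray-class characters, while tracking carefully how the implied constants grow with the degree: the $n^2$ in the exponent of $\log QT$ is precisely what produces the exponent $400d^2[F:\QQ]$ in the theorem, and the explicit shape of $c(\epsilon)$ in \eqref{eq:cepsilon} (the $\sqrt\epsilon/18$ term together with the $(29[K:\QQ]^{1/2})^{-1}$ term) is calibrated so that, after substituting $\lambda\ll c(\epsilon)/\sqrt{\log H}$ with $\log H\ge(2+[K:F]/(2\epsilon))\log\log\Delta_K$, and $Q,T$ of size at most a fixed power of $\Delta_K$, the resulting power of $\Delta$ is at most $\Delta^{\epsilon(1+\delta)}$ with $\delta\ll_{F,d}(\log\log\Delta)^{-1/2}$.

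\textbf{Assembly and main obstacle.} Putting these together, the number of bad data $(E,L,\psi)$ is controlled by the zero-density estimate, and accounting for the multiplicity of $K$ over each datum (bounded via the second step) yields the claimed $\Delta^{\epsilon(1+\delta)}(\log\Delta)^{C(F,d)}$; the restriction $\Delta\gg_{F,d}1$ is what lets one absorb all lower-order factors and the summation over dyadic ranges of $H$, $Q$, and $\Delta_L$ into the single constant $C(F,d)=400d^2[F:\QQ]$, with the remaining bounded range of $\Delta$ dispatched by the crude count of number fields of bounded degree and discriminant. I expect the principal difficulty to be the zero-density estimate itself, in the quantitatively sharp, degree-uniform form required, together with its interlock with the conductor bookkeeping: one must ensure that every unavoidable loss — the sum over subfields $E$, the multiplicity of $K$ over a fixed abelian datum, and the truncation errors in the explicit formula — is swallowed by a single factor of $\Delta^{\epsilon(1+o(1))}$, and it is exactly here that the faithfulness hypothesis \eqref{eq:faithful_check}, in the guise ``the relevant conductor is a genuine power of $\Delta_K$,'' is indispensable.
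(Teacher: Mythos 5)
Your route is genuinely different from the paper's, and it essentially reverts to the PTBW/LOTZ strategy that the authors deliberately abandon. The paper converts $\epsilon$-badness into a bilinear (large-sieve) estimate for the coefficients of monomial $L$-functions, then boosts the range of validity of that estimate to short character sums via H\"{o}lder's inequality in the style of Friedlander--Iwaniec (Theorems~\ref{thm:bilinear_basic}, \ref{thm:Holder}, Corollary~\ref{cor:choose_t}, Theorem~\ref{thm:actual_main}). As the authors explicitly note, this ``sidesteps the consideration of zero free regions in families,'' which is precisely the step you treat as central. Zero-free regions do appear in the paper, but only for a \emph{single} field in Theorem~\ref{thm:cheb_explicit} (and only to discharge the range of very large $H$); they are never combined with a zero-density estimate for a family.

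The decisive gap in your write-up is the log-free zero-density estimate you label ``the crux.'' You do not prove it, and the form you need --- uniform in the base field $E$ as $E$ ranges over all subfields of degree up to $d[F:\QQ]$, with an exponent $c_2 n \lambda$ and a $\log$-power $\asymp n^2$ --- is itself a substantial theorem, of exactly the kind the authors were trying not to rely on. Until that estimate is supplied (with the degree-uniformity you claim), your proof is not complete. A secondary but real issue is the conductor bookkeeping. The inference you draw from \eqref{eq:faithful_check} --- that ``every prime ramifying in $K/F$ already ramifies in $L/F$,'' where $L = K^{\ker\psi_j}$ --- does not follow: \eqref{eq:faithful_check} applied to the normal closure $N$ of an inertia group $I_\mfp$ only gives that $N \cap H_j \not\le \ker\psi_j$, and an element of that intersection is a \emph{product} of conjugates of elements of $I_\mfp$, not necessarily a single conjugate; so it does not directly put some conjugate of $I_\mfp$ outside $\ker\psi_j$. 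The lower bound $\mathfrak q(\psi_j) \ge \Delta_K^{c_0(F,d)}$ you want therefore requires a more careful argument than the one sketched. Finally, even if the zero-density estimate were proved, your scheme would likely produce a constant $C(F,d)$ depending on $\epsilon$ (since the admissible $\lambda$ depends on $\epsilon$), and removing that dependence is one of the concrete improvements the paper claims for its method.
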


In short, the number of $\epsilon$-bad extensions of $F$ of degree $d$ is on the order of $\Delta^{\epsilon(1 + o(1))}$ at most.

We will prove the following simple proposition in a strengthened form as Theorem~\ref{thm:avg_cdt}.

\begin{proposition}[The averaged Chebotarev density theorem] \label{prop:avg_cdt}
Choose a number field $F$, a nontrivial finite group $G$, and a positive number $\epsilon \le 1$. Choose a $G$-extension $K$ of $F$, and suppose $K$ contains no field in $\mathbb{X}_{\textup{bad}}(F, \epsilon)$.

Then, for any conjugacy class $C$ of $G$, we have

\[\left|\pi_C(H; K/F) \,-\, \frac{|C|}{|G|} \cdot \pi_F(H)\right| \le  \frac{H}{\log H} \cdot \exp\left(- c(\epsilon) \cdot \sqrt{\log H}\right).\]
for all $H \ge (\log \Delta_K)^{2 + \frac{[K:F]}{2\epsilon}}$, where $c(\epsilon)$ is defined by \eqref{eq:cepsilon}.
\end{proposition}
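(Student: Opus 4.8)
The plan is to deduce the conjugacy-class count from control on the individual Artin $L$-functions, which in turn comes from Theorem~\ref{thm:faithful}. First I would reduce from the function $\pi_C(H; K/F)$ to a sum of character sums: by orthogonality of characters on $\Gal(K/F)$, one has $\pi_C(H; K/F) - \tfrac{|C|}{|G|}\pi_F(H) = \tfrac{|C|}{|G|}\sum_{\chi \ne \mathbf{1}} \overline{\chi(C)} \sum_{N\mfp \le H} \chi(\mfp)$, where $\chi$ runs over the nontrivial irreducible characters of $G$. So it suffices to bound $|\sum_{N\mfp \le H}\chi(\mfp)|$ for each nontrivial irreducible $\chi$, absorbing the bounded number of terms and the factors $|\chi(C)| \le \chi(1) \le |G|$ into the final estimate (here one has to be slightly careful with constants, but the exponential savings $\exp(-c(\epsilon)\sqrt{\log H})$ dominates any polynomial-in-$|G|$ loss for $H$ in the stated range, which is exactly why the threshold $H \ge (\log\Delta_K)^{2 + [K:F]/(2\epsilon)}$ and the min in \eqref{eq:cepsilon} are shaped the way they are).

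Next I would handle a general nontrivial irreducible $\chi$ of $G = \Gal(K/F)$ by passing to its kernel: let $N = \ker\chi$, let $L = K^N$, so that $\chi$ descends to a faithful irreducible character $\tilde\chi$ of $\Gal(L/F)$, and the character sums agree, $\sum_{N\mfp \le H}\chi(\mfp) = \sum_{N\mfp \le H}\tilde\chi(\mfp)$, since a prime unramified in $K$ is unramified in $L$ and Frobenius is compatible. Because $L \subseteq K$ and $K$ contains no field in $\mathbb{X}_{\textup{bad}}(F,\epsilon)$, the field $L$ is itself not $\epsilon$-bad; by Definition~\ref{defn:eps_bad} applied to $L/F$ and the faithful irreducible character $\tilde\chi$, we get $|\sum_{N\mfp \le H}\tilde\chi(\mfp)| < \tfrac{H}{\log H}\exp(-c(\epsilon)\sqrt{\log H})$ for all $H \ge (\log\Delta_L)^{2 + [L:F]/(2\epsilon)}$, and since $\Delta_L \le \Delta_K$ and $[L:F] \le [K:F]$, the threshold $(\log\Delta_K)^{2 + [K:F]/(2\epsilon)}$ is at least as large, so the bound holds on the whole range we need. (One subtlety: if $L = F$ then $\tilde\chi$ is the trivial character, but that case does not arise since $\chi$ was assumed nontrivial, so $N \ne G$ and $[L:F] > 1$; the nontriviality of $K/F$ in the hypothesis also matters here.)

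The main obstacle — and where Theorem~\ref{thm:faithful} secretly enters, even though the statement of the proposition does not mention $L$-functions — is that Definition~\ref{defn:eps_bad} only gives us information about \emph{faithful} irreducible characters, so the reduction to the kernel quotient in the previous paragraph is essential and the proposition is really a formal consequence once that definition is in place. The remaining work is bookkeeping: summing over the (at most $|G|$) nontrivial irreducible characters $\chi$, the total is at most $|G| \cdot \max_\chi \chi(1) \cdot \tfrac{H}{\log H}\exp(-c(\epsilon)\sqrt{\log H})$ times $\tfrac{|C|}{|G|}$, which one checks is $\le \tfrac{H}{\log H}\exp(-c(\epsilon)\sqrt{\log H})$ in the stated range — here the ``strengthened form as Theorem~\ref{thm:avg_cdt}'' presumably tracks these constants honestly, whereas for this cleaner proposition the slack in the exponent (the $2+\tfrac{[K:F]}{2\epsilon}$ rather than a smaller constant) is generous enough to swallow the polynomial factors. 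I would close by remarking that the same argument, run with the quantitative sparsity input of Theorem~\ref{thm:sparse_bad}, shows that the hypothesis ``$K$ contains no field in $\mathbb{X}_{\textup{bad}}(F,\epsilon)$'' is satisfied by almost all $K$ in any family, which is the point of the whole setup.
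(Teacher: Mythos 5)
Your overall structure — expand the indicator of $C$ over irreducible characters, then reduce each nontrivial irreducible $\chi$ to a faithful irreducible character of the quotient $\Gal(L/F)$ with $L = K^{\ker\chi}$, and invoke the definition of $\epsilon$-bad — is the right skeleton and matches the paper's proof of the strengthened Theorem~\ref{thm:avg_cdt}. But there is a genuine gap at the very point you wave at: the claim that ``the slack in the exponent is generous enough to swallow the polynomial factors.''

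There is no slack. Look closely at Definition~\ref{defn:eps_bad}: the bound you get for a single faithful irreducible character sum is \emph{exactly} $\tfrac{H}{\log H}\exp(-c(\epsilon)\sqrt{\log H})$, with the \emph{same} $c(\epsilon)$ and the \emph{same} threshold $H \ge (\log\Delta_K)^{2+[K:F]/(2\epsilon)}$ that appear in the conclusion of the proposition. So after your orthogonality expansion
\[
\pi_C(H;K/F) - \tfrac{|C|}{|G|}\pi_F(H) = \tfrac{|C|}{|G|}\sum_{\chi\neq\mathbf{1}}\overline{\chi(C)}\sum_{N\mfp\le H}\chi(\mfp),
\]
the triangle inequality loses a factor of roughly $\tfrac{|C|}{|G|}\sum_{\chi\neq \mathbf 1}|\chi(C)|$, which is not $\le 1$ in general and cannot be absorbed by shrinking $H$ or enlarging the range, because the single-character bound is already tight against the target. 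The paper flags this explicitly (``the right hand side exactly matches the right hand side of Definition~\ref{defn:eps_bad}'') and resolves it with Lemma~\ref{lem:fun}: writing the indicator function $\mathbf{1}_C = \sum_i a_i\chi_i$, it proves that $\sum_i|a_i|\le 1$, via the observation that $\sum_i|a_i|^2 = |C|/|G|$, that the nonzero $\tfrac{|G|}{|C|}a_i = \overline{\chi_i(C)}$ are algebraic integers whose product has absolute value $\ge 1$, and then AM--GM plus Cauchy--Schwarz. That $\ell^1$ bound of exactly $1$ is what lets the final estimate match the definition with no loss; without it, your argument proves a weaker statement with an extra factor depending on $|G|$ and $|C|$, which is not what the proposition asserts.

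The rest of your reduction is sound and is what the paper does: the passage $\chi\mapsto\tilde\chi$ on $\Gal(L/F)$, the observation that $\Delta_L\le\Delta_K$ and $[L:F]\le[K:F]$ so the range of $H$ is preserved, and the remark that Theorem~\ref{thm:faithful} lurks in the background via the definition of $\epsilon$-bad. But you should replace the hand-wave about ``slack in the exponent'' by Lemma~\ref{lem:fun} (or prove an equivalent $\ell^1\le 1$ bound for the Fourier coefficients of the conjugacy-class indicator) to close the argument.
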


Taken together, Proposition \ref{prop:avg_cdt} and Theorem \ref{thm:sparse_bad} constitute an averaged form of the Chebotarev density theorem as seen in \cite{PTBW} and \cite{LOTZ}. The central advantage of our result over prior work is the fact that $G$ may be an arbitrary finite group, with the previous results all placing substantial hypotheses on this group.  

Another advantage of these statements compared to previous work is that $\epsilon$ may vary with $\Delta$ in Theorem \ref{thm:sparse_bad} since the constant $C(F, d)$ does not depend on $\epsilon$. In particular, this allows for a trade off between the number of ``exceptional fields'' in the set $\mathbb{X}_\mathrm{bad}(F,\epsilon)$ and the range in which the effective Chebotarev density theorem applies.  For example, by taking $\epsilon$ to be proportional to $\frac{\log \log \Delta}{\log \Delta}$, we obtain an effective Chebotarev density theorem that applies as soon as $H$ is a small power of the discriminant (a range of critical interest in \cite{PTBW}), with a much better bound $(\log \Delta)^{O(1)}$ on the number of exceptional fields than was provided by either \cite{PTBW} or \cite{LOTZ}.

\begin{rmk}
Choose a positive integer $d$. If $\epsilon$ is sufficiently small, the subset of $\epsilon$-bad extensions among the Galois extensions of $F$ of degree at most $d$ is sparse. However, the subset of such extensions that \emph{contain} an $\epsilon$-bad extension of $F$ might not be sparse under some orderings. For example, combining the results of \cite{Maki85} and \cite{BhargavaWood08}, one easily sees that a positive proportion of Galois sextic fields ordered by discriminant contain any fixed cyclic cubic field, e.g. $\mathbb{Q}(\zeta_7+\zeta_7^{-1})$.

A consideration of subfields is inevitable. 
A class function $f\colon \Gal(K/F) \to \C$ is uniquely expressible in the form $\sum_L f_L$, where the sum is over the Galois extensions $L/F$ contained in $K$ and each $f_L$ is a complex combination of the faithful irreducible characters of $\Gal(L/F)$. The averaging procedure needs to handle the character sum for each $f_L$ separately; if we average over $K$ in a family of fields containing a fixed intermediate field $L$, the procedure can only handle the contribution from $f_L$ if $L$ has sufficiently small discriminant to apply an unconditional Chebotarev density theorem. 
\end{rmk}

For some of our arithmetic applications, it is also useful to spell out the following version of the prime ideal theorem that holds for extensions disjoint from $\mathbb{X}_\mathrm{bad}(F,\epsilon)$.

\begin{proposition}\label{prop:avg_pit}
    Let $F$ be a number field, let $\epsilon > 0$, let $L/F$ be a finite extension, let $K/F$ be its normal closure, and let $G = \mathrm{Gal}(K/F)$.  Suppose that $L$ is linearly disjoint from every field in $\mathbb{X}_\mathrm{bad}(F,\epsilon)$ contained in $K$.  Then for all $H \geq (\frac{|G|}{2}\log \Delta_L)^{2+\frac{|G|}{2\epsilon}}$, we have
        \[
            \left| \pi_L(H) - \pi_F(H) \right|
                \leq  \frac{H}{\log H} \cdot ([L:F]-1) \cdot\exp\left( - c(\epsilon) \cdot \sqrt{\log H} \right),
        \]
    where $c(\epsilon)$ is defined by \eqref{eq:cepsilon}.  
\end{proposition}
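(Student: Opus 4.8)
The plan is to reduce the prime ideal theorem for $L$ to the averaged Chebotarev density theorem (Proposition~\ref{prop:avg_cdt}) applied to the Galois extension $K/F$. First I would recall that $\pi_L(H)$ counts primes $\mathfrak{P}$ of $L$ with $N\mathfrak{P} \le H$, and that such a prime of degree one over $F$ corresponds to a prime $\mathfrak{p}$ of $F$ (also of norm $\le H$) that splits with at least one degree-one factor in $L$. In terms of the permutation action of $G = \mathrm{Gal}(K/F)$ on the $[L:F]$ cosets of $H_0 := \mathrm{Gal}(K/L)$, the number of degree-one primes of $L$ above an unramified prime $\mathfrak{p}$ equals $\mathrm{fix}(\mathrm{Frob}\,\mathfrak{p})$, the number of fixed points of $\mathrm{Frob}\,\mathfrak{p}$ acting on those cosets. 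Thus, up to the bounded error coming from primes of degree $\ge 2$ in $L/F$ and from ramified primes (which is $O(\sqrt{H}\,\log \Delta_L)$, absorbed into the stated bound), $\pi_L(H)$ equals $\sum_{\mathfrak{p}} \mathrm{fix}(\mathrm{Frob}\,\mathfrak{p})$, and $\pi_F(H)$ is the $\mathfrak p$-sum of the constant function $1 = \mathrm{fix}(e)/\ldots$—more precisely $\sum_{\mathfrak p} 1 = \pi_F(H)$.

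Next I would decompose the permutation character $\pi = \mathrm{Ind}_{H_0}^G \mathbf{1}$ into irreducibles: $\pi = \mathbf{1} + \sum_i m_i \chi_i$ with $\chi_i$ nontrivial irreducible and $\sum_i m_i \chi_i(e) = [L:F] - 1$. Then $\mathrm{fix}(\mathrm{Frob}\,\mathfrak{p}) - 1 = \sum_i m_i \chi_i(\mathfrak p)$, so
\[
    \pi_L(H) - \pi_F(H) = \sum_i m_i \sum_{N\mathfrak p \le H} \chi_i(\mathfrak p) + O(\sqrt H \log \Delta_L).
\]
Each $\chi_i$ factors through $\mathrm{Gal}(K_i/F)$ for the fixed field $K_i$ of $\ker \chi_i$, and $\chi_i$ is a \emph{faithful} irreducible character of $\mathrm{Gal}(K_i/F)$; moreover $K_i \subseteq K$ and, because $\chi_i$ appears in $\mathrm{Ind}_{H_0}^G\mathbf 1$, the field $K_i$ is actually contained in the Galois closure of $L/F$, which is $K$ itself. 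The hypothesis that $L$ is linearly disjoint from every field in $\mathbb X_{\mathrm{bad}}(F,\epsilon)$ contained in $K$ should be leveraged to conclude that no $K_i$ lies in $\mathbb X_{\mathrm{bad}}(F,\epsilon)$: indeed $\mathrm{Gal}(K_i/(K_i\cap L))$ acts trivially on the $G$-set of cosets of $H_0$ restricted appropriately, which forces $K_i$ and $L$ not to be linearly disjoint unless $K_i = F$; hence the linear disjointness hypothesis rules out each nontrivial $K_i$ being $\epsilon$-bad. Applying the definition of $\epsilon$-bad (Definition~\ref{defn:eps_bad}) to each $K_i$, we get
\[
    \left|\sum_{N\mathfrak p \le H} \chi_i(\mathfrak p)\right| \le \frac{H}{\log H}\exp\bigl(-c(\epsilon)\sqrt{\log H}\bigr)
\]
for all $H \ge (\log \Delta_{K_i})^{2 + [K_i:F]/(2\epsilon)}$; since $[K_i:F] \le |G|$ and $\Delta_{K_i}$ is controlled by $\Delta_L$ via the conductor-discriminant formula and $[K:F] \le |G|$ (giving $\log \Delta_{K_i} \le \tfrac{|G|}{2}\log \Delta_L$ up to the usual bounds, which is why the threshold is stated with $(\tfrac{|G|}{2}\log\Delta_L)^{2 + |G|/(2\epsilon)}$), the common range $H \ge (\tfrac{|G|}{2}\log \Delta_L)^{2 + |G|/(2\epsilon)}$ suffices for all $i$ simultaneously. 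Summing with the weights $m_i$ and using $\sum_i m_i\chi_i(e) = [L:F]-1 \ge \sum_i m_i$ yields the claimed bound, after checking that the $O(\sqrt H \log \Delta_L)$ error is dominated by the main term in the stated range (true since $\sqrt H$ is much smaller than $H/\log H \cdot \exp(-c(\epsilon)\sqrt{\log H})$ once $H$ exceeds a fixed power of $\log \Delta_L$).

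The main obstacle is the bookkeeping around the fields $K_i$: one must verify carefully that (a) each irreducible constituent of $\mathrm{Ind}_{H_0}^G\mathbf 1$ indeed has its kernel-fixed-field inside $K$ and linearly disjoint from $L$ unless trivial, so that the hypothesis applies; and (b) the discriminant bound $\log \Delta_{K_i} \le \tfrac{|G|}{2}\log \Delta_L$ holds, which requires invoking the conductor–discriminant formula together with $\Delta_K \mid \Delta_L^{[K:L]}\cdot(\text{something})$ or, more simply, bounding $\Delta_{K_i}$ through $\Delta_K$ and $\Delta_K$ through $\Delta_L$. A mild subtlety is that several $\chi_i$ may share the same field $K_i$ (if two irreducibles have the same kernel), but this causes no trouble since Definition~\ref{defn:eps_bad} quantifies over all faithful irreducible characters of $\mathrm{Gal}(K_i/F)$ at once. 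Everything else is the standard unwinding of the prime ideal theorem into a Chebotarev count, which is routine.
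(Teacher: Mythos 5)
Your proposal follows essentially the same route as the paper. The paper sets $\chi_{H_0} := \mathrm{Ind}_{H_0}^G\mathbf{1} - \mathbf{1}$, writes $\pi_L(H) - \pi_F(H) = \sum_{N\mfp\le H}\chi_{H_0}(\mfp)$, observes that $\chi_{H_0}$ has at most $[L:F]-1$ irreducible constituents counted with multiplicity, shows each constituent's kernel field is not linearly disjoint from $L$ (citing \cite[Lemma 3.9]{LOTZ}), and invokes Definition~\ref{defn:eps_bad}; the threshold on $H$ is converted from $\Delta_K$ to $\Delta_L$ via Lemma~\ref{lem:galois-disc}, giving $\log\Delta_{K_i}\le\log\Delta_K\le\tfrac{|G|}{2}\log\Delta_L$. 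Your decomposition into irreducibles, the argument that nontrivial constituents of $\mathrm{Ind}_{H_0}^G\mathbf{1}$ have kernel fields not linearly disjoint from $L$, and the discriminant bookkeeping all match.

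Two small remarks. First, you are actually more careful than the paper at one step: the identity $\pi_L(H)-\pi_F(H)=\sum_{N\mfp\le H}\chi_{H_0}(\mfp)$ is stated as exact in the paper, but it holds only up to a contribution from degree-$\ge 2$ primes of $L$ and ramified primes, which is $O_{[L:\QQ]}(\sqrt{H} + \log\Delta_K)$ as you note; to honestly absorb this into the stated bound you should add a line verifying $\sqrt{H}\log\Delta_L \ll \frac{H}{\log H}\exp(-c(\epsilon)\sqrt{\log H})$ in the given range of $H$, which is routine since $c(\epsilon)\le 1/29$. Second, your justification that each nontrivial $K_i$ is not linearly disjoint from $L$ is phrased loosely (``$\mathrm{Gal}(K_i/(K_i\cap L))$ acts trivially \dots restricted appropriately''); the clean statement is that if $K_i$ and $L$ were linearly disjoint over $F$, then $H_0 = \mathrm{Gal}(K/L)$ would surject onto $\mathrm{Gal}(K_i/F)$, so $\langle\chi_i|_{H_0},\mathbf{1}\rangle_{H_0}=\langle\chi_i,\mathbf{1}\rangle_G=0$, contradicting that $\chi_i$ occurs in $\mathrm{Ind}_{H_0}^G\mathbf{1}$. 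This is precisely what \cite[Lemma 3.9]{LOTZ} supplies, and the paper simply cites it.
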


\subsection{Arithmetic applications}

    One of the virtues of a strong effective Chebotarev density theorem is that it affords many pleasant arithmetic consequences.  We highlight a few that may be easily derived from Propositions~\ref{prop:avg_cdt} and \ref{prop:avg_pit}.  We focus our initial attention on a class of fields for which the linear disjointness hypothesis of Proposition~\ref{prop:avg_pit} may be easily handled, namely, the class of primitive extensions.  Recall that a finite extension $L/F$ is called primitive if it admits no nontrivial proper subextensions.  (For example, any extension of prime degree is primitive.)  For any integer $m \geq 2$, we let $\mathscr{F}_{m,F}^\mathrm{prim}$ denote the set of primitive extensions $L/F$ with degree $m$ inside a fixed algebraic closure $\overline{F}$, and for any $Q \geq 1$, we let $\mathscr{F}_{m,F}^\mathrm{prim}(Q) \subset \mathscr{F}_{m,F}^\mathrm{prim}$ be the subset consisting of those $L$ with $\Delta_L \leq Q$.

    We begin with the following application to bounding the $\ell$-torsion subgroups of the class group of a number field.

    \begin{corollary} \label{cor:ellenberg-venkatesh}
        Let $F$ be a number field and $m,\ell \geq 2$ be integers.  Then there is a constant $A$, depending on $F$, $m$, and $\ell$, such that for any $Q \geq 3$ and all but at most $(\log Q)^A$ extensions $L \in \mathscr{F}_{m,F}^\mathrm{prim}(Q)$, there holds for any $\varepsilon > 0$
            \[
                |\mathrm{Cl}(L)[\ell]| \ll_{F,m,\ell,\epsilon} |\Delta_L|^{\frac{1}{2}-\frac{1}{2\ell(m-1)}+\varepsilon}.
            \]
    \end{corollary}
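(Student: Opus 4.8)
The plan is to feed the averaged prime ideal theorem of Proposition~\ref{prop:avg_pit} into the Ellenberg--Venkatesh method for bounding $\ell$-torsion. Recall that the relative form of that method (as developed in \cite{PTBW}, building on Ellenberg and Venkatesh) reduces the desired bound $|\mathrm{Cl}(L)[\ell]| \ll_{F,m,\ell,\varepsilon} |\Delta_L|^{\frac12 - \frac{1}{2\ell(m-1)} + \varepsilon}$, where $m = [L:F]$, to the assertion that $\mathcal{O}_L$ has at least $|\Delta_L|^{\frac{1}{2(m-1)} - \varepsilon}$ prime ideals of norm at most $|\Delta_L|^{\frac{1}{2(m-1)}}$ of residue degree one; the appearance of the subfield $F$, rather than $\mathbb{Q}$, in the exponent is exactly what the relative (as opposed to absolute) version of the argument buys. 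So it suffices to show that for all but $(\log Q)^A$ of the fields $L \in \mathscr{F}_{m,F}^{\mathrm{prim}}(Q)$, Proposition~\ref{prop:avg_pit} applies with $H = H_L := |\Delta_L|^{\frac{1}{2(m-1)}}$ and yields $\pi_L(H_L) \gg_{F,m} H_L/\log H_L$; after discarding the $O(H_L^{1/2})$ prime ideals of residue degree at least two, this gives the required supply of small degree-one primes.

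The crux is the choice of the parameter $\epsilon = \epsilon_L := \kappa \cdot \frac{\log\log\Delta_L}{\log\Delta_L}$ for a sufficiently large constant $\kappa = \kappa(F,m)$, the finitely many $L$ with $\Delta_L$ below a threshold $Q_0(F,m)$ being placed into the exceptional set. Writing $K$ for the Galois closure of $L/F$ and $G = \mathrm{Gal}(K/F)$, so $|G| \leq m!$, a direct computation gives $\big(\tfrac{|G|}{2}\log\Delta_L\big)^{2 + |G|/(2\epsilon_L)} \leq |\Delta_L|^{\frac{m!}{2\kappa}(1 + o(1))}$, which is at most $H_L$ once $\kappa \geq m!(m-1)$ and $\Delta_L$ is large; hence the range hypothesis of Proposition~\ref{prop:avg_pit} holds at $H = H_L$. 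Moreover $\epsilon_L \to 0$, so for $\Delta_L$ large $c(\epsilon_L) = \tfrac{1}{18}\sqrt{\epsilon_L}$ and $c(\epsilon_L)\sqrt{\log H_L} = \tfrac{\sqrt\kappa}{18\sqrt{2(m-1)}}\sqrt{\log\log\Delta_L} \to \infty$, so the error term $(m-1)\exp(-c(\epsilon_L)\sqrt{\log H_L}) \cdot H_L/\log H_L$ in Proposition~\ref{prop:avg_pit} is $o(H_L/\log H_L)$; combined with the prime ideal theorem for the fixed field $F$, this yields $\pi_L(H_L) = (1 + o(1))\pi_F(H_L) \gg_{F,m} H_L/\log H_L$, as needed.

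It remains to identify the $L$ for which Proposition~\ref{prop:avg_pit} applies and to count the rest. Since $L/F$ is primitive, any $M \in \mathbb{X}_{\mathrm{bad}}(F,\epsilon_L)$ with $M \subseteq K$ is Galois over $F$, so $L \cap M \in \{F, L\}$, and $L \cap M = L$ would force $K \subseteq M$, i.e. $M = K$; hence $L$ is linearly disjoint from every field of $\mathbb{X}_{\mathrm{bad}}(F,\epsilon_L)$ contained in $K$ if and only if $K \notin \mathbb{X}_{\mathrm{bad}}(F,\epsilon_L)$, so Proposition~\ref{prop:avg_pit} applies to exactly those $L$ whose Galois closure is not $\epsilon_L$-bad. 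To bound the exceptional set I would use the elementary different bound $\Delta_K \leq |\Delta_L|^{B}$ with $B = B(F,m)$ (and $[K:F] \leq m!$), together with the monotonicity $\mathbb{X}_{\mathrm{bad}}(F,\epsilon') \subseteq \mathbb{X}_{\mathrm{bad}}(F,\epsilon)$ for $\epsilon' \leq \epsilon$; then, decomposing $\Delta_L$ into $O(\log Q)$ dyadic ranges $(T, 2T]$ and applying Theorem~\ref{thm:sparse_bad} with $d = m!$, $\epsilon = \kappa\frac{\log\log T}{\log T}$, and $\Delta = (2T)^B$, one bounds the number of admissible closures $K$ per range by $((2T)^B)^{\epsilon(1 + o(1))}(\log T)^{O(1)} = (\log T)^{B\kappa + O(1)}$. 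Since each such $K$ is the closure of at most $2^{m!}$ fields $L$, summing over the dyadic ranges gives $\ll_{F,m} (\log Q)^{B\kappa + O(1)} = (\log Q)^A$ exceptional $L$ for a suitable $A = A(F,m)$; for every other $L$ the two inputs combine to give the stated bound.

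The main obstacle is the tension governing the choice of $\epsilon_L$: to make Proposition~\ref{prop:avg_pit} valid in the narrow range $H \approx |\Delta_L|^{1/(2(m-1))}$ demanded by the Ellenberg--Venkatesh input, one is forced to take $\epsilon_L$ as small as a constant multiple of $\frac{\log\log\Delta_L}{\log\Delta_L}$, and Theorem~\ref{thm:sparse_bad} then only controls the $|\Delta|^{\epsilon_L(1+o(1))} = (\log\Delta)^{O(1)}$ exceptional Galois extensions --- so the argument hinges on tracking the constants ($\kappa$, $B$, the various $o(1)$'s, the passage from prime ideals of $L$ of residue degree one over $F$ to those of degree one over $\mathbb{Q}$, and the primitivity reduction of linear disjointness to ``$K$ is not $\epsilon$-bad'') uniformly, and on verifying that the relative Ellenberg--Venkatesh method genuinely produces the exponent $\frac{1}{2\ell(m-1)}$ rather than $\frac{1}{2\ell([L:\mathbb{Q}]-1)}$.
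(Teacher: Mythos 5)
Your proposal follows essentially the same route as the paper's proof: combine the Ellenberg--Venkatesh lemma with the averaged prime ideal theorem (Proposition~\ref{prop:avg_pit}), take $\epsilon$ proportional to $\frac{\log\log\Delta_L}{\log\Delta_L}$ so that the range hypothesis of Proposition~\ref{prop:avg_pit} is met at the Ellenberg--Venkatesh threshold, use primitivity to reduce linear disjointness from $\mathbb{X}_\mathrm{bad}$ to the single condition that the normal closure $K$ is not $\epsilon$-bad, and count exceptions by applying Theorem~\ref{thm:sparse_bad} over dyadic ranges in $\Delta_L$ together with the bound $\Delta_K \leq \Delta_L^{m!/2}$. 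The only slip is a dropped factor of $\ell$: the Ellenberg--Venkatesh input counts primes of norm up to $\Delta_{L/F}^{\delta}$ with $\delta < \frac{1}{2\ell(m-1)}$, not $\Delta_L^{1/(2(m-1))}$, which correspondingly forces $\kappa \gtrsim m!\,\ell(m-1)$ rather than $m!(m-1)$; this changes constants but not the structure of the argument.
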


    Recall that the Minkowski bound implies that $|\mathrm{Cl}(L)|\ll_{[L:\mathbb{Q}],\varepsilon} |\mathrm{Disc}(L)|^{\frac{1}{2}+\varepsilon}$ for every $\varepsilon > 0$.  Thus, Corollary~\ref{cor:ellenberg-venkatesh} obtains an improvement over the trivial bound $|\mathrm{Cl}(L)[\ell]| \leq |\mathrm{Cl}(L)| \ll _{[L:\mathbb{Q}],\varepsilon} |\mathrm{Disc}(L)|^{\frac{1}{2}+\varepsilon}$ for almost all $L \in \mathscr{F}_{m,F}^\mathrm{prim}$.  Analogous improvements were also obtained in \cite{LOTZ,PTBW} for certain subsets of $\mathscr{F}_{m,F}^\mathrm{prim}$ defined by particular Galois and inertial conditions.  Corollary~\ref{cor:ellenberg-venkatesh} improves over these prior results in two ways.  First, and most substantially, it removes these auxiliary Galois and inertial conditions.  Second, it refines the bound on the number of $L$ to which the result does not apply from a bound of the form $O_{F,m,\ell,\varepsilon}(Q^\varepsilon)$ to the bound $(\log Q)^A$ as in its statement.

    Beyond the ``almost all'' result of Corollary~\ref{cor:ellenberg-venkatesh}, one may ask for bounds on the moments of $|\mathrm{Cl}(L)[\ell]|$ as $L$ varies in a family such as $\mathscr{F}_{m,F}^\mathrm{prim}(Q)$.  For this, using machinery of Koymans and Thorner \cite{KoymansThorner} (which builds on and generalizes work of Heath-Brown and Pierce \cite{HBP} and Frei and Widmer \cite{FreiWidmer}), we obtain the following.

    \begin{corollary} \label{cor:koymans-thorner}
        Let $F$ be a number field, $m \geq 2$ be an integer, $Q \geq 1$, and $r \geq 1$ be an integer.  Then for every integer $\ell \geq 2$ and every $\varepsilon > 0$, there holds
            \[
                \sum_{ L \in \mathscr{F}_{m,F}^\mathrm{prim}(Q)} |\mathrm{Cl}(L)[\ell]|^r
                    \ll_{F,m,\ell,\varepsilon} Q^{\frac{r}{2}+\varepsilon} \cdot \left( 1 + |\mathscr{F}_{m,F}^\mathrm{prim}(Q)|^{1- \frac{r}{\ell(m-1)+1}}\right).
            \]
    \end{corollary}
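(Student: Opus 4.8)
The plan is to deduce the moment bound from a general theorem of Koymans and Thorner \cite{KoymansThorner}, which converts an effective prime ideal theorem valid for $100\%$ of a family of fixed-degree extensions — with the exceptional set polynomially sparse and the effective range a small power of the discriminant — into a moment estimate of exactly the shape asserted here. The two ingredients this requires are supplied by earlier results in the paper: Proposition~\ref{prop:avg_pit} provides the effective prime ideal theorem, and Theorem~\ref{thm:sparse_bad} controls the set of fields to which it does not apply. Thus the task is mainly to package these two results in the form the machinery of \cite{KoymansThorner} expects, and to choose the parameter $\epsilon$ so that both the exceptional set and the effective range come out acceptable.

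First I would identify, for the family $\mathscr{F}_{m,F}^{\mathrm{prim}}$, the fields excluded by the hypothesis of Proposition~\ref{prop:avg_pit}. Fix $\epsilon>0$. If $L \in \mathscr{F}_{m,F}^{\mathrm{prim}}$ has normal closure $K/F$, then since $L$ is primitive its only subextensions over $F$ are $F$ and $L$; hence $L$ fails to be linearly disjoint from a field $L' \in \mathbb{X}_{\mathrm{bad}}(F,\epsilon)$ with $L' \subseteq K$ only if $L \subseteq L'$, which forces $L' = K$. So the primitive degree-$m$ extensions to which Proposition~\ref{prop:avg_pit} does not apply are precisely those whose normal closure lies in $\mathbb{X}_{\mathrm{bad}}(F,\epsilon)$. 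Now $[K:F]$ divides $m!$, the set of primes ramifying in $K$ equals that ramifying in $L$, and a standard bound gives $\log\Delta_K \ll_m \log\Delta_L$; moreover each $K$ contains only boundedly many (in terms of $m$) subfields of degree $m$ over $F$. Combining this with Theorem~\ref{thm:sparse_bad} applied with $d = m!$ and $\Delta = Q^{O_m(1)}$, the number of $L \in \mathscr{F}_{m,F}^{\mathrm{prim}}(Q)$ that are not linearly disjoint from $\mathbb{X}_{\mathrm{bad}}(F,\epsilon)$ is $\ll_{F,m} Q^{O_m(\epsilon)\,(1+o(1))}(\log Q)^{O_{F,m}(1)}$. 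For these fields we will use only the trivial Minkowski bound $|\mathrm{Cl}(L)[\ell]|^r \le |\mathrm{Cl}(L)|^r \ll_{F,m,\varepsilon} Q^{r/2+\varepsilon}$.

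Next I would apply Proposition~\ref{prop:avg_pit} to every remaining $L \in \mathscr{F}_{m,F}^{\mathrm{prim}}(Q)$: for all $H \ge (\tfrac{[K:F]}{2}\log\Delta_L)^{2+[K:F]/(2\epsilon)}$ one has
\[
\bigl|\pi_L(H) - \pi_F(H)\bigr| \;\le\; \frac{H}{\log H}\,(m-1)\exp\!\bigl(-c(\epsilon)\sqrt{\log H}\bigr),
\]
while $\pi_F(H) \sim H/\log H$, so that $L$ has $\gg H/\log H$ primes of norm at most $H$ in this range. If $\epsilon$ is taken to be a fixed constant, small enough in terms of $F$, $m$, $\ell$, and $\varepsilon$, then the threshold here is at most a fixed power of $\log Q$ — hence below $Q^\eta$ for every fixed $\eta>0$ once $Q$ is large — so this is precisely the ``effective prime ideal theorem for almost all members of the family, valid throughout $[\,Q^{o(1)},\infty)$'' input needed by \cite{KoymansThorner}; simultaneously the exceptional count of the previous paragraph becomes $\ll Q^{O_m(\epsilon)}$, which a routine choice of $\epsilon$ makes $\le Q^{\varepsilon/2}$, so the exceptional fields contribute only $O_{F,m,\varepsilon}(Q^{r/2+\varepsilon})$. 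Feeding these two facts into the moment estimate of \cite{KoymansThorner} (which builds on the arguments of \cite{HBP} and \cite{FreiWidmer}) yields
\[
\sum_{L \in \mathscr{F}_{m,F}^{\mathrm{prim}}(Q)} |\mathrm{Cl}(L)[\ell]|^r \;\ll_{F,m,\ell,\varepsilon}\; Q^{\frac r2+\varepsilon}\Bigl(1 + |\mathscr{F}_{m,F}^{\mathrm{prim}}(Q)|^{1-\frac{r}{\ell(m-1)+1}}\Bigr),
\]
as claimed; the finitely many $L$ with $\Delta_L$ too small for Proposition~\ref{prop:avg_pit} to be meaningful are harmless and absorbed into the implied constant.

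I expect the genuine difficulty to lie entirely in the cited inputs — Theorem~\ref{thm:sparse_bad} and the machinery of \cite{KoymansThorner} — rather than in the deduction. Within the deduction, the only point requiring real care is the interaction of parameters: one must check that shrinking $\epsilon$ to thin the exceptional set does not push the effective range of Proposition~\ref{prop:avg_pit} above a small power of $Q$ (it does not, since for a fixed $\epsilon$ that range stays polylogarithmic in $Q$), and that the contribution of the exceptional fields — bounded only via Minkowski — never exceeds the main term, which is arranged by choosing $\epsilon$ small in terms of $\varepsilon$. The reduction to normal closures in the second paragraph is special to primitive extensions and is exactly what makes the linear-disjointness hypothesis of Proposition~\ref{prop:avg_pit} easy to verify here; a non-primitive family would instead require separating out the contribution of each intermediate field, as anticipated in the remark on subfields in the introduction.
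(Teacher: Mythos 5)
Your proposal is correct and follows essentially the same route as the paper: you reduce to a Koymans--Thorner style black box (which the paper abstracts as Proposition~\ref{prop:koymans-thorner}), verify its hypotheses for $\mathscr{F}_{m,F}^{\mathrm{prim}}$ by combining Proposition~\ref{prop:avg_pit} with Theorem~\ref{thm:sparse_bad}, and use primitivity exactly as the paper's Lemma~\ref{lem:primitive-not-bad} does to identify the exceptional $L$ with those whose normal closure is $\epsilon$-bad. The only cosmetic difference is that you invoke the machinery of \cite{KoymansThorner} directly rather than first isolating it as a standalone proposition.
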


    The main theorem of \cite{KoymansThorner} proves Corollary~\ref{cor:koymans-thorner} in the case that $m$ is prime.  For general $m$, they prove a result analogous to Corollary~\ref{cor:koymans-thorner}, but only for the subset $\mathscr{F}_{m,F}^{S_m} \subseteq \mathscr{F}_{m,F}^\mathrm{prim}$ consisting of extensions $L/F$ whose normal closure over $F$ has Galois group $S_m$.  We also obtain an analogue of this result for any primitive permutation group $G$ of degree $m$ and the associated subset $\mathscr{F}_{m,F}^G$; see Corollary~\ref{cor:koymans-thorner-group} below.
    
    As another sample application to class groups, we have the following variant of a well known result of Bach \cite{Bach} on the generation of the ideal class group assuming the generalized Riemann hypothesis.

    \begin{theorem} \label{thm:approximate-bach}
        Let $F$ be a number field, $m \geq 2$ an integer, $Q \geq 3$, $\varepsilon>0$, and $\ell >m$ be prime.  Then with at most $O_{F,m,\ell,\varepsilon}(Q^{\varepsilon})$ exceptions, each $L \in \mathscr{F}_{m,F}^\mathrm{prim}(Q)$ is such that $\mathrm{Cl}(L) / \ell \mathrm{Cl}(L)$ is generated by primes in $L$ of norm at most $(\log Q)^{3 \ell^{2m} (m!)^2/\varepsilon}$.
    \end{theorem}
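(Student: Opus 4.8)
The plan is to run the classical ``Bach's bound via effective Chebotarev'' argument, substituting Proposition~\ref{prop:avg_pit} for the generalized Riemann hypothesis, and then to count the exceptional fields using Theorem~\ref{thm:sparse_bad}. Set $x := (\log Q)^{3\ell^{2m}(m!)^2/\varepsilon}$. I may assume $\varepsilon < 1$ and $Q$ larger than a threshold depending on $F,m,\ell,\varepsilon$, since otherwise $\mathscr{F}_{m,F}^{\mathrm{prim}}(Q)$ is finite of size $O_{F,m}(Q^{O_m(1)})$ and the bound is trivial. Suppose $L\in\mathscr{F}_{m,F}^{\mathrm{prim}}(Q)$ is an exception, i.e.\ $\mathrm{Cl}(L)/\ell\,\mathrm{Cl}(L)$ is not generated by the classes of primes of norm $\le x$. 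As this is an $\mathbb{F}_\ell$-vector space, those classes lie in an index-$\ell$ subgroup, which by class field theory corresponds to an everywhere-unramified cyclic degree-$\ell$ extension $M/L$ in which every prime of $L$ of norm $\le x$ splits completely. Since each prime of $M$ of norm $\le x$ lies over a prime of $L$ of norm $\le x$, and each prime of $L$ of norm $\le x$ contributes exactly $\ell$ of them (of the same norm), this yields the exact identity $\pi_M(x)=\ell\,\pi_L(x)$.

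Next I would pass to the Galois closures $\widetilde L/F$ of $L/F$ and $K/F$ of $M/F$, writing $\widetilde G=\mathrm{Gal}(\widetilde L/F)$, $G=\mathrm{Gal}(K/F)$. Using primitivity of $L/F$ and that $M/L$ is Galois, one checks that $K$ is the compositum of $\widetilde L$ with the $m$ conjugates of $M$, that $K/\widetilde L$ is unramified, that $\mathrm{Gal}(K/\widetilde L)$ embeds into $(\mathbb{Z}/\ell)^m$, and hence $[K:F]\le m!\,\ell^m$ and $\Delta_K=\Delta_{\widetilde L}^{[K:\widetilde L]}\le\Delta_{\widetilde L}^{\ell^m}\le\Delta_L^{O_m(1)}\le Q^{O_{m,\ell}(1)}$; here the hypothesis $\ell>m$ keeps all primes above $\ell$ tame in $\widetilde L/F$. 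I then set $\epsilon:=\varepsilon/(5\ell^m m!)<1$ and argue by contradiction: if $L$ were linearly disjoint over $F$ from every field of $\mathbb{X}_{\mathrm{bad}}(F,\epsilon)$ inside $\widetilde L$ and $M$ were linearly disjoint from every such field inside $K$, then Proposition~\ref{prop:avg_pit} applied to $L/F$ and to $M/F$ — whose range hypotheses hold at $H=x$ precisely by the choices of the exponent in $x$ and of $\epsilon$ — would give $\pi_L(x),\,\pi_M(x)=\pi_F(x)+O\!\big(\tfrac{x}{\log x}e^{-c(\epsilon)\sqrt{\log x}}\big)$. Together with $\pi_M(x)=\ell\,\pi_L(x)$ and the unconditional lower bound $\pi_F(x)\gg_F x/\log x$, this forces $e^{-c(\epsilon)\sqrt{\log x}}\gg_{m,\ell}1$, which is false for $Q$ large. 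Hence one of the disjointness hypotheses fails, so there is a field $K'\in\mathbb{X}_{\mathrm{bad}}(F,\epsilon)$ with $K'\subseteq K$, $[K':F]\le m!\,\ell^m$, $\Delta_{K'}\le Q^{O_{m,\ell}(1)}$, and $L\cap K'\ne F$ or $M\cap K'\ne F$.

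Finally I would count. For each such $K'$ I want only $O_{F,m,\ell}(1)$ exceptional $L$. Since $K'/F$ is Galois and $L/F$ primitive, $L\cap K'\ne F$ forces $L\subseteq K'$; and since the only subfields of $M$ over $F$ are $F$, $L$, $M$ and possible ``complements'' $E$ with $EL=M$, $E\cap L=F$, the condition $M\cap K'\ne F$ forces $L\subseteq K'$, $M\subseteq K'$, or $E\subseteq K'$. In the first two cases $L$ is one of the $O_{m,\ell}(1)$ degree-$m$ subfields of $K'$ (or of a degree-$\ell m$ subfield of $K'$). In the complement case a short additional argument — using that $M/L$ is unramified and $\ell>m$ is prime, which forces the $\ell$-part of the ramification of $E/F$ to vanish — reduces matters to an effective Chebotarev density theorem for an auxiliary extension of the \emph{fixed} field $F$, unconditionally valid for $Q$ large, and again leaves only $O_{F,m,\ell}(1)$ values of $L$ per $K'$. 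Summing over $K'$ and invoking Theorem~\ref{thm:sparse_bad}, the number of exceptions is $\ll_{F,m,\ell}Q^{c_1\epsilon(1+o(1))}(\log Q)^{O(1)}$ with $c_1\le O_m(1)\cdot\ell^m$, and the choice $\epsilon=\varepsilon/(5\ell^m m!)$ makes $c_1\epsilon<\varepsilon$, giving the desired $O_{F,m,\ell,\varepsilon}(Q^\varepsilon)$.

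The main obstacle is the numerical bookkeeping binding the two middle steps: the range of validity of Proposition~\ref{prop:avg_pit} pushes $\epsilon$ up to roughly $\varepsilon/(\ell^m m!)$, while Theorem~\ref{thm:sparse_bad} is only useful when $c_1\epsilon<\varepsilon$, and reconciling these requires a genuinely sharp bound $\Delta_{\widetilde L}\le\Delta_L^{O(m!)}$ for Galois closures — with attention to wild ramification at primes $\le m$ — and it is exactly this balance that produces the exponent $3\ell^{2m}(m!)^2$ in the statement. A secondary nuisance is the subfield/linear-disjointness analysis in the counting step: primitivity of $L/F$ does most of the work, but the ``complement'' configurations must be excluded by hand using that $\ell>m$ is prime.
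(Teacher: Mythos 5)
Your overall strategy (run the Bach-via-effective-Chebotarev argument, replace GRH by Proposition~\ref{prop:avg_pit}, count exceptions with Theorem~\ref{thm:sparse_bad}) is the same as the paper's, and your bookkeeping of degrees, discriminants, and the choice of $\epsilon$ is essentially right.  But the mechanism is genuinely different, and the difference creates a gap that your sketch does not close.

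The paper does not apply Proposition~\ref{prop:avg_pit} to $M/F$. Instead it starts from the identity $\sum_{\mathrm{N}\mathfrak{P}\le H}\chi(\mathfrak{P})=\pi_L(H)$, views the class group character $\chi$ of $L$ as a linear character of $H_M=\mathrm{Gal}(\widetilde M/L)$ via class field theory, and inducts it to $G_M=\mathrm{Gal}(\widetilde M/F)$. Lemma~\ref{lem:induction-irreducible} shows (using $\ell>m$, so $\ell\nmid|\widetilde G|$) that this induced character is a \emph{faithful irreducible} character of $G_M$ unless $M$ comes from a cyclic $M_0/F$; the character sum is then controlled directly from Definition~\ref{defn:eps_bad}, whose exceptional set is indexed by the normal closure $\widetilde M$, which contains $L$ and hence determines it up to $O_{m,\ell}(1)$ choices. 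Your route uses $\pi_M(x)=\ell\,\pi_L(x)$ and Proposition~\ref{prop:avg_pit} for $M/F$, and Proposition~\ref{prop:avg_pit} requires $M$ to be linearly disjoint from every $\epsilon$-bad field $K'\subseteq K$. This is a strictly stronger hypothesis than ``$\widetilde M$ is not $\epsilon$-bad,'' and the failure modes are harder to count.

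Concretely, the gap is in your case $M\cap K'=E$ (the ``complement,'' $[E:F]=\ell$, $E\cap L=F$, $EL=M$). In that case one checks $L\cap K'\subseteq L\cap(M\cap K')=L\cap E=F$, i.e.\ $L$ \emph{is} linearly disjoint from $K'$. So knowing $K'$ (or even knowing the degree-$\ell$ subfield $E\subseteq K'$) gives no control on $L$ whatsoever: the counting claim ``only $O_{F,m,\ell}(1)$ values of $L$ per $K'$'' has no support here, and the ``short additional argument'' is not supplied. The observation that $M/L$ unramified plus $\ell>m$ should make $E/F$ unramified is the right intuition — this is precisely how the paper handles its $M_0$ case, by noting the associated cyclic extension of $F$ is unramified and hence one of finitely many subfields of the Hilbert class field of $F$ — but it does not, by itself, bound the number of $L$ per $K'$.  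In the paper the point of that observation is to control the corresponding \emph{character sum} unconditionally (it involves a fixed field), not to count $L$'s; your write-up conflates the two.  Also note that your $E$ need not even be Galois over $F$, whereas the paper's $M_0$ is automatically cyclic over $F$ (it arises from a linear character of $G_M$), which is why the unramifiedness/class-field-theory step goes through cleanly there.

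In short: the first-order plan is right, the linear-disjointness bookkeeping for $L/F$ is right, but applying Proposition~\ref{prop:avg_pit} to the imprimitive extension $M/F$ introduces a failure mode (the complement $E\subseteq K'$, $L$ disjoint from $K'$) in which your counting breaks down, and the sketch does not repair it. Switching to the paper's character-theoretic route — induce $\chi$, invoke Lemma~\ref{lem:induction-irreducible} to get a faithful irreducible constituent, and index the exceptional set by $\widetilde M$ rather than by an arbitrary bad subfield of $K$ — is the clean fix.
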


    Under GRH, Bach's work \cite[Theorem 4]{Bach} implies that the class group $\mathrm{Cl}(L)$ of any number field $L$ is generated by primes of norm at most $12 (\log \Delta_L)^2$.  Thus, the conclusion of Theorem~\ref{thm:approximate-bach} is weaker than this both in terms of the size of the primes required and in that it only concerns the co-$\ell$ part of the class group (though of course its conclusion is unconditional).  However, we note that the $\ell$-torsion conjecture on class groups (that for a fixed $\ell \geq 2$, $|\mathrm{Cl}(L)[\ell]| \ll_{[L:\mathbb{Q}],\ell,\varepsilon} \Delta_L^{\varepsilon}$ for every $\varepsilon>0$ and every number field $L$) is equivalent to asserting that the rank of $\mathrm{Cl}(L)[\ell]$ (which equals the rank of $\mathrm{Cl}(L)/\ell \mathrm{Cl}(L)$) is $o_{[L:\mathbb{Q}],\ell}( \log \Delta_L)$.  Theorem~\ref{thm:approximate-bach} does not provide an improvement even over the trivial bound on the rank of $\mathrm{Cl}(L)/\ell \mathrm{Cl}(L)$, but we nevertheless consider it an interesting result in its own right that is reflective of what is currently possible toward this line of attack on the $\ell$-torsion conjecture.

    Finally, returning to Artin $L$-functions themselves, we also provide essentially GRH quality bounds on the values $L(1,\chi)$ for almost all Artin $L$-functions $L(s,\chi)$.

    \begin{corollary} \label{cor:artin-bound}
        Let $F$ be a number field and let $G$ be a finite group.  Let $Q \geq 1$ and $\varepsilon>0$.  Then, apart from at most $O_{F,G,\varepsilon}(Q^\varepsilon)$ exceptional fields $K$, each faithful, irreducible Artin $L$-function $L(s,\chi)$ attached to $\mathrm{Gal}(K/F)$ for a Galois $G$-extension $K$ with $\Delta_K \leq Q$ satisfies
            \[
                (\log \log Q)^{\min\{ \Re(\chi(g)) : g \in G\}}
                    \ll_{F,G,\varepsilon} L(1,\chi)
                    \ll_{F,G,\varepsilon} (\log\log Q)^{\chi(1)}.
            \]
    \end{corollary}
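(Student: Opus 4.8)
The plan is to take the exceptional set to be the set of $\epsilon$-bad Galois $G$-extensions $K/F$ with $\Delta_K\le Q$, where $\epsilon=\epsilon(Q)$ is chosen of size $\asymp_{F,G}\frac{\log\log Q}{\log Q}$ (using the freedom, highlighted after Theorem~\ref{thm:sparse_bad}, to let $\epsilon$ vary with $Q$). Since a $G$-extension has $[K:F]=|G|$, Theorem~\ref{thm:sparse_bad} with $d=|G|$ bounds this set by $Q^{\epsilon(1+\delta)}(\log Q)^{C(F,|G|)}$, which for this choice of $\epsilon$ is $\ll_{F,G,\varepsilon}Q^{\varepsilon}$ (in fact $(\log Q)^{O_{F,G}(1)}$). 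If $K$ lies outside this set then $K\notin\mathbb{X}_{\textup{bad}}(F,\epsilon)$, so by Definition~\ref{defn:eps_bad} every faithful irreducible character $\chi$ of $\mathrm{Gal}(K/F)$ obeys
\[
\Big|\sum_{N\mathfrak{p}\le t}\chi(\mathfrak{p})\Big|\le\frac{t}{\log t}\exp\!\big(-c(\epsilon)\sqrt{\log t}\,\big)\qquad\text{for all }t\ge X_0:=(\log\Delta_K)^{2+|G|/(2\epsilon)}.
\]
Only finitely many $K$ have $\Delta_K$ below any fixed bound, and for those $L(1,\chi)=O_{F,G}(1)$, so the claimed estimates hold trivially once $Q$ is large; hence I may assume $\Delta_K$ exceeds a suitable constant $D_0=D_0(F,G,\varepsilon)$.

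Next I would relate $\log|L(1,\chi)|$ to a prime sum over $F$. For $\Re(s)>1$ the Euler product gives $\log|L(s,\chi)|=\sum_{\mathfrak{p}\,\mathrm{unram}}\sum_{k\ge1}\frac{\Re\chi(\mathfrak{p}^{k})}{k\,N\mathfrak{p}^{ks}}+\sum_{\mathfrak{p}\,\mathrm{ram}}\log|L_{\mathfrak{p}}(s,\chi)|$, and letting $s\to1^{+}$ is legitimate because $L(1,\chi)$ is finite and non-zero ($\chi$ being a nontrivial irreducible character: write $\chi$ through one-dimensional inductions by Brauer; the contributions with trivial inducing character cancel at $s=1$ since $\langle\chi,1\rangle=0$, and nonzero Hecke $L$-values remain). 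The $k\ge2$ terms contribute $O_{F,G}(1)$, and since $\dim V^{I_{\mathfrak{p}}}\le\chi(1)$ at a ramified $\mathfrak{p}$, the ramified block contributes $O_{F,G}\!\big(\sum_{\mathfrak{p}\mid\mathfrak{d}_{K/F}}N\mathfrak{p}^{-1}\big)=O_{F,G}(\log\log\log\Delta_K)$, using $\prod_{\mathfrak{p}\mid\mathfrak{d}_{K/F}}N\mathfrak{p}\le N\mathfrak{d}_{K/F}\le\Delta_K$ and the prime ideal theorem for $F$; this is of lower order than the main term. Thus $\log|L(1,\chi)|=\sum_{\mathfrak{p}\,\mathrm{unram}}\frac{\Re\chi(\mathfrak{p})}{N\mathfrak{p}}+O_{F,G}(\log\log\log\Delta_K)$.

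For the remaining sum I would split at $X_0$. For $N\mathfrak{p}\le X_0$, set $\mu=\min_{g\in G}\Re\chi(g)$; then $-\chi(1)\le\mu<0$ (since $\sum_{g}\Re\chi(g)=0$ and $\Re\chi(1)=\chi(1)>0$), so Mertens' theorem for $F$ gives $\sum_{N\mathfrak{p}\le X_0}N\mathfrak{p}^{-1}=\log\log X_0+O_F(1)$ and the trivial bounds $\mu\le\Re\chi(\mathfrak{p})\le\chi(1)$ yield
\[
\mu\log\log X_0-O_{F,G}(1)\ \le\ \sum_{N\mathfrak{p}\le X_0,\ \mathrm{unram}}\frac{\Re\chi(\mathfrak{p})}{N\mathfrak{p}}\ \le\ \chi(1)\log\log X_0+O_{F,G}(1).
\]
For $N\mathfrak{p}>X_0$, write $A(t)=\sum_{N\mathfrak{p}\le t}\chi(\mathfrak{p})$; the displayed bound gives $|A(t)|\le\frac{t}{\log t}e^{-c(\epsilon)\sqrt{\log t}}$ for $t\ge X_0$, so partial summation together with the substitution $t=e^{u}$, $u=v^{2}$ gives $\big|\sum_{N\mathfrak{p}>X_0}\chi(\mathfrak{p})N\mathfrak{p}^{-1}\big|\le\frac{|A(X_0)|}{X_0}+\int_{X_0}^{\infty}\frac{|A(t)|}{t^{2}}\,dt\ll\frac{1}{c(\epsilon)\sqrt{\log X_0}}$. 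For this choice of $\epsilon$ one checks $c(\epsilon)\sqrt{\log X_0}\gg_{G}\sqrt{\log\log\Delta_K}$, which is bounded below (and tends to $\infty$ with $\Delta_K$), so this tail is $O_{F,G}(1)$; likewise $\log\log X_0\le\log\log Q+O_{G}(1)$. Combining the three displays, $\mu\log\log Q-O_{F,G}(\log\log\log Q)\le\log|L(1,\chi)|\le\chi(1)\log\log Q+O_{F,G}(\log\log\log Q)$, and exponentiating gives the asserted bounds.

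I expect the delicate points to be twofold. The first is the joint calibration of $\epsilon$ and $X_0$: $\epsilon$ must be small enough that $\mathbb{X}_{\textup{bad}}(F,\epsilon)$ is suitably sparse, yet large enough — and $X_0$ must be a power of $\Delta_K$ rather than of $\log\Delta_K$, which is exactly why $\epsilon$ has to vary with $Q$ — so that $\exp(-c(\epsilon)\sqrt{\log X_0})$ remains useful after partial summation and so that the Mertens main term reads $\chi(1)\log\log Q$. The second, and more subtle, concerns the ramified Euler factors: their contribution is of lower order, but pinning the exponent down to exactly $\chi(1)$ (resp.\ $\min_{g}\Re\chi(g)$), rather than $\chi(1)+o(1)$, calls for a more careful analysis than the crude bound above, since a $G$-extension may be ramified at every prime of $F$ of norm up to roughly $\log\Delta_K$.
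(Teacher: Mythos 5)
There is a genuine gap, and it comes from exactly the ``delicate point'' you flag but resolve in the wrong direction. You set $\epsilon \asymp \frac{\log\log Q}{\log Q}$, which makes the Chebotarev cutoff $X_0 = (\log\Delta_K)^{2 + |G|/(2\epsilon)}$ roughly a power of $\Delta_K$ (hence of $Q$). That forces $\log\log X_0 = \log\log Q + O(1)$, so your Mertens main term is $\chi(1)\log\log Q$, and exponentiating gives $L(1,\chi) \ll (\log Q)^{\chi(1)}$ --- not the claimed $(\log\log Q)^{\chi(1)}$, which is smaller by a full logarithm. Likewise your lower bound exponentiates to $(\log Q)^{\mu}$ with $\mu < 0$, which is far smaller than the required $(\log\log Q)^{\mu}$. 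The bound you derive is strictly weaker than the statement.

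The fix is the opposite of your calibration: keep $\epsilon$ \emph{fixed}, say $\epsilon = \varepsilon/2$. Theorem~\ref{thm:sparse_bad} then gives $|\mathbb{X}_{\mathrm{bad}}(F,\epsilon)| \leq Q^{\epsilon(1+o(1))}(\log Q)^{O(1)} \ll_{F,G,\varepsilon} Q^{\varepsilon}$ directly --- one does not need the polylogarithmic sharpening at all. With $\epsilon$ fixed, the cutoff $(\log \Delta_K)^{c_\epsilon}$ is a power of $\log\Delta_K$, so Mertens gives $\sum_{\mathrm{N}\mfp \leq (\log Q)^{c_\epsilon}} 1/\mathrm{N}\mfp = \log\log\log Q + O_\epsilon(1)$, and exponentiating $\chi(1)\log\log\log Q$ yields $(\log\log Q)^{\chi(1)}$ as desired. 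The tail over $\mathrm{N}\mfp > X_0$ is still $O(1/(c(\epsilon)\sqrt{\log X_0})) = O(1/\sqrt{\log\log\Delta_K}) = O(1)$ by your partial summation, so nothing is lost there. This is exactly what the paper's Lemma~\ref{lem:l-one-approx} does. Regarding your second worry about ramified primes: instead of segregating them and paying an extra $\log\log\log\Delta_K$, the cleaner route (taken in the paper) is to keep them in the Euler product sum and observe that the local trace $\chi(\sigma_\mfp) := \mathrm{tr}(\rho(\sigma_\mfp)|V^{I_\mfp})$ is the average of $\chi$ over the coset $\sigma_\mfp I_\mfp$, hence satisfies $\Re\chi(\sigma_\mfp) \leq \chi(1)$ at every prime (giving the upper bound) and $\Re\chi(\sigma_\mfp) \geq \min_g\Re\chi(g)$ at every prime (so the ramified part is nonnegative after subtracting $a(\chi)$, giving the lower bound).
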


    Bounds of the same quality (though with sharper implied constants) follow from GRH, and in many cases are known to be close to optimal.  See, for example, \cite[Theorem 2]{Duke03}.

\subsection{An overview of our methods}
\subsubsection{Faithful Artin induction}
 As in the partial result \cite[Theorem 5.6]{LOTZ}, our proof of hypothesis $T(G, N)$ and the related stronger hypothesis $T_0(G, N)$ introduced below in Definition \ref{defn:T0GN} is by induction on the order of $G$. This approach lets us assume that that hypotheses $T_0(H, N \cap H)$ hold for all proper subgroups $H$ of $G$. The proof of hypothesis $T_0(G, N)$ then reduces to proving that the elements in a given coset of $N$ are connected by chains of proper subgroups of $G$; see Proposition \ref{prop:TGN_induction} for details.

In our proof of Proposition \ref{prop:TGN_induction} in Section \ref{ssec:TGN_induction}, we explicitly construct such chains of proper subgroups. Our construction is $p$-local for some prime $p$ dividing $[G: N]$, with the involved groups being normalizers of $p$-subgroups.

The proof of this hypothesis establishes Theorem \ref{thm:faithful} in the case that $G$ has a unique minimal normal subgroup. More generally, we establish this theorem by considering the socle of $G$, which is the subgroup of $G$ generated by its minimal normal subgroups. The socle is known to be a direct product of characteristically simple groups, and we take advantage of this decomposition to prove the general case of Theorem \ref{thm:faithful}.

\subsubsection{The Chebotarev density theorem in families}
Once Theorem \ref{thm:faithful} is proved, we may turn to number theory. Our first result here is Theorem \ref{thm:bilinear_basic}, which is a bilinear character sum estimate for the coefficients of Artin $L$-functions corresponding to direct sums of monomial characters. The proof of this result uses standard techniques; we first prove a smoothed character sum estimate for such coefficients using the convexity bounds for these $L$-functions, then derive a large sieve from this in the typical way.

The novelty of our approach instead comes in Theorem \ref{thm:Holder}, where we use this large sieve to prove bilinear estimates for shorter character sums using H\"{o}lder's inequality. This technique originates in work of Friedlander and Iwaniec \cite[(21.9)]{Fried98}, but it has not previously been used in this context. The H\"{o}lder trick sidesteps the consideration of zero free regions in families of \cite{PTBW} and \cite{LOTZ} and leads to results of a similar quality to \cite{LOTZ} in greater generality. The disadvantage of this approach is that we are left with no results about zero free regions in families.

With an eye to future applications where such concreteness may matter, we have made an effort to keep the constants appearing in this paper explicit. At the same time, we have made no effort to optimize these constants.

\subsubsection{Layout}
In Section \ref{sec:TGN}, we define the hypothesis $T_0(G, N)$ and show that it holds for all $(G, N)$. We then use this fact to prove Theorem \ref{thm:faithful} in Section \ref{sec:faithful}.

In Section \ref{sec:Lbound}, we prove a smooth character sum estimate for certain $L$-functions. We use this to prove bilinear character sum estimates in Section \ref{sec:Bibound}. In Section \ref{sec:averaged}, we combine these estimates with Theorem \ref{thm:faithful} and the unconditional Chebotarev density theorem to prove Theorem \ref{thm:sparse_bad}. Finally, in Section \ref{sec:arithmetic}, we give some arithmetic applications of our work.

\section*{Acknowledgements}
    The authors would like to thank Jesse Thorner and Asif Zaman for useful conversations.

    RJLO was supported by NSF grant DMS 2200760 and by a Simons Foundation Fellowship in Mathematics.  This research was conducted during the period that AS served as a Clay Research Fellow.

\section{The hypothesis $T_0(G, N)$}
\label{sec:TGN}
Rather than working with irreducible characters as in the statement of Theorem \ref{thm:faithful}, it is convenient to focus on class functions, leading to the following definition.
\begin{defn}
\label{defn:RI}
Choose a finite group $G$ and a set $\{N_1, \dots, N_k\}$ of normal subgroups of $G$. We take
\[  \mathcal{R}(G;\{N_1,\dots,N_k\}) \]
to be the $\C$-vector space of complex class functions $f: G \to \C$ of $G$ whose push forward to any $G/N_i$ is $0$. That is, a class function $f: G \to \C$ lies in this space if and only if
\[\sum_{g \in \sigma N_i} f(g) = 0 \text{ for all $\sigma \in G$ and each $1 \leq i \leq k$}.\]
We take $\mathcal{I}(G; \{N_1, \dots, N_k\})$ to be the subspace of this vector space spanned by the characters of the form $\Ind_H^G \psi$, where $H$ ranges over the nilpotent subgroups of $G$ and $\psi$ ranges over the linear characters of $H$ satisfying
\[H \cap N_i \not \le \ker \psi \quad\text{for } i\le k.\]
To simplify notation, we will alternatively write these spaces in the form $ \mathcal{R}(G;\,N_1,\dots,N_k)$ and $ \mathcal{I}(G;\,N_1,\dots,N_k)$.
\end{defn}

\begin{defn}
\label{defn:T0GN}
Given a finite group $G$ and a normal subgroup $N$ of $G$, we  write that \emph{Hypothesis $T_0(G, N)$ holds} if
\[\mathcal{R}(G; N) = \mathcal{I}(G; N).\]
\end{defn}
This is stronger than the hypothesis $T(G, N)$ because of our restriction to nilpotent subgroups in Definition \ref{defn:RI}. The aim of this section is to prove the following:
\begin{theorem}
\label{thm:T(G,N)}
Hypothesis $T_0(G, N)$ holds for all finite groups $G$ and all normal subgroups $N$ of $G$.
\end{theorem}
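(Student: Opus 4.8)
The plan is to induct on $|G|$, so that in the inductive step we may assume $T_0(H, N \cap H)$ holds for every proper subgroup $H < G$; the base case and the degenerate case $N = 1$ (where $\mathcal R(G;N) = \{0\}$) are immediate. The inclusion $\mathcal I(G;N) \subseteq \mathcal R(G;N)$ is elementary: by Frobenius reciprocity, if $\psi$ is a linear character of $H \le G$ with $H \cap N \not\le \ker \psi$ and $\theta$ is any character of $G$ factoring through $G/N$, then $\langle \Ind_H^G \psi, \theta\rangle = \langle \psi, \res_H \theta\rangle = 0$, because $\res_H \theta$ is trivial on $H \cap N$ while $\psi$ is not; thus every spanning character of $\mathcal I(G;N)$ has vanishing pushforward to $G/N$. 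For the reverse inclusion, transitivity of induction gives $\Ind_H^G \mathcal I(H; N \cap H) \subseteq \mathcal I(G;N)$ for all $H \le G$, so the inductive hypothesis yields
\[
\mathcal I(G;N) \ =\ \sum_{H < G} \Ind_H^G\, \mathcal R(H; N \cap H) \ \ (+ \text{ the linear characters of } G \text{ nontrivial on } N \text{, if } G \text{ is nilpotent}).
\]
Passing to orthogonal complements for the usual inner product on class functions, it suffices to show that any class function $f$ on $G$ with $f(hn) = f(h)$ whenever $H < G$, $h \in H$, and $n \in N \cap H$, is invariant under translation by all of $N$, and hence is inflated from $G/N$; one then uses that $f \in \mathcal R(G;N)$ has vanishing pushforward. (When $G$ is nilpotent there is the extra constraint that $f$ also be orthogonal to the linear characters of $G$ nontrivial on $N$, which only helps, but must be carried along.)

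Since $f$ is a class function, the displayed condition says exactly that $f$ is constant on the set of conjugacy classes reachable from a given one by the two moves ``replace $x$ by a $G$-conjugate'' and ``replace $x$ by $xn$, where $x$ and $n$ lie in a common proper subgroup of $G$ and $n \in N$''. Both moves fix the $G/N$-conjugacy class of the image of $x$, so $f$ being inflated from $G/N$ is equivalent to the converse statement that any two elements of a fixed coset $\sigma N$ are linked by a chain of such moves. This is the reduction recorded in Proposition~\ref{prop:TGN_induction}. If $\sigma$ and $\sigma n$ satisfy $\langle \sigma, n\rangle \ne G$, a one-step chain suffices, so the whole difficulty is concentrated in the case $\langle \sigma, n\rangle = G$, in which $\sigma N$ generates the cyclic group $G/N$.

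The heart of the argument is therefore the construction of connecting chains in this remaining case, and here we work $p$-locally for a prime $p \mid [G:N]$. Such a $p$ exists whenever $N \ne G$; the case $N = G$ is essentially trivial, since then every element of a noncyclic $G$ shares a proper cyclic subgroup with the identity, while a cyclic $G = N$ is handled directly. Starting from a Sylow $p$-subgroup of $G$ and invoking Frattini-type arguments, we move $\sigma$, along its coset and up to $G$-conjugacy, into a proper subgroup that is the normalizer $N_G(P)$ of a nontrivial $p$-subgroup $P$ of $G$, and then use the additional flexibility available inside such a normalizer (and chains running through several of them) to reach $\sigma n$. The main obstacle is to carry this out while guaranteeing that every normalizer that appears is a \emph{proper} subgroup of $G$ and that each step of the chain is realized either within a single coset of $N$ or by conjugation, so that it is licit for the class function $f$; this is exactly where the hypothesis $p \mid [G:N]$ and a careful analysis of the $p$-local subgroup structure of $G$ relative to $N$ enter. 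Once the chains are produced, $f$ is inflated from $G/N$, hence $\mathcal R(G;N) = \mathcal I(G;N)$, and the induction closes.
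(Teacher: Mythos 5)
Your outline tracks the paper's own strategy closely: induct on $|G|$, reformulate $\mathcal{R}(G;N) = \mathcal{I}(G;N)$ via the orthogonal complement, reduce (using Frobenius reciprocity and transitivity of induction, together with the inductive hypothesis) to the assertion that a $G$-class function that is constant on cosets of $N\cap H$ inside every proper subgroup $H < G$ must be constant on each full coset of $N$, and then build the connecting chains $p$-locally for some prime $p\mid[G:N]$. This is essentially the content of Lemma~\ref{lem:TGN_dual}, Lemma~\ref{lem:subgroup_game}, and Proposition~\ref{prop:TGN_induction}, and your closing paragraph gestures at the argument of Section~\ref{ssec:TGN_induction}.

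The gap is that the $p$-local chain construction --- the only step with real content --- is not actually carried out; your paragraph names the difficulty (``guaranteeing that every normalizer that appears is a proper subgroup of $G$'') but does not resolve it, and the resolution is not a routine matter. Two ingredients the paper needs are absent from your sketch. First, a reduction to $Z(G)=1$ is required before the chain argument can even be run; the paper's deduction that one may assume trivial center splits on whether $N\cap Z(G)$ is trivial or all of $N$ and appeals to several auxiliary results of \cite{LOTZ}. Second, and more centrally, the Fitting subgroup $F(G)$: Lemma~\ref{lem:TGN_edge} reduces to the situation where the chosen prime $p$ divides the orders of $x$ and $y$ in $G/F(G)$, which is exactly what makes it legitimate to take $P_0(x)=\langle x\rangle_p\cdot F(G)_p$ and iterate $P_{i+1}(x)$ a Sylow $p$-subgroup of $G_i(x)=N_G(P_i(x))$: since $P_i(x)$ strictly contains $F(G)_p$ it cannot be normal in $G$, so every $G_i(x)$ is proper. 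That properness guarantee is the obstruction you flag but do not overcome, and it does not follow from ``a Sylow $p$-subgroup plus Frattini.'' One also needs Lemma~\ref{lem:sylows-linked} to ensure each link of the chain can be realized inside the fixed coset of $N$. Until these are supplied, the proposal is a correct roadmap of the paper's approach rather than a proof.
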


\subsection{First reductions for Theorem \ref{thm:T(G,N)}}
\label{ssec:first_reductions}
Given two class functions $f_1, f_2$ on $G$, we  will define an inner product $\langle f_1, f_2 \rangle$ using the standard formula 
\[\langle f_1, f_2 \rangle = \frac{1}{|G|} \sum_{\sigma \in G} f_1(\sigma) \overline{f_2(\sigma)}.\]
As in \cite{LOTZ}, we will reframe hypothesis $T_0(G, N)$ in terms of the orthogonal complement of $\mathcal{I}(G; N)$ with respect to this product.

\begin{lemma}
\label{lem:TGN_dual}
Hypothesis $T_0(G, N)$ holds if and only if every class function $f: G \to \C$ in $\mathcal{I}(G; N)^{\perp}$ is constant on each coset of $N$.
\end{lemma}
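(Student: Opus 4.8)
The plan is to prove the contrapositive-friendly equivalence via the standard ``annihilator of the annihilator'' principle for finite-dimensional inner product spaces, together with a good description of the space $\mathcal{R}(G;N)$. First I would observe that $\mathcal{R}(G;N)$ is exactly the orthogonal complement, inside the space of class functions, of the subspace $\mathcal{C}(G;N)$ spanned by the indicator functions $\mathbf{1}_{\sigma N}$ of cosets of $N$; equivalently, $\mathcal{C}(G;N)$ is the space of class functions that are constant on each coset of $N$, i.e. the pullbacks of class functions on $G/N$. Indeed, the defining relations $\sum_{g\in\sigma N}f(g)=0$ say precisely that $f$ is orthogonal to each $\mathbf{1}_{\sigma N}$, and these indicators (for $\sigma$ running over coset representatives) span the pullback space. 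Hence $\mathcal{R}(G;N)=\mathcal{C}(G;N)^{\perp}$ and, by nondegeneracy of $\langle\cdot,\cdot\rangle$ on the finite-dimensional space of class functions, $\mathcal{C}(G;N)=\mathcal{R}(G;N)^{\perp}$.

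Next I would check the inclusion $\mathcal{I}(G;N)\subseteq\mathcal{R}(G;N)$: for a nilpotent subgroup $H$ and a linear character $\psi$ of $H$ with $H\cap N\not\le\ker\psi$, the pushforward of $\Ind_H^G\psi$ to $G/N$ is a sum of induced characters of the form $\Ind_{HN/N}^{G/N}(\overline{\psi})$ over the appropriate quotient, and since $\psi$ restricted to $H\cap N$ is nontrivial, no $G/N$-conjugate of $\psi$ descends to $HN/N$ — so this pushforward vanishes. (This is just the observation accompanying \eqref{eq:faithful_check}, now at the level of class functions rather than irreducible characters.) Granting this, $\mathcal{I}(G;N)\subseteq\mathcal{R}(G;N)$ always, so Hypothesis $T_0(G,N)$ — the equality $\mathcal{R}(G;N)=\mathcal{I}(G;N)$ — is equivalent to $\mathcal{I}(G;N)^{\perp}\subseteq\mathcal{R}(G;N)^{\perp}$, where both orthogonal complements are taken inside the space of all class functions. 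By the computation of the previous paragraph, $\mathcal{R}(G;N)^{\perp}=\mathcal{C}(G;N)$, the space of class functions constant on cosets of $N$. Therefore $T_0(G,N)$ holds if and only if every $f\in\mathcal{I}(G;N)^{\perp}$ lies in $\mathcal{C}(G;N)$, which is exactly the assertion that every such $f$ is constant on each coset of $N$.

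The only genuine content beyond bookkeeping is the verification that $\mathcal{I}(G;N)\subseteq\mathcal{R}(G;N)$, i.e. that the monomial characters allowed in Definition \ref{defn:RI} really do push forward to $0$ on $G/N$; everything else is the formal fact that for subspaces $U\subseteq V$ of a finite-dimensional inner product space one has $U=V$ iff $U^{\perp}\subseteq V^{\perp}$, applied with $V=\mathcal{R}(G;N)$ and its complement re-expressed via $\mathcal{C}(G;N)$. I expect the pushforward computation to be the main (though still routine) obstacle, since one must be careful about how $\Ind_H^G\psi$ decomposes after projection to the quotient and confirm that the condition $H\cap N\not\le\ker\psi$ is precisely what forces every constituent to die; the Mackey formula or a direct character-sum computation of $\sum_{g\in\sigma N}(\Ind_H^G\psi)(g)$ handles this cleanly.
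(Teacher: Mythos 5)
Your argument is correct and proceeds by essentially the same duality reformulation as the paper's proof: both reduce $T_0(G,N)$ to the inclusion $\mathcal{I}(G;N)^{\perp}\subseteq\mathcal{R}(G;N)^{\perp}$ and then identify $\mathcal{R}(G;N)^{\perp}$ with the class functions pulled back from $G/N$.

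One imprecision worth flagging. The indicator functions $\mathbf{1}_{\sigma N}$ are not class functions when $G/N$ is nonabelian (the preimage of a noncentral element of $G/N$ under $G\to G/N$ is not a union of $G$-conjugacy classes), so they do not lie in, let alone span, the pullback space $\mathcal{C}(G;N)$ inside the space of class functions. What makes your first step go through is that for a class function $f$ one has $\langle f,\mathbf{1}_{\sigma N}\rangle=\langle f,\widetilde{\mathbf{1}_{\sigma N}}\rangle$, where $\widetilde{g}(\sigma)=\frac{1}{|G|}\sum_{\tau\in G}g(\tau\sigma\tau^{-1})$ denotes the class average, and it is these averages $\widetilde{\mathbf{1}_{\sigma N}}$ (not the raw indicators) that span $\mathcal{C}(G;N)$; this is exactly the $\widetilde{g}$ device the paper deploys. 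With that patch, your identification $\mathcal{R}(G;N)=\mathcal{C}(G;N)^{\perp}$, and hence $\mathcal{R}(G;N)^{\perp}=\mathcal{C}(G;N)$ by double complement, is correct and the rest of your argument follows. Finally, your pushforward verification of $\mathcal{I}(G;N)\subseteq\mathcal{R}(G;N)$ is a reasonable sanity check, but note that the paper already builds this containment into Definition~\ref{defn:RI}, where $\mathcal{I}$ is declared to be a subspace of $\mathcal{R}$, so it is not logically needed here.
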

\begin{proof}
Since the inner product on class functions is a perfect pairing, hypothesis $T_0(G, N)$ is equivalent to the claim
\[\mathcal{I}(G; N)^{\perp} \subseteq \mathcal{R}(G; N)^{\perp}.\]
But the condition for a class function $f$ to lie in $\mathcal{R}(G; N)$ can be expressed in the form 
\begin{equation}
\label{eq:RGNperp}
\frac{1}{|G|}\sum_{\sigma \in G} f(\sigma) \overline{g(\sigma)} = 0 \quad\text{for all}\quad g: G/N \to \C.
\end{equation}
If we take $\widetilde{g}$ to be the class function on $G$ given by the formula
\[\widetilde{g}(\sigma) = \frac{1}{|G|}\sum_{\tau \in G} g(\tau\sigma \tau^{-1}), \]
we see that the left hand side of \eqref{eq:RGNperp} equals $\langle f, \widetilde{g}\rangle$. So \eqref{eq:RGNperp} gives that $\mathcal{R}(G; N)^{\perp}$ is the set of class functions on $G$ coming from class functions on $G/N$. This gives the lemma.
\end{proof}

Our proof of Theorem \ref{thm:T(G,N)} is by induction, with the induction step handled by the following proposition. This reduction can be seen in the proof of \cite[Theorem 5.6]{LOTZ}.

\begin{proposition}
\label{prop:TGN_induction}
Let $G$ be a finite group with trivial center, and let $N$ be a normal subgroup such that $G/N$ is cyclic and such that
\[T_0(H, H \cap N)\quad\text{holds for all proper subgroups} \quad H < G. \]
Then, given any class function $f$ of in $\mathcal{I}(G; N)^{\perp}$ and any two elements $x, y$ of $G$ in the same coset of $N$, we have $f(x) = f(y)$.
\end{proposition}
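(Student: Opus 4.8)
The plan is to exploit the hypotheses $T_0(H, H\cap N)$ for proper subgroups $H < G$ to control $f$ on individual subgroups, and then to connect the elements $x$ and $y$ by a chain of such subgroups. Concretely, fix $f \in \mathcal{I}(G;N)^\perp$. The first observation is that for any proper subgroup $H < G$, the restriction of $f$ to $H$ lies in $\mathcal{I}(H; H\cap N)^\perp$ (computed with respect to the inner product on $H$): this is because Frobenius reciprocity identifies $\langle f|_H, \Ind_K^H\psi \rangle_H$ with $\langle f, \Ind_K^G \psi\rangle_G$ for $K \le H$ nilpotent with $K \cap (H\cap N) \not\le \ker\psi$, and such an induced character $\Ind_K^G\psi$ is one of the generators of $\mathcal{I}(G;N)$. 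Hence by $T_0(H, H\cap N)$ and Lemma \ref{lem:TGN_dual}, $f|_H$ is constant on each coset of $H\cap N$ in $H$. In particular, if two elements $u, v$ of the fixed coset $xN$ both lie in a common proper subgroup $H$ and additionally satisfy $u^{-1}v \in H \cap N$ — which holds automatically since $u^{-1}v \in N$ and $u,v \in H$ — then $f(u) = f(v)$.

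It therefore suffices to show that $x$ and $y$ can be joined by a chain $x = g_0, g_1, \dots, g_m = y$ of elements of the coset $xN$ such that each consecutive pair $g_i, g_{i+1}$ lies in a common proper subgroup of $G$. This is the purely group-theoretic heart of the argument, and I expect it to be the main obstacle. The setup is favorable: $G$ has trivial center and $G/N$ is cyclic, so $N$ is nontrivial (else $G = G/N$ is cyclic, contradicting trivial center unless $G$ is trivial, which is excluded) and we have substantial structure to work with. The strategy, as the overview in the paper indicates, is to argue $p$-locally: pick a prime $p \mid [G:N]$, and build the chain out of normalizers of nontrivial $p$-subgroups, each of which is a proper subgroup of $G$ because $G$ has trivial center (so $G$ itself is not the normalizer of any nontrivial $p$-subgroup unless that subgroup is normal — and one must rule this out, or handle the case where it occurs separately using that $G/N$ is cyclic). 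The idea is that if $g \in xN$ and $P$ is a Sylow $p$-subgroup of $\langle g, N\rangle$ or of some relevant subgroup, then conjugating $g$ by elements of $N$ moves it within its coset while Sylow's theorem and the structure of $N$-conjugacy let us pass between different normalizers; each individual step stays inside a proper subgroup.

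The execution will require care in several places. First, one must verify that the relevant $p$-subgroups are nontrivial and that their normalizers are proper — here trivial center is essential, and one should also confront the possibility that $N$ has a characteristic $p$-subgroup normal in $G$, using cyclicity of $G/N$ to reduce or bypass that obstruction. Second, the chain must stay inside the single coset $xN$; since conjugation by $N$ preserves cosets of $N$ (as $N$ is normal, $n^{-1}gn \in gN$ for $g \in G$, $n\in N$), moves of the form $g \mapsto n^{-1}gn$ are safe, and one builds the chain by alternating such conjugations with "stay put inside a common proper subgroup" steps. Third, one needs a connectivity input: that any two Sylow $p$-subgroups of an appropriate subgroup, or any two elements lying over the same generator of $G/N$, are linked through this process — this is where Sylow conjugacy and the hypothesis that $G/N$ is cyclic (so that $\langle g\rangle N = \langle h \rangle N = G$ for any $g,h \in xN$ generating $G/N$, forcing these to be large subgroups) combine. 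I would organize the final write-up as: (1) the reduction to chains via Frobenius reciprocity and Lemma \ref{lem:TGN_dual}; (2) the choice of $p$ and the properness of normalizers of nontrivial $p$-subgroups; (3) the explicit construction of the chain connecting $x$ to $y$; (4) a short verification that every step is legitimate.
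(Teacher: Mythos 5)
Your first reduction is correct and matches the paper's Lemma~\ref{lem:subgroup_game}: the Frobenius-reciprocity computation showing $\langle f|_H, \Ind_K^H\psi\rangle_H = \langle f, \Ind_K^G\psi\rangle_G$ is exactly right (noting $K\cap N = K\cap(H\cap N)$ for $K\le H$), and combined with $T_0(H,H\cap N)$ and Lemma~\ref{lem:TGN_dual} it gives that $f$ is constant on $H$-cosets of $H\cap N$ for every proper $H<G$. The reduction to connecting $x,y \in xN$ by a chain through proper subgroups is likewise the right frame.

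However, the $p$-local chain construction is left as a sketch, and the sketch as stated has two genuine gaps that the paper's proof closes with specific devices you have not identified. First, properness of normalizers: you write that the trivial center ensures $N_G(P)$ is proper ``unless $P$ is normal — and one must rule this out,'' but trivial center does not by itself rule this out, and the cyclicity of $G/N$ does not give a ready handle on it either. The paper's mechanism is different: it forms the $p$-subgroups as $P_i(x) \supsetneq O_p(G) = F(G)_p$ (starting from $P_0(x) = \langle x\rangle_p\cdot F(G)_p$), so that no $P_i(x)$ can be normal because $O_p(G)$ is the unique maximal normal $p$-subgroup. The Fitting subgroup is essential here and does not appear in your proposal. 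Second, the $p$-local argument only gets off the ground if the image of $x$ in $G/F(G)$ has order divisible by $p$ — otherwise $\langle x\rangle_p \subseteq F(G)_p$ and $P_0(x) = F(G)_p$ is normal, collapsing the construction. The paper spends an entire technical lemma (Lemma~\ref{lem:TGN_edge}) reducing to this favorable case, using the trivial-center hypothesis in a crucial way (to show $G/F(G)$ is not a $p$-group, via the action on a central Sylow $p$-subgroup of $F(G)$); this reduction is absent from your outline. Finally, you would still need an argument that each normalizer $N_G(P_i(x))$ actually meets $xN$, which the paper obtains via a Frattini-argument lemma (Lemma~\ref{lem:sylows-linked}); ``Sylow conjugacy'' gestures at this but does not supply it. In short, the architecture you describe is the one the paper uses, but the steps you flag as requiring care are precisely where the proof's real content lies, and none of the three specific mechanisms needed to make them work appear in your proposal.
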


\begin{proof}[Proof that Proposition \ref{prop:TGN_induction} implies Theorem \ref{thm:T(G,N)}]
Suppose hypothesis $T_0(G, N)$ did not hold for some $G$  and $N$, and choose $(G, N)$ with $G$ of minimal order and, given $G$, with $N$ of minimal order so this hypothesis is not satisfied. Following \cite[Lemma 5.7(i)]{LOTZ}, we see that we may assume that $G/N$ is cyclic.

Suppose first that $Z(G)$ is nontrivial. By observing that nilpotent subgroups of $G/Z(G)$ have nilpotent preimage in $G$, the argument for \cite[Lemma 5.7 (ii)]{LOTZ} shows that 
\[T_0(G/(N \cap Z(G)), N/(N \cap Z(G))\quad\text{and}\quad T_0(G, N \cap Z(G))\]
together imply $T_0(G, N)$. By the induction hypothesis, we must either have $N \cap Z(G) = 1$ or $N \cap Z(G) = N$. We can rule out the former case using the argument of \cite[Theorem 3.7]{LOTZ}.

In the latter case, $G$ is a cyclic extension of a central subgroup and is hence nilpotent, so $T_0(G, N)$ is equivalent to $T(G, N)$, and we reach a contradiction by \cite[Theorem 5.6]{LOTZ}. So we must have $Z(G) = 1$.

By Lemma \ref{lem:TGN_dual}, there are some $f$ in $\mathcal{I}(G; N)^{\perp}$ and some $x, y \in G$ in the same coset of $N$ such that 
\[f(x) \ne f(y).\]
At the same time, by the minimality of the order of $G$, we know that $T_0(H, H \cap N)$ holds for every proper subgroup $H$ of $G$, so Proposition \ref{prop:TGN_induction} gives
\[f(x) =f(y).\]
This is a contradiction, so $T_0(G, N)$ holds for all $(G, N)$.
\end{proof}

The power of reframing Theorem \ref{thm:T(G,N)} in this way comes from the following lemma.

\begin{lemma}
\label{lem:subgroup_game}
Suppose $(G, N)$ satisfies the hypotheses of Proposition \ref{prop:TGN_induction}. Then, for any proper subgroup $H$ of $G$, any class function $f$ in $\mathcal{I}(G; N)^{\perp}$, and any $x, y \in H$ in the same coset of $H \cap N$, we have
\[f(x) = f(y).\]
\end{lemma}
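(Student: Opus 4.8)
\textbf{Proof strategy for Lemma \ref{lem:subgroup_game}.}

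The plan is to deduce this from the hypothesis $T_0(H, H\cap N)$, which we are granted for every proper subgroup $H<G$. By Lemma \ref{lem:TGN_dual}, $T_0(H, H\cap N)$ is equivalent to the statement that every class function on $H$ lying in $\mathcal{I}(H; H\cap N)^\perp$ (where the orthogonal complement is taken with respect to the inner product on class functions of $H$) is constant on each coset of $H\cap N$. Our given $f$ lives on $G$, but we can restrict it: let $f|_H$ denote the restriction of $f$ to $H$, which is a class function on $H$. If I can show $f|_H \in \mathcal{I}(H; H\cap N)^\perp$, then $T_0(H, H\cap N)$ immediately gives $f(x)=f|_H(x)=f|_H(y)=f(y)$ for $x,y\in H$ in the same coset of $H\cap N$, which is exactly the claim.

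So the real content is the orthogonality statement: $\langle f|_H, \theta\rangle_H = 0$ for every generator $\theta = \Ind_{M}^{H}\psi$ of $\mathcal{I}(H; H\cap N)$, where $M$ is a nilpotent subgroup of $H$ and $\psi$ a linear character of $M$ with $M\cap(H\cap N)\not\le\ker\psi$. The key tool here is Frobenius reciprocity in two forms. First, $\langle f|_H, \Ind_M^H\psi\rangle_H = \langle f|_M, \psi\rangle_M$, pushing the computation all the way down to $M$. On the other side, since $M$ is a nilpotent subgroup of $G$ and $M\cap N = M\cap(H\cap N)\not\le\ker\psi$, the induced character $\Ind_M^G\psi$ is precisely one of the generators of $\mathcal{I}(G; N)$; since $f\in\mathcal{I}(G;N)^\perp$ we get $0 = \langle f, \Ind_M^G\psi\rangle_G = \langle f|_M,\psi\rangle_M$ by Frobenius reciprocity over $G$. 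Comparing the two computations gives $\langle f|_H, \Ind_M^H\psi\rangle_H = \langle f|_M,\psi\rangle_M = 0$, as needed.

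The main thing to check carefully — and the only place where the hypotheses of Proposition \ref{prop:TGN_induction} are used — is that everything is consistent: $M$ nilpotent in $H$ implies $M$ nilpotent in $G$ (trivial), and the condition $M\cap(H\cap N)\not\le\ker\psi$ is literally the condition $M\cap N\not\le\ker\psi$ since $M\subseteq H$. One should also note that $H<G$ being proper is essential only insofar as it is what lets us invoke the inductive hypothesis $T_0(H, H\cap N)$; the orthogonality argument itself does not need properness. The potential subtlety is bookkeeping with the two different inner products (on $G$ versus on $H$) in the Frobenius reciprocity steps, but this is routine. I do not anticipate a genuine obstacle here; the lemma is essentially a formal consequence of the setup, serving to transport the inductive hypothesis on subgroups into a statement about the single class function $f$ on $G$.
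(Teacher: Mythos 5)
Your proof is correct and follows the same route as the paper's, which simply states that the lemma follows from $T_0(H,H\cap N)$ via Lemma~\ref{lem:TGN_dual}; the Frobenius reciprocity computation showing that $f|_H \in \mathcal{I}(H;H\cap N)^\perp$ is exactly the step the paper treats as immediate, and you have filled it in correctly.
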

\begin{proof}
Since $T_0(H, H \cap N)$ holds by assumption, this follows from Lemma \ref{lem:TGN_dual}.
\end{proof}

The following technical lemma reduces the proof of Proposition \ref{prop:TGN_induction} to a case where the pair $x, y$ obeys a weak niceness property.
\begin{lemma}
\label{lem:TGN_edge}
Choose $(G, N)$ satisfying the hypotheses of Proposition \ref{prop:TGN_induction}. Take $F(G)$ to be the Fitting subgroup of $G$.

Suppose that, given any $f$, $x$, $y$ as in Proposition \ref{prop:TGN_induction}, we have $f(x) = f(y)$ whenever there is some prime $p \mid [G:N]$ dividing the orders of $x$ and $y$ in $G/F(G)$. Then the conclusion of Proposition \ref{prop:TGN_induction} holds for $(G, N)$.
\end{lemma}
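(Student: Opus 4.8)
The plan is to reduce to the asserted special case by disposing of every pair $(x,y)$ that is \emph{not} nice --- i.e., for which no prime $p\mid[G:N]$ divides both $\mathrm{ord}_{G/F(G)}(x)$ and $\mathrm{ord}_{G/F(G)}(y)$ --- by moving $x$ and $y$ within their common coset of $N$ through chains of elements lying in proper subgroups, applying Lemma~\ref{lem:subgroup_game} at each link to keep $f$ fixed, until we reach a nice pair.

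I would begin with two reductions. If $xN$ (equivalently $yN$) does not generate $G/N$, then $x$ and $y$ both lie in the proper subgroup $H=\pi^{-1}(\langle xN\rangle)$, where $\pi\colon G\to G/N$ is the quotient map; since $x^{-1}y\in N=H\cap N$, Lemma~\ref{lem:subgroup_game} gives $f(x)=f(y)$. So assume $xN$ generates $G/N$ and put $n=[G:N]$. If moreover $G\ne NF(G)$, then the image of $x$ in the nontrivial cyclic group $G/NF(G)$ has order $[G:NF(G)]>1$; any prime $p$ dividing this order divides $n$, and pushing forward along $G/F(G)\twoheadrightarrow G/NF(G)$ (under which $x$ and $y$ have the same image) shows $p$ divides both $\mathrm{ord}_{G/F(G)}(x)$ and $\mathrm{ord}_{G/F(G)}(y)$; thus $(x,y)$ is nice and we are done by hypothesis.

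It remains to treat $G=NF(G)$. If $n=1$ then $G=N$; since $Z(G)=1$ forces $G$ to be non-cyclic, each $\langle g\rangle$ is a proper nilpotent subgroup with $\langle g\rangle\cap N=\langle g\rangle$, so Lemma~\ref{lem:subgroup_game} makes $f$ constant. Suppose $n>1$, and choose a coset representative $t\in F(G)$ of $xN$ (possible since $G=NF(G)$); it suffices to prove $f(tm)=f(t)$ for every $m\in N$. If $mF(G)$ does not generate $G/F(G)$ (which is automatic when $G/F(G)$ is not cyclic), then $\langle t,m\rangle$ lies in the proper preimage of $\langle mF(G)\rangle$ under the quotient $G\to G/F(G)$, and since $(tm)^{-1}t=m^{-1}\in N$, Lemma~\ref{lem:subgroup_game} applies. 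Otherwise $G/F(G)$ is cyclic of some order $c$ and $mF(G)$ generates it; here, for every $j$, the elements $tm$ and $(tm)^{1+jn}$ lie in the proper cyclic subgroup $\langle tm\rangle$ and differ by $(tm)^{jn}\in N$, so $f(tm)=f\big((tm)^{1+jn}\big)$, and since $(tm)^{1+jn}F(G)=(mF(G))^{1+jn}$, this element fails to generate $G/F(G)$ --- reducing to the previous case --- as soon as $\gcd(c,1+jn)>1$, which can be arranged unless every prime of $c$ divides $n$. In that last situation one checks that $G/(N\cap F(G))$ embeds into $(G/N)\times(G/F(G))$ and is therefore abelian, so $[G,G]\le N\cap F(G)\le F(G)$, $G$ is solvable, and $C_G(F(G))\le F(G)$ while $Z(G)=1$.

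The main obstacle is this final configuration. In it, each element $tm$ with $mF(G)$ a generator of $G/F(G)$ satisfies $p\mid\mathrm{ord}_{G/F(G)}(tm)$ for the primes $p\mid n$ that occur, so any two such elements form a nice pair and $f$ is forced to be constant on this set; the difficulty is to link that constant value to $f(t)$. I would attempt this by connecting $t$ to one of these elements through a chain of proper subgroups --- cyclic subgroups, Sylow $p$-subgroups (proper since $G$ is not a $p$-group), and normalizers of $p$-subgroups --- using the solvability of $G$ together with $Z(G)=1$ to exclude the degenerate case in which $O_p(G)$ contains the $p$-part of every element of the coset. Isolating precisely this configuration, where $p$-parts lying outside the Fitting subgroup are available, is the point of the lemma: it is exactly the setting in which the $p$-local chains of Section~\ref{ssec:TGN_induction} are built.
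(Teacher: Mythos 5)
Your proof correctly dispatches the easy cases (non-generating $xN$, $G \ne NF(G)$, $n=1$, $mF(G)$ non-generating, and the $\gcd$ trick) and correctly isolates the hard configuration: $G=NF(G)$, $G/F(G)$ cyclic of order $c$ with every prime of $c$ dividing $n$, and we must connect $f(t)$ (with $t \in F(G)$) to $f(tm)$ (with $mF(G)$ a generator). But you then stop, and the tools you gesture at --- Sylow subgroups and normalizers of $p$-subgroups --- are in fact the machinery for the later proof of Proposition~\ref{prop:TGN_induction}, not for this lemma. So this is a genuine gap, not a routine step you left to the reader.

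Two ideas are needed to close it, both used in the paper. First, one must show $G/F(G)$ is \emph{not} a $p$-group for any single prime $p$; your remark that $G$ is solvable and $C_G(F(G)) \le F(G)$ is true but doesn't by itself yield this. The paper's argument: some prime $p \mid n$ divides $|F(G)|$ (since $F(G) \twoheadrightarrow G/N$), hence divides $|Z(F(G))|$; if $G/F(G)$ were a $p$-group it would act on the nontrivial Sylow $p$-subgroup $P \le Z(F(G))$ with a nontrivial fixed point (the class equation for $p$-group actions), which would lie in $Z(G)=1$, a contradiction. So $c$ has at least two distinct primes $p \ne q$, both dividing $n$. Second, rather than linking $t$ to $tm$ directly, form $w = t\,(tmt^{-1})^{c/q}$. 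Then $w \in tN$ (because $tmt^{-1} \in N$) and $wF(G) = (mF(G))^{c/q}$ has order exactly $q$ in $G/F(G)$. The pair $(tm, w)$ lies in the same $N$-coset and both have orders in $G/F(G)$ divisible by $q \mid n$, so the niceness hypothesis gives $f(tm)=f(w)$; and $\langle w, F(G)\rangle$ is a proper subgroup of $G$ (its image in $G/F(G)$ has order $q < c$) that contains both $w$ and $t$ with $w^{-1}t \in N$, so Lemma~\ref{lem:subgroup_game} gives $f(w)=f(t)$. This is exactly the missing link.

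One further remark on strategy: the paper first reduces (by the power-trick argument cited from \cite[Theorem~5.6]{LOTZ}) to the case where $x$ and $y$ have orders divisible only by primes of $[G:N]$, and then works directly with $x$, $y$, and a fixed $z \in F(G) \cap xN$; your version, parameterizing the coset by $tm$ and analyzing which $m$ are problematic, is a workable variant that avoids the power trick, but it funnels to the same final configuration and needs the same two ideas above to finish.
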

\begin{proof}
    Choose $f$, $x$, and $y$ as in Proposition \ref{prop:TGN_induction}. Our aim is to show that $f(x) = f(y)$.  We may assume that $x$ and $y$ generate $G$, i.e. $G = \langle x,y\rangle$, since  Lemma \ref{lem:subgroup_game}  would otherwise imply that $f(x)=f(y)$. Since $x$ and $y$ lie in the same coset of $N$, we also find that $G/N$ is generated by $xN$.

    By the argument of \cite[Theorem 5.6]{LOTZ}, we find there is $m \ge 1$ coprime to $[G: N]$ such that $f(x) = f(x^m)$ and $f(y) = f(y^m)$ and such that $x^m$ and $y^m$ have orders divisible only by primes dividing $[G: N]$. So we may assume $x$ and $y$ have orders divisible only by primes dividing $[G: N]$.

    Now suppose that $F(G) \cap xN$ is empty, so $x$ maps to a nontrivial element of $G/ N \cdot F(G)$. Choose a prime $p$ dividing the order of $[G: N \cdot F(G)]$. Since $xN = yN$ generates $G/N$, we see that the images of $x$ and $y$ generate $G/ N \cdot F(G)$, so the images of $x$ and $y$ in this quotient, and hence also in the quotient $G/F(G)$, have order divisible by $p$. By the assumptions of the lemma, we thus have $f(x) = f(y)$.

So we may assume that $F(G)$ meets $xN$. This implies that $F(G)$ surjects onto the quotient $G/N$.

    If $F(G) = G$, then $G$ must be nilpotent since $F(G)$ is.  In this case, the condition that $Z(G) = 1$ implies that $G=1$, and the conclusion follows.  Hence, we may assume that the index $[G:F(G)]$ is not $1$.  We may further reduce to the case that the indices $[G:F(G)]$ and $[G:N]$ are not coprime. Otherwise, both $x$ and $y$ would have trivial image in the quotient $G/F(G)$ by our assumption on their orders. This would imply that $x$ and $y$ both lie in  $F(G)$, so Lemma \ref{lem:subgroup_game} would give $f(x)=f(y)$.  

   So we may assume that there is some prime $p$ dividing both $[G:F(G)]$ and $[G:N]$.  In this case, we claim that $[G:F(G)]$ is not a power of $p$, or, equivalently, that $G/F(G)$ is not a $p$-group.  To prove this, we note that $p \mid |F(G)|$ since $F(G)$ surjects onto $G/N$.  Because $F(G)$ is nilpotent, it follows that its center $Z(F(G))$ has order divisible by $p$ as well.  Take $P$ to be the Sylow $p$-subgroup of $Z(F(G))$.

   Then $G/F(G)$ acts on $P$ via conjugation. Since we assumed $Z(G) =1$, the only fixed point of this action can be $1$.   This implies that $G/F(G)$ cannot be a $p$-group, as a $p$-group acting on a finite nontrivial $p$-group always fixes some element besides $1$. So $[G: F(G)]$ is not a power of $p$.

    Recall that we have assumed $F(G) \cap xN$ is non-empty, so we may fix some $z$ in this intersection.  We now claim that we must have $f(x) = f(z)$. Applying this claim to $y$ will show that $f(y) = f(z) = f(x)$, so this claim suffices to prove the lemma.

    Consider the order of $x$ in $G/F(G)$.  If this order is a power of the prime $p$ chosen above, then $\langle x, F(G) \rangle$ must be a proper subgroup of $G$ since $[G:F(G)]$ is not a power of $p$.  As this subgroup contains both $x$ and $z$, we conclude that $f(x)=f(z)$ by Lemma \ref{lem:subgroup_game}. 

Thus, we may assume that there is some prime $q \ne p$ dividing the order of $x$ in $G/F(G)$.  Choose $m \geq 1$ so that $x^m$ has order $q$ in $G/F(G)$, and take $w = z (x z^{-1})^m$.  Since $z$ lies in $xN$ and $F(G)$, we have
\[wN = zN = xN\quad\text{and}\quad w F(G) = x^m F(G).\]
 From the hypotheses of the lemma, we therefore find that $f(x) = f(w)$.  But since $w$ has order $q$ in $G/F(G)$ and the index $[G:F(G)]$ is divisible by $p \neq q$, the subgroup $\langle w, F(G) \rangle$ is proper and contains both $w$ and $z$, so we find $f(w) = f(z)$ by Lemma \ref{lem:subgroup_game}, and hence $f(x)=f(z)$.  This gives the claim, and the lemma follows.
\end{proof}

\subsection{The proof of Proposition \ref{prop:TGN_induction}}
\label{ssec:TGN_induction}

Our proof of the general case of Proposition \ref{prop:TGN_induction} takes advantage of the $p$-local subgroup structure of $G$. This approach requires the following two lemmas.

    \begin{lemma}\label{lem:p-normalizers-grow}
        Let $G$ be a finite group and let $P \leq G$ be a $p$-group.  Then either $P$ is a Sylow $p$-subgroup of $G$ or $N_G(P)$ contains a $p$-group properly containing $P$.
    \end{lemma}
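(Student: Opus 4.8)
The plan is to run the classical ``normalizers grow'' argument via a fixed-point count. Suppose $P$ is not a Sylow $p$-subgroup of $G$, so that $p \mid [G:P]$. I would let $P$ act on the set of left cosets $G/P$ by left translation, and first identify the fixed points of this action: a coset $gP$ is fixed by every element of $P$ precisely when $g^{-1}Pg \subseteq P$, i.e.\ precisely when $g \in N_G(P)$. Hence the set of $P$-fixed cosets is exactly $N_G(P)/P$, of cardinality $[N_G(P):P]$. This is the one spot where the structure of the problem is really used.

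Next I would invoke the standard congruence for a $p$-group acting on a finite set: the cardinality of the fixed-point set is congruent modulo $p$ to the cardinality of the whole set. Applied here, this gives
\[
[N_G(P):P] \equiv [G:P] \equiv 0 \pmod p,
\]
so $[N_G(P):P]$ is divisible by $p$ and in particular is at least $p > 1$. Thus $N_G(P)/P$ is a nontrivial group whose order is divisible by $p$.

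Finally, since $P \trianglelefteq N_G(P)$, I would apply Cauchy's theorem to $N_G(P)/P$ to produce a subgroup of order $p$, and take $Q \leq N_G(P)$ to be its preimage under the quotient map $N_G(P) \twoheadrightarrow N_G(P)/P$. Then $|Q| = p \cdot |P|$, so $Q$ is a $p$-group that properly contains $P$ and lies inside $N_G(P)$, which is exactly the desired conclusion. There is essentially no serious obstacle in this argument; the only step needing a moment's care is the identification of the $P$-fixed cosets with $N_G(P)/P$, after which the counting and Cauchy's theorem finish things off mechanically.
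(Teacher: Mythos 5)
Your proof is correct, but it takes a different route from the paper's. The paper's argument is very short: it embeds $P$ in a Sylow $p$-subgroup $S$ of $G$, and then invokes the fact that every subgroup of a nilpotent group (in particular of the $p$-group $S$) is subnormal, so if $P < S$ then $N_S(P) > P$; since $N_S(P)$ is a $p$-subgroup of $N_G(P)$ properly containing $P$, this finishes the proof in two lines. You instead run the classical fixed-point congruence: letting $P$ act on $G/P$ by left translation, the fixed cosets are exactly those of $N_G(P)/P$, so $[N_G(P):P] \equiv [G:P] \equiv 0 \pmod p$, whence $N_G(P)/P$ has order divisible by $p$; Cauchy's theorem in the quotient then produces a subgroup of order $p$ whose preimage is the desired $p$-group properly containing $P$ inside $N_G(P)$. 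Both arguments are sound. The paper's route is shorter but leans on the subnormality property of subgroups of nilpotent groups (and implicitly on the Sylow theorems to embed $P$ in a Sylow subgroup), whereas yours is more self-contained and elementary, requiring only the orbit-counting congruence for $p$-group actions and Cauchy's theorem; it is essentially the textbook ``normalizers grow'' argument. One small aesthetic point: your argument naturally produces a subgroup $Q$ of index exactly $p$ in which $P$ is normal, which is slightly more information than the statement asks for, while the paper's $N_S(P)$ need not have that extra structure.
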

    \begin{proof}
        Take $S \subseteq G$ to be a Sylow $p$-subgroup of $G$ containing $P$. Since all subgroups of nilpotent groups are subnormal \cite[Lemma 2.1]{Isaacs}, $N_S(P)$ either strictly contains $P$ or $S = P$.
    \end{proof}

\begin{lemma}\label{lem:sylows-linked}
    Choose a finite group $G$ and a normal subgroup $N$ of $G$ such that $G/N$ is cyclic. Then, given a prime $p$  and a Sylow $p$-subgroup $S$ of $G$, we have
\[N_G(S) / (N_G(S) \cap N) \simeq G / N.\]
\end{lemma}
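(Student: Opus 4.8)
The plan is to prove this via the Frattini argument combined with a counting/index argument. First I would observe that since $S$ is a Sylow $p$-subgroup of $G$, it is in particular a Sylow $p$-subgroup of $N$ is false in general — rather, $S \cap N$ is a Sylow $p$-subgroup of $N$. Indeed, $S \cap N$ is a $p$-subgroup of $N$, and $[N : S\cap N] = [SN : S]$ divides $[G:S]$, which is coprime to $p$; hence $S \cap N \in \mathrm{Syl}_p(N)$. Now the Frattini argument applied to the normal subgroup $N$ and its Sylow subgroup $S \cap N$ gives $G = N \cdot N_G(S \cap N)$. The subtlety is that I want $N_G(S)$, not $N_G(S \cap N)$, so I would instead apply the Frattini argument more carefully: since $S \in \mathrm{Syl}_p(G)$ and Sylow subgroups are conjugate, for the normal subgroup $N$ one has $N \cdot N_G(S) = G$ as well. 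Here is the cleanest route: take any $g \in G$. Then $S$ and $gSg^{-1}$ are both Sylow $p$-subgroups of $G$, but I want to compare inside $SN$ or use that $N$ is normal. Actually the standard form: for $g\in G$, $S$ and $g^{-1}Sg$ lie in $G$; consider them inside the subgroup... Let me instead argue directly: $NS$ is a subgroup (as $N\trianglelefteq G$), $S$ is Sylow in $NS$, and $N \trianglelefteq NS$, so by the Frattini argument $NS = N \cdot N_{NS}(S) \subseteq N\cdot N_G(S)$; but more is needed since I want all of $G$. The honest statement is $G = N\cdot N_G(S)$: for $g\in G$, $gSg^{-1}$ is Sylow in $G$; write—hmm, this needs $gSg^{-1}\subseteq$ something with $N$.

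The correct clean argument: let $g \in G$. Then $S$ and $gSg^{-1}$ are Sylow $p$-subgroups of $G$, hence conjugate \emph{within} $G$ — but I need conjugacy within $N_G(S)\cdot$something. Use instead: $G/N$ cyclic is not even needed for $G = N\cdot N_G(S)$; that equality is false in general (e.g. $N=1$ forces $N_G(S)=G$, fine, but $N=G$ forces nothing). Wait — $N = G$: then $N_G(S)/(N_G(S)\cap N) = N_G(S)/N_G(S) = 1 = G/N$. Good. The real content: I claim $G = N \cdot N_G(S)$ always holds when... no. Counterexample: $G = S_3$, $N = A_3$, $p = 2$, $S$ of order $2$, $N_G(S) = S$ has order $2$, $N\cdot N_G(S)$ has order $6 = G$. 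OK that works. In fact $G = N\cdot N_G(S)$ does hold generally: given $g$, $gSg^{-1}$ and $S$ are Sylow in $G$; $(gSg^{-1})\cap N$ and $S\cap N$ are Sylow in $N$, so conjugate by some $n\in N$: $n(gSg^{-1}\cap N)n^{-1} = S\cap N$. This does not immediately give $ng\in N_G(S)$. So $G = N\cdot N_G(S)$ is genuinely \emph{not} always true. Hence the hypothesis ``$G/N$ cyclic'' must be doing real work.

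So here is the plan I would actually pursue. Set $M = N_G(S)$ and $q = [G:M\cdot N]$; I must show $q = 1$, i.e. $M$ surjects onto $G/N$. Since $G/N$ is cyclic, its subgroups are totally ordered, so it suffices to rule out $M\cdot N/N$ being contained in a maximal (index-prime) subgroup of $G/N$. Suppose for contradiction that there is a prime $r$ with $MN \le G_0 < G$ where $[G:G_0] = r$ and $G_0 \supseteq N$, $G_0\trianglelefteq G$ (normal since $G/N$ cyclic means all subgroups containing $N$ are normal). Now $S \le G_0$? Not necessarily — $|S|$ might be bigger than the $p$-part of $|G_0|$, i.e. $r$ could equal $p$. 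Split into cases: \textbf{Case $r \neq p$:} then $S \le G_0$, and since $G_0 \trianglelefteq G$ with $S$ Sylow in $G_0$ (as $p\nmid r$), the Frattini argument gives $G = G_0 \cdot N_G(S) = G_0 \cdot M \subseteq G_0$, contradiction. \textbf{Case $r = p$:} then consider $S_0 := S\cap G_0 \in \mathrm{Syl}_p(G_0)$, of index $p$ in $S$; by Frattini $G = G_0 \cdot N_G(S_0)$. One checks $N_G(S_0) \supseteq N_S(S_0) \supsetneq S_0$ and in fact $S \le N_G(S_0)$ since $S_0 \trianglelefteq S$ (index $p$); so $G = G_0\cdot N_G(S_0)$ and $N_G(S_0)$ contains a Sylow $p$-subgroup $S' \supseteq S$ of $N_G(S_0)$ which is Sylow in $G$... then $S'$ is conjugate to $S$ in $N_G(S_0)$, and after adjusting I get $S \le N_G(S_0)$ normalizing — and then $S$ being a maximal $p$-subgroup with $S_0\trianglelefteq S$, actually $N_{N_G(S_0)}(S)\supseteq S$ and by the structure one deduces $N_G(S)$ meets $G\setminus G_0$. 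I'd make this precise by noting $N_G(S_0)/S_0$ has order divisible by $p$ (contains $S/S_0$) but $N_G(S_0) \not\subseteq G_0$, and pushing $S$ up. The cleaner finish: in $\bar G := G/\mathrm{core}$ or just directly, use that $S$ is Sylow in $N_G(S_0)$ too is false; instead pick $S' \in \mathrm{Syl}_p(N_G(S_0))$ with $S\subseteq S'$; since $S$ is Sylow in $G$, $S = S'$, so $S \le N_G(S_0)$, so $S_0 \trianglelefteq S$ — consistent — and $G = G_0 N_G(S_0)$ means $|N_G(S_0)| > |G_0 \cap N_G(S_0)| \cdot$(stuff)... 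I would then apply Lemma~\ref{lem:p-normalizers-grow}-style reasoning inside $N_G(S_0)$ to climb from $S_0$ back to a Sylow of $N_G(S_0)$, which is $S$, and track that $N_G(S)$ acquires an element outside $G_0$.

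\textbf{Anticipated main obstacle.} The case $r = p$ is the crux: a standard Frattini argument handles $r\neq p$ immediately, but when the prime $r$ cut out of the cyclic quotient coincides with $p$, Sylow $p$-subgroups are not contained in the index-$p$ normal subgroup, and one must argue via $S_0 = S\cap G_0$ and carefully propagate normalizer information back up the chain $S_0 \trianglelefteq S$. I expect the bulk of the work — and the place where the cyclicity of $G/N$ is truly exploited (so that the relevant intermediate group $G_0$ is normal and of prime index) — to be in closing this case. An alternative, possibly smoother, route to the whole lemma that I would try in parallel: induct on $|G|$, reduce to $G/N$ of prime order $\ell$ (since a cyclic group is built from prime quotients, and $M = N_G(S)$ surjecting onto each prime quotient of $G/N$ implies it surjects onto $G/N$), and then for $G/N \cong \mathbb{Z}/\ell$ argue by the two cases $\ell = p$ / $\ell \ne p$ as above; the inductive framing lets one assume the result for $N_G(S_0) < G$ in the hard case, which is exactly the leverage needed to finish cleanly.
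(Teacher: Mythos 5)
Your plan is a genuinely different route from the paper's. The paper avoids the case split entirely: it applies Frattini once (to $S\cap N\in\mathrm{Syl}_p(N)$, exactly as you observe), picks a single $x\in N_G(S\cap N)$ mapping to a generator of $G/N$, and then checks that $\langle x\rangle_p\cdot(S\cap N)$ is a Sylow $p$-subgroup of $G$ normalized by $x$; conjugating that Sylow to $S$ conjugates $x$ into $N_G(S)$, and since $G/N$ is abelian the conjugate of $x$ still generates $G/N$. Cyclicity is used precisely twice — to pick one element $x$ that generates, and to ensure conjugation doesn't spoil its image. Your plan instead reduces (correctly) to showing $N_G(S)N=G$ by ruling out $N_G(S)N\le G_0$ for $G_0\trianglerighteq N$ of prime index $r$, splitting on $r\neq p$ versus $r=p$. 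Your $r\neq p$ case is clean and correct: $S\le G_0$ and Frattini in $G_0\trianglelefteq G$ finishes it. The paper's proof is shorter and avoids any recursion; yours isolates the easy case and makes transparent exactly where the hypothesis "$G/N$ cyclic'' enters (to make the intermediate $G_0$ normal).

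The gap is in the $r=p$ case, which you flag but do not close. Your first, non-inductive attempt visibly stalls, and the inductive reframing at the end — apply the lemma to $(N_G(S_0),\,N_G(S_0)\cap G_0,\,S)$ when $N_G(S_0)<G$ — is the right idea but omits the remaining case $N_G(S_0)=G$, i.e. $S_0\trianglelefteq G$. That case needs its own argument: if $S_0\neq 1$ pass to $G/S_0$ (noting $N_{G/S_0}(S/S_0)=N_G(S)/S_0$ since $S_0\le S$) and invoke the induction on order there; if $S_0=1$ then $|S|=p$, $p\nmid|G_0|$, so $S\cap G_0=1$ and $S$ itself maps isomorphically onto $G/G_0$, putting $N_G(S)\supseteq S$ over $G_0$. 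With those two subcases filled in, your inductive version does go through. As written, though, the crux case is only a sketch that would not yet compile into a proof.

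One further small caution in the induction: "reduce to $G/N$ of prime order'' does not by itself shrink $|G|$, so the actual descent happens either in the step $N_G(S_0)<G$ or in passing to $G/S_0$; it is worth stating the induction as strong induction on $|G|$ so that both of those invocations are legitimate.
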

   \begin{proof}
        Observe that $S \cap N$ is a Sylow $p$-subgroup of $ N$.  The Frattini argument \cite[Lemma 1.13]{Isaacs} shows that $G = N \cdot  N_G(S \cap N)$. So we may choose $x$ in $N_G(S \cap N)$ that maps to a generator of $G/N$.

        Taking $\langle x \rangle_p$ to be the maximal $p$-subgroup of $\langle x \rangle$, we thus see that $\langle x \rangle_p \cdot (S \cap N)$ is a Sylow $p$-subgroup of $G$ and is normalized by $x$. This subgroup is conjugate to $S$, so we find that some conjugate of $x$ normalizes $S$.
    \end{proof}

\begin{proof}[Proof of Proposition \ref{prop:TGN_induction}]
By Lemma \ref{lem:TGN_edge}, it suffices to assume that there is a prime $p \mid [G: N]$ such that the images of $x$ and $y$ in $G/F(G)$ have orders divisible by $p$. Take $F(G)_p$ to be the maximal $p$-subgroup of $F(G)$.

With this $p$ fixed, take $\langle x \rangle_p$ to be the maximal $p$-subgroup of $\langle x \rangle$, and take \[P_0(x) = \langle x \rangle_p \cdot F(G)_p.\]
Supposing $P_i(x)$ has been defined for a given $i \ge 0$, we then define
\[G_i(x) = N_G(P_i(x)),\]
and we take $P_{i+1}(x)$ to be a Sylow $p$-subgroup of $G_i(x)$ containing $P_i(x)$. This defines sequences of groups 
\[G_0(x), G_1(x), \dots\quad\text{and}\quad P_0(x), P_1(x), \dots.\]
We note that $x$ is contained in $G_0(x)$.

By Lemma \ref{lem:p-normalizers-grow}, we have 
\[P_i(x)\le P_{i+1}(x) \quad\text{for} \quad i \ge 0,\]
with equality only if $P_i(x)$ is a Sylow $p$-subgroup of $G$. So we may fix $k \ge 0$ such that $P_k(x)$ is a Sylow $p$-subgroup of $G$.

We also have 
\[F(G)_p < P_0(x) \le P_i(x),\]
so no $P_i(x)$ is a normal subgroup of $G$. This means that $G_i(x)$ is a proper subgroup of $G$ for all $i \ge 0$.

We claim that the projection from $G_i(x)$ to $G/N$ is surjective for all $i \ge 0$. It is true for $i = 0$. Now suppose it is true for $G_{i}(x)$, and that we wish to prove it for $G_{i+1}(x)$. By applying Lemma \ref{lem:sylows-linked} to the extension $G_i(x)/N \cap G_i(x)$ with Sylow subgroup $P_{i+1}(x)$, we find that the intersection
\[G_i(x) \cap G_{i+1}(x) \cap xN\]
is nonempty. This gives the claim by induction, and we may take $x_i$ to be some element in this intersection for any $i \ge 0$.

Applying Lemma \ref{lem:subgroup_game} to $G_0(x), G_1(x), \dots$ gives
\[f(x) = f(x_0) = f(x_1) = \dots = f(x_k).\]
Note that the element $x_k$ lies in $N_G(S)$ for some Sylow $p$-subgroup $S$ of $G$. 

Applying the same argument for $y$ shows that there is $y' \in G$ so $y'$ lies in some conjugate of $N_G(S)$ and $f(y) = f(y')$. Take $z$ to be a conjugate of $y'$ in $N_G(S)$. Then
\begin{alignat*}{2}
f(x) = f(x_k) &= f(z) &&\text{by Lemma \ref{lem:subgroup_game} for $N_G(S)$} \\ & = f(y') = f(y) \qquad&&\text{since $f$ is a class function,} 
\end{alignat*}
giving the proposition.
\end{proof}

\begin{rmk}
Given a nilpotent group $G$ and a normal subgroup $N$ of $G$, we claim that $\mathcal{R}(G; N)$ is spanned by the collection of subspaces $\mathcal{R}(H; H \cap N)$, where $H$ varies over the elementary subgroups of $G$ of rank at most $2$. (Recall that the rank of a group is the minimal number of generators.)  
Following the argument after Proposition \ref{prop:TGN_induction}, we find that this claim reduces to showing that, given a cyclic extension $G/N$ with $G$ nilpotent, given an element $f$ in $\mathcal{R}(G;\, N)$ orthogonal to the sum of such $\mathcal{R}(H; H \cap N)$, and given $x, y \in G$ in the same coset of $N$, we have $f(x) = f(y)$. 

To prove this claim, take $w = yx^{-1}$, and choose a sequence of integers $b_0 = 0, b_1, \dots, b_{k-1}, b_k = 1$ such that $w^{b_{i+1} - b_{i}}$ has prime power order for each $i < k$. Then
\[\left\langle w^{b_i} x,\, w^{b_{i+1} - b_i} \right\rangle\]
is an elementary group of rank at $2$ for $i < k$, 
so the orthogonality assumption gives
\[f(w^{b_i}x) = f(w^{b_{i+1} -b_i} w^{b_i} x) = f(w^{b_{i+1}x}) \quad\text{for }\, i < k.\]
So 
\[f(x) = f(w_0x) = f(w_kx) = f(y),\]
giving the claim.

As a consequence, we find that $T_0(G, N)$ still holds for all $(G, N)$ even if we replace the nilpotent groups $H$ in the definition of $\mathcal{I}(G; N)$ with elementary subgroups of rank $\le 2$.
\end{rmk}

\section{The proof of Theorem \ref{thm:faithful}}
\label{sec:faithful}
The proof of Theorem \ref{thm:faithful} reduces to showing
\[\mathcal{R}(G; N_1, \dots, N_k) = \mathcal{I}(G; N_1, \dots, N_k)\]
for any group $G$, where $N_1, \dots, N_k$ enumerates the  minimal normal subgroups of $G$. To do this, we will give a sequence of lemmas that reduce what we need to show to cases we have already dealt with in Section \ref{sec:TGN}.
\begin{lemma}
\label{lem:faith_cyclic}
Take $G/N$ to be an extension of finite groups, and take $N_1, \dots, N_k$ to be normal subgroups of $G$ contained in $N$. Then
\[\mathcal{R}(G;\, N_1, \dots, N_k) = \sum_{H} \textup{Ind}^G_H \left(\mathcal{R}(H;\, N_1, \dots, N_k)\right),\]
where the sum is over all subgroups $H$ of $G$ containing $N$ such that $H/N$ is cyclic.

In particular, if
\[\mathcal{R}(H;\, N_1, \dots, N_k) = \mathcal{I}(H;\, N_1, \dots, N_k)\]
for all such $H$, then
\[\mathcal{R}(G;\, N_1, \dots, N_k) = \mathcal{I}(G;\, N_1, \dots, N_k).\]
\end{lemma}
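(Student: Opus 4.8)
The plan is to first establish the claimed identity $\mathcal{R}(G;\, N_1,\dots,N_k) = \sum_H \Ind^G_H\left(\mathcal{R}(H;\, N_1,\dots,N_k)\right)$, where $H$ ranges over subgroups containing $N$ with $H/N$ cyclic, and then deduce the ``in particular'' clause by a short formal argument. For the main identity, the inclusion $\supseteq$ is the easy direction: induction of class functions is exact and commutes with the pushforward maps in the appropriate sense, so if $f \in \mathcal{R}(H;\, N_1,\dots,N_k)$ then its pushforward to each $G/N_i$ vanishes because each $N_i \le N \le H$, and one checks that $\Ind_H^G f$ still has vanishing pushforward to every $G/N_i$ (concretely, the pushforward of $\Ind_H^G f$ to $G/N_i$ is the induction from $H/N_i$ to $G/N_i$ of the pushforward of $f$, which is $0$). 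So $\Ind_H^G f \in \mathcal{R}(G;\, N_1,\dots,N_k)$.

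The substantive direction is $\subseteq$. Here I would argue by duality with respect to the standard inner product, exactly as in Lemma \ref{lem:TGN_dual}. Let $V \subseteq \mathcal{R}(G;\, N_1,\dots,N_k)$ denote the right-hand side; I want to show $V^{\perp} \cap \mathcal{R}(G;\, N_1,\dots,N_k) = 0$, i.e. that any $f \in \mathcal{R}(G;\, N_1,\dots,N_k)$ orthogonal to every $\Ind_H^G(\mathcal{R}(H;\, N_1,\dots,N_k))$ is zero. By Frobenius reciprocity, $\langle f, \Ind_H^G g\rangle = \langle \res_H f, g\rangle$ for $g$ a class function on $H$, so $f \perp \Ind_H^G(\mathcal{R}(H;\, N_1,\dots,N_k))$ says precisely that $\res_H f$ is orthogonal to $\mathcal{R}(H;\, N_1,\dots,N_k)$ as a class function on $H$. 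Now the key structural input is the description of $\mathcal{R}(H;\, N_1,\dots,N_k)^{\perp}$: arguing as in the proof of Lemma \ref{lem:TGN_dual}, a class function on $H$ is orthogonal to $\mathcal{R}(H;\, N_1,\dots,N_k)$ iff it is a class function pulled back from $H/(N_1\cdots N_k)$ — equivalently, it is constant on each coset of the subgroup generated by the $N_i$ (intersected with $H$, but since all $N_i \le N \le H$ this is just $N_1\cdots N_k$). Applying this with $H$ ranging over the cyclic-over-$N$ subgroups, orthogonality of $f$ to $V$ tells us: for every such $H$ and every $x,y \in H$ lying in the same coset of $N_1\cdots N_k$, we have $f(x) = f(y)$. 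Taking $N_0 := N_1\cdots N_k \le N$: since any two elements $x, xn$ with $n \in N$ both lie in $\langle x, N\rangle$, whose quotient by $N$ is cyclic (generated by $xN$), we may apply the previous sentence with $H = \langle x, N\rangle$ and conclude $f(x) = f(xn)$ whenever $xn \in x N_0$, but more to the point we want $f$ constant on cosets of $N_0$ in all of $G$: given $x \in G$ and $n \in N_0$, both $x$ and $xn$ lie in $\langle x\rangle \cdot N \supseteq \langle x, N_0\rangle$... The cleanest route: since $N_0 \le N$, for any $x\in G$ and $n\in N_0$ the subgroup $H = \langle x, N \rangle$ contains $x$ and $xn$, has $H/N$ cyclic, and $x, xn$ are in the same coset of $N_0$; hence $f(x) = f(xn)$. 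So $f$ is constant on cosets of $N_0 = N_1\cdots N_k$, i.e. $f$ is pulled back from $G/N_0$. But $f \in \mathcal{R}(G;\, N_1,\dots,N_k)$ means $f$ pushes forward to $0$ on each $G/N_i$, hence on $G/N_0$ (the pushforward to $G/N_0$ refines those), and a function pulled back from $G/N_0$ with zero pushforward to $G/N_0$ is zero. This forces $f = 0$, proving $V = \mathcal{R}(G;\, N_1,\dots,N_k)$.

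For the ``in particular'' clause: assume $\mathcal{R}(H;\, N_1,\dots,N_k) = \mathcal{I}(H;\, N_1,\dots,N_k)$ for all $H \ge N$ with $H/N$ cyclic. By the main identity, $\mathcal{R}(G;\, N_1,\dots,N_k) = \sum_H \Ind_H^G(\mathcal{I}(H;\, N_1,\dots,N_k))$. It then suffices to observe that $\Ind_H^G$ carries $\mathcal{I}(H;\, N_1,\dots,N_k)$ into $\mathcal{I}(G;\, N_1,\dots,N_k)$: a spanning element of $\mathcal{I}(H;\, N_1,\dots,N_k)$ has the form $\Ind_{H'}^H \psi$ with $H' \le H$ nilpotent and $\psi$ linear satisfying $H' \cap N_i \not\le \ker\psi$ for all $i$, and by transitivity of induction $\Ind_H^G \Ind_{H'}^H \psi = \Ind_{H'}^G \psi$, which is one of the spanning elements of $\mathcal{I}(G;\, N_1,\dots,N_k)$ (the subgroup $H'$ and character $\psi$ are unchanged, so the defining conditions still hold). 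Hence $\mathcal{R}(G;\, N_1,\dots,N_k) \subseteq \mathcal{I}(G;\, N_1,\dots,N_k)$, and the reverse inclusion is immediate from Definition \ref{defn:RI}, giving equality.

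The main obstacle I anticipate is getting the duality bookkeeping exactly right — in particular pinning down that $\mathcal{R}(H;\, N_1,\dots,N_k)^{\perp}$ consists precisely of functions inflated from $H/(N_1\cdots N_k)$ (the analogue of the computation with $\widetilde{g}$ in Lemma \ref{lem:TGN_dual}, where one must symmetrize over conjugation), and then correctly chaining the "constant on cosets of $N$ inside each cyclic-over-$N$ subgroup" conclusions to get "constant on cosets of $N_1\cdots N_k$ in $G$." Everything else (exactness of induction, Frobenius reciprocity, transitivity of induction) is routine.
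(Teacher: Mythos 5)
Your proof of the easy inclusion $\sum_H \Ind_H^G\bigl(\mathcal{R}(H;\,N_1,\dots,N_k)\bigr) \subseteq \mathcal{R}(G;\,N_1,\dots,N_k)$ and your treatment of the ``in particular'' clause via transitivity of induction are both correct. The problem is in the substantive inclusion $\mathcal{R}(G;\,N_1,\dots,N_k) \subseteq \sum_H \Ind_H^G\bigl(\mathcal{R}(H;\,N_1,\dots,N_k)\bigr)$, where you rely on the claim that a class function on $H$ is orthogonal to $\mathcal{R}(H;\,N_1,\dots,N_k)$ if and only if it is pulled back from $H/(N_1\cdots N_k)$. That claim is false once $k>1$. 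Lemma~\ref{lem:TGN_dual} establishes it only for a single $N$; for several $N_i$ one has $\mathcal{R}(H;\,N_1,\dots,N_k) = \bigcap_i \mathcal{R}(H;\,N_i)$, so the orthogonal complement is $\sum_i \mathcal{R}(H;\,N_i)^\perp$, the span of the class functions constant on cosets of some single $N_i$. This is generally strictly larger than the space of functions constant on cosets of $N_1\cdots N_k$. For a concrete failure, take $H = (\Z/p)^2$ with $N_1$, $N_2$ the two coordinate subgroups, so $N_1 N_2 = H$: the pullbacks from $H/N_1N_2$ form a one-dimensional space (constants), while $\mathcal{R}(H;\,N_1,N_2)^\perp$ has dimension $2p-1$. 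Orthogonality of $\res_H f$ to $\mathcal{R}(H;\,N_1,\dots,N_k)$ therefore only yields that $\res_H f$ is a sum of functions each constant on cosets of one $N_i$, and the chaining step where you conclude $f(x) = f(xn)$ for all $n \in N_1\cdots N_k$ collapses.

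The paper avoids duality entirely, which is why its argument handles all $k$ uniformly. Writing $\mathcal{C}$ for the set of subgroups $H \supseteq N$ with $H/N$ cyclic, an inclusion--exclusion (a downward recursion on the poset of cyclic subgroups of $G/N$) produces integers $a_H$ with $\sum_{H\in\mathcal{C}} a_H \delta_{g\in H} = 1$ for every $g \in G$. Since $f$ is a class function, a short direct computation then yields the pointwise identity
\[
f \;=\; \sum_{H\in\mathcal{C}} a_H \,\frac{|H|}{|G|}\, \Ind_H^G(\res_H f),
\]
and when $f \in \mathcal{R}(G;\,N_1,\dots,N_k)$ each $\res_H f$ lies in $\mathcal{R}(H;\,N_1,\dots,N_k)$, because $N_i \le N \le H$ and the pushforward of $\res_H f$ to $H/N_i$ is just a restriction of the pushforward of $f$ to $G/N_i$. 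If you want to salvage your write-up, replace the orthogonal-complement step with this inclusion--exclusion identity; the rest of your proof (the easy inclusion and the formal deduction of the ``in particular'' clause) can stay as is.
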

\begin{proof}
Take $\mathcal{C}$ to be the set of subgroups $H$ of $G$ containing $N$ such that $H/N$ is cyclic. By inclusion-exclusion, for each $H \in \mathcal{C}$, there is an integer $a_H$ such that 
\[\sum_{H \in \mathcal{C}} a_H \delta_{g \in H} = 1 \quad\text{for all}\quad g \in G,\]
where $\delta_{g \in H}$ denotes the Kronecker delta.

As a result, given $f \in \mathcal{R}(H; \, N_1, \dots, N_k)$, we have
\[f = \sum_{H \in \mathcal{C}} a_H f_H,\]
where $f_H$ denotes the restriction of $f$ to $H$. This implies
\[f = \sum_{H \in \mathcal{C}} a_H \frac{|H|}{|G|} \Ind_H^G f_H.\]
Since $f_H$ lies in $\mathcal{R}(H;\, N_1, \dots, N_k)$, the first claim follows. The second then follows from the simple observation
\[\mathcal{I}(G; \,N_1, \dots, N_k) \supseteq \sum_{H \in \mathcal{C}} \Ind_{H}^{G}\left(\mathcal{I}(H;\, N_1, \dots, N_k)\right).\]
\end{proof}

\begin{defn}
Take $G/N$ to be a cyclic extension of finite groups, and choose $\sigma \in G$. Take $\mathscr{N}$ to be a set of normal subgroups of $G$ contained in $N$.  We then define
\[\mathcal{R}(\sigma N;\, \mathscr{N})\]
to be the subspace of $\mathcal{R}(G;\, \mathscr{N})$ of functions that are $0$ outside the coset $\sigma N$.
\end{defn}

Suppose $\sigma$ generates $G/N$. Given $j \ge 0$, take $g_j$ to be the function on $G$ that is $1$ on $\sigma^j N$ and $0$ outside $\sigma ^j N$. Then $f \cdot g_j$ lies in $\mathcal{R}(\sigma^j N;\, \mathscr{N})$ for any $f \in \mathcal{R}(G;\, \mathscr{N})$. Since 
\[1 = \sum_{j = 1}^{[G: N]} g_j,\]
we thus have
\[\mathcal{R}(G;\, \mathscr{N}) = \bigoplus_{j=1}^{[G: N]} \mathcal{R}(\sigma^j N;\, \mathscr{N}). 
 \]
Given a subgroup $H$ of $G$ containing $N$, we have
\[\text{Ind}_H^G\left(\mathcal{R}(H;\, \mathscr{N})\right) =\bigoplus_{\substack{j \le [G: N]\\ \sigma^j \in H }} \mathcal{R}(\sigma^j N;\, \mathscr{N}),\]
so we find 
\begin{equation}
\label{eq:cyclic_decomp}
\mathcal{R}(G;\, \mathscr{N}) = \bigoplus_{j \,\perp\, [G: N]} \mathcal{R}(\sigma^j N;\, \mathscr{N}) \oplus \sum_{H} \text{Ind}_H^G\left(\mathcal{R}(H;\, \mathscr{N})\right),
\end{equation}
 where the direct sum is over positive integers $j \le [G: N]$ coprime to $[G: N]$ and the second sum is over proper subgroups of $G$ that contain $N$.

\begin{lemma}
\label{lem:RG_tensor}
Choose finite groups $M, N$, take $S =  M \times N$, and choose a finite cyclic extension $G/S$ of groups such that $M$ and $N$ are both normal in  $G$. Choose $\sigma \in G$ generating $G/S$.

Take $\mathscr{M}$ to be a set of normal subgroups of $G$ contained in $M$, and take $\mathscr{N}$ to be a set of normal subgroups of $G$ contained in $N$.  We may view $\mathscr{M}$ also as a set of normal subgroups of $G/N$, and $\mathscr{N}$ as a set of normal subgroups of $G/M$.

Then multiplication of class functions defines an isomorphism
\begin{equation}
\label{eq:RG_tensor}
\mathcal{R}(\sigma (S/M); \,\mathscr{N}) \otimes \mathcal{R}(\sigma(S/N); \,\mathscr{M}) \xrightarrow{\,\,\sim\,\,}\mathcal{R}\left(\sigma S; \,\mathscr{M}\cup\mathscr{N}\right)
\end{equation}

\end{lemma}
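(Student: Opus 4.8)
The plan is to show the multiplication map is well-defined, injective, and surjective, treating each in turn. The key structural fact I would exploit is that $S = M \times N$, so a class function on $S$ (or on a coset $\sigma S$) decomposes via the coordinate projections, and restriction along $S \hookrightarrow G$ interacts well with the normality of $M$ and $N$ in $G$.

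First I would check the map is \emph{well-defined}, i.e. that the product $f_1 \cdot f_2$ of an element $f_1 \in \mathcal{R}(\sigma(S/M);\,\mathscr{N})$ (pulled back to a function on $\sigma S$ that is constant on $M$-cosets) with $f_2 \in \mathcal{R}(\sigma(S/N);\,\mathscr{M})$ actually lands in $\mathcal{R}(\sigma S;\,\mathscr{M}\cup\mathscr{N})$. Being a class function of $G$ supported on $\sigma S$ is clear since $M,N \trianglelefteq G$ forces $S \trianglelefteq G$, and conjugation permutes the two factors compatibly; the pushforward-to-zero conditions for each $N_i \in \mathscr{N}$ come from the corresponding condition on $f_1$ (since $f_2$ is constant on $N$-cosets, hence on $N_i$-cosets as $N_i \le N$, it factors out of the sum over a coset of $N_i$), and symmetrically for $\mathscr{M}$ using that $f_1$ is constant on $M$-cosets.

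Next, \emph{injectivity}: because $S = M\times N$, any function on $\sigma S$ is determined by its values on pairs $(\sigma m, \sigma n)$ appropriately, and a tensor $f_1 \otimes f_2$ maps to the function $(m,n)\mapsto f_1(\bar m) f_2(\bar n)$; standard bilinear algebra (a product of a basis of one space with a basis of the other is linearly independent as functions on the product) gives that the map is injective. For \emph{surjectivity}, I would take an arbitrary $h \in \mathcal{R}(\sigma S;\,\mathscr{M}\cup\mathscr{N})$, view it as a function of $(\bar m,\bar n) \in (S/N)\times(S/M)$ after quotienting by $N$ and $M$ respectively where possible — the point is that $h$ need not be constant on $M$-cosets, so one must instead expand $h$ in the basis of product-characters of $S/1 = M\times N$ and check that the vanishing conditions force each constituent to satisfy the right conditions. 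Concretely, decompose $h$ in terms of $\chi_M \otimes \chi_N$ where $\chi_M$ runs over $\mathrm{Irr}(M)$ and $\chi_N$ over $\mathrm{Irr}(N)$ (twisted by the $\sigma$-action); the condition that $h$ pushes to zero on $G/N_i$ for $N_i \in \mathscr{N}$ kills all constituents with $\chi_N$ trivial on $N_i$, and likewise on the $M$ side, so $h$ is a sum of products lying in the image.

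The main obstacle I anticipate is \emph{surjectivity}, specifically bookkeeping the $\sigma$-conjugation action: $\mathcal{R}(\sigma S;\,\cdots)$ consists of functions on a single coset, and compatibility of the product decomposition with the $G$-class-function condition (invariance under conjugation by a lift of $\sigma$, which may swap or permute irreducibles of $M$ and of $N$ and also acts within each factor) needs care. I would handle this by noting that the conjugation action of $G$ preserves $M$ and $N$ separately (they are each normal), so it acts on $\mathrm{Irr}(M)$ and $\mathrm{Irr}(N)$ independently, and the class-function condition on $\sigma S$ decouples into the class-function-type conditions defining $\mathcal{R}(\sigma(S/M);\,\mathscr{N})$ and $\mathcal{R}(\sigma(S/N);\,\mathscr{M})$ — this is exactly where the hypothesis that both $M$ and $N$ are normal in $G$ (not just in $S$) is used. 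Everything else is routine linear algebra over the group algebra of the cyclic quotient.
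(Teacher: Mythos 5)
Your structural plan mirrors the paper's: both exploit the decomposition $S = M \times N$ to reduce to a product of two smaller pieces. The well-definedness and injectivity sketches are fine, and your computation that the pushforward condition for $N_i \in \mathscr{N}$ kills exactly the constituents $\chi_M \otimes \chi_N$ with $N_i \le \ker \chi_N$ is correct. The real substance, as you identify, is surjectivity, and that is where your argument has a genuine gap.

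You claim that because $G$-conjugation preserves $M$ and $N$ separately, ``the class-function condition on $\sigma S$ decouples.'' That conclusion is true but does not follow from the stated reason alone. What is actually needed is that the natural map from conjugacy classes of $G$ in $\sigma S$ to (conjugacy classes of $G/N$ in $\sigma(S/N)$) $\times$ (conjugacy classes of $G/M$ in $\sigma(S/M)$) is a \emph{bijection}. Surjectivity of this map is easy (since $M \cap N = 1$), but injectivity is the nontrivial point: given $\tau_1, \tau_2 \in \sigma S$ conjugate in $G/N$ and conjugate in $G/M$, one must produce a \emph{single} element of $G$ conjugating $\tau_1$ to $\tau_2$. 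The issue is that the two conjugating elements you start with may live in different cosets of $S$, so one cannot simply splice their $M$- and $N$-parts. The paper resolves this by replacing the conjugating element $m$ with $m\tau_1^k$ for a suitable $k$ (which conjugates $\tau_1$ to the same element and, since $\tau_1$ generates $G/S$, can be forced into $S$), and only then projecting to the $M$-factor; doing the same on the other side yields elements of $M$ and $N$ respectively which, because $M \cap N = 1$, combine to a single conjugator. This maneuver is essential, and ``the action preserves each factor'' does not supply it. Relatedly, your proposal to expand $h$ in products $\chi_M \otimes \chi_N$ of irreducibles is not quite on solid footing either: translating $h$ from $\sigma S$ to $S$ produces a function invariant under the $\sigma$-twisted conjugation, not an ordinary class function of $S$, so $\mathrm{Irr}(M) \otimes \mathrm{Irr}(N)$ is not the right basis unless one first passes through the conjugacy-class bijection. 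Once that bijection is in hand, the paper deals with the refinement to $\mathscr{M} \cup \mathscr{N}$ by a clean flatness argument (the kernel of a pushforward map tensored with the identity), which you may find a tidier route than tracking characters directly.
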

\begin{proof}

 Take 
\begin{alignat*}{2}
\mathscr{C} \, &\text{ to be the conjugacy classes of } G &&\text{contained in } \sigma S, \\
\mathscr{C}_N \, &\text{ to be the conjugacy classes of } G/N \quad&&\text{contained in } \sigma (S/N), \text{ and} \\
\mathscr{C}_M \, &\text{ to be the conjugacy classes of } G/M &&\text{contained in } \sigma (S/M). \\
\end{alignat*}
We claim that the natural map
\[\mathscr{C} \to \mathscr{C}_N \times \mathscr{C}_M\]
is a bijection. It is clearly surjective since $M$ and $N$ have trivial intersection. 

Now suppose we have chosen $\tau_1$ and $\tau_2$ in  $\sigma S$ which have equal image in $\mathscr{C}_N \times \mathscr{C}_M$. Then there is $m \in G$ so $m \tau_1 m^{-1}$ and $\tau_2$ have equal image in $G/N$. Multiplying $m$ on the right by a power of $\tau_1$ as necessary, we may assume that $m$ lies in $S$. Discarding the component in $N$, we may further assume $m$ lies in $M$. We similarly may find $n$ in $N$ so $n \tau_1n^{-1}$ and $\tau_2$ have equal image in $G/M$. Since $N \cap M = 1$, we conclude that
\[nm \tau_1 (nm)^{-1} = \tau_2,\]
establishing injectivity of this map. So multiplication of class functions defines an isomorphism
\begin{equation}
\label{eq:multiplication}
\mathcal{R}(\sigma(S/M);\,\emptyset ) \otimes \mathcal{R}(\sigma(S/N);\,\emptyset) \xrightarrow{\,\,\sim\,\,} \mathcal{R}(\sigma S;\,\emptyset ). \end{equation}

Given a subgroup $N_1$ in $\mathscr{N}$, define
\[\lambda: \mathcal{R}(\sigma(S/M);\,\emptyset ) \to \mathcal{R}(\sigma(S/M);\,\emptyset )\]
by
\[\lambda(f)(x) = \sum_{y \in N_1} f(xy)\quad\text{for all } x \in \sigma (S/M).\]
The kernel of this map is $\mathcal{R}(\sigma(S/M);\, N_1)$. Furthermore, given $v = \sum_i f_i \otimes g_i$ in the domain of the map \eqref{eq:multiplication}, we see that $v$ maps into $\mathcal{R}(\sigma S;\, N_1)$
if and only if
\[\sum_{i} \lambda(f_i)(x) \cdot g_i(x)  = 0 \quad\text{ for all } x\in \sigma S.\]
Since \eqref{eq:multiplication} is an isomorphism, this condition is equivalent to $v$ lying in the kernel of the map
\[\mathcal{R}(\sigma(S/M);\,\emptyset ) \otimes \mathcal{R}(\sigma(S/N);\,\emptyset )\xrightarrow{\,\,\lambda \,\otimes \,\text{Id}\,\,} \mathcal{R}(\sigma(S/M);\,\emptyset ) \otimes \mathcal{R}(\sigma(S/N);\,\emptyset ).\]
The kernel of this map is $\mathcal{R}(\sigma(S/M); \,N_1 ) \otimes \mathcal{R}(\sigma(S/N);\,\emptyset )$ since finite-dimensional vector spaces are flat. 

Repeating this argument for the other subgroups in $\mathscr{M}$ and $\mathscr{N}$ shows we have isomorphisms
\begin{alignat*}{3}
&\mathcal{R}(\sigma(S/M);\, \mathscr{N}) &&\otimes \mathcal{R}(\sigma(S/N);\,\emptyset) &&\xrightarrow{\,\,\sim\,\,} \mathcal{R}(\sigma S;\, \mathscr{N}) \quad\text{and}\\
&\mathcal{R}(\sigma(S/M);\, \emptyset) &&\otimes \mathcal{R}(\sigma(S/N);\,\mathscr{M}) &&\xrightarrow{\,\,\sim\,\,} \mathcal{R}(\sigma S;\, \mathscr{M}),
\end{alignat*}
that agree on the intersection of their domain. The intersection of these maps has domain and codomain equaling \eqref{eq:RG_tensor}, giving the result.
\end{proof}

\begin{lemma}[Mackey's formula]
\label{lem:Mackey}
Choose finite groups $M, N$, take $S =  M \times N$, and choose a finite extension $G/S$ of groups such that $M$ and $N$ are both normal in  $G$. Take $\mathscr{M}$ to be a set of normal subgroups of $G$ contained in $M$, and take $\mathscr{N}$ to be a set of normal subgroups of $G$ contained in $N$.

Then multiplication of class functions defines a homomorphism
\begin{equation}
\label{eq:IG_tensor}
\mathcal{I}(G/M; \,\mathscr{N}) \otimes \mathcal{I}(G/N; \,\mathscr{M}) \xrightarrow{\quad}\mathcal{I}\left(G; \,\mathscr{M}\cup \mathscr{N}\right).
\end{equation}
\end{lemma}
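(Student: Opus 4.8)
The plan is to deduce this from Mackey's double coset formula together with the tensor-product compatibility already established in Lemma \ref{lem:RG_tensor}. The statement \eqref{eq:IG_tensor} is a statement about spanning sets: $\mathcal{I}(G/M;\,\mathscr{N})$ is spanned by characters $\mathrm{Ind}_{\bar H_1}^{G/M}\psi_1$ with $\bar H_1 \le G/M$ nilpotent and $\bar H_1 \cap N_j \not\le \ker\psi_1$ for $N_j \in \mathscr{N}$, and similarly $\mathcal{I}(G/N;\,\mathscr{M})$ is spanned by $\mathrm{Ind}_{\bar H_2}^{G/N}\psi_2$. So it suffices to show that for any two such induced characters, the pointwise product of their inflations to $G$ lies in $\mathcal{I}(G;\,\mathscr{M}\cup\mathscr{N})$. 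Writing $H_1, H_2$ for the preimages in $G$ of $\bar H_1, \bar H_2$ (so $H_1 \supseteq M$, $H_2 \supseteq N$, and $\mathrm{Inf}\,\mathrm{Ind}_{\bar H_i}^{G/\cdot}\psi_i = \mathrm{Ind}_{H_i}^G \mathrm{Inf}\,\psi_i$), the product in question is $(\mathrm{Ind}_{H_1}^G \phi_1)\cdot(\mathrm{Ind}_{H_2}^G \phi_2)$ where $\phi_i$ is the inflation of $\psi_i$.

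First I would apply Mackey's formula in the form $(\mathrm{Ind}_{H_1}^G \phi_1)\cdot(\mathrm{Ind}_{H_2}^G \phi_2) = \sum_{g} \mathrm{Ind}_{H_1 \cap {}^g H_2}^G\big(\phi_1|_{H_1\cap {}^gH_2} \cdot ({}^g\phi_2)|_{H_1 \cap {}^gH_2}\big)$, where $g$ runs over a set of $(H_1, H_2)$-double coset representatives and ${}^g\phi_2$ denotes the character $h \mapsto \phi_2(g^{-1}hg)$ of ${}^gH_2$. Replacing $H_2$ by its conjugate ${}^gH_2$ and $\psi_2$ by ${}^g\psi_2$ does not disturb any of the hypotheses: ${}^gH_2$ still contains $N$ (as $N \triangleleft G$), its image in $G/N$ is still nilpotent, and for $N_j \in \mathscr{M}$ (which is normal in $G$) one checks ${}^gH_2 \cap N_j = {}^g(H_2 \cap N_j) \not\le \ker({}^g\psi_2)$. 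So we may assume $g = 1$ and it suffices to treat a single summand $\mathrm{Ind}_{H_1 \cap H_2}^G(\phi_1|\cdot\phi_2|)$. Set $J = H_1 \cap H_2$. Since $H_1 \supseteq M$ and $H_2 \supseteq N$ and $M, N$ are normal with $M\cap N = 1$, the subgroup $J$ contains $M \cap H_2$ and $N \cap H_1$, and in fact contains $(M\cap H_2) \times (N \cap H_1) =: S_J$; moreover $J/S_J$ embeds into both $(H_1/M)$-ish and $(H_2/N)$-ish quotients, hence is nilpotent, so $J$ itself is nilpotent (extension facts plus the fact that $J$ injects into $H_1/M \times H_2/N$... more carefully: $J/(J\cap M)$ injects into $H_1/M$ so is nilpotent, $J \cap M \le H_2$, $J\cap M$ is normal in $J$; one shows $J$ is nilpotent because its image in $G/M$ is nilpotent and $J \cap M$ is central-ish — actually cleanest is: $J$ maps injectively into $G/M \times G/N$ via $j \mapsto (jM, jN)$, landing in the nilpotent subgroup $\bar H_1 \times \bar H_2$, so $J$ is nilpotent).

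Next I would verify the linear-character condition: the restricted character $\theta := \phi_1|_J \cdot \phi_2|_J$ is a linear character of the nilpotent group $J$, and I must check $J \cap N_j \not\le \ker\theta$ for every $N_j \in \mathscr{M}\cup\mathscr{N}$. Take $N_j \in \mathscr{N}$, so $N_j \le N$. Then $J \cap N_j = H_1 \cap H_2 \cap N_j = H_1 \cap N_j$ (since $N_j \le N \le H_2$). On $N_j \le N \le M\cdot N$, the character $\phi_2$ is trivial (it is inflated from $G/N$), so $\theta|_{J\cap N_j} = \phi_1|_{J\cap N_j} = \phi_1|_{H_1 \cap N_j}$, which is nontrivial exactly by the hypothesis $\bar H_1 \cap (N_j/M\cap\ldots)$... here one must be slightly careful because the hypothesis on $\mathcal{I}(G/M;\mathscr{N})$ is phrased in $G/M$; but since $N_j \cap M \le N \cap M = 1$, the image of $N_j$ in $G/M$ is isomorphic to $N_j$ and the image of $H_1$ is $\bar H_1$, so $\bar H_1 \cap \bar N_j \not\le \ker\psi_1$ translates back to $H_1 \cap N_j \not\le \ker\phi_1$. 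By symmetry the same works for $N_j \in \mathscr{M}$, using that $\phi_1$ is trivial on $M$. This shows $\theta$ satisfies the defining condition for $\mathcal{I}(J;\,\mathscr{M}\cup\mathscr{N})$, hence $\mathrm{Ind}_J^G\theta \in \mathcal{I}(G;\,\mathscr{M}\cup\mathscr{N})$, and summing over double cosets gives the homomorphism \eqref{eq:IG_tensor}.

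The main obstacle I anticipate is purely bookkeeping: correctly tracking how the hypotheses phrased in the quotients $G/M$ and $G/N$ pull back to $G$, and confirming at each step that conjugation by a double-coset representative and intersection with $J$ preserve both the nilpotency of the relevant subgroup and the ``$\cap N_j \not\le \ker$'' non-degeneracy condition. The key facts that make all of this go through are: $M \cap N = 1$ (so nothing collapses when passing between $G$, $G/M$, $G/N$); $M$ and $N$ are normal in $G$ (so conjugation respects the setup); subgroups and quotients of nilpotent groups are nilpotent, and a group embedding into a product of nilpotent groups is nilpotent; and the triviality of an inflated character on the subgroup it is inflated past. None of these requires the cyclic hypothesis that appeared in Lemma \ref{lem:RG_tensor}, which is why Mackey's formula is stated here in the greater generality of an arbitrary extension $G/S$.
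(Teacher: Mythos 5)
Your proof is correct and follows essentially the same route as the paper: apply Mackey's formula for a product of induced characters, observe that each $H_1 \cap {}^{g}H_2$ injects into $G/M \times G/N$ (since $M \cap N = 1$) with image in $\bar H_1 \times {}^{g}\bar H_2$ and hence is nilpotent, then use that $\phi_2$ is trivial on $N$ (resp.\ $\phi_1$ on $M$) to reduce the kernel condition for $N_j \in \mathscr{N}$ (resp.\ $\mathscr{M}$) to the given one for $\phi_1$ (resp.\ $\phi_2$). The only differences from the paper's write-up are notational (you use $\bar H_i$/$H_i$ where the paper writes $H_i$/$H_iM$, $H_iN$), and your handling of the kernel verification is if anything cleaner than the paper's slightly abusive identification of $H_\tau \cap S$.
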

\begin{proof}
Choose nilpotent subgroups 
\[H_1\le G/M \quad\text{and}\quad H_2 \le G/N,\]
and take $\psi_1: H_1 \to \C$ and $\psi_2: H_2 \to \C$. We assume that
\[H_1 \cap N_0 \not \le  \ker \psi_1 \quad\text{and}\quad H_2 \cap M_0 \not \le \ker \psi_2 \quad\text{for all}\quad N_0 \in \mathscr{N}, \,\, M_0 \in \mathscr{M}.\]
To prove the lemma, it suffices to show that
\[\Ind_{H_1}^{G/M} \psi_1 \cdot \Ind_{H_2}^{G/N} \psi_2 \in \mathcal{I}(G;\, \mathscr{M}\cup \mathscr{N}).\]
Call this character $\chi$. Viewing $\psi_1$ as a character on $H_1M$ and $\psi_2$ as a character on $H_2N$, we may use Frobenius reciprocity and Mackey's formula \cite[17.3, 17.4]{Huppert98} to rewrite $\chi$ in the form
\[\Ind^{G}_{H_1M}\left( \psi_1 \cdot \res_{H_1M} \left(\Ind_{H_2N }^{G}\psi_2\right)\right) = \sum_{\tau \in B} \Ind^{G}_{H_\tau}\left(\psi_{\tau}\right),\]
where $B$ is some subset of $G$ and  where we have taken
\[H_\tau=H_1M \cap \tau^{-1} H_2N \tau\quad\text{and}\quad \psi_{\tau} = \res_{H_\tau}(\psi_1) \cdot \res_{H_\tau}(\psi_2^{\tau}),\]
 where $\psi_{2}^{\tau}$ denotes the linear character on $\tau^{-1} H_2N \tau$ defined by 
\[x \mapsto \psi_2(\tau x  \tau^{-1}).\]
Since $M \cap N = 1$, the natural projection
\[H_{\tau} \to H_1N/N \times \tau^{-1}H_2M\tau/M\]
is injective, so $H_{\tau}$ is nilpotent, and $\chi$ lies in $\mathcal{I}(G;\,\emptyset)$.

We have
\[H_{\tau} \cap S \,=\, H_1 \times \tau^{-1}H_2 \tau \,\le\, N \times M.\]
The kernel of $\res_{H_{\tau} \cap S}\, \psi_2^{\tau}$ in this group is $H_1 \times \ker \psi_2^{\tau}$, and the kernel of $\res_{H_{\tau} \cap S} \,\psi_1$ is $\ker \psi_1 \times \tau^{-1} H_2 \tau$. For any $N_0$ in $\mathscr{N}$, we see that the former kernel contains $N_0 \cap H_{\tau}$, while the latter kernel does not. So $\psi_{\tau}$ has kenrel not containing $N_0 \cap H_{\tau}$, giving
\[\chi \in \mathcal{I}(G;\, N_0).\]
Repeating this argument for the other subgroups in $\mathscr{N}$ and $\mathscr{M}$ gives the result.
\end{proof}

The following proposition will be needed to handle the abelian part of the socle of $G$ in the proof of Theorem \ref{thm:faithful}.

\begin{proposition}
\label{prop:abelian_socle}
Take $G/N$ to be a finite cyclic extension of a finite abelian group $N$, and take $\mathscr{N}$ to be a set of normal subgroups of $G$ contained in $N$. Then
\[\mathcal{R}(G;\, \mathscr{N}) = \mathcal{I}(G;\, \mathscr{N}).\]
\end{proposition}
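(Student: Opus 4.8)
The plan is to induct on $|G|$, first using the cyclic decomposition \eqref{eq:cyclic_decomp} to reduce to the ``primitive part'' $\mathcal{R}(\sigma N;\mathscr{N})$ for a generator $\sigma$ of $G/N$, and then to analyze that part via a Fitting decomposition of $N$ under conjugation by $\sigma$, combined with Lemmas~\ref{lem:RG_tensor} and \ref{lem:Mackey}.

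The inclusion $\mathcal{I}(G;\mathscr{N})\subseteq\mathcal{R}(G;\mathscr{N})$ is Frobenius reciprocity, exactly as in the proof of Lemma~\ref{lem:TGN_dual}: for nilpotent $H\le G$ and $\psi$ linear on $H$ with $H\cap N_i\not\le\ker\psi$, the character $\Ind_H^G\psi$ is orthogonal to every class function pulled back from $G/N_i$. For the reverse inclusion, fix a generator $\sigma$ of $G/N$ and let $\phi$ be conjugation by $\sigma$ on $N$. By \eqref{eq:cyclic_decomp} it suffices to treat (i) the summands $\Ind_H^G\mathcal{R}(H;\mathscr{N})$ with $N\le H<G$, and (ii) the summands $\mathcal{R}(\sigma^j N;\mathscr{N})$ with $j$ coprime to $[G:N]$. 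In case (i) the triple $(H,N,\mathscr{N})$ again satisfies the hypotheses (since $H/N$ is a subgroup of the cyclic group $G/N$) and $|H|<|G|$, so by induction $\mathcal{R}(H;\mathscr{N})=\mathcal{I}(H;\mathscr{N})$, and $\Ind_H^G\mathcal{I}(H;\mathscr{N})\subseteq\mathcal{I}(G;\mathscr{N})$ by transitivity of induction. In case (ii), replacing $\sigma$ by $\sigma^j$, we may assume $j=1$.

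For the primitive part $\mathcal{R}(\sigma N;\mathscr{N})$, apply Fitting's lemma to the endomorphism $\phi-1$ of the finite abelian group $N$ to obtain a $\phi$-stable (hence normal in $G$, since $\phi$ commutes with $\phi-1$) decomposition $N=A\oplus W$ on which $\phi-1$ is nilpotent and invertible respectively; then $W\subseteq(\phi-1)N$, and Fitting's lemma applied to the submodule $N_i$ gives $N_i=(N_i\cap A)\oplus(N_i\cap W)$. Since a class function supported on $\sigma N$ is, as a function of its $N$-coordinate, invariant under translation by $(\phi-1)N$ and hence by $W$, writing $u=u_A+u_W$ for $u\in N_i$ shows that $\sum_{u\in N_i}f(\sigma tu)$ is a nonzero multiple of $\sum_{u_A\in N_i\cap A}f(\sigma tu_A)$; therefore $\mathcal{R}(\sigma N;\mathscr{N})=\mathcal{R}(\sigma N;N_1\cap A,\dots,N_k\cap A)$, and it suffices to put this last space inside $\mathcal{I}(G;N_1\cap A,\dots)\subseteq\mathcal{I}(G;\mathscr{N})$. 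If $W\neq 1$, Lemma~\ref{lem:RG_tensor} (with $S=N=W\times A$, imposing no conditions on the $W$-factor) gives an isomorphism $\mathcal{R}(\sigma(N/W);N_1\cap A,\dots)\otimes\mathcal{R}(\sigma(N/A);\emptyset)\xrightarrow{\sim}\mathcal{R}(\sigma N;N_1\cap A,\dots)$ by multiplication of class functions; the second tensor factor is one-dimensional (invertibility of $\phi-1$ on $N/A\cong W$ forces the coset $\sigma(N/A)$ to be a single conjugacy class of $G/A$) and hence lies in $\mathcal{I}(G/A;\emptyset)$ by Artin's induction theorem, while the first factor lies in $\mathcal{I}(G/W;N_1\cap A,\dots)$ by the inductive hypothesis applied to $G/W$ (valid since $|G/W|<|G|$). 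Multiplying and invoking Lemma~\ref{lem:Mackey} places the product in $\mathcal{I}(G;N_1\cap A,\dots)$, as needed. If instead $W=1$, then $\phi-1$ is nilpotent on all of $N$, so $G$ is nilpotent; here $\mathcal{R}(\sigma N;\mathscr{N})$ is spanned by the class functions equal to $\psi$ on $\sigma N$ and $0$ elsewhere, for $\psi$ a $\phi$-fixed character of $N$ with $\psi|_{N_i}\neq 1$, and each such function is a $\C$-linear combination of the $[G:N]$ linear extensions of $\psi$ to $G$; any such extension $\chi$ satisfies $G\cap N_i=N_i\not\le\ker\chi$, so $\chi=\Ind_G^G\chi\in\mathcal{I}(G;\mathscr{N})$ because $G$ itself is a nilpotent subgroup.

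The step I expect to be the crux is the handling of the primitive part: recognizing that one should split $N$ by Fitting's lemma into the ``unipotent direction'' $A$ (on which $G/W$ becomes nilpotent) and its complement $W$; verifying that the defining character-sum conditions for $\mathcal{R}(\sigma N;\mathscr{N})$ do not see the $W$-direction, so that each $N_i$ may be shrunk to $N_i\cap A$; and then realizing that Lemmas~\ref{lem:RG_tensor}–\ref{lem:Mackey} let one strip $W$ off entirely, landing on the strictly smaller group $G/W$. Once that is in place the nilpotent case is immediate, since a nilpotent $G$ is its own nilpotent overgroup of $N$.
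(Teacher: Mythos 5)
Your proof is correct, and it takes a genuinely different route from the paper's. The paper picks an irreducible $\chi\in\mathcal{R}(G;\mathscr{N})$, reduces to $\chi$ linear by inducting from a proper subgroup otherwise, and then --- after arranging $\ker\chi\cap Z(G)=1$ --- splits $N=Z(G)\times[G,G]$ using the exact sequence $0\to Z(G)\to N\xrightarrow{x\mapsto[\sigma,x]}[G,G]\to 0$, replaces $\mathscr{N}$ by the minimal subgroups of the cyclic group $Z(G)$, and finishes by a three-way case split: Artin induction when $Z(G)$ is trivial, Theorem~\ref{thm:T(G,N)} when $|Z(G)|$ is a prime power, and Lemma~\ref{lem:Mackey} plus induction when $|Z(G)|$ has two distinct prime factors. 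You instead work directly with the space $\mathcal{R}(\sigma N;\mathscr{N})$ and decompose $N$ by Fitting's lemma applied to $\phi-1$, splitting off the invertible part $W$ from the nilpotent part $A$. The two decompositions are analogous but not the same: the paper's uses the first-order kernel $Z(G)=\ker(\phi-1)$ and image $[G,G]=(\phi-1)N$, which form a direct sum only after the extra hypothesis $\ker\chi\cap Z(G)=1$ is arranged, whereas Fitting's stable decomposition $N=A\times W$ is canonical and always available. What your approach buys: no individual character has to be selected, no splitting has to be arranged, the case analysis is binary ($W=1$ or not), and --- most notably --- Theorem~\ref{thm:T(G,N)} is never invoked, so the argument rests only on \eqref{eq:cyclic_decomp}, Lemmas~\ref{lem:RG_tensor} and~\ref{lem:Mackey}, Artin induction, and a direct treatment of the nilpotent base case. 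One small remark: the one-dimensionality of $\mathcal{R}(\sigma(N/A);\emptyset)$ you observe is not needed, since every class function on $G/A$ already lies in $\mathcal{I}(G/A;\emptyset)$ by Artin induction.
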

\begin{proof}
Suppose the proposition were not true, and choose a counterexample $(G, N, \mathscr{N})$ where $|G|$ is minimized. Without loss of generality, we will assume that the center of $G$ is contained in $N$. Take $\sigma$ to be an element in $G$ generating $G/N$.

Take $\chi$ to be an irreducible character of $G$ whose kernel contains no subgroup in $\mathscr{N}$. Such $\chi$ generate $\mathcal{R}(G;\, \mathscr{N})$ by the orthogonality of irreducible characters, so we will establish a contradiction if we can prove that $\chi$ lies in $\mathcal{I}(G;\,\mathscr{N})$.

Since $N$ is abelian, its irreducible characters are all linear. Thus, since $G/N$ is cyclic, $\chi$ must be induced from a $1$-dimensional character $\psi$ of some subgroup $H$ of $G$ containing $N$ \cite[Theorem 6.22]{Isaacs76}. By induction, $\psi$ must lie in $\mathcal{I}(H;\, \mathscr{N})$ unless $H = G$. So we may assume that $\chi$ is a linear character of $G$

Take $K$ to be kernel of $\chi$ in $G$, and take $Z$ to be the center of $G$. Taking $H = K \cap Z$, we first suppose that $H$ is nontrivial. Note that the preimage of a nilpotent subgroup of $G/H$ in $G$ is nilpotent. So the projection $G \to G/H$ defines a map
\[\mathcal{I}(G/H;\, \mathscr{N}_1) \to \mathcal{I}(G;\, \mathscr{N}_2),\]
where $\mathscr{N}_1$ is the set of normal subgroups of $G/H$ not contained in $K/H$, and $\mathscr{N}_2$ is the set of normal subgroups $N$ of $G$ such that $N$ is not contained in $K$. We have $\mathscr{N}_2 \supseteq \mathscr{N}$, so the induction step shows that $\chi$ lies in $\mathcal{I}(G;\, \mathscr{N})$.

So we may assume $K \cap Z$ is trivial. Since $G/N$ is generated by $\sigma$ and $N$ is abelian, the map $\tau: N \to N$ defined by $\tau(x) =  [\sigma, x]$ fits in an exact sequence
\[0 \to Z \to N \xrightarrow{\,\,\tau\,\,} G_0 \to 0,\]
where $G_0$ is the derived subgroup of $G$. Since $K \cap Z$ is trivial, this sequence splits, so $N = Z \times G_0$. Furthermore, since any normal subgroup $M$ of $G$ contained in $N$ will also contain $\tau(M)$, we find that normal subgroups of $G$ in $N$ may be written in the form
\[Z_1 \times G_{01}\quad\text{with}\quad Z_1 \le Z \,\text{ and }\, G_{01} \le G_0.\]
If $Z_1 \times G_{01}$ lies in $\mathscr{N}$, then $Z_1$ must be nontrivial since $\chi$ lies in $\mathcal{R}(G;\,\mathscr{N})$. So, taking $\mathscr{N}_3$ to be the set of minimal subgroups of $Z$, we have
\[\mathcal{I}(G;\, \mathscr{N}_3) \subseteq \mathcal{I}(G;\,\mathscr{N}) \quad\text{and}\quad \chi \in \mathcal{R}(G;\, \mathscr{N}_3),\]
with the second claim following from $K \cap Z = 1$. So we may assume that $\mathscr{N}_3= \mathscr{N}$.

If $Z = 1$, the claim follows from Artin's induction theorem. If $|Z|$ is a prime power, it follows from Theorem \ref{thm:T(G,N)}. So suppose $|Z|$ is divisible by at least two distinct primes. Taking $p_1, \dots, p_r$ to be the disinct primes dividing $|Z|$, we take $A$ to be the subgroup of $Z$ of order $p_1$ and $B$ to be the subgroup of order $p_2 \dots p_r$. We take $\mathscr{A}$ to consist of the minimal subgroups of $A$ and $\mathscr{B}$ to consist of the minimal subgroups of $B$.  There are then integers $a, b$ with $a + b = 1$ such that $\chi^a$ is in the image of $\mathcal{R}(G/A, \mathscr{B})$ and $\chi^b$ is in the image of $\mathcal{R}(G/B, \mathscr{A})$. The claim then follows from the induction step and Lemma \ref{lem:Mackey}.
\end{proof}

\begin{proposition}
\label{prop:fake_socle}
Choose a finite group $G$, an abelian normal subgroup $A$ of $G$, and normal subgroups $N_1, \dots, N_k$ of $G$ such that the natural map
\[A \times N_1 \times \dots \times N_k \to G\]
is an injective homomorphism. 
Take $\mathscr{A}$ to be a set of normal subgroups of $G$ contained in $A$.

Then
\[\mathcal{R}(G;\, \mathscr{A}\cup\{ N_1, \dots, N_k\}) = \mathcal{I}(G;\, \mathscr{A}\cup\{N_1, \dots, N_k\}).\]
\end{proposition}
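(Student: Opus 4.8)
The plan is to induct on $|G|$. Write $S$ for the (normal) subgroup of $G$ obtained as the internal direct product of $A, N_1, \dots, N_k$, and abbreviate $\mathscr{S} = \mathscr{A} \cup \{N_1, \dots, N_k\}$, a set of normal subgroups of $G$ contained in $S$. If some member of $\mathscr{S}$ is trivial, both $\mathcal{R}(G;\mathscr{S})$ and $\mathcal{I}(G;\mathscr{S})$ vanish, so I may assume every $N_i$ is nontrivial (and, since then $\mathscr{A} \subseteq \{1\}$ would force a trivial member, that $A \ne 1$ whenever $\mathscr{A} \ne \emptyset$). The first step is to reduce to the case that $G/S$ is cyclic. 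Lemma~\ref{lem:faith_cyclic}, applied with $N = S$, says it is enough to prove the statement for every $H$ with $S \le H \le G$ and $H/S$ cyclic; and for any such $H$ the data $(H, A, N_1, \dots, N_k, \mathscr{A})$ again satisfies the hypotheses of the proposition, so the inductive hypothesis disposes of every proper $H$. Thus I may assume $G/S$ is cyclic and fix a generator $\sigma$ of it.

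Next I would feed this into the decomposition \eqref{eq:cyclic_decomp} for the cyclic extension $G/S$, which expresses $\mathcal{R}(G;\mathscr{S})$ as a sum of the spaces $\mathcal{R}(\sigma^j S;\mathscr{S})$ with $\gcd(j,[G:S])=1$ together with $\sum_H \Ind_H^G\mathcal{R}(H;\mathscr{S})$ over proper subgroups $H \supseteq S$. Each such $H$ falls under the inductive hypothesis, and $\Ind_H^G\mathcal{I}(H;\mathscr{S}) \subseteq \mathcal{I}(G;\mathscr{S})$ by transitivity of induction, so the entire second sum lies in $\mathcal{I}(G;\mathscr{S})$. Since each relevant $\sigma^j$ is itself a generator of $G/S$, it then suffices to show $\mathcal{R}(\sigma S;\mathscr{S}) \subseteq \mathcal{I}(G;\mathscr{S})$ for an arbitrary generator $\sigma$.

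For this last point I would split off a single nonabelian factor. If $k = 0$ then $S = A$ is abelian with $G/A$ cyclic, and Proposition~\ref{prop:abelian_socle} gives $\mathcal{R}(\sigma A;\mathscr{A}) \subseteq \mathcal{R}(G;\mathscr{A}) = \mathcal{I}(G;\mathscr{A})$. If $k \ge 1$, set $M = A N_1 \cdots N_{k-1}$ and $N = N_k$, so that $M$ and $N$ are normal subgroups of $G$ with $S = M \times N$; put $\mathscr{M} = \mathscr{A} \cup \{N_1, \dots, N_{k-1}\}$ and $\mathscr{N} = \{N_k\}$, so $\mathscr{M} \cup \mathscr{N} = \mathscr{S}$. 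Lemma~\ref{lem:RG_tensor} realizes $\mathcal{R}(\sigma S;\mathscr{S})$ as the image, under multiplication of class functions, of $\mathcal{R}(\sigma(S/M);\mathscr{N}) \otimes \mathcal{R}(\sigma(S/N);\mathscr{M})$. In $G/M$ the set $\mathscr{N}$ becomes the single normal subgroup $S/M$ (the isomorphic image of $N_k$ under $G \to G/M$), so Theorem~\ref{thm:T(G,N)} gives $\mathcal{R}(G/M;\mathscr{N}) = \mathcal{I}(G/M;\mathscr{N})$ and hence $\mathcal{R}(\sigma(S/M);\mathscr{N}) \subseteq \mathcal{I}(G/M;\mathscr{N})$. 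In $G/N$ the images of $A$ and of $N_1, \dots, N_{k-1}$, together with the images of the members of $\mathscr{A}$, satisfy the hypotheses of the proposition (here one uses $M \cap N = 1$), and $|G/N| < |G|$, so the inductive hypothesis gives $\mathcal{R}(G/N;\mathscr{M}) = \mathcal{I}(G/N;\mathscr{M})$ and hence $\mathcal{R}(\sigma(S/N);\mathscr{M}) \subseteq \mathcal{I}(G/N;\mathscr{M})$. Combining, every element of $\mathcal{R}(\sigma S;\mathscr{S})$ is a sum of products $f \cdot g$ with $f \in \mathcal{I}(G/M;\mathscr{N})$ and $g \in \mathcal{I}(G/N;\mathscr{M})$, and Lemma~\ref{lem:Mackey} places all such products in $\mathcal{I}(G;\mathscr{M} \cup \mathscr{N}) = \mathcal{I}(G;\mathscr{S})$. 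This closes the induction.

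The argument is really an orchestration of the earlier lemmas, so I expect no single hard computation; the care goes into (i) checking at each reduction that the restricted or quotiented data still satisfies the hypotheses of the proposition --- in particular that $A$ and the $N_i$ stay normal with trivial pairwise intersections and that $\mathscr{A}$ stays inside the abelian part --- and (ii) confirming that the multiplication map of Lemma~\ref{lem:RG_tensor} (which surjects onto $\mathcal{R}(\sigma S;\mathscr{S})$) and that of Lemma~\ref{lem:Mackey} (which carries $\mathcal{I} \otimes \mathcal{I}$ into $\mathcal{I}$) are the same operation, so that the two facts compose. The one genuine idea is to peel off exactly one nonabelian factor $N_k$: that is what makes $G/M$ fall under Theorem~\ref{thm:T(G,N)} --- only a single normal subgroup remains there --- while leaving $G/N_k$ within the scope of the proposition for the induction.
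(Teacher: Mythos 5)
Your proof is correct and uses the same ingredients as the paper's, in the same overall order: reduce to $G/S$ cyclic via Lemma~\ref{lem:faith_cyclic}, reduce to a single coset $\sigma S$ via \eqref{eq:cyclic_decomp}, factor $\mathcal{R}(\sigma S;\mathscr{S})$ via Lemma~\ref{lem:RG_tensor}, treat the factors via Theorem~\ref{thm:T(G,N)} and Proposition~\ref{prop:abelian_socle}, and reassemble with Lemma~\ref{lem:Mackey}. The one genuine organizational difference is how the tensor decomposition is handled. The paper decomposes $\sigma S$ fully in one step, writing
\[
\mathcal{R}(\sigma S;\mathscr{S}) \cong \mathcal{R}(\sigma(S/M_0);\mathscr{A}) \otimes \bigotimes_{i\le k} \mathcal{R}(\sigma(S/M_i);\,N_i),
\]
with $M_0 = N_1\cdots N_k$ and $M_i = A\prod_{j\ne i}N_j$, then applies Proposition~\ref{prop:abelian_socle} to the first factor and Theorem~\ref{thm:T(G,N)} to each of the others before a single application of Lemma~\ref{lem:Mackey}. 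You instead peel off one nonabelian factor $N_k$, invoke Theorem~\ref{thm:T(G,N)} on $G/M$, and feed the remaining data into the inductive hypothesis for $G/N_k$; Proposition~\ref{prop:abelian_socle} then serves as the $k=0$ base case. Your variant needs the inductive hypothesis in one extra place (on the quotient $G/N_k$ as well as on proper subgroups of $G$), but in exchange Lemmas~\ref{lem:RG_tensor} and~\ref{lem:Mackey} each get applied only once per inductive step, so the verification that the hypotheses persist is a bit lighter. Both are correct; the difference is bookkeeping rather than a new idea.
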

\begin{proof}
Working inductively, we may assume that the proposition has been shown for all groups of order less than $|G|$.

Take $N = A \times N_1 \times \dots \times N_k$. By Lemma \ref{lem:faith_cyclic}, we may assume that $G/N$ is cyclic. Choose $\sigma \in G$. 
From \eqref{eq:cyclic_decomp} and the induction step, we see that it suffices to prove that
\[ \mathcal{R}(\sigma N; \, \mathscr{A}\cup\{N_1, \dots, N_k\})\subseteq \mathcal{I}(G;\, \mathscr{A}\cup\{ N_1, \dots, N_k\}).\]

Take $M_0 = N_1 \times \dots \times N_k$. For $1 \le i \le k$, take 
\[M_i = A \times \prod_{j \ne i} N_j.\]
By repeatedly applying Lemma \ref{lem:RG_tensor}, we have
\[\mathcal{R}(\sigma N;\, \mathscr{A}\cup\{ N_1, \dots, N_k\}) \cong \mathcal{R}(\sigma (N/M_0);\,\mathscr{A}) \otimes  \bigotimes_{i \le k}\mathcal{R}(\sigma (N/M_i);\,N_i).\]
By Proposition \ref{prop:abelian_socle} and Theorem \ref{thm:T(G,N)}, this right hand side is a subspace of
\[\mathcal{I}(G/M_0;\,\mathscr{A}) \otimes  \bigotimes_{i \le k}\mathcal{I}(G/M_i;\,N_i).\]
By Lemma \ref{lem:Mackey}, multiplication of class functions takes this last space into
\[\mathcal{I}(G;\, \mathscr{A}\cup\{ N_1, \dots, N_k\}),\]
giving the result.
\end{proof}

\begin{proof}[Proof of Theorem \ref{thm:faithful}]
Choose a finite group $G$, and take $\mathscr{N}$ to be the collection of distinct minimal normal subgroups of $G$. We claim that
\begin{equation}
\label{eq:complex_faithful}
\mathcal{R}(G; \mathscr{N}) = \mathcal{I}(G; \,\mathscr{N}).
\end{equation}
This will imply Theorem \ref{thm:faithful}. To see this, take $\mathcal{R}_{\QQ}(G; \, \mathscr{N})$ to be the $\QQ$-linear combinations of irreducible faithful characters of $G$, and take $\mathcal{I}_{\QQ}(G; \, \mathscr{N})$ to be the $\QQ$-vector subspace of this spanned by the characters considered in Definition \ref{defn:RI}. Then we have
\[\mathcal{R}_{\QQ}(G; \, \mathscr{N}) \otimes \C \cong \mathcal{R}_{\QQ}(G; \, \mathscr{N})\quad\text{and}\quad  \mathcal{I}_{\QQ}(G; \, \mathscr{N}) \subseteq \mathcal{R}_{\QQ}(G; \, \mathscr{N}).\]
The relation \eqref{eq:complex_faithful} implies $\mathcal{R}(G; \, \mathscr{N})$ and $\mathcal{I}(G; \, \mathscr{N})$ have the same dimension as complex vector spaces. The above relations then show
\[\dim_{\QQ}\mathcal{R}_{\QQ}(G; \, \mathscr{N}) = \dim_{\QQ}\mathcal{I}_{\QQ}(G; \, \mathscr{N}),\]
implying that these vector spaces are equal and giving the theorem.

To prove \eqref{eq:complex_faithful},we consider the socle $\text{soc}(G)$ of the group $G$, which is the minimal normal subgroup of $G$ containing all minimal normal subgroups of $G$. This group takes the form
\[\text{soc}(G) \cong A \times N_1 \times \dots \times N_k,\]
where the $N_i$ are the  nonabelian minimal normal  subgroups of $G$, and where $A$ is an abelian normal subgroup containing all of the abelian minimal normal subgroups of $G$ \cite[Lemma 42.9]{Huppert98}. Taking $\mathscr{A}$ to be the set of abelian minimal normal subgroups of $G$, we see that the result follows from Proposition \ref{prop:fake_socle}.
\end{proof}

\section{Bounding smoothed character sums}
\label{sec:Lbound}

In this section, we begin by recalling the definition of Artin $L$-functions $L(s,\chi)$ and some of their basic analytic properties (e.g., the convexity bound).  The main result of this section, Proposition~\ref{prop:smoothed_character_sum}, uses these properties to give an approximation for the sum of the coefficients of $L(s,\chi)$ and related series over squarefree ideals. 

\subsection{Artin $L$-functions}

We begin by recalling the definition of Artin $L$-functions, in part to demonstrate the notation we shall use.  Let $F$ be a number field with absolute Galois group $G_F$ and degree $n$ over $\mathbb{Q}$, and let $\chi\colon G_F \to \mathbb{C}$ be a character of $G_F$ with degree $d$.  That is, there is a representation $\rho\colon G_F \to \mathrm{GL}_d(\mathbb{C})$ such that $\chi = \mathrm{tr} \rho$.  Let $K/F$ be the extension corresponding to $\chi$, i.e. the kernel field $\overline{F}^{\ker \rho}$.  For any prime $\mfp$ of $F$, let $D_\mfp$ and $I_\mfp$ denote the decomposition and inertia groups associated with a fixed prime $\mathfrak{P}$ of $K$ lying over $\mfp$.  Let $V$ be the space underlying $\rho$, and observe that $D_\mfp / I_\mfp$ acts on $V^{I_\mfp}$, the subspace of $V$ fixed by the inertia subgroup.  Letting $\sigma_\mfp$ denote the Frobenius element in $D_\mfp / I_\mfp$, we define the local Euler factor $L_\mfp(s,\chi)$ by
    \[
        L_\mfp(s,\chi)
            := \det\left( 1 - (\mathrm{N} \mfp)^{-s}\rho(\sigma_\mfp) | V^{I_\mfp} \right)^{-1}.
    \] 
    We then define $L(s,\chi)$ to be the product over prime ideals of the local factors, that is
    \[
        L(s,\chi)
            := \prod_{\mfp} L_\mfp(s,\chi).
    \]
Note that for every prime $\mfp$ of $F$, there exist ``local roots'' $\alpha_{1}(\mfp),\dots,\alpha_d(\mfp) \in \mathbb{C}$ of absolute value at most $1$ such that
    \begin{equation} \label{eqn:local-roots}
        L_\mfp(s,\chi) = \prod_{i=1}^d (1 - \alpha_i(\mfp) (\mathrm{N} \mfp)^{-s} )^{-1}.
    \end{equation}
For primes $\mfp$ that are unramified in $K$, each $\alpha_i(\mfp)$ is a root of unity and $\alpha_1(\mfp) + \dots + \alpha_d(\mfp) = \chi(\mathrm{Frob}_\mfp)$.  For primes $\mfp$ that are ramified in $K$, there is some $d_\mfp \leq d$ such that (reordering if necessary) $\alpha_1(\mfp),\dots,\alpha_{d_\mfp}(\mfp)$ are roots of unity and $\alpha_i(\mfp) = 0$ for $d_\mfp + 1 \leq i \leq d$.

With $\chi$ as above, we define the number $r = r(\chi)$ by means of the expression
    \begin{equation} \label{eqn:order-pole}
        r := \langle \chi, \mathbf{1} \rangle_G = \frac{1}{|G|} \sum_{g \in G} \chi(g),
    \end{equation}
where $G = \mathrm{Gal}(K/F)$ and we regard $\chi$ as a character of $G$.  Since $\chi$ is a character and $r$ is the multiplicity of the trivial representation inside the representation $\rho$ associated with $\chi$, we have that $r$ is an integer satisfying $0 \leq r \leq d$ and that $r = \mathrm{ord}_{s=1} L(s,\chi)$.

Let $\mathfrak{f}_\chi$ be the Artin conductor associated with $\chi$, for example following \cite[p. 533]{Neuk99}, and define the quantity $Q_\chi$ by $Q_\chi := \Delta_F^d \mathrm{N} \mathfrak{f}_\chi$.  So doing, the Artin $L$-function $L(s,\chi)$ satisfies a functional equation of the form
\begin{equation}\label{eqn:artin-fe}
    L(s, \chi) 
        = w \cdot \pi^{n d (s-1) } \cdot Q_{\chi}^{\frac{1 - 2s}{2}}\cdot  2^{nd s} \cdot \Gamma(1-s)^{n d} \cdot \sin\left(\frac{\pi s}{2}\right)^{r_1}\cdot \cos\left(\frac{\pi s}{2}\right)^{r_2} \cdot L\left(1-s, \overline{\chi}\right)
\end{equation}
where $r_1,$ and $r_2$ are non-negative integers satisfying $r_1+r_2=nd$ and where $w \in \mathbb{C}$ has modulus $1$ \cite[Theorem 12.6]{Neuk99}.

We now record an explicit form of the convexity bound for $L(s,\chi)$. We will take the notation
\[Q_{\chi}(t) = Q_{\chi} \cdot (1 + |t|)^{nd}.\]

\begin{lemma}
\label{lem:Lind}
    Let $L(s,\chi)$ be an Artin $L$-function of degree $d$ over a number field $F$ of degree $n$, and suppose that $(s-1)^r L(s, \chi)$ is entire for some $0 \leq r \leq d$.
    There for all $0 < \delta <1/2$ 
    and all complex $s = \sigma + it$ with $- \delta \le \sigma \le 1 + \delta$, we have
    \[
        \left|\left(\frac{s-1}{s+1}\right)^r L(s, \chi) \right|
            \leq 3^{2d} e^{nd} \delta^{-d} \cdot  Q_{\chi}(t)^{\frac{1+\delta-\sigma}{2}} \cdot \left( 3 + \frac{\log \Delta_F}{2n}\right)^{nd}.
    \]

\end{lemma}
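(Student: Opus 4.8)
The plan is to prove the convexity bound by the standard Phragm\'en--Lindel\"of argument applied to the auxiliary entire function
\[
    F(s) = \left(\frac{s-1}{s+1}\right)^r L(s,\chi),
\]
which is entire and of finite order on the strip $-\delta \le \sigma \le 1+\delta$. First I would establish the bound on the two vertical edges of a slightly wider strip. On the line $\sigma = 1+\delta$, the Euler product converges absolutely, and bounding each local factor via \eqref{eqn:local-roots} by $(1 - (\mathrm{N}\mfp)^{-(1+\delta)})^{-d}$ gives $|L(s,\chi)| \le \zeta_F(1+\delta)^d$; then $\zeta_F(1+\delta) \le \zeta(1+\delta)^n \cdot (\text{something involving } \Delta_F)$, and more carefully one uses the bound $\zeta_F(1+\delta) \ll \delta^{-1}(3 + \frac{\log\Delta_F}{2n})^n$ coming from a standard estimate (e.g. splitting the Dedekind zeta Dirichlet series at $\mathrm{N}\mfa \approx \Delta_F^{1/n}/\delta$ and using the ideal-counting bound). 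The factor $|(s-1)/(s+1)|^r \le 1$ on $\mathrm{Re}(s) \ge 0$. On the line $\sigma = -\delta$, I would use the functional equation \eqref{eqn:artin-fe} to reflect to $\sigma = 1+\delta$: the gamma factors and the $\sin/\cos$ factors are estimated by Stirling, producing the factor $Q_\chi(t)^{\frac{1+2\delta}{2}}$ up to the $2^{nds}\pi^{nd(s-1)}\Gamma(1-s)^{nd}$ contributions, which combine to give a clean power of $Q_\chi(t)$ and a factor like $e^{nd}$; the $|(s-1)/(s+1)|^r$ factor is where the modification of $L$ by $((s-1)/(s+1))^r$ earns its keep, cancelling the pole/zero behavior and staying bounded by $1$ after reflection as well.

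Second, having the bound $|F(s)| \le B \cdot Q_\chi(t)^{\frac{1+2\delta}{2}}$ on $\sigma = -\delta$ and $|F(s)| \le B'$ on $\sigma = 1+\delta$ for appropriate constants $B, B'$ built from $3^{2d}, e^{nd}, \delta^{-d}, (3 + \frac{\log\Delta_F}{2n})^{nd}$, I would interpolate. The subtlety is that $Q_\chi(t)$ depends on $t$, so one applies Phragm\'en--Lindel\"of to $F(s) \cdot Q_\chi^{s/2} \cdot (\text{a compensating factor in } s \text{ for the } (1+|t|)^{nd} \text{ part})$; concretely one multiplies by $Q_\chi^{(s-1-\delta)/2}$ and by a factor like $(\text{something})^{nd(s-1-\delta)/2}$ to kill the $(1+|t|)^{nd}$ growth, reducing to a function bounded on both edges, then invokes the three-lines theorem (Phragm\'en--Lindel\"of in the form valid for finite-order functions on a strip, e.g. \cite[Lemma 12.8]{Neuk99} or the standard reference). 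Tracking the exponent, at a general $s = \sigma + it$ the interpolation gives the weight $Q_\chi(t)^{\frac{1+\delta-\sigma}{2}}$ as claimed, after absorbing the $\delta$-dependence of the edge at $\sigma = 1+\delta$ versus the target strip endpoint.

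The main obstacle I expect is bookkeeping the constants so that they come out exactly as $3^{2d} e^{nd}\delta^{-d}(3 + \frac{\log\Delta_F}{2n})^{nd}$ rather than merely $O_{d,n}(\cdots)$. This requires (i) a clean explicit bound for $\zeta_F(1+\delta)$ with the stated shape, which is the source of both the $\delta^{-d}$ and the $(3 + \frac{\log\Delta_F}{2n})^{nd}$ factors, (ii) an explicit Stirling estimate for the archimedean factors in \eqref{eqn:artin-fe} on the left edge that contributes the $e^{nd}$ and part of the $3^{2d}$, and (iii) verifying that the Phragm\'en--Lindel\"of interpolation does not inflate the constant (it does not, since the interpolated bound is a geometric mean of the edge bounds, both of which are $\le 3^{2d}e^{nd}\delta^{-d}(3+\frac{\log\Delta_F}{2n})^{nd} \cdot Q_\chi(t)^{(\ast)}$). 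The analytic input — that $F$ has finite order on the strip, which is needed to license Phragm\'en--Lindel\"of — follows from Artin's meromorphic continuation together with the functional equation and the boundedness on the right edge, so it is not a genuine difficulty, only something to state.
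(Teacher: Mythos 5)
Your proposal is correct and follows essentially the same route as the paper: bound $L$ on $\Re(s)=1+\delta$ by $\zeta_F(1+\delta)^d$, reflect via the functional equation with an explicit Stirling estimate on $\Re(s)=-\delta$ to get the $Q_\chi(t)^{(1+2\delta)/2}$ factor, and then interpolate by Phragm\'en--Lindel\"of exactly as in \cite[Theorem 5.53]{Iwan04}. The only substantive deviation is your route to $\zeta_F(1+\delta)\ll\delta^{-1}(3+\tfrac{\log\Delta_F}{2n})^n$: you sketch a direct Dirichlet-series split with an ideal-counting bound, whereas the paper bootstraps by applying the already-derived convexity inequality \eqref{eqn:convexity-with-zeta} to the trivial character with a different value of the $\delta$-parameter (namely $\delta_0 = \min(\tfrac12,\tfrac{2n}{\log\Delta_F})$), combined with the trivial bound $\zeta_F(1+\delta_0)\le(1+\delta_0^{-1})^n$; both yield the needed shape, though you should be aware that the naive bound $\zeta_F(1+\delta)\le\zeta(1+\delta)^n$ alone gives $\delta^{-n}$ rather than $\delta^{-1}$, so the refinement you flag is essential, and also that $|(s-1)/(s+1)|^r$ is not $\le 1$ on $\sigma=-\delta$ but only $\le 3^r$, which is harmless but worth stating correctly.
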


\begin{proof}
    First, for any positive $\delta < 1/2$ and any $t \in \mathbb{R}$, we have that
        \[
            \left|L( 1 + \delta + it, \chi) \right|
                \le \zeta_F(1 + \delta)^{d},
        \]
    where $\zeta_F(s)$ is the Dedekind zeta function of $F$, as follows from \eqref{eqn:local-roots}.
    The same bound holds for $L(1+\delta - it, \overline{\chi})$, which implies via the functional equation \eqref{eqn:artin-fe} that
        \begin{align*}
            |L(-\delta + it, \chi)|
                \leq \pi^{-nd(1+\delta)} \cdot 2^{-nd\delta} \cdot Q_{\chi}^{\frac{1+2\delta}{2}} \cdot |\Gamma(1+\delta+it)|^{nd} e^{\frac{\pi nd |t|}{2}}\cdot  \zeta_F(1+\delta)^d.
        \end{align*}

    Applying the explicit error estimate  for Stirling's approximation \cite[(3.11)]{Boyd94} in the case $N = 1$ at $s = 1 + \delta + it$ gives
    \[\left|\Gamma(s) \right| \le \sqrt{2\pi} \cdot \left|s^{s - \frac{1}{2}}\right| \cdot e^{-1 - \delta} \cdot \left(1 + \frac{6}{(2 \pi)^2}\right).\]
    Note that
    \[\left|s^{s - \frac{1}{2}}\right| = |s|^{\frac{1}{2} + \delta}\cdot  e^{-\text{arg}(s) \cdot t}, \]
    where the argument $\text{arg}(s)$ lies in $(-\pi/2, \pi/2)$. Assuming this argument is nonnegative, we then have
    \[t^{-1}(1 + \delta) = \tan\left(\tfrac{1}{2}\pi - \text{arg}(s)\right) \ge \tfrac{1}{2}\pi - \text{arg}(s).\]
    Together with a symmetric argument when $\text{arg}(s)$ is negative, we may conclude
    \[ \left|s^{s - \frac{1}{2}}\right| \le |s|^{\frac{1}{2} + \delta}\cdot \exp\left(1 + \delta - \tfrac{1}{2}\pi |t|\right), \]
    and a computation gives
    \[\left|\Gamma(1 + \delta + it)\right| \le  \pi \cdot (1 + \delta + |t|)^{\frac{1}{2} + \delta} \cdot \exp\left(-\tfrac{1}{2} \pi |t|\right) \le  \pi \cdot (1 + \delta)  \cdot (1 +  |t|)^{\frac{1}{2} + \delta} \cdot \exp\left(-\tfrac{1}{2} \pi |t|\right)\]
So
        \[
            |L(-\delta + it, \chi)|
                \leq Q_\chi(t)^{\frac{1+2\delta}{2}} \cdot \zeta_F(1+\delta)^d.
        \]
    Thus, by the Phragmen--Lindel\"of convexity principle \cite[Theorem 5.53]{Iwan04}, for any $\sigma$ satisfying $-\delta < \sigma < 1+\delta$ and $s = \sigma + it$, we find
        \begin{equation} \label{eqn:convexity-with-zeta}
           \left| \left(\frac{s-1}{s+1}\right)^r L(s, \chi) \right| \leq 3^{\frac{r(1+\delta-\sigma)}{1+2\delta}} \cdot
                 Q_\chi(t) ^{\frac{1+\delta-\sigma}{2}} \cdot  \zeta_F(1+\delta)^d.
        \end{equation}

It remains to bound $\zeta_F(1 + \delta)$. We claim that
\begin{equation}
    \label{eq:zetaF_bnd}
    \left|  \zeta_F(1 + \delta)\right| \le \delta^{-1} \cdot 2\cdot 3^{1/4} \cdot e^{n} \cdot \max\left(1 + \frac{\log \Delta_F}{2n}, 3\right)^n \quad\text{ for } \delta \in (0, 1/2).
\end{equation}
If $F = \QQ$, this is clear from the relationship $\zeta(1 + \delta) \le (1 + \delta^{-1})$. 
Otherwise, take $\chi_0$ to be the trivial character on $G_F$. Applying \eqref{eqn:convexity-with-zeta} with $\delta_0 = \min(\frac{1}{2}, \frac{2n}{\log \Delta_F})$ gives
\[\zeta_F(1 + \delta) \le \delta^{-1} \cdot 2 \cdot 3^{1/4} \cdot e^n  \cdot \zeta_F(1 + \delta_0) \quad\text{ for } \delta \in (0, \delta_0).\]
For $\delta \in (\delta_0, 1/2)$, we instead use the inequality
\[\zeta_F(1 + \delta) < \zeta_F(1 + \delta_0) < \delta^{-1} \cdot \zeta_F(1 + \delta_0).\]
In either case, we fnd that \eqref{eq:zetaF_bnd} follows from the inequality
\[\zeta_F(1 + \delta_0) \le \zeta(1 + \delta_0)^n \le (1 + \delta_0^{-1})^n = \max\left(1 + \frac{\log \Delta_F}{2n}, 3\right)^n.\]
 Combining \eqref{eq:zetaF_bnd} with \eqref{eqn:convexity-with-zeta} then gives the lemma.
\end{proof}

\begin{rmk}
    The hypothesis that $(s-1)^r L(s,\chi)$ is entire is expected to hold for all characters $\chi$, but at present this is known only for characters expressible as a sums of monomial characters by class field theory and for a few scattered other classes of $\chi$ that will not be relevant for our purposes. Here, a \emph{monomial character} is defined as a character induced from a one dimensional character of some open subgroup.
\end{rmk}

\subsection{Acceptable multiplicative functions}
An Artin $L$-function $L(s, \chi)$ defined over a number field $F$ may be written in the form $\sum_{\mfa} \frac{f(\mfa)}{N\mfa^s}$, where $f$ is a multiplicative function on the integral ideals of $F$. If $f_1$, $f_2$ are the multiplicative functions defined this way from characters $\chi_1$, $\chi_2$, and if $f$ is the multiplicative function defined from the product character $\chi_1 \cdot \chi_2$, we find that
\begin{equation}
\label{eq:ff1f2}
f(\mfa) = f_1(\mfa) \cdot f_2(\mfa)
\end{equation}
for all squarefree integral ideals $\mfa$ of $F$ that are divisible by no prime where both $\chi_1$ and $\chi_2$ ramify.

For our applications, we would like \eqref{eq:ff1f2} to hold for all integral ideals. This requires us to modify our approach for defining a multiplicative function from a Galois character.

\begin{defn}
    Let $F$ be a number field, let $\chi\colon G_F \to \C$ be a character of degree $d$, and let $K/F$ be a Galois extension containing the kernel field of $\chi$. We will assume that $L(s, \chi)$ is entire except potentially for a pole at $s = 1$.
    
    An \emph{acceptable multiplicative function} associated with the tuple $(K/F, \chi, S)$ is a multiplicative function $f$ on the ideals $\mathfrak{a}$ of $F$ that is supported on squarefree ideals, and which satisfies $f(\mfp) = \chi(\mathrm{Frob}_\mfp)$ for primes $\mfp \not\in S$ and $|f(\mfp)| \leq d$ for primes $\mfp \in S$.

    Given an acceptable multiplicative function $f$, we let $L(s,f)$ denote its Dirichlet series, that is,
    \[
        L(s,f) 
            := \sum_{\mathfrak{a}} \frac{ f(\mathfrak{a})}{(\mathrm{N} \mathfrak{a})^s} = \prod_{\mfp} \left( 1 + \frac{f(\mfp)}{\mathrm{N}\mfp^s}\right),
    \]
    which is absolutely convergent in the region $\Re(s)>1$.
\end{defn}

We need to show that $L(s, f)$ is not too much larger than $L(s, \chi)$.
\begin{lemma}
\label{lem:Lfs}
Let $f$ be an acceptable multiplicative function associated with a tuple $(K/F, \chi, S)$.  Then for any $s = \sigma + it$ with $\sigma \ge \sigma_0$ for some $\sigma_0 \in (1/2, 1)$,
we have
\[\log\left|\frac{L(s, f)}{L(s,\chi)}\right| \le  \frac{3 n \cdot d^{4 - 4\sigma}}{ 2\sigma - 1} + 2dn\cdot\frac{ (2d^2 + \# S)^{1 - \sigma_0} - 1}{1 - \sigma_0} \]
\end{lemma}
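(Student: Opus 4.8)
The plan is to bound the logarithm of the ratio $L(s,f)/L(s,\chi)$ by comparing their Euler products prime-by-prime and separating the primes in $S$ from those outside $S$. For a prime $\mfp \notin S$, the local factor of $L(s,f)$ is $1 + \chi(\mathrm{Frob}_\mfp)\mathrm{N}\mfp^{-s}$, whereas the local factor of $L(s,\chi)$ is $\prod_{i=1}^d(1-\alpha_i(\mfp)\mathrm{N}\mfp^{-s})^{-1}$ with $\sum_i \alpha_i(\mfp) = \chi(\mathrm{Frob}_\mfp)$ (and all $|\alpha_i(\mfp)|\le 1$, with the $\alpha_i$ roots of unity when $\mfp$ is unramified). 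So the local logarithmic ratio is $\log(1 + \chi(\mathrm{Frob}_\mfp)x) + \sum_i \log(1-\alpha_i(\mfp)x)$ where $x = \mathrm{N}\mfp^{-s}$. Expanding both logarithms as power series in $x$, the linear terms cancel exactly because $\sum_i\alpha_i(\mfp) = \chi(\mathrm{Frob}_\mfp)$, so the difference is $O(x^2)$ with explicit constants: $|\log(1+wx)| $-type bounds give a contribution starting at the quadratic term, bounded by something like $\sum_{k\ge 2} \frac{(d+1)d^{k-1}}{k}|x|^k$ or a cleaner majorant. For primes $\mfp \in S$, I bound each local factor crudely using $|f(\mfp)|\le d$ and $|\alpha_i(\mfp)|\le 1$, getting a bound on $|\log(1+f(\mfp)\mathrm{N}\mfp^{-s})| + \sum_i|\log(1-\alpha_i(\mfp)\mathrm{N}\mfp^{-s})|$ of roughly $(d + d)\cdot\frac{\mathrm{N}\mfp^{-\sigma}}{1-\mathrm{N}\mfp^{-\sigma}} \le 2d\cdot 2\mathrm{N}\mfp^{-\sigma}$ (using $\sigma > 1/2$ so $\mathrm{N}\mfp^{-\sigma}\le 2^{-1/2} < 1/2$... actually one needs $\mathrm{N}\mfp \ge 2$ so $\mathrm{N}\mfp^{-\sigma}\le 2^{-\sigma_0}$, tuning the constant).

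Next I would sum these local estimates over all primes. For the $\mfp\notin S$ contribution I use the quadratic-or-higher bound, so the sum is controlled by $\sum_\mfp \sum_{k\ge 2} c_k \mathrm{N}\mfp^{-k\sigma}$; since $\sigma \ge \sigma_0 > 1/2$, each inner sum over $\mfp$ is dominated by $\sum_\mfp \mathrm{N}\mfp^{-k\sigma} \le n\sum_{m\ge 2} m^{-k\sigma} \le n\cdot\frac{2^{1-k\sigma}}{k\sigma - 1}$ (counting at most $n$ primes above each rational prime, comparing with $\zeta$). Plugging in and summing the geometric-type series in $k$ with the $d^{k-1}/k$ weights produces a bound of the shape $\frac{C\, n\, d^{4-4\sigma}}{2\sigma-1}$; the exponent $4-4\sigma$ and denominator $2\sigma-1$ should fall out of the $k=2$ term being $\asymp d^2\mathrm{N}\mfp^{-2\sigma}$ summed against $\sum_\mfp$, i.e. $\asymp \frac{n d^2 2^{1-2\sigma}}{2\sigma-1}$, and bounding $d^2 \cdot(\text{tail in }k)$ by $d^{4-4\sigma}$ after a convexity/monotonicity manipulation in $\sigma$. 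For the $\mfp\in S$ contribution, I sum $4d\,\mathrm{N}\mfp^{-\sigma}$ over the at most... here I need to know how many primes are in $S$: the statement's bound involves $\#S$ and also $2d^2$, suggesting $S$ is enlarged or that ramified primes (of which there are $O(d^2)$-ish worth, counted with the degree-$n$ multiplicity, controlled by $\log$ of the conductor or simply bounded in number) get folded in. I would bound $\sum_{\mfp\in S'}\mathrm{N}\mfp^{-\sigma}$ where $S'$ has size $\le 2d^2 + \#S$ by its worst case: the sum is maximized by taking the $|S'|$ smallest prime norms, giving $\le \sum_{j\le |S'|} p_j^{-\sigma}$, and a crude bound $\sum_{j\le M} j^{-\sigma} \le \int_0^M t^{-\sigma}dt = \frac{M^{1-\sigma}}{1-\sigma}$; with $\sigma \ge \sigma_0$ this gives $\le \frac{M^{1-\sigma_0}}{1-\sigma_0}$, and multiplying by the per-prime constant $2dn$ (after accounting for the factor-of-$n$ prime-counting and the $2d$ from $|f(\mfp)|\le d$ plus the $d$ local roots) yields exactly the second term $2dn\cdot\frac{(2d^2+\#S)^{1-\sigma_0}-1}{1-\sigma_0}$.

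The main obstacle I anticipate is getting the \emph{explicit constants} to match the stated inequality rather than the qualitative shape — in particular, correctly accounting for (i) the number-of-primes-above-$p$ factor $n$ without losing a $\zeta(2\sigma)$-type constant, (ii) folding the ramified primes into the ``$2d^2$'' and handling that at a ramified prime $\mfp$ the local $L$-factor of $\chi$ has $d_\mfp \le d$ roots of unity plus zeros so the trivial bound still works, and (iii) choosing the right elementary inequality for $\log(1+wx)+\sum\log(1-\alpha_i x)$ so that the cancellation of linear terms is visible and the remainder is cleanly majorized by a series summing to $\frac{3n\,d^{4-4\sigma}}{2\sigma-1}$ — the exponent $4-4\sigma$ in particular looks like it comes from bounding $\sum_{k\ge 2}\frac{d^{k}}{k}2^{-k\sigma}\cdot(\text{stuff})$ by pulling out $d^{2}$ from $k=2$ and then $d^{2-2\sigma}$ (since $2^{-2\sigma}\le d^{-2\sigma}$ fails for small $d$... so more care is needed, perhaps using $\mathrm{N}\mfp \ge 2$ only for the ramified primes and a genuinely different argument for the bulk). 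Everything else is routine: termwise power-series expansion, geometric series summation, and comparison of $\sum_\mfp \mathrm{N}\mfp^{-k\sigma}$ with the Riemann zeta function via $[F:\mathbb{Q}] = n$.
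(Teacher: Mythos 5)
Your overall strategy---compare Euler products term by term, exploit the cancellation of the linear terms $\chi(\mathrm{Frob}_\mfp)(\mathrm{N}\mfp)^{-s}$ for $\mfp\notin S$, use the trivial $\log|1+x|\le|x|$ bound for primes in $S$, and estimate $\sum_\mfp(\mathrm{N}\mfp)^{-k\sigma}$ via an integral comparison with a factor $n$ for primes above each rational prime---is the same strategy the paper uses, and you have the right picture for the second term of the bound. However, there is a genuine gap in your account of where $2d^2$ and the exponent $4-4\sigma$ come from, and you flag it yourself (``so more care is needed, perhaps \dots\ a genuinely different argument for the bulk'').

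The $2d^2$ in the statement has nothing to do with ramified primes or enlarging $S$. It is a \emph{threshold on prime norms}. The paper splits the primes into two sets: those with $\mfp\in S$ \emph{or} $\mathrm{N}\mfp\le 2d^2$, where one uses only $\log|1+x|\le|x|$ to get $a_\mfp\le 2d(\mathrm{N}\mfp)^{-\sigma}$ and then sums over a set of at most roughly $2d^2+\#S$ norms; and those with $\mfp\notin S$ \emph{and} $\mathrm{N}\mfp>2d^2$, where the linear terms cancel. The threshold $\mathrm{N}\mfp\ge 2d^2$ (together with $\sigma>1/2$) is precisely what forces $d(\mathrm{N}\mfp)^{-\sigma}\le 2^{-1/2}$, which is what makes the power-series remainder converge with a small absolute constant:
\[
\sum_{k\ge 2}\frac{d^k+d}{k}(\mathrm{N}\mfp)^{-k\sigma}
\;\le\; d^2(\mathrm{N}\mfp)^{-2\sigma}\sum_{k\ge 2}\frac{2^{2-k/2}}{k}
\;<\; 3d^2(\mathrm{N}\mfp)^{-2\sigma}.
\]
Without a lower bound on $\mathrm{N}\mfp$ in terms of $d$ one cannot get a constant like this, which is the difficulty you ran into. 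Finally, the exponent $4-4\sigma$ arises not from any power-series manipulation in $d$ and $2$, but from the \emph{lower limit} of the integral: summing the quadratic bound over $\mathrm{N}\mfp>2d^2$ gives $3d^2 n\int_{2d^2}^\infty x^{-2\sigma}\,dx=\tfrac{3nd^2(2d^2)^{1-2\sigma}}{2\sigma-1}$, and since $\sigma>1/2$ gives $2^{1-2\sigma}\le 1$, this is at most $\tfrac{3nd^{4-4\sigma}}{2\sigma-1}$. So the missing idea is: introduce the norm threshold $\mathrm{N}\mfp>2d^2$ (not a ramification count) in order to make the cancellation-based bound quantitative, and then let the threshold appear as the lower endpoint of the integral, which is what produces $d^{4-4\sigma}$.
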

\begin{proof}
For any prime $\mfp$ of $F$, take
    \[
        a_{\mfp} = \log|1 + f(\mfp) \cdot (\mathrm{N} \mfp)^{-s}| - \log |L_{\mfp}(s,\chi)|,
    \]
where $L_{\mfp}$ denotes the Euler factor of $L(s,\chi)$ at $\mfp$. 
By \eqref{eqn:local-roots} and the surrounding discussion, we may write the second term above in the form
    \[
        - \log L_{\mfp}(\chi, s) 
            = \log(1 - \alpha_{1}(\mfp) (\mathrm{N} \mfp)^{-s}) + \dots + \log (1 - \alpha_{d_{\mfp}}( \mfp) (\mathrm{N} \mfp)^{-s}),
    \]
where $d_{\mfp} \le d$ and the $\alpha_{i}(\mfp)$ are all roots of unity. From these formulae
and the simple inequality $\log |1+x| \leq |x|$
we may conclude
    \begin{equation} \label{eqn:basic-ap-inequality}
        a_{\mfp}  \le 2d (\mathrm{N}\mfp)^{-\sigma}
    \end{equation}
for all primes $\mfp$. If $\mfp$ is outside $S$ and $\mathrm{N} \mfp \ge 2d^2$, we have a stronger inequality: the Taylor series for $\log(1+x)$ gives
    \begin{equation} \label{eqn:better-ap-inequality}
        a_{\mfp} 
            \le \sum_{k \ge 2}\frac{ d^k + d}{k} (\mathrm{N} \mfp)^{-k\sigma} 
            \le d^2 \cdot (\mathrm{N} \mfp)^{-2\sigma} \cdot \sum_{k \ge 2} \frac{2^{2-k/2}}{k} 
            < 3d^2 \cdot (\mathrm{N} \mfp)^{-2\sigma}.
    \end{equation}
    
Now, given an integer $m > 1$, there are at most $n$ primes in $F$ with norm $m$.  Applying this fact with \eqref{eqn:basic-ap-inequality}, we find
    \[
        \sum_{\mfp \in S} a_{\mfp} + \sum_{\mathrm{N} \mfp \le 2d^2} a_{\mfp} 
            \le 2dn \cdot \int_1^{2d^2 + \# S}x^{-\sigma }dx  
            \le 2dn \cdot\frac{ (2d^2 + \# S)^{1 - \sigma_0} - 1}{1 - \sigma_0},
\]
while by invoking \eqref{eqn:better-ap-inequality}, we find
    \[
        \sum_{\substack{\mfp \not \in S \\ \mathrm{N} \mfp > 2d^2}} a_{\mfp} 
            \le 3d^2 n \int_{2d^2}^{\infty} x^{-2\sigma} dx 
            \le \frac{3 n \cdot d^{4 - 4\sigma}}{ 2\sigma - 1}.
    \]
Since 
    \[
        \log \left| \frac{L(s,f)}{L(s,\chi)} \right|
            = \sum_{\mfp} a_{\mfp},
    \]
the lemma follows.
\end{proof}

We will apply Lemma~\ref{lem:Lfs} in two special cases.

\begin{lemma}
\label{lem:L_line_circle}
Choose $\delta$ in $(0, 1/4]$. If we take $s = 1/2 + \delta + it$ with $t$ real, we have
\[\left| L(s, f)\right| \le  \left(\log e^3 Q_{\chi}(t)\right)^d \cdot \left(\log e^3 \Delta_F\right)^{nd} \cdot Q_{\chi}(t)^{\frac{1 - 2\delta}{4}} \cdot \exp\left(6nd + 3nd^2 \delta^{-1} + 4dn (\#S)^{1/2}\right).\]
If $s$ is instead a complex number satisfying $|s - 1| = \delta$, we have
\[\left| L(s, f)\right| \le \delta^{-d -r} \cdot Q_{\chi}^{\delta}  \cdot \left(\log e^3 \Delta_F\right)^{nd} \cdot \exp\left(11nd + 2dn \cdot \frac{(2d^2 + \# S)^{\delta} -1 }{\delta}\right).\]

\end{lemma}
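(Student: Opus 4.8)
The plan is to prove both estimates in parallel by factoring
\[
  |L(s,f)| \;=\; \left| \frac{L(s,f)}{L(s,\chi)} \right| \cdot |L(s,\chi)|,
\]
bounding the first factor with Lemma~\ref{lem:Lfs} and the second with the convexity estimate of Lemma~\ref{lem:Lind} (whose hypothesis, that $(s-1)^r L(s,\chi)$ is entire, holds here with $r = r(\chi)$ by the standing assumption that $L(s,\chi)$ is entire away from a possible pole at $s=1$). The only genuine work is to choose the free parameter in Lemma~\ref{lem:Lind} (called $\delta$ there, which I will write $\delta'$ to avoid clashing with the $\delta$ of the present statement) and to check that the accumulated explicit constants fit inside the deliberately wasteful exponential factors $e^{6nd}$ and $e^{11nd}$ in the two displays.

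For the line $s = 1/2 + \delta + it$, I apply Lemma~\ref{lem:Lfs} with $\sigma = 1/2 + \delta$ and any admissible $\sigma_0 \in (1/2, 1/2+\delta)$; since $2\sigma-1 = 2\delta$, $d^{4-4\sigma} = d^{2-4\delta} \le d^2$, and the map $u \mapsto (a^u - 1)/u$ is increasing (so that the $\sigma_0$-term is at most its value at $u = 1/2$), one gets
\[
  \log\left| \frac{L(s,f)}{L(s,\chi)} \right| \;\le\; \frac{3nd^2}{2\delta} + 4dn\big( (2d^2 + \#S)^{1/2} - 1 \big),
\]
whereupon $(2d^2+\#S)^{1/2} \le \sqrt2\,d + (\#S)^{1/2}$ peels off the desired $4dn(\#S)^{1/2}$, and the leftover $4\sqrt2\,d^2 n$ is absorbed into $3nd^2\delta^{-1}$ since $\delta \le 1/4$. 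For $|L(s,\chi)|$ I apply Lemma~\ref{lem:Lind} with $\delta' = 1/\log(e^3 Q_\chi(t)) \in (0,1/3]$: then $\delta'^{-d} = (\log e^3 Q_\chi(t))^d$, the exponent $\tfrac{1+\delta'-\sigma}{2} = \tfrac{1-2\delta}{4} + \tfrac{\delta'}{2}$ produces $Q_\chi(t)^{\frac{1-2\delta}{4}}$ together with the bounded factor $Q_\chi(t)^{\delta'/2} \le e^{1/2}$ (because $\delta'\log Q_\chi(t) \le 1$), and $(3 + \tfrac{\log\Delta_F}{2n})^{nd} \le (\log e^3\Delta_F)^{nd}$. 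Finally, on this line $\bigl|\tfrac{s+1}{s-1}\bigr| \le \tfrac{3/2+\delta}{1/2-\delta} \le 7$, so passing from $\bigl(\tfrac{s-1}{s+1}\bigr)^r L(s,\chi)$ to $L(s,\chi)$ costs only a factor $7^r \le 7^d$. Collecting the prefactors, $7^d \cdot 3^{2d} \cdot e^{nd} \cdot e^{1/2} \le e^{6nd}$, which yields the first displayed bound.

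For the circle $|s-1| = \delta$, now $\sigma = \Re(s) \in [1-\delta, 1+\delta] \subseteq [3/4,5/4]$ and $|t| = |\Im(s)| \le \delta \le 1/4$. I apply Lemma~\ref{lem:Lfs} with $\sigma_0 = 1-\delta \in (1/2,1)$, so that its $\sigma_0$-term is exactly $2dn\,\tfrac{(2d^2+\#S)^\delta - 1}{\delta}$, while for every $\sigma$ in the range the term $\tfrac{3nd^{4-4\sigma}}{2\sigma-1}$ is at most $\tfrac{3nd}{1-2\delta} \le 6nd$ (using $4-4\sigma \le 4\delta \le 1$ and $2\sigma-1 \ge 1-2\delta \ge 1/2$). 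For $|L(s,\chi)|$ I apply Lemma~\ref{lem:Lind} with $\delta' = \delta$: on the circle $\tfrac{1+\delta-\sigma}{2} \le \delta$, so $Q_\chi(t)^{(1+\delta-\sigma)/2} \le Q_\chi(t)^\delta = Q_\chi^\delta(1+|t|)^{nd\delta} \le Q_\chi^\delta (5/4)^{nd/4}$, while $\delta'^{-d} = \delta^{-d}$ and again $(3+\tfrac{\log\Delta_F}{2n})^{nd} \le (\log e^3\Delta_F)^{nd}$. The pole is harmless: $\bigl|\tfrac{s-1}{s+1}\bigr|^r = \delta^r|s+1|^{-r} \le \delta^r$ since $|s+1| \ge 2-\delta \ge 1$, hence $|L(s,\chi)| \le \delta^{-r}|s+1|^r \cdot (\text{Lemma~\ref{lem:Lind} bound})$ with $|s+1|^r \le 3^r$. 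Multiplying the two factors gives $\delta^{-d-r} Q_\chi^\delta (\log e^3\Delta_F)^{nd}$ times $3^{2d+r} e^{nd} (5/4)^{nd/4} \exp\!\big(6nd + 2dn\tfrac{(2d^2+\#S)^\delta-1}{\delta}\big)$, and since $3^{2d+r} e^{nd}(5/4)^{nd/4} \le e^{5nd}$ this is the second displayed bound.

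I expect the only obstacle to be exactly the constant bookkeeping just indicated: Lemmas~\ref{lem:Lind} and \ref{lem:Lfs} essentially hand over the two factors, and the content of the proof is the clean choice of the convexity parameter in each regime — $\delta' \asymp 1/\log Q_\chi(t)$ on the line, which trades a negligible power of $Q_\chi(t)$ for the factor $(\log e^3 Q_\chi(t))^d$, and $\delta' = \delta$ on the circle, which is what produces $Q_\chi^\delta$ and $\delta^{-d}$ — followed by the routine verification that $\delta \le 1/4$ leaves enough slack for every stray numerical constant (including the pole factor $|(\tfrac{s+1}{s-1})|^r$ and the mild inflation of $Q_\chi(t)$ relative to $Q_\chi$) to be swallowed by $e^{6nd}$, respectively $e^{11nd}$.
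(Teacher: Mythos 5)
Your proposal is correct and follows essentially the same route as the paper: you factor $|L(s,f)| = |L(s,f)/L(s,\chi)|\cdot |L(s,\chi)|$, bound the ratio with Lemma~\ref{lem:Lfs} and the Artin $L$-function with the convexity estimate of Lemma~\ref{lem:Lind}, choosing the convexity parameter as $\delta' = 1/\log(e^3 Q_\chi(t))$ on the critical-strip line and $\delta' = \delta$ on the circle — exactly as in the paper's proof. The only differences are cosmetic bookkeeping: you take $7^d$ and $3^{2d+r}$ where the paper uses the slightly tighter $7^r$ and $(9/4)^r\cdot 3^{2d}$, and you phrase the $\sigma_0\to 1/2$ step in Lemma~\ref{lem:Lfs} as a supremum via the monotonicity of $u\mapsto (a^u-1)/u$ (which is in fact the cleaner justification of what the paper writes directly); in both cases there is enough slack in the $e^{6nd}$ and $e^{11nd}$ factors, as your final checks confirm.
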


\begin{proof}
Start with $s = 1/2 + \delta + it$ as  in the first part. Applying Lemma \ref{lem:Lind} with $(\delta, \sigma)$ set to
\[\left(\left(\log e^3 Q_{\chi}(t)\right)^{-1},\,\, 1/2 + \delta\right)\]
gives
\begin{align*}
\left|L(s, \chi)\right| &\le 7^r \cdot 3^{2d} \cdot  e^{nd} \cdot \left(\log e^3 Q_\chi(t)\right)^d \cdot e^{1/2}\cdot Q_{\chi}(t)^{\frac{1 - 2\delta}{4}} \cdot \left(3 + \frac{\log \Delta_F}{2n}\right)^{nd} \\
&\le e^{6nd} \cdot \left(\log e^3 Q_{\chi}(t)\right)^d \cdot \left(\log e^3 \Delta_F\right)^{nd} \cdot Q_{\chi}(t)^{\frac{1 - 2\delta}{4}} . 
\end{align*}
Applying Lemma \ref{lem:Lfs} gives
\[\log\left|\frac{L(s, f)}{L(s,\chi)}\right| \le \frac{3nd^2}{2\delta} + 2dn \frac{(2d^2 + \#S)^{1/2} - 1}{1/2} \le \left(\frac{3}{2} + \sqrt{2}\right) nd^2\delta^{-1} + 4dn (\#S)^{1/2},\]
and the first part of the lemma follows.

For the second part, applying Lemma \ref{lem:Lind} with $(\delta, \sigma)$ set to $(\delta, \Re(s))$ gives
\begin{align*}
\left|L(s, \chi)\right| &\le \left(\frac{9}{4}\right)^{r} \cdot 3^{2d} \cdot e^{nd} \cdot \delta^{-d -r} \cdot Q_{\chi}(\Im(s))^{\delta} \cdot \left(\log e^3 \Delta_F\right)^{nd}\\
&\le \delta^{-d -r} \cdot Q_{\chi}^{\delta} \cdot e^{5nd} \cdot \left(\log e^3 \Delta_F\right)^{nd},
\end{align*}
and the lemma follows since Lemma \ref{lem:Lfs} gives
\[\log\left|\frac{L(s, f)}{L(s,\chi)}\right| \le 6nd + 2dn \cdot \frac{(2d^2 + \# S)^{\delta} - 1}{\delta}.\]
\end{proof}

\subsection{Smoothed character sums}
Our bounds for bilinear character sums are proved using bounds on smoothed character sums, which we prove from bounds on $L$-functions in the critical strip in a manner similar to Weiss \cite[Lemma 3.5]{Weiss83}. This work requires a smoothing function. The specific choice of this function will not matter much, so we choose one whose Fourier transform is easy to calculate.

\begin{defn}
Given  $H \ge 3$, we will define a holomorphic function $\eta_H$ by the formula
\[\eta_H(z) = \int_{-\log H}^{\log H} e^{1 -(z - t)^2} dt.\]
In other words, $\eta_H$ is the convolution of the indicator function on $[- \log H, \log H]$ with $e^{1-z^2}$. The Fourier transform of this function has the explicit form
\[\widehat{\eta_H}(s) = \int_\R \eta_H(x)e^{-isx} dx = 2e \sqrt{\pi} \cdot \frac{\sin (s \cdot \log H)}{s}\cdot  e^{-s^2/4}.\]
For $x$ in $[- \log H, \log H]$, we have the (crude) lower bound
\begin{equation}
    \label{eq:etaH_bnd}
    \eta_H(x) \ge \int_{0}^1 e^{1 -t^2}dt \ge 1.
\end{equation}
\end{defn}

Building on Lemma \ref{lem:L_line_circle}, the following two lemmas collect the bounds we need involving $\widehat{\eta_H}(-is) \cdot L(s, f)$ in the proof of  Proposition \ref{prop:smoothed_character_sum}.

\begin{notat}
\label{notat:smoothed}
We take $f$ to be an acceptable multiplicative function associated to the tuple $(K/F, \chi, S)$. We take $d$ to be a positive integer no smaller than degree of $\chi$ and we take $Q$ to be a real number no smaller than $Q_{\chi}$. We will assume that
\[Q \ge \max\left(2^{n}, \exp\left(4d^{1/2}\right), \exp\left(\tfrac{1}{2}d^{1/2} |S|\right)\right).\]
As above, we take $n$ to be the degree of $F$ over $\QQ$, and we will write $r$ for the degree of the pole for $L(s, \chi)$ at $s = 1$, with $r = 0$ if there is no pole.
\end{notat}

\begin{lemma}
\label{lem:Z_int}
Take all notation as above, and suppose $r \ge 1$. Take $H \ge e^4$. Then
\[\left|\frac{1}{2\pi}\oint_Z \widehat{\eta_H}(-is) \cdot L(s, f) ds\right| \le H (\log H)^{r-1} \cdot (\log Q)^{50nd},\]
where $Z$ is the counterclockwise circular path centered at $s = 1$ of radius $1/4$.
\end{lemma}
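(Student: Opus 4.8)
The plan is to evaluate the integral by the residue theorem and then bound the resulting residue by two Cauchy estimates taken on circles of \emph{different} radii, one calibrated to $H$ and one to $Q$. First observe that $\widehat{\eta_H}(-is)$ is entire, that $L(s,\chi)$ is holomorphic away from a pole of order $r$ at $s=1$, and that
\[
\frac{L(s,f)}{L(s,\chi)} = \prod_{\mfp}\bigl(1 + f(\mfp)\,\mathrm{N}\mfp^{-s}\bigr)\,L_{\mfp}(s,\chi)^{-1}
\]
extends to a holomorphic function on $\Re(s)>\tfrac12$: for all but finitely many $\mfp$ (those unramified and outside $S$) the $\mfp$-factor is a polynomial in $\mathrm{N}\mfp^{-s}$ with no linear term, hence $1+O(\mathrm{N}\mfp^{-2\sigma})$, while the finitely many remaining factors are entire. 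Thus $\widehat{\eta_H}(-is)L(s,f)$ is holomorphic on a neighbourhood of the closed disk $\{|s-1|\le\tfrac14\}$ except for a pole of order at most $r$ at $s=1$, and the residue theorem gives
\[
\frac{1}{2\pi}\oint_Z \widehat{\eta_H}(-is)L(s,f)\,ds \;=\; i\cdot\mathrm{Res}_{s=1}\bigl[\widehat{\eta_H}(-is)L(s,f)\bigr].
\]

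Writing $G(s)=(s-1)^rL(s,f)$, which is holomorphic near $s=1$, and $\widehat{\eta_H}(-is)=2e\sqrt{\pi}\,\tfrac{\sinh(s\log H)}{s}\,e^{s^2/4}$, the Leibniz rule yields
\[
\mathrm{Res}_{s=1}\bigl[\widehat{\eta_H}(-is)L(s,f)\bigr] = \frac{1}{(r-1)!}\sum_{k=0}^{r-1}\binom{r-1}{k}\,\Bigl(\tfrac{d^k}{ds^k}\widehat{\eta_H}(-is)\Bigr)\Big|_{s=1}\cdot G^{(r-1-k)}(1).
\]
I would estimate the two families of derivatives separately. For $\widehat{\eta_H}(-is)$, apply Cauchy's inequality on $|s-1|=1/\log H$ (allowed since $H\ge e^4$): on that circle $|\sinh(s\log H)|\le\cosh(\Re(s)\log H)\le H^{1+1/\log H}$ and the remaining factors are $O(1)$, so the $k$th derivative at $s=1$ is $\ll k!\,H(\log H)^k$. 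For $G$, apply Cauchy's inequality on $|s-1|=1/\log Q$ (allowed since $Q\ge e^{4d^{1/2}}\ge e^4$), giving $|G^{(j)}(1)|\le j!\,(\log Q)^{j-r}\max_{|s-1|=1/\log Q}|L(s,f)|$; now insert the second bound of Lemma~\ref{lem:L_line_circle} with $\delta=1/\log Q$. That bound contributes $(\log Q)^{d+r}$ (from $\delta^{-d-r}$), a bounded factor $Q_\chi^{1/\log Q}\le e$, a factor $(\log e^3\Delta_F)^{nd}\le(2\log Q)^{nd}$ (using $\Delta_F^d\le Q_\chi\le Q$), and a factor $\exp\!\bigl(11nd+2dn\tfrac{(2d^2+\#S)^{1/\log Q}-1}{1/\log Q}\bigr)$ which, by the linearization $x^{1/\log Q}-1\ll\tfrac{\log x}{\log Q}$ together with $2d^2+\#S\ll(\log Q)^{4}$ (a consequence of $Q\ge e^{4d^{1/2}}$ and $Q\ge e^{\frac12 d^{1/2}|S|}$ from Notation~\ref{notat:smoothed}), is $(\log Q)^{O(nd)}$. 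Hence $|G^{(j)}(1)|\le j!\,(\log Q)^{d+j}(\log Q)^{O(nd)}$ for $0\le j\le r-1$.

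Finally, assemble. Since $\binom{r-1}{k}k!(r-1-k)!=(r-1)!$, the factorials cancel $1/(r-1)!$, leaving
\[
\bigl|\mathrm{Res}_{s=1}[\widehat{\eta_H}(-is)L(s,f)]\bigr|\;\ll\; H\,(\log Q)^{d+O(nd)}\sum_{k=0}^{r-1}(\log H)^k(\log Q)^{r-1-k},
\]
and each term of the sum is at most $\max(\log H,\log Q)^{r-1}$, so the sum is $\le r\max(\log H,\log Q)^{r-1}$. When $\log H\ge\log Q$ this supplies exactly the required $(\log H)^{r-1}$, and the surviving constant times $(\log Q)^{d+O(nd)}$ is absorbed into $(\log Q)^{50nd}$ using $r\le d\le nd$ and $\log Q\ge 4$; when $\log Q>\log H$ the sum supplies $(\log Q)^{r-1}$ instead, which together with the other powers of $\log Q$ still fits inside $(\log Q)^{50nd}$ after using the harmless factor $(\log H)^{r-1}\ge 1$. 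Either way one arrives at $H(\log H)^{r-1}(\log Q)^{50nd}$.

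The main obstacle is the analytic bookkeeping in the middle step: the two Cauchy radii must be chosen so that the powers of $\log H$ never exceed $r-1$ while the powers of $\log Q$ stay within the budget $50nd$. In particular one must verify that $\exp\!\bigl(2dn\tfrac{(2d^2+\#S)^{\delta}-1}{\delta}\bigr)$ is polynomial, not exponential, in $\log Q$ — which is precisely why $\delta$ must be taken as small as $1/\log Q$ and why the quantitative constraints on $|S|$ and $d$ in Notation~\ref{notat:smoothed} are essential — and one must keep all the explicit $(n,d)$-dependent constants from Lemma~\ref{lem:L_line_circle} and the Stirling-type estimates under control, absorbing them using only $\log Q\ge 4$.
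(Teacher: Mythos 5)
Your proposal is correct and follows essentially the same route as the paper: write the contour integral as the residue at $s=1$, bound the derivatives of $\widehat{\eta_H}(-is)$ at $1$ by a Cauchy estimate on a circle of radius $\approx 1/\log H$, and bound the singular part of $L(s,f)$ by the second estimate of Lemma~\ref{lem:L_line_circle} on a circle of radius $\approx 1/\log Q$. The only cosmetic differences are that the paper takes the slightly smaller radius $\delta=(2d^2+\#S)^{-1}(\log Q)^{-1}$ (so the factor $\exp(2dn((2d^2+\#S)^\delta-1)/\delta)$ is controlled via the mean-value theorem, yielding a \emph{uniform} bound on the Laurent coefficients $\mathrm{Res}_{s=1}(s-1)^k L(s,f)$ independent of $k$) whereas you take $\delta=1/\log Q$ and carry a $k$-dependent $(\log Q)^{r-1-k}$ in the sum, which you then absorb by a $\max(\log H,\log Q)^{r-1}$ estimate plus a case split. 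Both organizations land in the same place; the paper's choice is marginally tidier for keeping the final constant explicitly under $50nd$.
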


\begin{proof}
Given $\delta$ in $(0, 1/4]$ and $s \in \C$ satisfying $|s - 1 | \le \delta$, we have
\[\left|\widehat{\eta_H}(-is)\right| \le  2e\sqrt{\pi} \cdot H^{1 + \delta}\cdot \frac{e^{(1 + \delta)^2/4}}{1 - \delta} \le 20 H^{1 + \delta},\]
so Cauchy's integral formula gives
\[\left|\frac{1}{k!}\frac{d^k}{ds^k} \widehat{\eta_H}(-is)\Big|_{s = 1} \right| \le  \delta^{ - k} \cdot 20 H^{1 + \delta}\quad\text{for } k \ge 0.\]
Taking $\delta = (\log  H)^{-1}$ gives
\[\left|\frac{1}{k!}\frac{d^k}{ds^k} \widehat{\eta_H}(-is)\Big|_{s = 1}\right|  \le  H \cdot 20e  \cdot  (\log H)^{k} \le 55 H (\log H)^k.\]
We next will apply Lemma \ref{lem:L_line_circle} with
\[\delta = (2d^2 + \# S)^{-1} \cdot (\log Q)^{-1},\]
which gives
\[\left|L(s, f)\right| \le (\log Q)^{4nd} \cdot \exp\left(12nd + 2dn \frac{(2d^2 + \#S)^{\delta} - 1}{\delta}\right) \cdot (2d^2 + \# S)^{2d}\]
for $s$ satisfying $|s-1| = \delta$. Applying the mean value theorem to the function $g(x) = (2d^2 + \# S)^x$, we find that
\[\frac{(2d^2 + \# S)^{\delta} - 1}{\delta} \le g'(\delta) < e \cdot \log (2d^2 + \#S).\]
So, on this circle, we have
\[\left|L(s, f)\right| \le (\log Q)^{4nd} \cdot e^{12nd} \cdot (2d^2 + \# S)^{(2e + 2)nd} \le (\log Q)^{46nd},\]
and it follows that
\[\left|\res_{s = 1} (s-1)^k L(s, f) \right| \le (\log Q)^{46nd}\]
for $0 \le k \le r - 1$. From the residue theorem, we thus have
\begin{align*}
\left|\frac{1}{2\pi}\oint_Z \widehat{\eta_H}(-is) \cdot L(s, f) ds\right| &\le  \sum_{k = 0}^{r-1} 55 H (\log H)^k \cdot (\log Q)^{46nd} \\
&\le H (\log H)^{r-1} \cdot (\log Q)^{50nd}.
\end{align*}

\end{proof}

\begin{lemma}
\label{lem:straight}
Take all notation as in Notation \ref{notat:smoothed}, and choose $H \ge Q^{1/2}$ such that
\[16nd^2 \le \log Q^{-1/2}H.\]
Then, for any $\delta$ in $(0, 1/4]$, we have
\begin{align*}
&\left|\frac{1}{2\pi}\int_{1/2 + \delta - i\infty}^{1/2 + \delta + i\infty} \widehat{\eta_H}(-is) \cdot L(s, f) ds\right| \\
&\le\,  H \cdot (Q^{-1/2}H)^{-1/2} \cdot (\log Q)^{26nd} \cdot \exp\left(4 n^{1/2}d \sqrt{\log Q^{-1/2}H} + 4 \sqrt{2}nd^{3/4} \sqrt{\log Q}\right).
\end{align*}

\end{lemma}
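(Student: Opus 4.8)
The plan is to bound the integrand pointwise on the line $\Re(s) = \tfrac12 + \delta$, integrate in $t = \Im(s)$ using the Gaussian decay of $\widehat{\eta_H}$, and only at the very end choose the free parameter $\delta \in (0,\tfrac14]$. For the first factor, substituting $w = -is$ into the explicit formula $\widehat{\eta_H}(w) = 2e\sqrt{\pi}\,\tfrac{\sin(w\log H)}{w}e^{-w^2/4}$ and using $w^2 = -s^2$ together with $|\sin((t\log H) - i(\tfrac12+\delta)\log H)| \le \cosh((\tfrac12+\delta)\log H) \le H^{1/2+\delta}$ gives
\[
\bigl| \widehat{\eta_H}(-is) \bigr|
    \;\le\; 2e\sqrt{\pi}\,\frac{H^{1/2+\delta}}{|s|}\,e^{((1/2+\delta)^2 - t^2)/4}
    \;\le\; 39\,H^{1/2+\delta}\,e^{-t^2/4},
\]
since $|s| \ge \tfrac12$ and $(\tfrac12+\delta)^2 \le \tfrac9{16}$. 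For $|L(s,f)|$ on this line I would apply the first bound of Lemma~\ref{lem:L_line_circle} verbatim, together with $Q_\chi(t) = Q_\chi(1+|t|)^{nd} \le Q(1+|t|)^{nd}$ and $\Delta_F \le \Delta_F^{\deg\chi} \le Q_\chi \le Q$. The point is that the factor $e^{-t^2/4}$ dominates the polynomial growth $(1+|t|)^{nd/4}$ coming from $Q_\chi(t)^{(1-2\delta)/4}$ and the mild growth of $(\log e^3 Q_\chi(t))^d$, so the vertical integral converges absolutely and $\int_{\mathbb{R}} e^{-t^2/4}(1+|t|)^{nd/4}(\log e^3 Q_\chi(t))^d\,dt \le (\log Q)^{O(nd)}$ under the standing hypotheses $Q \ge 2^n$ and $Q \ge \exp(4d^{1/2})$.

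Carrying out the integration produces a bound of the shape
\[
\frac{39}{2\pi}\,H^{1/2+\delta}\,Q^{(1-2\delta)/4}\,(\log Q)^{O(nd)}\,\exp\!\left(6nd + 3nd^2\delta^{-1} + 4dn(\#S)^{1/2}\right).
\]
Now I would rewrite $H^{1/2+\delta}Q^{(1-2\delta)/4} = H\cdot(Q^{-1/2}H)^{-1/2}\cdot(Q^{-1/2}H)^{\delta}$, absorb $\exp(4dn(\#S)^{1/2}) \le \exp(4\sqrt2\,nd^{3/4}\sqrt{\log Q})$ using the hypothesis $Q \ge \exp(\tfrac12 d^{1/2}|S|)$, and fold $\tfrac{39}{2\pi}$, $\exp(6nd)$, $(\log e^3\Delta_F)^{nd}$ and the $n,d$-constant from the $t$-integral into $(\log Q)^{26nd}$. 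It remains to control $(Q^{-1/2}H)^{\delta}\exp(3nd^2\delta^{-1})$ by a good choice of $\delta$. Writing $L = \log(Q^{-1/2}H)$ (which is $\ge 16nd^2$ by hypothesis), I would take $\delta = \min\{\tfrac14,\ d\sqrt{3n/L}\}$: if $L \ge 48nd^2$ then $d\sqrt{3n/L} \le \tfrac14$ and AM--GM gives $\delta L + 3nd^2\delta^{-1} = 2d\sqrt{3nL} \le 4n^{1/2}d\sqrt{L}$; if $16nd^2 \le L < 48nd^2$ then $\delta = \tfrac14$ and one checks the elementary inequality $\tfrac14 L + 12nd^2 \le 4n^{1/2}d\sqrt{L}$, i.e. $\tfrac u4 + 12 \le 4\sqrt u$ for $u = L/(nd^2) \in [16,48]$, which holds because the difference is decreasing on this interval and vanishes at $u = 16$. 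In either case $(Q^{-1/2}H)^{\delta}\exp(3nd^2\delta^{-1}) \le \exp(4n^{1/2}d\sqrt{L})$, which is exactly the first term in the target exponential, and assembling the pieces gives the claimed bound.

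The main obstacle is this last step: the unconstrained optimum $\delta = d\sqrt{3n/L}$ may exceed $\tfrac14$, so one must truncate at $\tfrac14$, and it is precisely the hypothesis $16nd^2 \le \log(Q^{-1/2}H)$ that makes the truncated choice still strong enough (via the elementary inequality above). The remaining points are routine: confirming that the vertical integral converges (the Gaussian beats every polynomial-in-$t$ factor), and verifying — using $Q \ge 2^n$ and $Q \ge \exp(4d^{1/2})$, so that $\log Q$ is comparable to a power of $nd$ — that the accumulated absolute and lower-order constants are all dominated by $(\log Q)^{26nd}$.
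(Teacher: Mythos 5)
Your proposal is correct and follows essentially the same path as the paper: bound $|\widehat{\eta_H}(-is)|$ on the line $\Re s = \tfrac12+\delta$, invoke the first bound of Lemma~\ref{lem:L_line_circle}, integrate in $t$ using the Gaussian decay, and then optimize $\delta$ at the end. The one place you deviate is the choice of $\delta$: you take the unconstrained AM--GM optimizer $\delta = d\sqrt{3n/L}$ (with $L = \log Q^{-1/2}H$), truncate it at $\tfrac14$, and then split into two cases depending on whether the truncation bites, whereas the paper simply sets $\delta = n^{1/2}d\,L^{-1/2}$, which is automatically $\le \tfrac14$ under the hypothesis $16nd^2 \le L$ and gives $\delta L + 3nd^2\delta^{-1} = 4n^{1/2}d\sqrt{L}$ with equality, matching the target exponent exactly and avoiding any case analysis. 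Your version is marginally sharper in the regime $L \ge 48nd^2$ (it saves a factor $e^{(4-2\sqrt{3})n^{1/2}d\sqrt{L}}$) at the cost of the elementary inequality check on $[16nd^2, 48nd^2]$; both are valid and both ultimately land on the same stated bound.
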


\begin{proof}
We start by estimating
\begin{equation}
\label{eq:half_int}
\int_{0}^{ \infty} e^{-t^2/4} \cdot Q_\chi(t)^{\frac{1 - 2\delta}{4}} \cdot (\log e^3 Q_{\chi}(t))^d dt. 
\end{equation}
Writing the integrand as $F(t)$, we have
\[\frac{d}{dt} \log F(t) = \frac{-t}{2} + \frac{(1 -2 \delta)nd}{4(1 + t)} + \frac{d^2n}{ (1 + t) \cdot\log e^3 Q_{\chi}(t)}.\]
So long as $t \ge e - 1$, this is at most
\[\frac{-t}{2} + \frac{nd}{4 t} + \frac{d}{t}, \]
which is no greater than $-1$ if $t \ge 3\sqrt{nd}$. Since $Q_{\chi}$ is increasing with $t$ and $e^{-t^2/4}$ is no greater than 1, \eqref{eq:half_int} is at most
\begin{align*}
&\left(3\sqrt{nd} + 1\right) \cdot Q_{\chi}\left(3\sqrt{nd}\right)^{\frac{1 - 2\delta}{4}} \cdot \left(\log e^3 Q_{\chi}\left(3\sqrt{nd}\right)\right)^d \\
&\le\, Q_{\chi}^{\frac{1 - 2\delta}{4}} \cdot \left(4\sqrt{nd}\right)^{\frac{nd}{4} + 1} \cdot \left( nd \log \left(4 \sqrt{nd}\right) + \log e^3 Q_{\chi}\right)^d
\,\le\, Q^{\frac{1 - 2\delta}{4}} \cdot (\log Q)^{16nd},
\end{align*}
where we have used the fact that $\log Q$ is at least $4$ and $(\log Q)^3$ is at least $\max(n, d)$.

By Lemma \ref{lem:L_line_circle}, the integral of the lemma is then bounded by 
\[Q^{\frac{1 - 2\delta}{4}} \cdot (\log Q)^{16nd} \cdot \frac{2 \cdot 5e\sqrt{\pi}}{2\pi}H^{1/2 + \delta} \cdot \left(\log e^3 \Delta_F\right)^{nd} \cdot \exp\left(6nd + 3nd^2 \delta^{-1} + 4dn (\#S)^{1/2}\right),\]
which is at most
\[H \cdot (\log Q)^{26nd}  \cdot(Q^{-1/2}H)^{-1/2 + \delta} \cdot  \exp\left(3nd^2 \delta^{-1} + 4nd \cdot(\# S)^{1/2}\right).\]
By Cauchy's residue theorem, we may shift $\delta$ to any value in the interval $(0, 1/4]$ without changing the value of the integral. We will take
\[\delta =  n^{1/2} d (\log Q^{-1/2} H)^{-1/2};\]
note that this is at most $1/4$ by the conditions of the lemma. The result follows.
\end{proof}

\begin{proposition}
\label{prop:smoothed_character_sum}
Take all notation as in Notation \ref{notat:smoothed}, and choose $H \ge Q^{1/2}$ satisfying the condition of Lemma \ref{lem:straight}. We then have
\begin{align*}
& \left|\sum_{\mfa} f(\mfa) \cdot \eta_H(\log N\mfa)\right| \\
&\quad\le \kappa \cdot H (\log H)^{r- 1} \cdot (\log Q)^{50nd} \\
& \quad+  H \cdot (Q^{-1/2}H)^{-1/2} \cdot (\log Q)^{26nd} \cdot \exp\left(4 n^{1/2}d \sqrt{\log Q^{-1/2}H} + 4 \sqrt{2} nd^{3/4} \sqrt{\log Q}\right),
\end{align*}
where $\kappa = 1$ if $r$ is positive and $\kappa = 0$ otherwise.
\end{proposition}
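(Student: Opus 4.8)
The plan is to realize the smoothed sum as a vertical contour integral of $\widehat{\eta_H}(-is)\,L(s,f)$ in the region of absolute convergence, then to move the contour to the left, picking up the contribution of the possible pole of $L(s,\chi)$ (equivalently of $L(s,f)$) at $s=1$.

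First I would record the Mellin--Fourier identity
\[
\sum_{\mfa} f(\mfa)\,\eta_H(\log N\mfa) \;=\; \frac{1}{2\pi i}\int_{(\sigma)} \widehat{\eta_H}(-is)\, L(s,f)\, ds \qquad (\sigma > 1),
\]
where $(\sigma)$ denotes the line $\Re s = \sigma$. This rests on three points: (i) $L(s,f)=\sum_{\mfa} f(\mfa)(N\mfa)^{-s}$ converges absolutely for $\Re s>1$ and $\int_{(\sigma)}|\widehat{\eta_H}(-is)|\,|ds|<\infty$ because $\widehat{\eta_H}$ has Gaussian decay along vertical lines, so sum and integral may be interchanged; (ii) for a fixed ideal $\mfa$, the entire integrand $\widehat{\eta_H}(-is)(N\mfa)^{-s}$ may be pushed back to $\Re s=0$, where Fourier inversion gives $\frac{1}{2\pi i}\int_{(\sigma)}\widehat{\eta_H}(-is)(N\mfa)^{-s}\,ds = \eta_H(-\log N\mfa)$; and (iii) $\eta_H$ is even, so this equals $\eta_H(\log N\mfa)$. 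The sum on the left converges absolutely since $\eta_H$ decays faster than any power of $N\mfa$ while $\sum_{N\mfa\le x}|f(\mfa)|$ is polynomially bounded.

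Next I would note that $L(s,f)$ continues to a holomorphic function on $\Re s > 1/2$ apart from a pole of order $r=r(\chi)$ at $s=1$. Indeed, $f$ agrees with the multiplicative function attached to $\chi$ at every squarefree ideal supported away from $S$ and the ramified primes, and at each such prime the ratio of Euler factors is $1+O((N\mfp)^{-2s})$ because $\alpha_1(\mfp)+\dots+\alpha_d(\mfp)=\chi(\mathrm{Frob}\,\mfp)=f(\mfp)$; the finitely many remaining Euler factors are entire and non-vanishing for $\Re s>0$. Hence $L(s,f)/L(s,\chi)$ is given by an absolutely convergent product on $\Re s>1/2$, and the claim follows from the standing hypothesis on $L(s,\chi)$. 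I would then shift the contour in the identity above from $\Re s=\sigma$ down to $\Re s=1/2+\delta$ for a fixed $\delta\in(0,1/4)$. The truncated horizontal segments at height $\pm T$ tend to $0$ as $T\to\infty$, since $|\widehat{\eta_H}(-is)|\ll e^{-T^2/4}$ there while $|L(s,f)|=|L(s,\chi)|\cdot|L(s,f)/L(s,\chi)|$ grows at most polynomially in $T$ by the convexity bound (Lemma~\ref{lem:Lind}) and Lemma~\ref{lem:Lfs}. The only singularity crossed is the pole at $s=1$, whose residue equals $\frac{1}{2\pi i}\oint_Z \widehat{\eta_H}(-is)\,L(s,f)\,ds$ for the counterclockwise circle $Z$ of radius $1/4$ about $s=1$ (which lies in $\{3/4\le\Re s\le 5/4\}$, strictly inside the strip) by Cauchy's theorem applied to the enclosed disc. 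This yields
\[
\sum_{\mfa} f(\mfa)\,\eta_H(\log N\mfa) \;=\; \frac{1}{2\pi i}\oint_Z \widehat{\eta_H}(-is)\,L(s,f)\,ds \;+\; \frac{1}{2\pi i}\int_{1/2+\delta-i\infty}^{1/2+\delta+i\infty}\widehat{\eta_H}(-is)\,L(s,f)\,ds ,
\]
and when $r=0$ the integrand is holomorphic inside $Z$, so the first term vanishes; this explains the factor $\kappa$.

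Finally I would bound the two pieces using the lemmas already in hand: Lemma~\ref{lem:Z_int} bounds the integral over $Z$ by $H(\log H)^{r-1}(\log Q)^{50nd}$ when $r\ge1$, and Lemma~\ref{lem:straight} (applied with any admissible $\delta$) bounds the integral over $\Re s=1/2+\delta$ by the second displayed quantity in the statement. The hypotheses $H\ge Q^{1/2}$ and $16nd^2\le\log Q^{-1/2}H$ are exactly those required by Lemma~\ref{lem:straight}; they also force $\log Q^{-1/2}H\ge 16$, hence $H\ge e^4$, so Lemma~\ref{lem:Z_int} applies. Adding the two bounds gives the proposition. The one genuinely delicate point is the justification of the contour shift --- controlling $L(s,f)$ uniformly enough in vertical strips that the horizontal segments vanish --- but this is immediate from pairing the Gaussian decay of $\widehat{\eta_H}$ against the convexity estimate of Lemma~\ref{lem:Lind}; the quantitative heart of the argument has already been isolated in Lemmas~\ref{lem:Z_int} and~\ref{lem:straight}.
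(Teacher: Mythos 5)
Your proposal is correct and takes essentially the same route as the paper: express the smoothed sum as a vertical contour integral of $\widehat{\eta_H}(-is)\,L(s,f)$, shift the line from $\Re s = 2$ to $\Re s = 1/2+\delta$, write the residue contribution at $s=1$ as the circle integral over $Z$ (which vanishes when $r=0$), and then invoke Lemmas~\ref{lem:Z_int} and~\ref{lem:straight}. The paper's proof is terser about the interchange of sum and integral and about the meromorphic continuation of $L(s,f)$, both of which you spell out correctly, but there is no genuine difference in method.
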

\begin{proof}
Applying the inverse Fourier transform and Cauchy's integral theorem gives
\[\eta_H(x) = \frac{1}{2\pi i} \int_{2-i \infty}^{2 + i \infty} \widehat{\eta_H}(-is)e^{sx} ds\]
for any real $x$. So
\begin{equation}
\label{eq:switchable}
\sum_{\mfa} f(\mfa) \cdot \eta_H\left(\log N\mfa\right) = \sum_{\mfa} \frac{1}{2\pi i} \int_{2-i \infty}^{2 + i \infty} \widehat{\eta_H}(-is)N\mfa^{-s}ds.
\end{equation}
There is some $C > 0$ such that, for all positive $H_0$, we have
\begin{align*}
&\left|\sum_{N\mfa \ge H_0} \frac{1}{2\pi i}\int_{2-i \infty}^{2 + i \infty} \widehat{\eta_H}(-is)f(\mfa)N\mfa^{-s}ds\right| \le CH_0^{-1/2} \quad\text{and}\\
&\left|\frac{1}{2\pi i}\int_{2-i \infty}^{2 + i \infty} \sum_{N\mfa \ge H_0}  \widehat{\eta_H}(-is)f(\mfa)N\mfa^{-s}ds\right| \le CH_0^{-1/2}. 
\end{align*}
This allows us to swap the order of the sum and integral in \eqref{eq:switchable}, so
\[\sum_{\mfa} f(\mfa) \cdot \eta_H\left(\log N\mfa\right) =  \frac{1}{2\pi i} \int_{2-i \infty}^{2 + i \infty} \widehat{\eta_H}(-is)L(s, f)ds.\]
But now Cauchy's theorem gives
\[\int_{2-i \infty}^{2 + i \infty} \widehat{\eta_H}(-is)L(s, f)ds \,=\, \oint_Z \widehat{\eta_H}(-is)  L(s, f) ds \,+\, \int_{1/2 + \delta - i\infty}^{1/2 + \delta + i\infty}\widehat{\eta_H}(-is)  L(s, f) ds,\]
where $Z$ is the path from Lemma \ref{lem:Z_int} and $\delta$ is taken from $(0, 1/4]$. The result follows from Lemmas \ref{lem:Z_int} and \ref{lem:straight}.
\end{proof}

\section{Bilinear character sums}
\label{sec:Bibound}

\subsection{The theorem for large $H$}
\begin{defn}
Given a Galois extension of number fields $K_1/F$ and a nontrivial character $\chi_1$ of $\Gal(K_1/F)$, we say that $\chi_1$ is a \emph{monomial positive} if $\chi_1$ is a positive rational combination of monomial characters on $\Gal(K_1/F)$, i.e. characters induced from a linear character of some subgroup.

Given another Galois extension $K_2$ of $F$ and a choice of character $\chi_2 : \Gal(K_2/F) \to \C$, we define an inner product via the standard formula
\[\langle \chi_1, \chi_2 \rangle = \frac{1}{|G|}\sum_{\sigma \in G} \chi_1(\sigma)\overline{\chi_2(\sigma)} \quad\text{with }\, G = \Gal(K_1K_2/F).\]
\end{defn}

\begin{theorem}
\label{thm:bilinear_basic}
Choose a number field $F$ of degree $n$, and fix a positive integer $d$.

Choose integers $Q, M > 100$, and choose $M$ acceptable multiplicative functions $f_1, \dots, f_M$ over $F$. Take $(K_i/F, \chi_i, S_i)$ to be a tuple associated to $f_i$ for $i \le M$. For each $i \le M$, we will assume that $\chi_i$ is monomial positive, has degree at most $d$, and satisfies $Q_{\chi_i} \le Q$. We will also assume that $S_i$ has cardinality at most $\log Q/\log 2$.

Take $E$ to be the set of pairs $(i, j)$ with $i, j \le M$ such that $\langle \chi_i, \chi_j \rangle \ne 0$. Take $r$ to be the maximum value attained by this pairing as $(i, j)$ varies. 

Then, for $H \ge Q^d e^{16nd^4}$, we have
\begin{align*}
\sum_{N\mfa < H} \left| \sum_{i = 1}^M a_if_i(\mfa)\right|^2 &\le\,   H \cdot (\log H)^{r-1} \cdot (2d \log Q)^{50d^2n} \cdot \left(\sum_{(i, j) \in E} |a_ia_j|\right) \\
&+\, AH \cdot (Q^{-d}H)^{-1/2} \cdot \left(\sum_{i \le M} |a_i|\right)^2,
\end{align*}
where the sum on the left is over all integral ideals of $F$ of norm at most $H$, and where
\[A = (2d \log Q)^{26d^2n} \cdot \exp\left(4n^{1/2} d^2 \sqrt{\log Q^{-d}H} + 8nd^2 \sqrt{\log Q}\right). \]

\end{theorem}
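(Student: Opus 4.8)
The plan is to follow the standard route from a smoothed character sum bound to a large sieve inequality. First I would expand the square on the left-hand side and interchange summation, obtaining
\[
\sum_{N\mfa < H} \left| \sum_{i=1}^M a_i f_i(\mfa) \right|^2
= \sum_{i,j \leq M} a_i \overline{a_j} \sum_{N\mfa < H} f_i(\mfa) \overline{f_j(\mfa)}.
\]
For each pair $(i,j)$, the inner sum $\sum_{N\mfa < H} f_i(\mfa)\overline{f_j(\mfa)}$ is — up to controlled error at the finitely many primes dividing both conductors, using \eqref{eq:ff1f2} and the fact that the $f_i$ are acceptable multiplicative functions supported on squarefree ideals — the partial sum of the coefficients of an acceptable multiplicative function attached to the product character $\chi_i \overline{\chi_j}$ (note $\overline{\chi_j}$ is again monomial positive since the conjugate of a monomial character is monomial). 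The key analytic input is that such a product $L$-function has a pole at $s=1$ of order exactly $\langle \chi_i, \chi_j\rangle \leq r$, and is otherwise entire (this is where monomiality and class field theory are used, so that Proposition \ref{prop:smoothed_character_sum} applies), with conductor bounded by roughly $Q^{2d}$ and local roots bounded by $2d$, and a bad set of size at most $2\log Q/\log 2 \leq \log Q^2/\log 2$.

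Next I would apply Proposition \ref{prop:smoothed_character_sum} to the product function, with the degree parameter $d^2$, the conductor parameter $Q^2$ (or a similar explicit power), and $H$ replaced by a value slightly larger than $H$ to ensure the smoothing $\eta_H$ dominates the sharp cutoff. Concretely: since $\eta_H(\log N\mfa) \geq 1$ for $N\mfa \leq H$ by \eqref{eq:etaH_bnd}, and $\eta_H \geq 0$ everywhere (it is an integral of a positive function), one has
\[
\left| \sum_{N\mfa < H} f_i(\mfa)\overline{f_j(\mfa)} \right|
\leq \left| \sum_{\mfa} f_i(\mfa)\overline{f_j(\mfa)} \eta_H(\log N\mfa) \right| + (\text{tail correction}),
\]
but more carefully one applies Proposition \ref{prop:smoothed_character_sum} directly to the full smoothed sum and then uses positivity of $\eta_H$ together with the square structure to pass back to the sharp sum; the cleanest implementation is to bound the diagonal-in-$E$ contribution (pairs with $\langle \chi_i,\chi_j\rangle \neq 0$) by the main term $H(\log H)^{r-1}(\log Q)^{O(d^2 n)}$ from Proposition \ref{prop:smoothed_character_sum}, weighted by $|a_i a_j|$, and the off-$E$ contribution (where the product $L$-function is entire, $\kappa = 0$) by the second, smaller term, after summing crudely over all $i,j$ which produces the factor $(\sum_i |a_i|)^2$. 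The hypotheses $H \geq Q^d e^{16nd^4}$ and $|S_i| \leq \log Q/\log 2$ are exactly what is needed to verify the running conditions of Notation \ref{notat:smoothed} and Lemma \ref{lem:straight} for the product function (with parameters $n, d^2, Q^2$ or so), and to absorb the $\exp$-factors and powers of $2d^2$, $\log(2d^2 + \#S)$ into the stated clean exponents $50d^2n$, $26d^2n$, and the explicit form of $A$.

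The main obstacle I anticipate is bookkeeping rather than conceptual: correctly tracking the conductor bound for $\chi_i \overline{\chi_j}$ (the conductor of a product is bounded by a product of conductors raised to appropriate degrees, so one gets something like $Q_{\chi_i \overline{\chi_j}} \leq (Q_{\chi_i} Q_{\chi_j})^{O(d)} \leq Q^{O(d)}$, which must be compared against the threshold $Q^d$ appearing in $H \geq Q^d e^{16nd^4}$ — the constants must be chosen consistently), verifying that the pole order of $L(s, \chi_i\overline{\chi_j})$ is exactly $\langle \chi_i, \overline{\overline{\chi_j}}\rangle = \langle \chi_i, \chi_j \rangle$, and checking that the finitely many ramified-prime discrepancies between $f_i(\mfa)\overline{f_j(\mfa)}$ and the coefficient of the product $L$-function contribute an acceptable multiplicative function in the sense required (i.e.\ they only affect a bad set $S$ of the allowed size, with the bound $|f(\mfp)| \leq d^2$ on those primes). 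Once these are in place, the estimate is assembled by splitting $\sum_{i,j}$ into $(i,j) \in E$ and $(i,j) \notin E$, applying Proposition \ref{prop:smoothed_character_sum} with $\kappa = 1$ and $\kappa = 0$ respectively, and bounding $\sum_{(i,j) \notin E} |a_i a_j| \leq (\sum_i |a_i|)^2$; the passage from the smoothed sum back to the sharp sum over $N\mfa < H$ uses the nonnegativity and the lower bound $\eta_H \geq 1$ on the relevant range, at the cost of enlarging $H$ by a bounded factor which is harmless given the slack in the hypotheses.
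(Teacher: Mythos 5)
Your proposal is correct and follows essentially the same route as the paper: expand the square via Cauchy--Schwarz--style interchange, bound the sharp cutoff by the smoothed one using $\eta_H \ge 1$ on $[-\log H, \log H]$, observe that $f_i\overline{f_j}$ is an acceptable multiplicative function attached to the monomial positive character $\chi_i\overline{\chi_j}$ of degree $\le d^2$ and conductor $Q_{\chi_i\overline{\chi_j}} \le Q^{2d}$ (the paper packages this as Lemma~\ref{lem:bilinear_lemma}), apply Proposition~\ref{prop:smoothed_character_sum} with $(d,Q)\mapsto(d^2,Q^{2d})$, and split the double sum according to $(i,j)\in E$ versus $(i,j)\notin E$ where the pole order $\langle\chi_i,\chi_j\rangle$ is zero.

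Two small points where the paper is tighter than your sketch: you hedge about needing to enlarge $H$ or add a tail correction, but no such step is required --- the sharp sum is bounded above by the smoothed sum outright since $\eta_H\ge 0$ everywhere and $\ge 1$ on the relevant range, and that is the only comparison used. And you hedge about $f_i\overline{f_j}$ only being acceptable ``up to controlled error at ramified primes,'' but because the $f_i$ are supported on squarefree ideals, $f_i\overline{f_j}$ is literally an acceptable multiplicative function for $(K_iK_j/F,\,\chi_i\overline{\chi_j},\,S_i\cup S_j)$ with no correction term; this is exactly what Lemma~\ref{lem:bilinear_lemma} records, together with the conductor bound $Q^{2d}$ (which is a definite power of $Q$, not the $Q^2$ or $Q^{O(d)}$ you tentatively write). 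The hypothesis $Q>100$ plus Minkowski is also what ensures $Q^{2d}\ge 2^n$ so Notation~\ref{notat:smoothed} applies.
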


We will need the following basic lemma.
\begin{lemma}
\label{lem:bilinear_lemma}
Take $f_1, \dots, f_M$ and their associated tuples as in Theorem \ref{thm:bilinear_basic}. Given $i, j \le M$, the function $f_i \cdot \overline{f_j}$ is an acceptable multiplicative function with associated character $\chi = \chi_i \cdot \overline{\chi_j}$ and associated set of places contained in $S_i \cup S_j$. Furthermore, $\chi$ is monomial positive and satisfies
\[Q_{\chi} \le Q^{2d}.\]
\end{lemma}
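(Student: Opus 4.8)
The plan is to establish the three assertions---that $f_i\overline{f_j}$ is acceptable, that $\chi := \chi_i\cdot\overline{\chi_j}$ is monomial positive, and that $Q_\chi \le Q^{2d}$---by unwinding the relevant definitions, the only substantive input being the classical bound on the Artin conductor of a tensor product. Throughout, set $G = \Gal(K_iK_j/F)$ and regard $\chi_i$, $\chi_j$, and $\chi$ as characters of $G$ by inflation; the kernel field of $\chi$ lies inside $K_iK_j$, so $(K_iK_j/F,\chi,S)$ has the admissible shape of a tuple. For \emph{acceptability}: both $f_i$ and $\overline{f_j}$ are multiplicative and supported on squarefree ideals, hence so is $f_i\overline{f_j}$. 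If $\mfp\notin S_i\cup S_j$ then $\mfp$ is unramified in $K_i/F$ and $K_j/F$, hence in $K_iK_j/F$ (its inertia group injects into the product of the two inertia groups), and its Frobenius in $G$ restricts to the Frobenii over $K_i$ and $K_j$; since inflation is compatible with these restrictions, $\chi(\mathrm{Frob}_\mfp) = \chi_i(\mathrm{Frob}_\mfp)\overline{\chi_j(\mathrm{Frob}_\mfp)} = f_i(\mfp)\overline{f_j(\mfp)}$. For an arbitrary prime $\mfp$ one has $|f_i(\mfp)|\le\deg\chi_i$ and $|f_j(\mfp)|\le\deg\chi_j$ (character values are bounded by the degree, and for $\mfp$ in the exceptional set this is built into the definition), so $|(f_i\overline{f_j})(\mfp)|\le\deg\chi_i\deg\chi_j = \deg\chi$; and every prime ramified in $K_iK_j/F$ ramifies in $K_i/F$ or $K_j/F$, so the Frobenius condition for $\mfp\notin S_i\cup S_j$ is meaningful. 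Thus $f_i\overline{f_j}$ is acceptable with tuple $(K_iK_j/F,\chi,S_i\cup S_j)$.

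For \emph{monomial positivity}, I would record that the positive-rational combinations of monomial characters of $G$ form a class closed under three operations: complex conjugation, since $\overline{\Ind_H^G\psi} = \Ind_H^G\overline{\psi}$ with $\overline{\psi}$ again linear; inflation along a quotient map, since inflation commutes with induction and carries linear characters to linear characters; and multiplication, since by the projection formula and Mackey's formula $\Ind_A^G\alpha\cdot\Ind_B^G\beta = \Ind_A^G\big(\alpha\cdot\mathrm{Res}_A\Ind_B^G\beta\big)$ expands as a nonnegative-integer combination of characters $\Ind_{A\cap{}^gB}^G\big(\alpha|_{A\cap{}^gB}\cdot{}^g\beta|_{A\cap{}^gB}\big)$, each of which is induced from a product of restrictions of linear characters and hence from a linear character. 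Since $\chi_i$ and $\chi_j$ are monomial positive, so are their inflations to $G$ and the complex conjugate of the second, and therefore so is $\chi = \chi_i\cdot\overline{\chi_j}$.

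The \emph{conductor bound} is where the real work lies. The key input is the standard estimate $\mathfrak{f}_{\rho_1\otimes\rho_2}\mid\mathfrak{f}_{\rho_1}^{\deg\rho_2}\mathfrak{f}_{\rho_2}^{\deg\rho_1}$ for Artin representations, which follows from the ramification-filtration formula for the local conductor exponent together with the elementary inequality $ab-a'b'\le b(a-a')+a(b-b')$ for $0\le a'\le a$, $0\le b'\le b$, applied to the dimensions of the subspaces fixed by each ramification group. Applying this to $\rho_i\otimes\overline{\rho_j}$, and using that a representation and its complex conjugate have equal Artin conductor, gives $N\mathfrak{f}_\chi\le N\mathfrak{f}_{\chi_i}^{\deg\chi_j}N\mathfrak{f}_{\chi_j}^{\deg\chi_i}$. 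Since $\deg\chi=\deg\chi_i\deg\chi_j$ and $\Delta_F\ge1$, it follows that
\[
Q_\chi = \Delta_F^{\deg\chi_i\deg\chi_j}N\mathfrak{f}_\chi \,\le\, \Delta_F^{-\deg\chi_i\deg\chi_j}\big(\Delta_F^{\deg\chi_i}N\mathfrak{f}_{\chi_i}\big)^{\deg\chi_j}\big(\Delta_F^{\deg\chi_j}N\mathfrak{f}_{\chi_j}\big)^{\deg\chi_i} \,\le\, Q_{\chi_i}^{\deg\chi_j}Q_{\chi_j}^{\deg\chi_i} \,\le\, Q^{\deg\chi_i+\deg\chi_j} \,\le\, Q^{2d}.
\]
The conductor estimate is the only step demanding genuine care; everything else is bookkeeping, and one should merely double-check that the argument degenerates gracefully when some $\chi_i$ is linear or some $S_i$ is empty, which it does.
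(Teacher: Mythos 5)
Your proposal is correct and follows essentially the same route as the paper: Mackey's formula for the closure of monomial-positive characters under conjugation, inflation, and multiplication, and the tensor-product conductor bound $\mathfrak{f}_{\rho_1\otimes\rho_2}\mid\mathfrak{f}_{\rho_1}^{\deg\rho_2}\mathfrak{f}_{\rho_2}^{\deg\rho_1}$ for $Q_\chi\le Q^{2d}$. The only difference is one of detail: the paper outsources the conductor bound to \cite[Lemma 6.6]{LOTZ} and declares the acceptability claims ``clear,'' whereas you re-derive the former and spell out the latter.
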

\begin{proof}
From the theory of products of characters, the function $\chi_i \cdot \overline{\chi_j}$ is a character. Since $\chi_i$ and $\chi_j$ are monomial positive, $\overline{\chi_j}$ is monomial positive, and Mackey's formula gives that $\chi_i \cdot \overline{\chi_j}$ is monomial positive. The claims about $f_i \cdot \overline{f_j}$ are then clear. The bound on $Q_{\chi}$ then follows from \cite[Lemma 6.6]{LOTZ}.
\end{proof}
\begin{proof}[Proof of Theorem \ref{thm:bilinear_basic}]
From \eqref{eq:etaH_bnd}, we have
\begin{align}\nonumber
\sum_{N\mfa < H} \left| \sum_{i = 1}^M a_if_i(\mfa)\right|^2 &\le \sum_{\mfa} \eta_H\left(\log N\mfa\right) \left| \sum_{i = 1}^M a_if_i(\mfa)\right|^2 \\
&= \sum_{i, j \le M}a_i \overline{a_j} \sum_{\mfa} \eta_H\left(\log N\mfa\right) \cdot (f_i \cdot \overline{f_j})(\mfa).\label{eq:first_doublesum_switch}
\end{align}
 By the Minkowski bound and the assumption $Q \ge 100$, it follows that $Q$ is at least $2^n$. Furthermore, for $i, j \le M$
\[\# (S_i \cup S_j) \le 2 \log Q/\log 2 \le 2d^{-1} \log Q^{2d}.\]
Finally, $\chi_i \cdot \overline{\chi_j}$ has degree at most $d^2$. Applying Lemma \ref{lem:bilinear_lemma}, we find that we may apply Proposition \ref{prop:smoothed_character_sum} with $(d, Q)$ set to $(d^2, Q^{2d})$ to show that \eqref{eq:first_doublesum_switch} is at most
\[\sum_{i, j \le M} |a_ia_j| \left(\kappa_{ij} \cdot H(\log H)^{r-1} \cdot (2d \log Q)^{50nd^2} + AH \cdot (Q^{-d}H)^{-1/2} \right),\]
where $\kappa_{ij} =1$ if $(i, j)$ lies in $E$ and is otherwise $0$. The result follows.
\end{proof}

\begin{rmk}
\label{rmk:min_H}
Suppose $H \le Q^d e^{16nd^4}$, so the theorem does not apply as written. In this case, we will still need to bound this bilinear sum, but a trivial bound will suffice. Specifically, we will choose $\delta$ in $(0, 1/2)$ and use
\[\sum_{N\mfa < H} \left| \sum_{i = 1}^M a_if_i(\mfa)\right|^2 \le \sum_{i, j \le M} |a_i|\cdot  |a_j|\cdot  H^{1 + \delta} \zeta_F(1 + \delta)^{d^2} \le \left(\sum_{i \le M} |a_i|\right)^2 \cdot H^{1 + \delta} (1 + \delta^{-1})^{nd^2}.\]
Taking $\delta = (\log e^3 H)^{-1}$ then gives
\begin{equation}
\label{eq:trivial_bilinear_bound}
\sum_{N\mfa < H} \left| \sum_{i = 1}^M a_if_i(\mfa)\right|^2 \le (\log e^4 H)^{nd^2 + 1}\cdot H \cdot \left(\sum_{i \le M} |a_i|\right)^2 
\end{equation}

The minimal choice of $H$ obeying the conditions of the theorem is $Q^d e^{16nd^4}$. In this case, the theorem gives
\begin{align*}
\sum_{N\mfa < H} \left| \sum_{i = 1}^M a_if_i(\mfa)\right|^2 &\le\,  Q^d \cdot (2d \log Q)^{54d^2n} \exp\left(16nd^4 + 8nd^2 \sqrt{\log Q}\right) \cdot \left(\sum_{i \le M} |a_i|\right)^2.
\end{align*}
\end{rmk}

\subsection{Applying H\"{o}lder's inequality for small $H$}
We can adapt Theorem \ref{thm:bilinear_basic} to handle smaller $H$ by applying H\"{o}lder's inequality. This approach was first used in work of Friedlander and Iwaniec \cite[(21.9)]{Fried98}. As in that original application, this method is something of a shortcut around the deeper consideration of the involved $L$-functions as appears in \cite{LOTZ}.

We start with a weak estimate for the number of squarefull ideals.

\begin{lemma}
\label{lem:squarefull}
Take $F$ to be a number field of degree $n$. Given $H \ge 100$, we have
\[\sum_{\substack{\mfr \text{ sqfull}\\ N\mfr \le H}} N\mfr^{-1/2} \le (\log e^3 \Delta_F)^{4n} \cdot \log H,\]
where the sum is over squarefull integral ideals of $F$ of norm at most $H$. We also have
\[\sum_{\mfr \text{ sqfull}} N\mfr^{-1} \le \zeta(2)^n\cdot \zeta(3)^n \le 2^n\]
\end{lemma}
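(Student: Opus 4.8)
The plan is to reduce both estimates to an Euler product computation over the rational primes, using that $F$ has at most $n$ primes above each rational prime $p$ and that each such prime has norm a power of $p$. First I would set up the key observation: if $\mfr$ is a squarefull integral ideal of $F$, then $N\mfr$ is a squarefull rational integer (every prime $p$ dividing $N\mfr$ does so to a power at least $2$, since each prime $\mfp \mid \mfr$ contributes a factor $N\mfp^{v_\mfp(\mfr)}$ with $v_\mfp(\mfr) \geq 2$). However, several ideals can share the same norm, so rather than pushing forward to $\Z$ directly I would work with the Euler product
\[
\sum_{\mfr \text{ sqfull}} N\mfr^{-s} = \prod_{\mfp} \left( 1 + \sum_{k \geq 2} N\mfp^{-ks} \right) = \prod_{\mfp} \left( 1 + \frac{N\mfp^{-2s}}{1 - N\mfp^{-s}} \right),
\]
valid for $\Re(s) > 1/2$, and bound it by grouping the primes $\mfp$ according to the rational prime $p$ below them.

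For the second estimate (at $s = 1$), each local factor is $1 + \frac{N\mfp^{-2}}{1-N\mfp^{-1}} \leq 1 + \frac{p^{-2}}{1-p^{-1}} = 1 + \frac{1}{p(p-1)}$, and since there are at most $n$ primes $\mfp$ above each $p$ I would bound the full product by $\prod_p \left(1 + \frac{1}{p(p-1)}\right)^n$. A crude comparison shows $1 + \frac{1}{p(p-1)} \leq (1 - p^{-2})^{-1}(1-p^{-3})^{-1}$ for all $p \geq 2$ (check: the right side is $1 + p^{-2} + p^{-3} + \cdots$ while the left is $1 + p^{-2} + p^{-3} + p^{-4} + \cdots$ after expanding $\frac{1}{p(p-1)} = \sum_{j\geq 2} p^{-j}$; so in fact equality-ish, and certainly $\leq$), giving $\sum_{\mfr \text{ sqfull}} N\mfr^{-1} \leq \zeta(2)^n \zeta(3)^n$. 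Since $\zeta(2)\zeta(3) = \frac{\pi^2}{6}\cdot\zeta(3) < \frac{\pi^2}{6}\cdot 1.21 < 2$, this yields the claimed bound $2^n$.

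For the first estimate, I would truncate: $\sum_{\mfr \text{ sqfull}, N\mfr \leq H} N\mfr^{-1/2} \leq H^{1/2 - 1/2}\cdot(\text{something})$ won't work directly, so instead I'd use a Rankin-type argument or split off the contribution of small primes. The cleanest route: write $\sum_{N\mfr \leq H} N\mfr^{-1/2} \leq \sum_{N\mfr \leq H} N\mfr^{-1/2}$ and use that $N\mfr^{-1/2} = N\mfr^{-1}\cdot N\mfr^{1/2} \leq N\mfr^{-1}\cdot H^{1/2}$ is too lossy; better to note $\log H$ on the right suggests the bound comes from $\sum_{N\mfr \leq H} 1 \cdot (\text{harmonic-type weight})$. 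I would instead bound the local factors at $s = 1/2$: each is $1 + \frac{N\mfp^{-1}}{1 - N\mfp^{-1/2}}$, which for $N\mfp \geq 4$ is at most $1 + \frac{2}{N\mfp}$, but for $N\mfp = 2, 3$ it is $O(1)$ — these "bad" local factors contribute a bounded power of $(\log e^3\Delta_F)$ once one accounts for the number of primes of norm $2$ and $3$ (at most $n$ each), producing the $(\log e^3\Delta_F)^{4n}$ factor after a crude bound like $\left(1 + \frac{1}{1 - 2^{-1/2}}\right)^n \left(1 + \frac{1}{1-3^{-1/2}}\right)^n \leq (\log e^3 \Delta_F)^{4n}$ (valid since $\log e^3 \Delta_F \geq 3$ and the constant on the left is absolute). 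The remaining product over $N\mfp \geq 4$ converges, and the truncation $N\mfr \leq H$ is what introduces the $\log H$: one shows $\sum_{N\mfr \leq H} N\mfr^{-1/2} \leq \log H \cdot \prod_{N\mfp \geq 4}(\text{convergent factor})$ by the standard device $\sum_{N\mfr \leq H} N\mfr^{-1/2} \leq \sum_{N\mfr} N\mfr^{-1/2}(N\mfr/H)^{-\sigma}$ won't help; rather use $N\mfr^{-1/2} \leq N\mfr^{-1} \cdot \frac{\log H}{\log 2}$ is false.

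The honest approach for the $\log H$: since $N\mfr$ is squarefull with $N\mfr \leq H$, write $\mfr = \mfa^2 \mfb^3$ in the usual way (so that squarefull ideals are parametrized, roughly, by $(\mfa, \mfb)$ with $\mfb$ squarefree), giving $N\mfr^{-1/2} = N\mfa^{-1} N\mfb^{-3/2}$, and then $\sum_{N\mfa^2 N\mfb^3 \leq H} N\mfa^{-1} N\mfb^{-3/2} \leq \left(\sum_{\mfb} N\mfb^{-3/2}\right)\left(\sum_{N\mfa \leq H^{1/2}} N\mfa^{-1}\right) \leq \zeta(3/2)^n \cdot \left(\text{bound for } \sum_{N\mfa \leq H^{1/2}} N\mfa^{-1}\right)$; the partial sum of $N\mfa^{-1}$ up to $H^{1/2}$ is $\leq (\log e^3 \Delta_F)^{O(n)} \cdot \log H$ by a standard estimate for $\sum_{N\mfa \leq x} N\mfa^{-1}$ in a number field (which is $\ll_F \log x$, with the $F$-dependence controllable in terms of $\Delta_F$ via, e.g., comparison with $\zeta_F$ and the residue bound, or simply $\sum_{N\mfa\leq x}N\mfa^{-1} \leq \prod_{N\mfp \leq x}(1-N\mfp^{-1})^{-1}$ plus Mertens-type input). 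Collecting the absolute constants $\zeta(3/2)^n$ and the $2,3$-local factors into $(\log e^3\Delta_F)^{4n}$ gives the claim.

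The main obstacle I anticipate is making the dependence on $\Delta_F$ fully explicit with the constant $4n$ in the exponent — the parametrization $\mfr = \mfa^2\mfb^3$ is cleanest but one must be careful that it genuinely gives an inequality (overcounting is fine, undercounting is not) and that the partial sum $\sum_{N\mfa \leq H^{1/2}}N\mfa^{-1}$ is bounded by $\log H$ times a clean power of $\log e^3 \Delta_F$ rather than by something with a worse $\Delta_F$-dependence; pinning down that the "bad" factors at $N\mfp \in \{2,3\}$ plus $\zeta(3/2)^n$ fit inside $(\log e^3\Delta_F)^{4n}$ for all $\Delta_F \geq 1$ is the kind of crude-but-must-be-checked estimate that will take a moment to verify cleanly.
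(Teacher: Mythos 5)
Your final route (parametrize squarefull ideals as $\mfr = \mfa^2\mfb^3$, bound $\sum_\mfb N\mfb^{-3/2}$ by $\zeta_F(3/2) \le \zeta(3/2)^n$, truncate the $\mfa$-sum and estimate $\sum_{N\mfa \le x} N\mfa^{-1}$) is correct in outline and uses the same key decomposition as the paper, but the two differ in where the truncation is handled. The paper applies a Rankin shift to the original sum before factoring: for $\delta \in (0,1/2)$, it writes $\sum_{N\mfr \le H} N\mfr^{-1/2} \le H^\delta \sum_{\mfr} N\mfr^{-1/2-\delta} \le H^\delta\, \zeta_F(1+2\delta)\,\zeta_F(3/2+3\delta)$, bounds $\zeta_F(1+2\delta)$ via the already-established estimate \eqref{eq:zetaF_bnd}, bounds $\zeta_F(3/2+3\delta) \le \zeta(3/2)^n \le e^n$, and then takes $\delta = (\log H)^{-1}$ so that $H^\delta = e$ and $\delta^{-1} = \log H$. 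This sidesteps the need for a separate partial-sum estimate for $\sum_{N\mfa \le x} N\mfa^{-1}$, which is the piece you leave unsupplied; you would either need to import a Mertens-type bound with explicit $\Delta_F$-dependence, or re-derive it by applying exactly the same Rankin trick plus \eqref{eq:zetaF_bnd} to the $\mfa$-sum, at which point the two proofs coincide up to bookkeeping. Your Euler-product argument for the second claim is correct and is a local version of the paper's direct inequality $\sum_{\mfr \text{ sqfull}} N\mfr^{-1} \le \zeta_F(2)\zeta_F(3) \le \zeta(2)^n\zeta(3)^n$. The main caution: your proposal wanders through several discarded attempts before landing on the right decomposition; the genuine content is the last paragraph, and its one gap is the explicit, $\Delta_F$-uniform bound on the truncated $\mfa$-sum, which the paper's Rankin-first ordering handles automatically via \eqref{eq:zetaF_bnd}.
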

\begin{proof}
For any $\delta \in (0, 1/2)$, the sum being estimated is at most
\[H^{\delta} \cdot \sum_{\substack{\mfr \text{ sqfull}}} N\mfr^{-1/2 - \delta}.\]
Since every squarefull ideal may be written in the form $\mfa^2 \mfb^3$ for some integral ideals $\mfa, \mfb$, we also have
\begin{align*}
\sum_{\substack{\mfr \text{ sqfull}}} N\mfr^{-1/2 - \delta} &\le \zeta_F\big(2 (1/2 + \delta)\big) \cdot \zeta_F\big(3 (1/2 + \delta)\big)
\end{align*}
Applying \eqref{eq:zetaF_bnd} and the bound
\[\zeta_F(3(1/2 + \delta)) \le \zeta(3/2)^n \le e^n\]
shows this is no greater than
\[\tfrac{1}{2}\delta^{-1} \cdot e^{2n} \cdot 2 \cdot 3^{1/4} \cdot \left(\log e^3 \Delta_F\right)^n.\]
Taking $\delta = (\log H)^{-1}$ then suffices to prove the first result. The proof of the second inequality is analogous but simpler.
\end{proof}

\begin{theorem}
\label{thm:Holder}
Choose a positive integer $t$. Fix $F$, $Q$, $M$, $d$, and $f_1, \dots, f_M$ as in Theorem \ref{thm:bilinear_basic}.  We define $r$ and $n$ as in that theorem. Choose $H \ge 100$.

Take $f$ to be the totally multiplicative function defined over $F$ so $f(\mfp) = d$ for each prime $\mfp$ of $F$.  For each integral ideal $\mfa$, choose a complex coefficient $b_{\mfa}$. We assume that $b_{\mfa} = 0$ if $\mfa$ has rational norm greater than $H$ or is not squarefree. 

Choose a positive integer $t$ such that
\[H^t \ge Q^de^{16nd^4}\]
 Given coprime integral ideals $\mfb, \mfr$ of $F$ with $\mfb$ squarefree and $\mfr$ squarefull, take
\[G(\mfr, \mfb) = \sum_{\mfa_1 \dots \mfa_t = \mfb \mfr} f(\mfr) \cdot \left|b_{\mfa_1} \dots b_{\mfa_t}\right|\]
and take
\[A_{0t} = \left(H^{-t} \cdot \sum_{\mfr, \mfb} G(\mfr, \mfb)^2\right)^{1/2t}.  \]
Then
\begin{align*}
&\sum_{i = 1}^M \left| \sum_{\mfa} b_{\mfa}f_i(\mfa)\right|\\
&\le A_{0t} \cdot (\log H^t)^{\max(r-1, 1)/2t} \cdot  (2d \log Q)^{26d^2n/t} \cdot HM\cdot \left((\#E/M^2)^{1/{2t}} + H^{-1/4}A^{1/2t}Q^{d/4t}\right)
\end{align*}
with
\[A = \exp\left(4n^{1/2}d^2\sqrt{\log Q^{-d}H^t} + 8nd^2 \sqrt{\log Q}\right).\]
\end{theorem}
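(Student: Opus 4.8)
The plan is to run the Friedlander--Iwaniec ``H\"{o}lder trick'' \cite[(21.9)]{Fried98}, using that the hypothesis $H^t \ge Q^d e^{16nd^4}$ makes bilinear sums of length $H^t$ (and slightly shorter) accessible via Theorem~\ref{thm:bilinear_basic}. Write $T_i = \sum_{\mfa} b_{\mfa} f_i(\mfa)$. Since $|T_i|^t = |T_i^t|$ and H\"{o}lder's inequality gives $\sum_{i} |T_i| \le M^{1-1/t}\big(\sum_i |T_i|^t\big)^{1/t}$, it is enough to bound $\sum_i |T_i^t|$. Expanding the $t$-th power and using that each $f_i$ is multiplicative and supported on squarefree ideals, one obtains $T_i^t = \sum_{\mathfrak{c}} g_i(\mathfrak{c})\, B_{\mathfrak{c}}$, where $B_{\mathfrak{c}} = \sum_{\mfa_1 \cdots \mfa_t = \mathfrak{c}} b_{\mfa_1}\cdots b_{\mfa_t}$ is supported on $N\mathfrak{c} \le H^t$ and $g_i$ is the totally multiplicative function with $g_i(\mfp) = f_i(\mfp)$. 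In particular $g_i$ agrees with $f_i$ on squarefree ideals, $|g_i(\mathfrak{c})| \le f(\mathfrak{c})$ (as $|f_i(\mfp)| \le d$ for every $\mfp$ since $f_i$ is acceptable), and for a factorization $\mathfrak{c} = \mfb\mfr$ with $\mfb$ squarefree and $\mfr$ the coprime squarefull part one has $g_i(\mathfrak{c}) = f_i(\mfb)\,g_i(\mfr)$. Picking unimodular $\epsilon_i$ with $\epsilon_i T_i^t = |T_i^t|$, we get $\sum_i |T_i^t| = \sum_{\mathfrak{c}} B_{\mathfrak{c}}\, \widetilde{W}(\mathfrak{c})$ with $\widetilde{W}(\mathfrak{c}) = \sum_i \epsilon_i g_i(\mathfrak{c})$, and we apply Cauchy--Schwarz in $\mathfrak{c}$ with a weight $\omega_{\mathfrak{c}}$ depending only on the squarefull part $\mfr$ of $\mathfrak{c}$:
\[
  \Big(\sum_i |T_i|^t\Big)^2 \le \Big(\sum_{\mathfrak{c}} |B_{\mathfrak{c}}|^2 \omega_{\mathfrak{c}}^{-1}\Big)\Big(\sum_{\mathfrak{c}} \omega_{\mathfrak{c}} |\widetilde{W}(\mathfrak{c})|^2\Big).
\]

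For the factor $\sum_{\mathfrak{c}} \omega_{\mathfrak{c}} |\widetilde{W}(\mathfrak{c})|^2$, write $\mathfrak{c} = \mfb\mfr$; then $\widetilde{W}(\mfb\mfr) = \sum_i \big(\epsilon_i g_i(\mfr)\big) f_i(\mfb)$ is a bilinear sum in the squarefree variable $\mfb$, with $\mfb$ running over ideals of norm at most $H^t/N\mfr$ and with coefficients of modulus at most $f(\mfr)$. The plan is to apply Theorem~\ref{thm:bilinear_basic} with $H' = H^t/N\mfr$ (and the trivial bound of Remark~\ref{rmk:min_H} for the few $\mfr$ for which $H^t/N\mfr < Q^d e^{16nd^4}$). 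The uniform coefficient bound $|\epsilon_i g_i(\mfr)| \le f(\mfr)$ converts $\sum_{(i,j)\in E}|a_i a_j|$ into $f(\mfr)^2 \cdot \#E$ and $(\sum_i |a_i|)^2$ into $f(\mfr)^2 M^2$ --- this is exactly what yields the factor $\#E/M^2$ rather than a square root. The outcome is a bound $\sum_{\mfb} |\widetilde{W}(\mfb\mfr)|^2 \ll f(\mfr)^2\big[(H^t/N\mfr)(\log H^t)^{r-1}\Lambda_1 \#E + A\, H^{t/2} N\mfr^{-1/2} Q^{d/2} M^2\big]$ with $\Lambda_1 = (2d\log Q)^{50d^2n}$ and $A$ as in the statement.

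For the dual factor $\sum_{\mathfrak{c}}|B_{\mathfrak{c}}|^2 \omega_{\mathfrak{c}}^{-1}$, the key inequality is $f(\mfr)|B_{\mfb\mfr}| \le G(\mfr,\mfb)$, giving $\sum_{\mfb}f(\mfr)^2|B_{\mfb\mfr}|^2 \le \sum_{\mfb} G(\mfr,\mfb)^2$ and hence $\sum_{\mfr,\mfb}f(\mfr)^2|B_{\mfb\mfr}|^2 \le \sum_{\mfr,\mfb}G(\mfr,\mfb)^2 = A_{0t}^{2t}H^t$. Optimizing $\omega_{\mathfrak{c}} = \omega_{\mfr}'$ over $\mfr$ reduces the product of the two factors to (a constant times) $\big(\sum_{\mfr} f(\mfr)\, N\mfr^{-\alpha}\, w_{\mfr}^{1/2}\big)^2$ for the appropriate exponent $\alpha$ ($\alpha = \tfrac12$ for the leading term coming from the length $H^t/N\mfr$, $\alpha = \tfrac14$ for the error term), where $w_{\mfr} = \sum_{\mfb}|B_{\mfb\mfr}|^2$; a further Cauchy--Schwarz over $\mfr$, using $f(\mfr)^2 w_{\mfr} \le \sum_{\mfb}G(\mfr,\mfb)^2$ together with Lemma~\ref{lem:squarefull} (namely $\sum_{\mfr\ \mathrm{sqfull}} N\mfr^{-1} \le 2^n$ for the leading term and $\sum_{\mfr\ \mathrm{sqfull},\, N\mfr \le H^t} N\mfr^{-1/2} \ll (\log e^3\Delta_F)^{4n}\log H^t$ for the error term), collapses this to $A_{0t}^{2t}H^t$ times polylogarithmic factors. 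Combining the two factors, taking a square root, then raising to the $(1/t)$-th power and multiplying by $M^{1-1/t}$, one reaches the stated bound; finally one simplifies the accumulated constants using $Q > 100$ (so $2^n \le Q$), $\log Q \ge 4$, and $(\log Q)^3 \ge \max(n,d)$, and one absorbs $(\log e^3 \Delta_F)^{O(n)}$ into powers of $(2d\log Q)^{1/t}$ since $\Delta_F \le Q_{\chi_i} \le Q$. The exponent $\max(r-1,1)$ on $\log H^t$ absorbs both the $(\log H^t)^{r-1}$ of Theorem~\ref{thm:bilinear_basic} and the single logarithm lost in the squarefull sum.

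The main obstacle is the bookkeeping of the squarefull part $\mfr$. There are $\asymp H^{t/2}$ squarefull ideals of norm at most $H^t$, so a careless Cauchy--Schwarz over $\mfr$ would cost a factor $H^{t/2}$ and turn the desired $HM$ in the conclusion into $H^{3/2}M$ or worse; the point is to keep the bilinear length honest at $H^t/N\mfr$ (rather than crudely extending to $H^t$), so that the attendant weight $N\mfr^{-1}$ makes the $\mfr$-sum behave like a polylogarithm via $\sum_{\mfr\ \mathrm{sqfull}}N\mfr^{-1} \le 2^n$, and to pass from $|B_{\mathfrak c}|$ to the divisor-bounded $G(\mfr,\mfb)$ \emph{before} the final Cauchy--Schwarz so that the whole estimate is governed by the single quantity $A_{0t}$.
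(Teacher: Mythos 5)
Your argument is correct and follows essentially the same route as the paper's proof: H\"older to reduce to $\sum_i |T_i|^t$, expansion as a sum over $\mathfrak{c} = \mathfrak{b}\mathfrak{r}$ with the coefficient redistribution $|f_i^*(\mathfrak{r})/f(\mathfrak{r})| \le 1$, Cauchy--Schwarz pitting $\sum G(\mathfrak{r},\mathfrak{b})^2 = A_{0t}^{2t}H^t$ against a bilinear sum of length $H^t/N\mathfrak{r}$ handled by Theorem~\ref{thm:bilinear_basic} (with Remark~\ref{rmk:min_H} for small $H^t/N\mathfrak{r}$), and Lemma~\ref{lem:squarefull} to sum over the squarefull part. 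The only superficial difference is that you describe the weight choice as ``optimize $\omega_{\mathfrak{r}}$, then Cauchy--Schwarz again,'' where the paper simply fixes $\omega_{\mathfrak{r}} = f(\mathfrak{r})^{-2}$ from the start; these are the same estimate.
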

\begin{proof}
By H\"{o}lder's inequality, we have
\begin{equation}
\label{eq:initial_Holder}
\sum_{i = 1}^M \left| \sum_{N\mfa \le H} b_{\mfa}f_i(\mfa)\right| \le M^{\frac{t-1}{t}} \cdot \left(\sum_{i = 1}^M \left| \sum_{N\mfa \le H} b_{\mfa}f_i(\mfa)\right|^t\right)^{1/t}.
\end{equation}
There are complex numbers $c_1, \dots, c_M$ of magnitude  $1$ 
such that
\begin{equation}
\label{eq:Holder2}
\sum_{i = 1}^M \left| \sum_{N\mfa \le H} b_{\mfa}f_i(\mfa)\right|^t = \sum_{i = 1}^M \sum_{\mfa_1, \dots, \mfa_t} c_i \cdot b_{\mfa_1}\cdot \dots \cdot b_{\mfa_t} \cdot f_i(\mfa_1) \cdot \dots \cdot f_i(\mfa_t). \end{equation}
Take $f_i^*$ to be the totally multiplicative function on integral ideals of $F$ that equals $f_i$ on squarefree ideals. Then the right hand side of \eqref{eq:Holder2} equals

\[\sum_{\mfr, \mfb} \left(f(\mfr)\sum_{\mfa_1 \dots \mfa_t = \mfb \mfr} b_{\mfa_1} \dots b_{\mfa_t} \right) \left(\sum_{i = 1}^M c_if_i(\mfb) f_i^*(\mfr)f^{-1}(\mfr)\right),\]
where the sum is over pairs of coprime ideals $(\mfb, \mfr)$ satisfying $N\mfb \cdot N\mfr \le H^t$ with $\mfb$ squarefree and $\mfr$ squarefull. By Cauchy--Schwarz, this has magnitude at most
\begin{equation}
\label{eq:CS_to_Holder}
\left(\sum_{\mfr, \mfb} G(\mfr, \mfb)^2 \right)^{1/2} \cdot \left(\sum_{\mfr, \mfb}\left|\sum_{i = 1}^M c_if_i(\mfb) f_i^*(\mfr)f^{-1}(\mfr)\right|^2\right)^{1/2}.
\end{equation}
Take
\begin{align*}
&A_1 = (\# E) \cdot H^t(\log H^t)^{r-1} \cdot (2d \log Q)^{50d^2n}, \\
&A_2 = M^2H^{t/2}Q^{d/2} (2d \log Q)^{26d^2n}\cdot \exp\left(4n^{1/2} d^2 \sqrt{\log Q^{-d}H^t} + 8nd^2 \sqrt{\log Q}\right),\quad\text{and}\\
&A_3 = M^2H^{t/2}Q^{d/2} e^{8nd^4} \cdot\left(\log e^{20nd^4} Q^d\right)^{2nd^2}.
\end{align*}

By Theorem \ref{thm:bilinear_basic}, we have
\[\sum_{\substack{\mfb \\ N\mfb \cdot N\mfr \le H^t}} \left|\sum_{i = 1}^M c_if_i(\mfb) f_i^*(\mfr)f^{-1}(\mfr)\right|^2 \le A_1 /N \mfr + A_2 /N\mfr^{1/2}\]
unless $H^t/N\mfr$ is smaller than $Q^de^{16nd^4}$. In this case, we may instead apply Remark \ref{rmk:min_H} to
bound this sum by $A_3/N\mfr^{1/2}$. So Lemma \ref{lem:squarefull} gives
\begin{equation}
\label{eq:inner_Holder}
\sum_{\mfr, \mfb}\left|\sum_{i = 1}^M c_if_i(\mfb) f_i^*(\mfr)f^{-1}(\mfr)\right|^2 \le 2^n \cdot A_1  + \max(A_2, A_3) \cdot (\log Q)^{8n} \cdot \log H^t.
\end{equation}
We note that
\begin{align*}
&e^{8nd^4} \le \exp\left(2n^{1/2}d^2 \sqrt{\log Q^{-d} H^t}\right), \quad\text{and} \\
&\log e^{20nd^4}Q^d \le (2d \log Q)^4.
\end{align*}
So $A_3 \le A_2$, and the left hand side of \eqref{eq:inner_Holder} is at most
\begin{align*}
&(\#E) \cdot H^t \cdot (\log H^t)^{r-1} \cdot (2d \log Q)^{51d^2n}\\
&+ M^2 Q^{d/2}H^{t/2} \cdot \log H^t \cdot (2d\log Q)^{34d^2n} \cdot \exp\left(4n^{1/2}d^2 \sqrt{\log Q^{-d}H^t} + 8nd^2 \sqrt{\log Q}\right).
\end{align*}
This gives us an upper bound for \eqref{eq:CS_to_Holder}, which in turn allows us to bound the left hand side of \eqref{eq:initial_Holder}.
\end{proof}
Assuming the values $A_{0t}$ do not grow too quickly with $t$, the following corollary gives a nearly optimal choice of $t$.
\begin{corollary}
\label{cor:choose_t}
Take $f_1, \dots, f_M$, $Q$, $d$, $n$, and $H$ as in Theorem \ref{thm:Holder}. Take
\[t = \left \lceil \frac{\log (Q^d M^2)  + 100nd^4 \sqrt{\log QM}}{\log H}\right\rceil \quad\text{and}\quad M_0 = M^2/\left(\#E \cdot (\log HMQ^{2d})^{60nd^2}\right).\]
Suppose that $M_0 > 1$. Then
\begin{align*}
&\sum_{i = 1}^M \left| \sum_{\mfa} b_{\mfa}f_i(\mfa)\right| \\
&\quad\le 2A_{0t}HM \cdot \exp\left( \frac{ - \log M_0 \cdot \log H}{2d\log Q + 4 \log M + 200nd^4 \sqrt{\log QM} + 2\log H } \right),
\end{align*}
where $A_{0t}$ is defined as in Theorem \ref{thm:Holder}.
\end{corollary}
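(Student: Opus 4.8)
The plan is to substitute the stated value of $t$ into Theorem~\ref{thm:Holder} and bound the two resulting terms, each by $A_{0t}HM\exp(-\log M_0\log H/D)$, where $D=2d\log Q+4\log M+200nd^4\sqrt{\log QM}+2\log H$ is the denominator appearing in the statement; the factor $2$ then comes from adding the two bounds. First I would record four consequences of the choice of $t$. From $t=\lceil(\log(Q^dM^2)+100nd^4\sqrt{\log QM})/\log H\rceil$ and $Q,M>100$ we obtain (i) $t\log H\ge d\log Q+16nd^4$, which verifies the hypothesis $H^t\ge Q^de^{16nd^4}$ of Theorem~\ref{thm:Holder}; (ii) $D=2(\log(Q^dM^2)+100nd^4\sqrt{\log QM})+2\log H>2t\log H$, hence $\tfrac{\log H}{D}<\tfrac1{2t}$; (iii) $\log(Q^{-d}H^t)=t\log H-d\log Q\ge 2\log M+100nd^4\sqrt{\log QM}$; and (iv) $r\le d^2$ (since $\langle\chi_i,\chi_j\rangle$ is the multiplicity of the trivial character in $\chi_i\overline{\chi_j}$, which has degree at most $d^2$) while $\#E\ge M$ (every diagonal pair lies in $E$, as $\chi_i\ne 0$), the latter giving $\log M_0\le\log M$.

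For the term of Theorem~\ref{thm:Holder} involving $(\#E/M^2)^{1/2t}$, the definition of $M_0$ gives $(\#E/M^2)^{1/2t}=M_0^{-1/2t}(\log HMQ^{2d})^{-30nd^2/t}$. Using (iv), $Q,M>100$, and the crude estimate $t\log H\le 104nd^4\log HMQ^{2d}\le(\log HMQ^{2d})^{8n}$, the logarithmic prefactor satisfies $(\log H^t)^{\max(r-1,1)/2t}(2d\log Q)^{26d^2n/t}\le(\log HMQ^{2d})^{30nd^2/t}$, which cancels the last factor above. Hence this term is at most $A_{0t}HM\cdot M_0^{-1/2t}$, and since $M_0>1$ and $\tfrac1{2t}\ge\tfrac{\log H}{D}$ by (ii), we get $M_0^{-1/2t}=\exp(-\tfrac{\log M_0}{2t})\le\exp(-\tfrac{\log M_0\log H}{D})$, as required.

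The term involving $H^{-1/4}A^{1/2t}Q^{d/4t}$ is the crux. Writing $X=t\log H-d\log Q$, one has $H^{-1/4}Q^{d/4t}=\exp(-X/4t)$ and $A^{1/2t}=\exp\bigl(\tfrac1t(2n^{1/2}d^2\sqrt X+4nd^2\sqrt{\log Q})\bigr)$, so this term equals $A_{0t}HM\exp(\Phi/t)$ with
\[
\Phi=-\tfrac X4+2n^{1/2}d^2\sqrt X+4nd^2\sqrt{\log Q}+\tfrac{\max(r-1,1)}{2}\log(t\log H)+26nd^2\log(2d\log Q).
\]
As before it suffices to prove $-\Phi\ge\tfrac12\log M_0$, and by (iv) it is enough to show $-\Phi\ge\tfrac12\log M$. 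By (iii), $X\ge 2\log M+100nd^4\sqrt{\log QM}\ge 303nd^4$; the first bound yields $\tfrac X4\ge\tfrac12\log M+25nd^4\sqrt{\log QM}$, while the second forces $\sqrt X\ge16n^{1/2}d^2$, so that $\tfrac X4-2n^{1/2}d^2\sqrt X\ge\tfrac X8$. One then checks that the slack $25nd^4\sqrt{\log QM}$ in $\tfrac X4$ (together with part of $\tfrac X8$) absorbs the three subtracted logarithmic terms, treating separately the cases $\log M\le X/100$ and $\log M>X/100$; in the latter case $\sqrt X<10\sqrt{\log M}\le10\sqrt{\log QM}$, which is what controls the term $2n^{1/2}d^2\sqrt X$. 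This chain of inequalities is tight precisely when $Q=M=100$ and $d=2$, which explains the constant $100$ in the definition of $t$.

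The main obstacle is exactly this last estimate, where there is essentially no room to spare: one cannot replace the lower bound $t\log H-d\log Q\ge 2\log M+100nd^4\sqrt{\log QM}$ from (iii) by the weaker $\ge 16nd^4$ of (i) or by $\ge 100nd^4\sqrt{\log Q}$. The $\sqrt{\log QM}$ piece is what cancels the growth coming from $A^{1/2t}$ and from the logarithmic prefactors, while the $2\log M$ piece is what keeps the bound valid in the regime where $M$ is large relative to $Q$ and $H$ — the same regime in which $\log M_0$ itself can be large, so that the target $\tfrac12\log M_0$ on the right is not negligible.
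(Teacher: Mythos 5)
You take a genuinely different route from the paper, and it has a gap in the treatment of the off-$E$ term. The paper first shows that the off-$E$ term of Theorem \ref{thm:Holder} is dominated by the on-$E$ term: from $(Q^{-d}H^t)^{-1/2}\exp\bigl(4n^{1/2}d^2\sqrt{\log Q^{-d}H^t}+8nd^2\sqrt{\log Q}\bigr) \le M^{-1}$ together with $\#E \ge M$ it follows that the second term is at most the first, so the whole sum is at most twice the on-$E$ term. Only afterwards does the paper absorb the logarithmic prefactors $(\log H^t)^{d^2/2t}(2d\log Q)^{26d^2n/t}$, against the $(\log HMQ^{2d})^{-30nd^2/t}$ factor that the $(\#E/M^2)^{1/2t}$ piece contributes through the definition of $M_0$. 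You instead try to bound each of the two terms separately by $A_{0t}HM\exp(-\log M_0\log H/D)$, which means that for the second term the logarithmic prefactors have nothing to be absorbed against.

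Concretely, your reduction to $-\Phi \ge \tfrac12\log M$ is not provable from your listed facts, and in fact fails for small parameters: with $n=d=1$, $Q=M=101$, and $\log H$ just above $\log 100$, one has $X \approx 313$, so $X/4 - 2\sqrt X \approx 43$, while $4\sqrt{\log Q} + 26\log(2\log Q) + \tfrac12\log(t\log H) \approx 8.6 + 57.8 + 2.9 \approx 69$; thus $-\Phi \approx -26 < 0 < \tfrac12\log M$. The slack of $X/4 - 2n^{1/2}d^2\sqrt X - 4nd^2\sqrt{\log Q}$ over $\tfrac12\log M$, coming from the paper's inequality at $a = 100$, is only about $0.8\,nd^4\sqrt{\log QM}$, which cannot dominate $26nd^2\log(2d\log Q)$ near minimal $Q$. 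The only way to rescue your argument is to invoke the hypothesis $M_0 > 1$, which forces $\log M > 60nd^2\log\log HMQ^{2d}$ and hence bounds $26nd^2\log(2d\log Q)$ and $\tfrac{d^2}{2}\log(t\log H)$ by small multiples of $\log M$; but you never use this, and your assertion that the chain of inequalities is ``tight precisely when $Q=M=100$ and $d=2$'' is inconsistent with it, since such parameters give $M_0 \ll 1$ and fall outside the corollary's hypotheses altogether.
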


\begin{proof}
Choose a real number $a$ such that
\[H^t = Q^d M^2 \exp\left(and^4 \sqrt{\log QM}\right),\]
so $a \ge 100$. We have the inequality
\begin{align*}
\exp\left(4n^{1/2}d^2 \sqrt{\log Q^{-d} H^t}\right) &=\exp\left(4n^{1/2}d^2 \sqrt{and^4 \sqrt{\log QM} + \log M^2}\right) \\
&\le \exp\left( 4nd^4 \sqrt{a+2}\cdot \sqrt{\log QM}\right),
\end{align*}
so
\begin{align*}
& (Q^{-d}H^t)^{-1/2} \cdot \exp\left(4n^{1/2}d^2 \sqrt{\log Q^{-d}H^t} + 8nd^2 \sqrt{\log Q}\right) \\
& \le M^{-1}\cdot \exp\left(\left(-\tfrac{1}{2}and^4 + 4\sqrt{a+2} \cdot nd^4 + 8nd^4\right) \cdot \sqrt{\log QM}\right) \le M^{-1}.
\end{align*}
The sum over $E$ in Theorem \ref{thm:Holder} is thus no smaller than the sum off $E$, so we have
\begin{equation}
\label{eq:Holder_t_chosen}
\sum_{i = 1}^M \left| \sum_{\mfa} b_{\mfa}f_i(\mfa)\right| \le\, 2A_{0t} \cdot HM \cdot (M^2/\#E)^{-1/2t} \cdot (\log H^t)^{d^2/2t} \cdot (2d \log Q)^{26d^2n/t}.
\end{equation}
We also have the inequalities
\[\log H^t \le \log H +  \log \left(Q^dM^2\right) + 100nd^4 \sqrt{\log QM} \le \tfrac{1}{2}\log H \cdot ( \log Q^{2d}M)^6\]
and
\[t \le \frac{\log H + \log\left( Q^d M^2\right) + 100nd^4 \sqrt{\log QM}}{\log H}.\]
We now may apply \eqref{eq:Holder_t_chosen} to prove the desired inequality.
\end{proof}
\begin{rmk}
\label{rmk:prime_support}
In this corollary, suppose the coefficient $b_{\mfa}$ is nonzero only when $\mfa$ is prime, and that its magnitude at primes is bounded by $1$. Choose primes $\mfp_1, \dots, \mfp_t$ of $F$ of norm at most $H$, and write $\mfp_1 \dots \mfp_t$ as $\mfb \mfr$, where $\mfb$ and $\mfr$ are coprime, $\mfb$ is squarefree, and $\mfr$ is squarefull. . Then
\[G(\mfr, \mfb) \le \frac{t!}{\#\text{Aut}((\mfp_1, \dots, \mfp_t))} \cdot f(\mfr) \le t! \cdot d^{t},\]
where $\#\text{Aut}((\mfp_1, \dots, \mfp_t))$ is the number of permutations in $S_t$ that fix $(\mfp_1, \dots, \mfp_t)$.

We have
\[\sum_{\substack{\text{Multiset }\{\mfp_1, \dots, \mfp_t\}\\ N\mfp_i \le H \text{ for } i \le t}} \frac{1}{\#\text{Aut}((\mfp_1, \dots, \mfp_t))} \le \frac{1}{t!} \left(\pi_F(H)\right)^t,\]
where $\pi_F$ is the prime counting function for $F$. We thus find
\begin{equation}
\label{eq:A0t_bnd}
A_{0t} \le \left(t! \cdot d^{2t} \cdot \frac{\pi_F(H)^t}{H^t}\right)^{1/2t} \le d t^{1/2} \cdot \frac{\pi_F(H)^{1/2}}{H^{1/2}} \le \frac{3dt^{1/2}n^{1/2}}{(\log H)^{1/2}},
\end{equation}
where the last inequality follows from \cite[Theorem 26A]{Rosser41} and the bound $H \ge 100$.
\end{rmk}

\section{The averaged Chebotarev density theorem}
\label{sec:averaged}
Given a finite group $G$, we call a character $\chi: G \to \C$ a \emph{faithful monomial character} if it is induced from a $1$-dimensional representation of a subgroup of $G$ and is a sum of irreducible faithful characters of $G$. With this codified, the following lemma is an application of Theorem \ref{thm:faithful}.

\begin{lemma}
\label{lem:to_positive_monomial}
Choose a nontrivial Galois extension $K/F$ of number fields, take $d = [K:F]$, and take $\Delta$ to be the magnitude of the absolute discriminant of $K$. Take $\chi$ to be a faithful irreducible character of $\Gal(K/F)$. Then there is an integer $m \le d$, a sequence  $\phi_1, \dots, \phi_m$ of faithful monomial characters of $\Gal(K/F)$, and rational numbers $a_1, \dots, a_m$ of magnitude at most $d^{\frac{3}{2}(d-1)}$ such that
\begin{equation}
\label{eq:irred_to_monomial}
\chi = a_1\phi_1 + \dots + a_m \phi_m
\end{equation}
Furthermore, the degree of each $\phi_i$ is at most $d/2$, and $Q_{\phi_i}$ is at most $\Delta$.
\end{lemma}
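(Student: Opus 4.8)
The plan is to deduce the existence of the decomposition \eqref{eq:irred_to_monomial} directly from Theorem~\ref{thm:faithful}, then extract a $\QQ$-linearly independent subfamily of the available monomial characters and bound the resulting coefficients by Cramer's rule; the auxiliary bounds on $\deg\phi_i$ and $Q_{\phi_i}$ will come from elementary observations about induced characters and the conductor--discriminant formula.

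First I would apply Theorem~\ref{thm:faithful} with $G=\Gal(K/F)$. Let $\Psi$ denote the set of characters of the form $\Ind_H^G\psi$, where $H$ ranges over nilpotent subgroups of $G$, $\psi$ over linear characters of $H$, and $\Ind_H^G\psi$ is a sum of irreducible faithful characters of $G$. Every element of $\Psi$ is a faithful monomial character in the sense defined above, and Theorem~\ref{thm:faithful} asserts exactly that $\chi\in\mathrm{span}_\QQ(\Psi)$. I would then choose $\phi_1,\dots,\phi_m\in\Psi$ forming a $\QQ$-basis of $\mathrm{span}_\QQ(\Psi)$, so that \eqref{eq:irred_to_monomial} holds with uniquely determined rationals $a_1,\dots,a_m$. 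Since each $\phi_i$ is a sum of faithful irreducible characters, all the $\phi_i$ lie in the span of the faithful irreducible characters of $G$, whose dimension is at most the number of irreducible characters of $G$, i.e.\ the number of conjugacy classes, which is at most $|G|=d$; hence $m\le d$.

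Next I would check the two structural bounds. If $\Ind_H^G\psi\in\Psi$ then $H\neq 1$: if $H$ were trivial then $\psi$ would be trivial and the criterion \eqref{eq:faithful_check} would fail for every nontrivial normal subgroup $N$ of $G$. Thus $\deg\phi_i=[G:H]\le |G|/2=d/2$. For the conductor bound, observe that $\psi$ occurs in the regular representation of $H$, so $\phi_i=\Ind_H^G\psi$ occurs as a direct summand of $\Ind_H^G(\mathrm{reg}_H)=\mathrm{reg}_G$. Because $\rho\mapsto Q_\rho$ is multiplicative under direct sums (the degree is additive and the Artin conductor and its norm are multiplicative) and $Q_\rho\ge 1$ for every representation $\rho$ (as $\Delta_F\ge 1$ and $\mathrm{N}\mathfrak{f}_\rho\ge 1$), we get $Q_{\phi_i}\le Q_{\mathrm{reg}_G}$. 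Finally $Q_{\mathrm{reg}_G}=\Delta_F^{[K:F]}\cdot\mathrm{N}_{F/\QQ}\mathfrak{d}_{K/F}=\Delta_K=\Delta$ by the conductor--discriminant formula together with the tower formula for discriminants, so $Q_{\phi_i}\le\Delta$.

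Finally, the coefficient bound. Expressing everything in the orthonormal basis $\mathrm{Irr}(G)$, the coordinate vector of $\phi_i$ has entries $\langle\phi_i,\rho\rangle\in\Z_{\ge 0}$, each at most $\deg\phi_i\le d/2$, while the coordinate vector of the irreducible $\chi$ is a standard basis vector $e$. Since $\phi_1,\dots,\phi_m$ are linearly independent, the integer coefficient matrix has rank $m$, so I can select $m$ coordinates yielding an invertible $B\in\Z^{m\times m}$; restricting \eqref{eq:irred_to_monomial} to those coordinates gives $B\mathbf{a}=e'$, where $e'$ is the corresponding restriction of $e$. Then Cramer's rule yields $a_i=\det(B_i)/\det(B)$, where $B_i$ is $B$ with its $i$-th column replaced by $e'$; since $|\det B|\ge 1$ and, by Hadamard's inequality, $|\det B_i|\le\|e'\|\prod_{j\neq i}\|\mathrm{col}_j(B)\|\le(\sqrt{m}\cdot d/2)^{m-1}\le(d^{3/2}/2)^{d-1}\le d^{\frac{3}{2}(d-1)}$, we conclude $|a_i|\le d^{\frac{3}{2}(d-1)}$. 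The only substantive ingredient here is Theorem~\ref{thm:faithful}; the step most prone to a slip is this last coefficient estimate, where one must be careful that the chosen $m\times m$ submatrix is genuinely integral and invertible (so that $|\det B|\ge 1$) and that Hadamard's inequality is applied with the correct column norms, after which the stated power of $d$ falls out.
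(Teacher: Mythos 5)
Your proof is correct and follows essentially the same strategy as the paper: invoke Theorem~\ref{thm:faithful}, pick a $\mathbb{Q}$-basis $\phi_1,\dots,\phi_m$ of faithful monomial characters for the span of the faithful irreducibles, observe that each $\phi_i$ is a summand of the regular representation to get the degree and conductor bounds, and extract the coefficient bound from the integrality and invertibility of the change-of-basis matrix. The only cosmetic differences are that the paper forms the $m\times m$ matrix over the faithful irreducibles directly and bounds its entries by $\sqrt{d}$ (multiplicities in the regular representation are at most $\deg\rho\le\sqrt{d}$) and then bounds the adjugate via the Leibniz formula, whereas you pass through the full $\mathrm{Irr}(G)$ coordinate matrix, select an invertible integral $m\times m$ submatrix, use the weaker entry bound $d/2$, and apply Cramer's rule with Hadamard's inequality --- both routes land on $d^{\frac{3}{2}(d-1)}$.
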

\begin{proof}
Take $\chi_1, \dots, \chi_m$ to be the distinct faithful irredcuible characters of $\Gal(K/F)$. From Theorem \ref{thm:faithful}, there are faithful monomial characters $\phi_1, \dots, \phi_m$ spanning the $\QQ$ vector space generated by $\chi_1, \dots, \chi_m$. As a result, if we define an $m \times m$ integer matrix $A = (a_{ij})$ so that
\[\phi_i = \sum_{j \le m} a_{ij} \chi_j \quad\text{for } i \le m,\]
we find that $A$ is invertible, and so has nonzero integer determinant. Since each $\phi_i$ corresponds to a subrepresentation of the regular representation of $\Gal(K/F)$, we find that each $a_{ij}$ is at most $\sqrt{d}$. From the theory of adjugate matrices, we see that the entries in the inverse matrix $A^{-1}$ have magnitude bounded by 
\[d^{\frac{1}{2}m-1} \cdot (m-1)! \le d^{\frac{3}{2}(d-1)}.\]
This gives the bound on the coefficients in \eqref{eq:irred_to_monomial}.

Since $\Gal(K/F)$ is nontrivial and each $\phi_i$ is faithful monomial, we see that each $\phi_i$ must be induced from a subgroup of order at least $2$, and hence has degree at most $d/2$. Finally, the bound on $Q_{\phi_i}$ follows from the fact that $\phi_i$ corresponds to a subrepresentation of the regular representation of $\Gal(K/F)$.
\end{proof}

\begin{theorem}
\label{thm:actual_main}
Choose positive numbers $Q, H \ge 100$ and an integer $M \ge 100$, choose a number field $F$, and choose a positive integer $d$. Finally, for each prime $\mfp$ of $F$, choose a complex number $b_{\mfp}$ of magnitude at most $1$.

Then there is a list $K_1, \dots, K_{M-1}$ of number fields so that, if $K/F$ is a Galois extension of relative degree $d$ such that $K$ has discriminant at most $Q$, and if $K$ is not in the list $K_1, \dots, K_{M-1}$, we have
\[\left| \sum_{N\mfp \le H} b_{\mfp} \cdot \chi( \mfp) \right| \le \frac{c H }{\log H}\]
with
\[ c = 10nd^{\frac{3}{2}(d + 2)} \sqrt{\log QMH} \cdot \exp\left( \frac{(- \log M + 15nd^2 \log \log Q^{d}MH ) \cdot \log H}{d\log Q + 4 \log M  + 2\log H + 13nd^4 \sqrt{\log QM} } \right)\]
for any irreducible faithful character $\chi$ of $\Gal(K/F)$.

With the same number of exceptions, we have
\begin{equation}
\label{eq:all_H}
\left| \sum_{N\mfp \le H} b_{\mfp} \cdot \chi( \mfp) \right| \le \frac{c(H) H}{\log H}
\end{equation}
for any $H \ge 100$, where $c(H)$ is defined as
\[11nd^{\frac{3}{2}(d + 2)} \sqrt{\log QMH} \cdot \exp\left( \frac{- \log \left(M (\log Q^dMH)^{-27nd^2}\right) \cdot \log H \cdot \left(1 - \frac{\log H}{\log Q^dM^4 H^3}\right)}{d\log Q + 4 \log M  + 2\log H + 13nd^4 \sqrt{\log QM} }\right). \]
\end{theorem}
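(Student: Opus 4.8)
The plan is to reduce the sum $\sum_{N\mfp\le H}b_\mfp\chi(\mfp)$ over a faithful irreducible $\chi$ to a bounded combination of character sums over faithful \emph{monomial} characters, and then to apply the H\"older-type bilinear estimate of Corollary~\ref{cor:choose_t} to the collection of all such monomial characters attached to all candidate fields. Let $\mathscr{K}$ be the finite set of Galois extensions $K/F$ of relative degree $d$ with $\Delta_K\le Q$; if $|\mathscr{K}|<M$ there is nothing to prove, so assume $|\mathscr{K}|\ge M$. For each $K\in\mathscr{K}$ and each faithful irreducible $\chi$ of $\Gal(K/F)$, Lemma~\ref{lem:to_positive_monomial} writes $\chi=\sum_i a_i\phi_i$ with at most $d$ terms, $|a_i|\le d^{\frac{3}{2}(d-1)}$, and each $\phi_i$ a faithful monomial character of degree $\le d/2$ with $Q_{\phi_i}\le\Delta_K\le Q$. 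Setting
\[
B(K):=\max\Big|\sum_{N\mfp\le H}b_\mfp\,\phi(\mfp)\Big|,
\]
the maximum over the finitely many faithful monomial characters $\phi$ occurring in these expansions, we get $\big|\sum_{N\mfp\le H}b_\mfp\chi(\mfp)\big|\le d^{\frac{3}{2}(d-1)+1}B(K)$ for every faithful irreducible $\chi$ of $\Gal(K/F)$. To each such $\phi$ I would attach the acceptable multiplicative function $f$ for the tuple $(K/F,\phi,S)$ with $S$ the set of primes of $F$ ramified in $K$ and $f(\mfp)=\phi(\mfp)$ for all $\mfp$; since $\prod_{\mfp\text{ ramified}}N\mfp\mid N\mathfrak{d}_{K/F}\le\Delta_K\le Q$ we have $\#S\le\log Q/\log 2$, while $L(s,\phi)$ is entire because $\phi$ is a sum of faithful irreducibles of the nontrivial group $\Gal(K/F)$ and hence has no trivial constituent. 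Thus all hypotheses of Theorem~\ref{thm:bilinear_basic} and Corollary~\ref{cor:choose_t} hold, with their parameter $d$ replaced by $\lfloor d/2\rfloor$.

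Next I would order $\mathscr{K}=\{K_1,K_2,\dots\}$ so that $B(K_1)\ge B(K_2)\ge\cdots$, declare the exceptional list to be $\{K_1,\dots,K_{M-1}\}$, and bound $B(K_M)$. If the quantity $M_0$ of Corollary~\ref{cor:choose_t} satisfies $M_0\le 1$, unwinding its definition gives $\log M\le 15nd^2\log\log Q^dMH$ (up to the harmless discrepancy between $\log(HMQ^{2d})$ and $\log(Q^dMH)$), which makes the stated $c$ at least the trivial bound $\sqrt d\cdot\pi_F(H)\le 9n\sqrt d\cdot H/\log H$, so no exceptions are needed. Otherwise, for each $l\le M$ pick a faithful monomial character $\phi^{(l)}$ of $\Gal(K_l/F)$ realizing $B(K_l)$, with associated $f_l$. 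The structural point to isolate is that $\langle\phi^{(l)},\phi^{(l')}\rangle\ne 0$ forces $K_l=K_{l'}$: a common irreducible constituent, viewed in $\Gal(K_lK_{l'}/F)$, would be faithful on $\Gal(K_l/F)$ and inflated from $\Gal(K_l\cap K_{l'}/F)$, hence $K_l=K_l\cap K_{l'}=K_{l'}$. Applying Corollary~\ref{cor:choose_t} to $f_1,\dots,f_M$ with $b_\mfa=b_\mfp$ for $\mfa=\mfp$ prime of norm $\le H$ and $b_\mfa=0$ otherwise (its exponent $t$ automatically meets the hypothesis $H^t\ge Q^de^{16nd^4}$ of Theorem~\ref{thm:Holder}), we therefore have $\#E=M$, so $M_0=M/(\log HMQ^{2d})^{15nd^2}>1$ and the estimate $A_{0t}\le\tfrac{3d}{2}t^{1/2}n^{1/2}/(\log H)^{1/2}$ of Remark~\ref{rmk:prime_support} applies. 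This yields
\[
M\cdot B(K_M)\le\sum_{l=1}^M\Big|\sum_{N\mfp\le H}b_\mfp\,\phi^{(l)}(\mfp)\Big|\le 2A_{0t}HM\exp\!\left(\frac{-\log M_0\cdot\log H}{d\log Q+4\log M+13nd^4\sqrt{\log QM}+2\log H}\right),
\]
and dividing by $M$, inserting the bounds on $A_{0t}$ and $t$, multiplying by $d^{\frac{3}{2}(d-1)+1}$, and folding the $(\log H^t)^{\max(r-1,1)/2t}$ and $(2d\log Q)^{26d^2n/t}$ factors into the exponential gives precisely the claimed value of $c$.

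For the final assertion~\eqref{eq:all_H}, which must hold for all $H\ge 100$ simultaneously with an exceptional set of the same size, I would rerun the argument with the H\"older exponent $t$ chosen as a function of $H$ and then pass to a uniform statement by dyadic decomposition: beyond an explicit threshold $H_0=H_0(F,d,Q,M)$ the trivial bound $\sqrt d\,\pi_F(H)$ already beats $c(H)H/\log H$, so only $O_{F,d}(\log QM)$ dyadic scales are relevant, and a maximal inequality across these scales — comparing the partial sum at scale $H'$ with the one at the top of its dyadic block and absorbing the number of blocks into the slightly smaller effective count $M(\log Q^dMH)^{-27nd^2}\big(1-\log H/\log(Q^dM^4H^3)\big)$ appearing in $c(H)$ — converts the per-scale estimates into the uniform one.

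The main obstacle I anticipate is not a single hard estimate but rather two points of care. First, pinning down $\#E=M$: this is clean only because ``faithful'' is preserved on passing to a common overfield and only because we take one monomial character per field rather than all of them, so the argument would break if run on the full list of monomial characters or without the disjointness observation. Second, reproducing the exact constants in $c$ and $c(H)$ requires honest bookkeeping of how the $\lfloor d/2\rfloor$ substitution, the coefficient bound $d^{\frac{3}{2}(d-1)}$ of Lemma~\ref{lem:to_positive_monomial}, the $A_{0t}$ estimate, and the $(\log)^{O(nd^2)}$ factors of Corollary~\ref{cor:choose_t} combine; I would carry out that arithmetic last and only to the precision demanded by the statement.
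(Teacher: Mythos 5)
Your treatment of the first part follows the paper's proof almost step for step: reduce to faithful monomial characters via Lemma~\ref{lem:to_positive_monomial}, take one extremizing monomial character per field, observe that the monomial characters attached to distinct fields are orthogonal (so $\#E=M$), and apply Corollary~\ref{cor:choose_t} with the $A_{0t}$ bound from Remark~\ref{rmk:prime_support}. Your explicit check of the edge case $M_0 \le 1$ (where the trivial bound already gives the claim) is a useful detail that the paper leaves implicit.

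The second part, however, does not work as you describe it. If you compare the partial sum at a generic $H'$ in a dyadic block $[H_0,2H_0]$ only against the partial sum at the top of that block, the discarded range $(H',2H_0]$ contains on the order of $nH_0/\log H_0$ primes, each contributing up to $d$ in modulus, so the comparison error is $O(ndH'/\log H')$ --- the same order as the target bound $c(H')H'/\log H'$, with $c(H')$ typically much smaller than $nd$. A single comparison per dyadic block therefore loses the entire savings. The paper's proof instead subdivides each dyadic block $[H_0,2H_0]$ into $M_1$ equal subintervals, applies the fixed-$H$ estimate at every subinterval endpoint with the exception budget reduced to $\lceil M/(M_1(\log H_0)^2)\rceil$, and bounds the contribution of the at most $nM_1^{-1}H_0$ primes within a single subinterval trivially; the loss of a factor $M_1$ in the effective $M$ is then traded against the gain of a factor $M_1^{-1}$ in the trivial piece, and optimizing $M_1 = \lfloor \log H \cdot \exp(-\log M \cdot \log H/\log Q^dM^4H^3)\rfloor$ is exactly what produces the correction factor $\bigl(1 - \tfrac{\log H}{\log Q^dM^4H^3}\bigr)$ in $c(H)$. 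Your sketch does not isolate this fine subdivision or the optimization over $M_1$, which is the essential new idea needed beyond the fixed-$H$ case; the dyadic decomposition alone (with the total exceptional count controlled via $\sum_k (\log(100\cdot 2^k))^{-2} < 1$ rather than by a threshold-and-finitely-many-scales argument) is only the outer layer.
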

\begin{proof}
Take $\mathcal{K}$ to be the set of Galois degree $d$ extensions of $F$ whose absolute discriminant is at most $Q$. For each $K$ in $\mathcal{K}$, choose an irreduciblle faithful character $\chi_K$ for which
\begin{equation}
\label{eq:bad_rating}
\left|\sum_{N\mfp \le H} b_{\mfp} \cdot \chi_K(\mfp) \right|
\end{equation}
is maximized. 

If $\mathcal{K}$ does not contain $M$ entries, the result is vacuous. Otherwise, we choose the $M$ fields $K_1, \dots, K_M$ in $\mathcal{K}$ for which the sum \eqref{eq:bad_rating} is maximized. To prove the first part, it  suffices to show
\[\sum_{i \le M}\left| \sum_{N\mfp \le H} b_{\mfp} \cdot \chi_{K_i}( \mfp) \right| \le \frac{c MH }{\log H}.\]
By Lemma \ref{lem:to_positive_monomial}, this will follow if we have
\[\sum_{i \le M}\left| \sum_{N\mfp \le H} b_{\mfp} \cdot \phi_i( \mfp) \right| \le \frac{d^{-\frac{3}{2}(d - 1) - 1}c MH }{\log H},\]
for any sequence of faithful monomial characters $\phi_1, \dots, \phi_M$, where $\phi_i$ is defined on $\Gal(K_i/F)$.

Recall from Lemma \ref{lem:to_positive_monomial} that the characters $\phi_i$ have degree at most $d/2$.
If we choose $t$ as in Corollary \ref{cor:choose_t}, \eqref{eq:A0t_bnd} gives
\[A_{0t} \le \frac{5n d^3 \sqrt{\log QMH}}{\log H}.\]
For $i \ne j$, we may view $\phi_i$ and $\phi_j$ as linear combinations of irreducible characters on $\Gal(K_iK_j/F)$ with kernel $\Gal(K_iK_j/K_i)$ and $\Gal(K_iK_j/K_j)$, respectively. From this, we have 
\[\langle \phi_i, \phi_j \rangle_{\Gal(K_iK_j/F)} = 0.\]
Corollary \ref{cor:choose_t} then gives the first part of the theorem.

For the second part, we first note that it suffices to show that, for any $H_0 \ge 100$, we have \eqref{eq:all_H} for all $H$ in $[H_0, 2H_0]$ and all fields in $\mathcal{K}$ with at most $M/ (\log H_0)^2$ exceptions, as we have
\[\sum_{k \ge 0} (\log(100 \cdot 2^k))^{-2} < 1.\]
Given $H_0$, we will choose an integer $M_1 \ge 1$ and apply the first part of the theorem with $(H, M)$ set to
\[\left(H_0(1 + k M_1^{-1}), \left\lceil \frac{M}{M_1(\log H_0)^2}\right\rceil\right)\]
for each integer $k$ in $[0, M_1)$. Noting that
\[\pi_F(H_0(1 + (k+1) M_1^{-1})) - \pi_F(H_0(1 + k M_1^{-1})) \le nM_1^{-1}H_0,\]
we find that we may take $c(H)$ equal to
\begin{align*}
&ndM_1^{-1}H \log H \\
&+ 10nd^{\frac{3}{2}(d + 2)} \sqrt{\log QMH} \cdot \exp\left( \frac{(- \log M/M_1 + 17nd^2 \log \log Q^{d}MH ) \cdot \log H}{d\log Q + 4 \log M  + 2\log H + 13nd^4 \sqrt{\log QM} } \right).
\end{align*}
The choice 
\[M_1 = \left\lfloor\log H \cdot \exp\left(\frac{- \log M \cdot \log H}{\log Q^dM^4H^3}\right) \right\rfloor\]
then gives the part.
\end{proof}

\subsection{The transition to the unconditional Chebotarev density theorem}
At some point as $H$ increases, the error term of Theorem \ref{thm:actual_main} becomes worse than what may be proved from the unconditional Chebotarev density theorem. Because of this, we need some form of the unconditional Chebotarev density theorem to prove Theorem \ref{thm:sparse_bad}.

This starts with a consideration of exceptional real zeros of $L$-functions.
\begin{defn}
\label{defn:exc_field}
Given a nontrivial quadratic extension $K/F$, we say $K$ is \emph{exceptional} if the Hecke $L$-function $L(s, K/F)$ corresponding to $K/F$ has a real root $\beta$ satisfying
\[1 - (32 \log |\Delta_K|)^{-1} \le \beta < 1.\]
If this root exists, it is necessarily simple \cite[Lemma 3]{Stark74}. We take $\mathbb{X}_{\textup{exc}}(F)$ to be the set of exceptional fields, and for an exceptional field $K$ we take $\beta(K)$ to be the real root defined as above.
\end{defn}
\begin{lemma}
\label{lem:few_exc_fields}
Given $\Delta \ge 3$, there is at most one field $K \in \mathbb{X}_{\textup{exc}}(F)$ such that
\[\Delta \le |\Delta_K| \le \Delta^2.\]
\end{lemma}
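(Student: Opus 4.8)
The plan is to argue by contradiction. Suppose $K_1,K_2\in\mathbb{X}_{\textup{exc}}(F)$ are distinct with $\Delta\le|\Delta_{K_i}|\le\Delta^2$. Let $\chi_i$ be the quadratic Hecke character of $F$ cutting out $K_i$, so that $L(s,K_i/F)=L(s,\chi_i)$, and let $\beta_i=\beta(K_i)$ be the exceptional real zero; since $|\Delta_{K_i}|\ge\Delta\ge 3$ we have $\beta_i\ge 1-\eta$ with $\eta:=(32\log\Delta)^{-1}$. As $K_1\ne K_2$, the character $\chi_3:=\chi_1\chi_2$ is again nontrivial quadratic, and $L:=K_1K_2$ is a biquadratic ($V_4$) extension of $F$ whose Dedekind zeta function factors as
\[
\zeta_L(s)=\zeta_F(s)\,L(s,\chi_1)\,L(s,\chi_2)\,L(s,\chi_3).
\]
In particular $\zeta_L(\beta_1)=\zeta_L(\beta_2)=0$, so $\zeta_L$ has at least two real zeros, counted with multiplicity, in $[1-\eta,1)$ (a double zero at the common point if $\beta_1=\beta_2$).

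I would then bound $|\Delta_L|$. Writing $\mathfrak{f}_i$ for the finite conductor of $\chi_i$, the conductor--discriminant formula gives $|\Delta_{K_i}|=|\Delta_F|^2\,\mathrm{N}\mathfrak{f}_i$ and $|\Delta_L|=|\Delta_F|^4\,\mathrm{N}(\mathfrak{f}_1\mathfrak{f}_2\mathfrak{f}_3)$; since $\mathfrak{f}_3\mid\mathfrak{f}_1\mathfrak{f}_2$, we get $\mathrm{N}(\mathfrak{f}_1\mathfrak{f}_2\mathfrak{f}_3)\le(\mathrm{N}\mathfrak{f}_1\,\mathrm{N}\mathfrak{f}_2)^2\le(\Delta^2/|\Delta_F|^2)^4$, whence $|\Delta_L|\le\Delta^8$ and $\tfrac12\log|\Delta_L|\le 4\log\Delta$.

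The heart of the argument is the classical bound for $-\zeta_L'/\zeta_L(\sigma)$ coming from the Hadamard factorization of the completed Dedekind zeta function of $L$ (see, e.g., \cite[Ch.~5]{Iwan04}): for real $\sigma\in(1,2)$,
\[
-\frac{\zeta_L'}{\zeta_L}(\sigma)=\frac1\sigma+\frac1{\sigma-1}+\tfrac12\log|\Delta_L|+(\text{archimedean }\Gamma\text{-terms})-\sum_{\rho}\Re\frac1{\sigma-\rho},
\]
where $\rho$ runs over the nontrivial zeros of $\zeta_L$ and I would use the standard identity $\Re b_L=-\sum_\rho\Re(1/\rho)$ to cancel the constant of the product. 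For $1<\sigma<2$ the archimedean terms are negative and may be dropped, and $\Re(1/(\sigma-\rho))\ge 0$ for each $\rho$, so every zero may be discarded except $\beta_1,\beta_2$. Since the left side equals $\sum_{\mathfrak{n}}\Lambda_L(\mathfrak{n})\,\mathrm{N}\mathfrak{n}^{-\sigma}>0$ and $\beta_i\ge 1-\eta$, this yields
\[
0<1+\tfrac12\log|\Delta_L|+\frac1{\sigma-1}-\frac2{\sigma-1+\eta}\qquad(1<\sigma<2).
\]
Choosing $\sigma=1+(1+\sqrt2)\,\eta\in(1,2)$ makes $\frac1{\sigma-1}-\frac2{\sigma-1+\eta}=\eta^{-1}\!\left(\tfrac1{1+\sqrt2}-\tfrac2{2+\sqrt2}\right)=-(3-2\sqrt2)\,\eta^{-1}=-32(3-2\sqrt2)\log\Delta$, so with $\tfrac12\log|\Delta_L|\le 4\log\Delta$ we obtain $0<1-\bigl(32(3-2\sqrt2)-4\bigr)\log\Delta$. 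Since $32(3-2\sqrt2)-4>1.4$ and $\log\Delta\ge\log 3>1.09$, the right side is negative — a contradiction, so at most one such $K$ exists. (Alternatively, the last two paragraphs can be replaced by an appeal to an off-the-shelf ``at most one exceptional real zero of a Dedekind zeta function'' statement with explicit constant, in the spirit of \cite{Stark74}.)

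The step I expect to be the main obstacle is pinning down the explicit-formula inequality with all signs correct — in particular that the $\tfrac12\log|\Delta_L|$ term enters with a plus sign while the archimedean $\Gamma$-terms and the $b_L$/$\sum 1/\rho$ cancellation contribute nothing positive, and that the zeros $\beta_1,\beta_2$ really do occur among the zeros of $\zeta_L$ with total multiplicity at least $2$. The conductor bookkeeping is routine, and the closing numerical inequality has comfortable slack — which is presumably why the constant $32$ appears in Definition~\ref{defn:exc_field}.
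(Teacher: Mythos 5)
Your proof is correct and follows essentially the same route as the paper's: both pass to the compositum $K_1K_2$, factor $\zeta_{K_1K_2}(s) = \zeta_F(s)L(s,\chi_1)L(s,\chi_2)L(s,\chi_1\chi_2)$ so that the two exceptional zeros contribute total multiplicity at least two, bound $|\Delta_{K_1K_2}| \le \Delta^8$, and then derive a contradiction from a Landau--Stark-type ``at most one real zero near $s=1$'' statement for $\zeta_{K_1K_2}$. The only difference is that the paper invokes \cite[Lemma 3]{Stark74} as a black box for the last step, whereas you re-derive the needed inequality from the Hadamard/explicit-formula bound with the explicit choice $\sigma = 1+(1+\sqrt{2})\eta$; the signs, conductor bookkeeping, and closing numerics you wrote down all check out.
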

\begin{proof}
Suppose otherwise, so there were two distinct fields $K_1, K_2$ satisfying these conditions. 

Take $K_1K_2$ to be the composite field of $K_1$ and $K_2$. The function
\[\frac{\zeta_{K_1K_2}(s)}{\zeta(s)\cdot L(s, K_1/F) \cdot L(s, K_2, F)}\]
is a Hecke $L$-function and is hence entire. So $\zeta_{K_1K_2}$ has at least two roots counted with multiplicity in the interval $[1 - (32 \log |\Delta_K|)^{-1}, \,1)$.

But $K_1K_2$  has discriminant at most $\Delta^8$, so $\zeta_{K_1K_2}$ has at most one necessarily simple root in the interval $[1 - 4 \log \Delta^8,\, 1]$  by \cite[Lemma 3]{Stark74}. This contradicts our assumption, giving the proposition.
\end{proof}

\begin{lemma}
\label{lem:no_quad_no_siegel}
Take $K/F$ to be a nontrivial Galois extension, and take $\chi$ to be a faithful monomial character on $\Gal(K/F)$. If $L(s, \chi)$ has a real zero in the interval $[1 - (32 \log |\Delta_K|)^{-1}, 1)$, then $K$ lies in $\mathbb{X}_{\textup{exc}}(F)$.
\end{lemma}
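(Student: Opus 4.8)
\emph{Proof plan.} The idea is to strip $\chi$ of its structure one layer at a time, using at each step that the relevant intermediate fields sit \emph{inside} $K$, so that Stark's ``at most one simple real zero'' lemma \cite[Lemma 3]{Stark74} applies to their Dedekind zeta functions.

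\textbf{Step 1: reduce to a quadratic inducing character.} Write $\chi = \Ind_H^G\psi$ for a linear character $\psi$ of a subgroup $H\le G:=\Gal(K/F)$, so that $L(s,\chi) = L_{K^H}(s,\psi)$ by Artin formalism. Since $\chi$ is a sum of faithful irreducible characters and $K\ne F$, it has no trivial constituent, so $0=\langle\chi,\mathbf 1\rangle=\langle\psi,\mathbf 1\rangle$ and $\psi$ is nontrivial; let $m\ge 2$ be its order and put $L_1=K^{\ker\psi}$, a cyclic extension of $K^H$ of degree $m$ with $\zeta_{L_1}(s)=\prod_{j=0}^{m-1}L_{K^H}(s,\psi^j)$. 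The hypothesized real zero $\beta$ of $L_{K^H}(s,\psi)$ is also a zero of $L_{K^H}(s,\psi^{m-1})=L_{K^H}(s,\overline\psi)$, since that $L$-function has the conjugate Dirichlet coefficients. If $m\ge 3$ these are two \emph{distinct} entire Hecke $L$-functions vanishing at $\beta$, so $\beta$ is a zero of $\zeta_{L_1}$ of multiplicity $\ge 2$; but $L_1\subseteq K$ forces $|\Delta_{L_1}|\le|\Delta_K|$, so $\beta\in[1-(32\log|\Delta_K|)^{-1},1)$ lies well inside the region in which \cite[Lemma 3]{Stark74} permits $\zeta_{L_1}$ at most a single, simple real zero --- a contradiction. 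Hence $m=2$: $\psi$ is quadratic, $[L_1:K^H]=2$, and $L(s,\chi)=\zeta_{L_1}(s)/\zeta_{K^H}(s)$. Since $\zeta_{L_1}$ has at most a simple zero at $\beta$ while $L(\beta,\chi)=0$, we conclude $\mathrm{ord}_{s=\beta}L(s,\chi)=1$ and $\zeta_{K^H}(\beta)\ne 0$.

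\textbf{Step 2: collapse to $G\cong\Z/2$.} Once this is done, $\chi$ is forced to be the quadratic character cut out by $K/F$, so $L(s,\chi)=L(s,K/F)$ vanishes at $\beta$, and $K\in\mathbb{X}_{\textup{exc}}(F)$ by Definition~\ref{defn:exc_field}. To get there, use that $\chi$ is a sum of faithful irreducibles: writing $\chi=\sum_i a_i\rho_i$ with the $\rho_i$ distinct faithful irreducibles and $1\le a_i=\langle\chi,\rho_i\rangle\le\dim\rho_i$ (Frobenius reciprocity), each $\rho_i$ occurs in $\zeta_K=\prod_\rho L_F(s,\rho)^{\dim\rho}$ to the power $\dim\rho_i$. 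If $G$ is \emph{cyclic}, then $\chi$ is a sum of faithful \emph{linear} characters; choosing one, $\lambda$, with $L_F(\beta,\lambda)=0$ (possible since $L(s,\chi)$ is then a product of entire Hecke $L$-functions), and using $\zeta_K=\prod_{\lambda'\in\widehat G}L_F(s,\lambda')$, we see that if $|G|\ge 3$ then $\lambda$ and $\overline\lambda$ are distinct characters each contributing a zero at $\beta$, so $\mathrm{ord}_{s=\beta}\zeta_K\ge 2$, contradicting \cite[Lemma 3]{Stark74} (legitimately, as $\beta$ lies in the region for $\zeta_K$). Hence $|G|\le 2$, and nontriviality gives $G\cong\Z/2$. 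If instead $G$ is \emph{not} cyclic, it has no faithful one-dimensional character, so every $\rho_i$ has $\dim\rho_i\ge 2$, and one must show the simple zero $\beta$ of $L(s,\chi)=\prod_i L_F(s,\rho_i)^{a_i}$ propagates into $\zeta_K$ with multiplicity $\ge 2$ --- again contradicting \cite[Lemma 3]{Stark74}, ruling this case out. To run this without invoking Artin's holomorphy conjecture for the $\rho_i$, I would argue entirely with Dedekind zeta functions of intermediate fields: for every $F\subseteq M\subseteq K$ the extension $K/M$ is automatically Galois (normality of $K/F$ descends), so $\zeta_K/\zeta_M$ is entire by Aramata--Brauer; in particular $\zeta_K/L(s,\chi)=(\zeta_K/\zeta_{L_1})\cdot\zeta_{K^H}$ is holomorphic off $s=1$. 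Combining this with the exact identity $\chi=\Ind_{\ker\psi}^G\mathbf 1-\Ind_H^G\mathbf 1$ and the observation that, with $\psi$ quadratic, each faithful constituent of $\chi$ already appears in $\zeta_K/\zeta_{L_1}$ to positive multiplicity (one tracks $\dim V_{\rho_i}^{\ker\psi}$ against $\dim\rho_i$), forces $\mathrm{ord}_{s=\beta}\zeta_K\ge 2$.

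\textbf{Expected main obstacle.} Step 1 is a routine combination of Artin formalism, complex conjugation, and Stark's lemma. The delicate point is the non-cyclic branch of Step 2: upgrading ``$L(s,\chi)$ has a simple real zero near $1$'' to ``$\zeta_K$ has a multiple real zero near $1$'' without access to the holomorphy of the non-abelian $L_F(s,\rho_i)$. The route sketched above --- replacing those $L$-functions by ratios of Dedekind zeta functions of intermediate fields, which are entire by Aramata--Brauer precisely because every such field lies inside the Galois extension $K/F$ --- should close the gap, but the multiplicity bookkeeping (and ruling out accidental cancellation between the contributions of different constituents $\rho_i$) is where the care is needed.
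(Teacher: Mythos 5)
Your Step~1 is correct and essentially matches the paper's: both reduce to the case where the inducing linear character $\psi$ on $H=\Gal(K/L)$ has order exactly $2$, using the conjugate $L$-function $L(s,\overline\psi)$ together with \cite[Lemma 3]{Stark74} applied to $\zeta_{L_1}$. The divergence --- and the gap --- is in Step~2. The paper's proof is shorter and does not split into cyclic and non-cyclic cases, because it invokes \cite[Theorem 3]{Stark74} (not Lemma~3): that result descends the Siegel zero of $\zeta_{L_1}$ to a quadratic subextension $M/F$ with $M\subseteq L_1$ and $\zeta_M(\beta)=0$. Since $\zeta_{K^H}(\beta)\ne0$, one gets $M\not\subseteq K^H$, hence $L_1=MK^H$ and $\psi$ is the restriction of the quadratic character $\psi_M$ attached to $M/F$; Frobenius reciprocity then makes $\psi_M$ a constituent of $\chi$, and faithfulness of every constituent of $\chi$ forces $K=M$.

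Your hands-on replacement for this descent does not close. In the non-abelian branch of Step~2 you want to propagate the (simple) zero of $L(s,\chi)$ to a double zero of $\zeta_K$ by ``multiplicity bookkeeping'' among the faithful constituents $\rho_i$, but this tacitly localizes the vanishing to a single factor $L(s,\rho_i)$ of $L(s,\chi)=\prod_i L(s,\rho_i)^{a_i}$ --- something you cannot do without Artin holomorphy (or at least nonvanishing) for the individual $\rho_i$, which are non-monomial in general. Moreover the multiplicity observation itself already fails in the degenerate sub-case $\ker\psi=1$ (equivalently $L_1=K$ and $|H|=2$), where $\zeta_K/\zeta_{L_1}\equiv 1$, so there is nothing to track. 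Both problems disappear once one passes to Stark's Theorem~3, which is the missing ingredient: it replaces all of your Step~2 and delivers the quadratic field $M/F$ directly, after which monomial faithfulness forces $K=M$ exactly as in the paper.
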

\begin{proof}
Suppose $L(s, \chi)$ has a zero $\beta$ in this interval. Take $E/L$ to be fields such that $\chi$ is induced from a linear surjective character $\psi$ on $\Gal(E/L)$. Then $\beta$ is a simple zero of $\zeta_E$, and cannot be a zero of $\zeta_L$. Since $\beta$ is also a zero of $L(s, \overline{\chi})$, we find that $E/L$ is quadratic.

By \cite[Theorem 3]{Stark74}, $E$ then contains a field $M$ in $\mathbb{X}_{\textup{exc}}(F)$ with $\beta(M) = \beta$ such that $M$ is not contained in $L$. It follows that $E = ML$, implying that $\psi$ is the restriction of the nontrivial quadratic character on $\Gal(M/F)$. So $\chi$ is monomial faithful only if $K = M$.
\end{proof}

\begin{theorem}
\label{thm:cheb_explicit}
There is an absolute $C_0 > 0$ so we have the following:

Take $K/F$ to be a Galois extension of number fields of degree $d$, and take $n$ to be the degree of $F$. Then, for any faithful monomial character $\chi$ of $\Gal(K/F)$, we have
\[ \left|\sum_{N \mfp \le H} \chi(\mfp) \right| \le C_0H^{\beta(K)} + C_0ndH \exp\left(-\frac{\log H}{104 \log e^{nd} \Delta_K}\right)  + C_0H \exp\left(-\sqrt{\frac{\log H}{832nd}}\right)\]
for all $H \ge 1$. Here, we take $\beta(K) = 1/2$ if $K$ is not in $\mathbb{X}_{\textup{exc}}(F)$.

\end{theorem}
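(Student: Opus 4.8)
The plan is to reduce the estimate to the prime ideal theorem for a single Hecke $L$-function and then invoke the classical zero-free region, using Lemma~\ref{lem:no_quad_no_siegel} to control the one possible exceptional zero. Since $\chi$ is a faithful monomial character of the nontrivial group $\Gal(K/F)$, we may write $\chi = \Ind_{\Gal(K/E)}^{\Gal(K/F)}\psi$ for an intermediate field $F\subseteq E\subseteq K$ and a \emph{nontrivial} linear character $\psi$ of $\Gal(K/E)$; nontriviality holds because otherwise $\Ind_{\Gal(K/E)}^{\Gal(K/F)}\mathbf{1}$ would contain the trivial character, contradicting that $\chi$ is a sum of faithful irreducibles. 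By class field theory $\psi$ is a ray class character of $E$, and invariance of Artin $L$-functions under induction gives $L(s,\chi)=L_E(s,\psi)$, a Hecke $L$-function over a field with $[E:\QQ]\le nd$; it is entire, and since $L(s,\chi)$ divides $\zeta_K(s)$, its conductor (equivalently $Q_\chi$) is at most $\Delta_K$.

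Next I would convert $\sum_{N\mfp\le H}\chi(\mfp)$ into an object governed by $L_E(s,\psi)$. Equality of the Dirichlet series $L(s,\chi)$ and $L_E(s,\psi)$ forces the von~Mangoldt–weighted coefficient sums to agree exactly, so $\sum_{N\mathfrak{n}\le H}\Lambda_\chi(\mathfrak{n})=\sum_{N\mathfrak{n}\le H}\Lambda_{E,\psi}(\mathfrak{n})$, the left sum over ideals of $F$ and the right over ideals of $E$. Partial summation together with the removal of the prime-power terms (an error of size $O(nd\,H^{1/2}\log H)$) and of the primes of $F$ ramified in $K$ (an error of size $O(d\log\Delta_K)$, since such primes contribute $0$ to $\sum_{N\mfp\le H}\chi(\mfp)$ by convention) reduces the theorem to estimating $\psi_{E,\psi}(H):=\sum_{N\mathfrak{n}\le H}\Lambda_{E,\psi}(\mathfrak{n})$.

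Here I would apply the classical de~la~Vallée~Poussin effective prime ideal theorem for the Hecke $L$-function $L_E(s,\psi)$, in an explicit form in the style of Weiss~\cite{Weiss83} or Lagarias--Odlyzko. Using the classical zero-free region --- no zero of $L_E(s,\psi)$ with $\sigma\ge 1-c_1\big(\log\Delta_K+[E:\QQ]\log(|t|+3)\big)^{-1}$ apart from at most one simple real zero $\beta$, which can occur only for real $\psi$ --- together with a truncated Perron formula and a contour shift onto this region with the truncation height optimized, one obtains
\[
  \big|\psi_{E,\psi}(H)\big|\ \le\ \big[\,\beta\text{ exists}\,\big]\cdot\frac{H^{\beta}}{\beta}\ +\ C\,H\exp\!\Big(-c\,\frac{\log H}{\log e^{nd}\Delta_K}\Big)\ +\ C\,H\exp\!\Big(-c\sqrt{\tfrac{\log H}{nd}}\Big),
\]
the first error term coming from the part of the region with $|t|$ bounded and the second from balancing the Perron height against the region's decay in $t$ (this is where $[E:\QQ]\le nd$ enters). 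Now any real zero $\beta$ either lies in $[1-(32\log\Delta_K)^{-1},1)$ --- in which case Lemma~\ref{lem:no_quad_no_siegel} forces $K\in\mathbb{X}_{\textup{exc}}(F)$, and the proof of that lemma shows $K/F$ is quadratic, $\chi$ is its quadratic character, and $\beta$ is the $\beta(K)$ of Definition~\ref{defn:exc_field} --- or else $\beta<1-(32\log\Delta_K)^{-1}$, in which case $H^{\beta}\le H\exp(-\log H/(32\log\Delta_K))\le H\exp(-\log H/(104\log e^{nd}\Delta_K))$ is absorbed into the second error term; and if $\psi$ is not real there is no such zero at all. In the exceptional case $H^{\beta}/\beta\le 2H^{\beta(K)}$ is absorbed into $C_0H^{\beta(K)}$. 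Finally, when $\log H\le 104\log e^{nd}\Delta_K$ the trivial bound $|\sum_{N\mfp\le H}\chi(\mfp)|\le\chi(1)\,\pi_F(H)\ll nd\,H$ already suffices, while for larger $H$ all the conversion errors above are negligible against the stated right-hand side; splitting into these two ranges and taking $C_0$ large enough gives the theorem for every $H\ge 1$.

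The main obstacle is not conceptual but bookkeeping: carrying the degree $[E:\QQ]\le nd$ and the conductor bound $\le\Delta_K$ cleanly through an explicit zero-free region, through the Perron truncation and contour shift, and through the passage between the weighted and unweighted prime sums over $E$ and over $F$, all while keeping $C_0$ absolute and the remaining constants of the shape appearing in the statement. The only genuinely non-elementary ingredient --- the Deuring--Heilbronn type fact that rules out a second exceptional zero and identifies $\beta(K)$ --- is already supplied by \cite{Stark74} through Lemma~\ref{lem:no_quad_no_siegel} and Definition~\ref{defn:exc_field}.
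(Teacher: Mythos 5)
Your proposal is correct and reaches the theorem, but it takes a genuinely different route through the zero-free region. You reduce $\chi$ to a single nontrivial Hecke $L$-function $L_E(s,\psi)$ over a field $E$ with $[E:\QQ]\le nd$ and conductor at most $\Delta_K$, then run a classical de la Vall\'ee Poussin / Lagarias--Odlyzko argument (Perron, contour shift, explicit zero-free region for Hecke $L$-functions) to get the prime ideal theorem for that one $L$-function. The paper, by contrast, does not isolate the Hecke $L$-function at all: it cites Lee's explicit zero-free region for Dedekind zeta functions $\zeta_L$ of subfields $L\subseteq K$, transfers it to $L(s,\chi)$ via the Artin formalism (since $L(s,\chi)$ divides $\zeta_K(s)$), and then simply invokes the already-proved prime ideal theorem \cite[Lemma 7.3]{LOTZ}, adjusting it to account for the possible exceptional real zero using \cite[Lemma 2.2(iv)]{Thorner19}. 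The two treatments of the exceptional zero coincide --- both lean on Lemma~\ref{lem:no_quad_no_siegel} and the uniqueness in Definition~\ref{defn:exc_field} --- and both handle small $H$ trivially. What each route buys: the paper's citation of \cite{Lee21} plus the \cite{LOTZ} prime ideal theorem keeps the proof very short and gives explicit constants with minimal work; your approach is more self-contained and arguably cleaner near $s=1$ (the classical Hecke zero-free region has no low-lying blind spot, whereas the paper has to add a separate remark to absorb the finitely many small-discriminant fields where Lee's result does not cover $|t|\le 1$), at the cost of pushing the explicit degree and conductor dependence through Perron and the contour shift yourself. One small point worth pinning down in a full write-up: your claim $Q_\chi\le\Delta_K$ rests on $\chi=\Ind_H^G\psi$ being a subrepresentation of the regular representation of $G$, which follows from Frobenius reciprocity bounding the multiplicity of each irreducible constituent by its degree --- this is how the paper justifies the analogous conductor bound in Lemma~\ref{lem:to_positive_monomial}.
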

\begin{proof}
Define a function $\omega: \mathbb{R}^{\ge 0} \to \mathbb{R}^{\ge 0}$ by
\[\omega(t) = \frac{1}{13 \log e^{nd}\Delta_K + 13 nd \log \max(1, t)}.\]
By \cite[Theorems 1 and 2]{Lee21}, if $L$ is a subfield of $K$ and $K$ has sufficiently large discriminant, the Dedekind zeta function $\zeta_L$ has at most one zero $\sigma + it$ with $t \ge 0$ and $\sigma > 1-\omega(t)$, with this zero necessarily real and simple.  From the Artin formalism, the same statement then also holds for $L(s, \chi)$. Take $\beta$ to be this zero if its exists, and take $\beta = 3/4$ otherwise.

Following \cite[(7.5)]{LOTZ}, we define $\eta(H)$ for $H \ge 1$ by
\[\eta(H) = \text{inf}_{t \ge 0} \left(\omega(t) \log H + \log \max(1, t)\right)\]
for $x \ge 1$.
A calculus exercise shows that this satisfies
\begin{equation}
\label{eq:calculus}
\eta(H) \ge \min\left(\frac{\log H}{13 \log e^{nd} \Delta_K},  \sqrt{\frac{\log H}{13nd}} \right).
\end{equation}
We now apply the proof of \cite[Lemma 7.3]{LOTZ}. The only adjustment needed is to account for the possible exceptional real zero, whose impact we bound using \cite[Lemma 2.2 (iv)]{Thorner19}. The lemma then gives
\[\left|\sum_{N \mfp \le H} \chi(\mfp) \right| \ll H^{\beta} + \frac{H}{\log H}e^{-\frac{1}{8} \eta(H)} \cdot \log e \Delta_K\,\,\text{ for }\,\, H \ge \max(5000, (\log \Delta_K)^{4}),\]
where the implicit constant is absolute.
Applying Lemma \ref{lem:no_quad_no_siegel} then gives
\[ \left|\sum_{N \mfp \le H} \chi(\mfp) \right| \ll H^{\beta(K)} + ndH \exp\left(-\frac{\log H}{104 \log e^{nd} \Delta_K}\right)  + H \exp\left(-\sqrt{\frac{\log H}{832nd}}\right)\]
for $H > (\log e^{nd}\Delta_K)^{104}$. For smaller $H$, we find that the result is trivially true.

For $K$ of small discriminant, Lee's result does not handle the zeros with imaginary part lying in $[-1, 1]$. But only finitely many fields have such a small discriminant, and each has only finitely many zeros with imaginary part in this range. The impact of such zeros is then accounted for by the final term of the above inequality.
\end{proof}

\subsection{The proof of Theorem \ref{thm:sparse_bad}}
\label{ssec:proof_sparse_bad}
Take $n$ to be the degree of $F$. Note that an $\epsilon$-bad extension is $\epsilon'$-bad for $\epsilon' > \epsilon$. So if we can prove the theorem with $C(F,d) =  150nd^2$ and
\begin{equation}
\label{eq:epsilon_okay}
\epsilon \log \Delta \ge 200 nd^2 \log \log \Delta,
\end{equation}
it will hold without this assumption for $\Delta \gg_{F, d} 1$ and $C(F, d) = 400nd^2$.

We will replace $\Delta \gg_{F, d} 1$ with the explicit assumptions 
\begin{alignat}{2}
\label{eq:Delta_large}
&\log \log \Delta &&\ge 10^6 \cdot (d \log d + \log n),\quad\text{and}\\
 \label{eq:C0_too}
 & \log\log \Delta &&\ge 6 \log C_0,
\end{alignat}
where $C_0 > 0$ is chosen to satisfy the conditions of Theorem \ref{thm:cheb_explicit}. 

Take $\mathcal{K}$ to be the set of degree $d$ $\epsilon$-bad Galois extensions of $F$ whose absolute discriminant is bounded by $\Delta$, and take $\mathcal{K}_0$ to be the set of such extensions outside $\mathbb{X}_{\text{exc}}(F)$ By Lemma \ref{lem:few_exc_fields}, we have
\[\left|\mathcal{K} \backslash \mathcal{K}_0\right| \le 1+ 2\log \log \Delta.\]
So it suffices to bound the size of $\mathcal{K}_0$.

Take

\[H_{\text{max}} = \exp\left(\frac{104}{3}\epsilon \left( \log e^{nd}\Delta\right)^2\right),\]
and suppose that $H \ge H_{\text{max}}$. Then we have
\begin{align*}
&\exp\left(\frac{\sqrt{\log H}}{29\sqrt{nd}}\right) \le \exp\left( \sqrt{\frac{\log H}{832nd}}\right) - \frac{\sqrt{\log H}}{6000 \sqrt{nd}} \quad\text{and}\\
&\exp\left(\frac{\sqrt{\epsilon \log H}}{18}\right) \le  \frac{\log H}{18 \sqrt{104/3} \cdot \log e^{nd} \Delta } \le \frac{\log H}{104\cdot \log e^{nd}\Delta}- \frac{\log H}{6000\cdot \log e^{nd}\Delta}.
\end{align*}
Meanwhile, Theorem \ref{thm:cheb_explicit} and Lemma \ref{lem:to_positive_monomial} give the estimate
\[ \left|\sum_{N \mfp \le H} \chi(\mfp) \right| \le 3C_0nd^{3/2(d+1)}H \max\left(\exp\left(-\frac{\log H}{104 \log e^{nd} \Delta}\right),  \exp\left(-\sqrt{\frac{\log H}{832nd}}\right)\right).\]
So \eqref{eq:bad_defn} cannot hold if
\[\min\left(\frac{\log H}{6000 \cdot \log e^{nd}\Delta},\, \frac{\sqrt{\log H}}{6000\sqrt{nd}} \right) \ge \log\left( C_0nd^{3d}\log H\right).\]
for $H \ge H_{\text{max}}$, and this holds by our explicit assumptions on $\Delta$.

We now will apply Theorem \ref{thm:actual_main} to bound the size of $\mathcal{K}_0$. Taking $H_{\text{min}} = (\log \Delta)^{2 + \frac{d}{2\epsilon}}$, for each $K$ in $\mathcal{K}_0$, we may find an irreducible faithful character $\chi$ of $\Gal(K/F)$ and an $H \in [H_{\text{min}}, H_{\text{max}}]$ such that \eqref{eq:bad_defn} holds. Taking
\[M = \Delta^{\epsilon(1 + \delta)} (\log \Delta)^{100nd^2}\quad\text{with}\quad \delta = \frac{100(d \log d + \log n)}{\log \log \Delta} + \frac{100 \sqrt{d}}{\sqrt{\log \log \Delta}},\]
we claim that $|\mathcal{K}_0| < M$. This is stronger than the claim of the theorem with $C(F, d) = 150nd^2$.

By Theorem \ref{thm:actual_main}, we will have proved the claim if we can show that
\[nd^{5d}\sqrt{\log \Delta MH} \exp\left(\frac{\sqrt{\epsilon \log H}}{18} - \frac{\log \left(M (\log \Delta^dMH)^{-27nd^2}\right) \cdot \log H \cdot \left(1 - \frac{\log H}{\log \Delta^d H^3}\right)}{d\log \Delta + 4 \log M  + 2\log H + 13nd^4 \sqrt{\log \Delta M} }\right)\]
is at most $1$ for all $H$ in the interval $I = [H_{\text{min}}, H_{\text{max}}]$. Calling this expression $f_0(H)$, we will prove this in the following four steps:
\begin{enumerate}
    \item Taking $H_0 = H_{\text{min}}^4$, we will show that $f_0(H) \le 1$ for $H$ in $[H_{\text{min}}, H_0]$.
    \item We will find a second function $f_1$ so  that $f_1(H) \ge f(H)$ for all $H$ in $[H_{\text{min}}, H_{\text{max}}]$.
    \item We will show that the minimal value attained by $f_1$ on $[H_0, H_{\text{max}}]$ is attained at either $H_0$ or $H_{\text{max}}$.
    \item We will check that $f_1(H_0) \le 1$ and that $f_1(H_{\text{max}}) \le 1$.
\end{enumerate}

Before proceeding with the first step, we will list a few estimates we will use multiple times.  First, by \eqref{eq:Delta_large}, we have $\delta \le 1/6$. From \eqref{eq:epsilon_okay}, we also know that $(\log \Delta)^{100nd^2}$ is at most $\Delta^{\epsilon/2}$. So we find $M \le \Delta^2$, and \eqref{eq:Delta_large} gives
\begin{equation}
\label{eq:13_estimate}
13nd^4 \sqrt{\log \Delta M} \le \frac{\delta}{100} \log \Delta.
\end{equation}
We also have $\log H_{\text{max}} \le 200 (\log \Delta)^2$, and so we find
\[\log (\Delta^dMH_{\text{max}}) \le 400 (\log \Delta)^2 \le (\log \Delta)^{\frac{20}{9}}\]
by \eqref{eq:Delta_large}. So
\begin{equation}
\label{eq:M_est}
\left(M (\log \Delta^dMH)^{-27nd^2}\right) \ge \Delta^{\epsilon(1 + \delta)}( \log \Delta)^{40nd^2}
\end{equation}
for $H$ on the interval $I$.

With these set, we will prove the first itemized claim. Note that
\begin{equation}
    \label{eq:H0_bnd}
    H_0 \le  (\log \Delta)^{\frac{6d}{\epsilon}},
\end{equation} 
so $H_0 \le \Delta^{1/30}$ by \eqref{eq:epsilon_okay} and
\[nd^{5d} \sqrt{\log \Delta M H_0} \le 2nd^{5d}  \sqrt{\log \Delta} \le 2 \sqrt{\log \Delta} \cdot \exp\left(\frac{5d \log d + \log n}{\log \log \Delta} \cdot \log \log \Delta \right).\]
 From \eqref{eq:H0_bnd}, we also have
 \begin{equation}
 \label{eq:h0_sqrt}
\frac{\sqrt{\epsilon \log H_0}}{18} \le  \frac{ \sqrt{d}}{\sqrt{\log \log \Delta}} \log \log \Delta.
\end{equation}
These last inequalities imply
\[nd^{5d}\sqrt{\log \Delta MH} \exp\left(\frac{\sqrt{\epsilon \log H}}{18}\right) \le \exp\left(\left(\frac{1}{2} + \frac{\delta}{20} \right) \log \log \Delta\right). \]
The first claim will then be shown if we can prove
\[\log \frac{\log \left(\Delta^{\epsilon(1 + \delta)}( \log \Delta)^{40nd^2}\right) \cdot \log H_{\text{min}} \cdot \left(1 - \frac{\log H_0}{\log \Delta^d H_0^3}\right)}{d\log \Delta + 4 \log M  + 2\log H_0 + \frac{\delta}{100} \log \Delta } \ge \log \left(\frac{1}{2} + \frac{\delta}{20} \right) + \log \log \log \Delta.\]
We will repeatedly use
\begin{equation}
    \label{eq:log_estimates}
    \log(1 + x) \le x \,\,\text{ for } \,\,0 \le x \quad\text{and}\quad \log(1 + x) \ge \frac{4}{5}x \,\,\text{ for }\,\, 0 \le x \le 1/2.
\end{equation}

Since $\delta \le 1/6$, \eqref{eq:log_estimates} and \eqref{eq:epsilon_okay} give
\[\log \log \left(\Delta^{\epsilon(1 + \delta)} (\log \Delta)^{40nd^2}\right)  \ge \log (\epsilon \log \Delta) + \frac{4}{5}\delta + \frac{32nd^2 \log \log \Delta}{\epsilon \log \Delta}.\]
We also have
\begin{align*}
&\log \log H_{\text{min}} = \log(2 + d/2\epsilon) + \log \log \log \Delta, \\
&\log \left(1 - \frac{\log  H_0}{\log \Delta^dH_0^3}\right) \ge - \log \frac{\log \Delta^d H_0}{\log \Delta^d} \ge -\frac{\log H_0}{d \log \Delta} \ge -\frac{2nd^2 \log \log \Delta}{\epsilon \log \Delta}, \quad\text{ and}\\
& \log\left( d\log \Delta + 4 \log M  + 2\log H_0 + \frac{\delta}{100} \log \Delta \right) \\
& \qquad \le \log \left((d + 4\epsilon) \log \Delta\right) + \frac{4\delta\epsilon}{d + 4 \epsilon} + \frac{400nd^2 \log \log \Delta}{d\log \Delta} + \frac{12 \log \log \Delta}{\epsilon \log \Delta} + \frac{\delta}{100}. 
\end{align*}
We note that
\[\frac{400nd^2 \log \log \Delta}{d\log \Delta} \le \frac{\delta}{100}\quad\text{and}\quad \frac{4\delta\epsilon}{d + 4\epsilon} \le \frac{2}{3} \delta,\]
so summing these estimates for logarithms gives the first claim. We also note that the estimates for the denominator and the bound $\delta \le 1/6$ give
\begin{equation}
\label{eq:M_contrib}
4\log M + \frac{\delta}{100} \log \Delta \le 4\epsilon \log \Delta + (d + 4\epsilon) \cdot \frac{\log \Delta}{8}.
\end{equation}

We now consider the second step. We will take
\[f_1(H) = (\log \Delta)^{10/9} \cdot \exp\left(\frac{\sqrt{\epsilon \log H}}{18} - \frac{\tfrac{2}{3} \log \left(\Delta^{\epsilon }\right) \cdot \log H }{\tfrac{9}{8} (d + 4\epsilon) \log \Delta  + 2\log H }\right).\]

That this is greater than $f(H)$ on the interval $I$ follows from \eqref{eq:M_contrib} and the estimates
\begin{align*}
&nd^{5d} \sqrt{\log \Delta MH_{\text{max}}} \le (\log \Delta)^{10/9}\quad\text{and}\quad  1 - \frac{\log H_0}{\log \Delta^d H_0^3} \ge \frac{2}{3}.
\end{align*}

We now move to the third step. Define a function
\[f_2(x) = \frac{x \cdot \sqrt{\epsilon}}{18} - \frac{\tfrac{2}{3} \log \left(\Delta^{\epsilon}\right) \cdot x^2}{\tfrac{9}{8} (d + 4\epsilon) \log \Delta  + 2x^2 }.\]
We claim that the derivative $f_2'(\sqrt{\log H_0})$ is negative, that the second derivative $f_2''(x)$ is zero at for a single choice of $x > 0$, and that $f_2''$ is negative before this zero and positive after this zero. These claims taken together will imply that the maximum value attained by $f_2$ on $[\sqrt{\log H_0}, \sqrt{\log H_{\text{max}}}]$ is attained at one of its endpoints.

We have
\[f_2'(x) =  \frac{ \sqrt{\epsilon}}{18} - \frac{\tfrac{4}{3} \log \left(\Delta^{\epsilon}\right)\cdot \tfrac{9}{8} (d + 4\epsilon) \log \Delta \cdot x }{\left(\tfrac{9}{8} (d + 4\epsilon) \log \Delta  + 2x^2\right)^2}.\]
Since $H_0 \le \Delta^{1/30}$, this is at most
\[\frac{\sqrt{\epsilon}}{18} - \frac{ \epsilon \log \Delta  \cdot d\log \Delta \cdot \sqrt{\epsilon^{-1} d \log \log \Delta}}{(5d \log \Delta)^2} \]
at $\sqrt{\log H_0}$, and this is negative by \eqref{eq:Delta_large}.

The claims about the second derivative hold for any function of the shape $a_0x  - a_1 + \frac{a_2}{a_3 + x^2}$ for positive constants $a_0, a_1, a_2, a_3$. This finishes the third step.

So it only remains to show that $f_1$ is at most $1$ at $H_0$ and at $H_{\text{max}}$. At $H_0$, we use that $H_0 \le \Delta^{1/30}$ to write
\[\frac{\tfrac{2}{3} \log \left(\Delta^{\epsilon}\right) \cdot \log H_0 }{\tfrac{9}{8} (d + 4\epsilon) \log \Delta  + 2\log H_0 } \ge \frac{ \tfrac{2}{3} \epsilon \log \Delta  \cdot 4(2 + \tfrac{d}{2\epsilon}) \cdot \log\log \Delta}{\left(\tfrac{9}{8} + \tfrac{1}{30} \right) \cdot (d + 4\epsilon)\cdot \log \Delta} \ge 1.15 \cdot  \log \log \Delta. \]
Together with the bound \eqref{eq:h0_sqrt}, we find that $f_1(H_0) \le 1$.

We now consider $ \log f_1(H_{\text{max}})$. Since $1/(1 + x)$ is at least $1-x$ for $x$ positive, this is at most
\begin{align*}
&\frac{10}{9} \log \log \Delta + \frac{\sqrt{\frac{104}{3}} \epsilon \log e^{nd} \Delta}{18} - \frac{1}{3} \epsilon \log \Delta + \frac{3(d + 4 \epsilon) \log \Delta}{16 \cdot \log H_{\text{max}}} \\
& \le nd + 2 \log \log \Delta - \frac{1}{170} \epsilon \log \Delta + \frac{d}{60 \epsilon \log \Delta} \le 0,
\end{align*}
with the final inequality following from \eqref{eq:epsilon_okay}. So $f_1(H_{\text{max}}) \le 1$.
\qed

\subsection{The averaged Chebotarev density theorem}
We will prove the following strengthened form of Proposition \ref{prop:avg_cdt}.

\begin{theorem}
\label{thm:avg_cdt}
Given a number field $F$ of degree $n$, and given a positive integer $d$, there is some $C(n, d) > 0$ depending just on $n$ and $d$ so we have the following

Choose any $\epsilon > 0$ and a Galois extension $K/F$ of degree $d$. Choose $H > (\log \Delta)^{2 + \frac{d}{2\epsilon}}$. For every $\epsilon$-bad extension $L/F$ contained in $K$, we assume that
\[\log H \ge C(n, d) \cdot \left(\log 3\Delta_L\right)^2.\]
If this bad $L$ lies in $\mathbb{X}_{\textup{exc}}(F)$, we also assume that
\[1 - \beta(L) \ge \frac{1}{40 \sqrt{\log H}}.\]
Then, for any conjugacy class $C$ of $G = \Gal(K/F)$, we have

\[\left|\pi_C(H; K/F) \,-\, \frac{|C|}{|G|} \cdot \pi_F(H)\right| \le  \frac{H}{\log H} \cdot \exp\left(- c(\epsilon) \cdot \sqrt{\log H}\right).\]
\end{theorem}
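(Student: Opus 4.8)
The plan is to expand $\pi_C$ into Artin character sums, decompose by kernel fields so that every character becomes a \emph{faithful} character of some Galois subextension of $K/F$, bound the faithful characters of non-$\epsilon$-bad subfields directly from Definition~\ref{defn:eps_bad}, bound those of $\epsilon$-bad subfields via Lemma~\ref{lem:to_positive_monomial} and the unconditional Theorem~\ref{thm:cheb_explicit}, and finally sum the pieces, absorbing the finitely many bounded factors into the choice of $C(n,d)$.

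\textbf{Setup and subfield decomposition.} Fix $g\in C$ and write, by orthogonality of the irreducible characters of $G=\Gal(K/F)$,
\[
\pi_C(H;K/F)=\frac{|C|}{|G|}\sum_{\chi}\overline{\chi(g)}\sum_{N\mfp\le H}\chi(\mfp),
\]
so the trivial character gives the main term $\frac{|C|}{|G|}\pi_F(H)$ and it remains to bound the sum over nontrivial $\chi$. Each nontrivial irreducible $\chi$ factors through $\Gal(L/F)$ for a unique Galois extension $F\subsetneq L\subseteq K$, namely $L=K^{\ker\chi}$, and there it is faithful; conversely the faithful irreducible characters of $\Gal(L/F)$ are exactly the $\chi\in\widehat G$ with kernel field $L$. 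Grouping accordingly reduces the problem to bounding $\sum_L\sum_{\chi}|\chi(g)|\cdot\bigl|\sum_{N\mfp\le H}\chi(\mfp)\bigr|$, the inner sum being over faithful irreducible $\chi$ of $\Gal(L/F)$. First I would record that $\Delta_L\le\Delta_K$ (tower formula for discriminants) and $[L:F]\le d$, and that $\log\Delta_K>1$, so that
\[
H>(\log\Delta_K)^{2+\frac{d}{2\epsilon}}\ge(\log\Delta_L)^{2+\frac{[L:F]}{2\epsilon}}
\]
for every such $L$.

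\textbf{Bounding each subfield.} If $L\notin\mathbb{X}_{\textup{bad}}(F,\epsilon)$, the displayed inequality together with Definition~\ref{defn:eps_bad} applied to $L/F$ gives, for every faithful irreducible $\chi$ of $\Gal(L/F)$,
\[
\Bigl|\sum_{N\mfp\le H}\chi(\mfp)\Bigr|<\frac{H}{\log H}\exp\!\bigl(-c(\epsilon)\sqrt{\log H}\bigr),
\]
where we also use $[L:\mathbb{Q}]\le[K:\mathbb{Q}]$ so that the constant \eqref{eq:cepsilon} for $L$ dominates that for $K$. If instead $L\in\mathbb{X}_{\textup{bad}}(F,\epsilon)$, I invoke the extra hypotheses $\log H\ge C(n,d)(\log 3\Delta_L)^2$ and, when $L\in\mathbb{X}_{\textup{exc}}(F)$, $1-\beta(L)\ge\frac{1}{40\sqrt{\log H}}$. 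Using Lemma~\ref{lem:to_positive_monomial} to write $\chi=\sum_i a_i\phi_i$ with $\phi_i$ faithful monomial and $|a_i|\le[L:F]^{\frac32([L:F]-1)}$, I apply Theorem~\ref{thm:cheb_explicit} to each $\phi_i$. Its three error terms are controlled as follows: the term $C_0H^{\beta(L)}$ is $\le C_0H^{1/2}$ when $L\notin\mathbb{X}_{\textup{exc}}(F)$ and $\le C_0H\exp(-\tfrac1{40}\sqrt{\log H})$ otherwise; the term $C_0nd\,H\exp\!\bigl(-\tfrac{\log H}{104\log e^{nd}\Delta_L}\bigr)$ is made $\le\frac{H}{\log H}\exp(-c(\epsilon)\sqrt{\log H})$ (times a harmless factor) because $\log H\ge C(n,d)(\log 3\Delta_L)^2$ forces $\tfrac{\log H}{\log e^{nd}\Delta_L}\gg\sqrt{\log H}$ once $C(n,d)$ is large; and the term $C_0H\exp\!\bigl(-\sqrt{\tfrac{\log H}{832nd}}\bigr)$ already has the right shape, with $\tfrac{1}{\sqrt{832nd}}>\tfrac{1}{29(nd)^{1/2}}\ge c(\epsilon)$.

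\textbf{Assembling, and the main obstacle.} Summing over the $O_d(1)$ Galois subfields $L$, the $O_d(1)$ faithful irreducible characters of each $\Gal(L/F)$, with weights $|\chi(g)|\le\chi(1)$ and the coefficient bound of Lemma~\ref{lem:to_positive_monomial}, produces an overall factor $B(n,d)$ together with $C_0$; choosing $C(n,d)$ large enough that this factor and a $\log H$ are swallowed by the (small) gap between $c(\epsilon)$ and the improved exponents $\tfrac1{40}$, $\tfrac{1}{\sqrt{832nd}}$, $\tfrac{\log H}{104\log e^{nd}\Delta_L}/\sqrt{\log H}$ yields the claimed bound. I expect the genuine difficulty to be precisely this constant bookkeeping: verifying that each exponent coming out of Theorem~\ref{thm:cheb_explicit} strictly beats $c(\epsilon)$ by a margin large enough to absorb the $\log\log H$ loss and the bounded numerical factor $B(n,d)$, and handling the tight comparison in the non-$\epsilon$-bad case where no margin is available beyond what the range of $H$ and the deliberately sub-optimal constant in \eqref{eq:cepsilon} provide. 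This is exactly the step where the precise shape of $c(\epsilon)$, the quadratic lower bound $\log H\ge C(n,d)(\log 3\Delta_L)^2$, and the near-zero-free-region hypothesis on $\beta(L)$ are all used.
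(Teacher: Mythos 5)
Your overall plan — decompose $\pi_C$ into Artin character sums indexed by kernel fields, invoke Definition~\ref{defn:eps_bad} for the subfields that are not $\epsilon$-bad and Theorem~\ref{thm:cheb_explicit} together with Lemma~\ref{lem:to_positive_monomial} for the bad ones — is the paper's route, and your analysis of the three error terms of Theorem~\ref{thm:cheb_explicit} is correct (the exponents $\tfrac{1}{40}$ and $\tfrac{1}{\sqrt{832nd}}$ strictly exceed $c(\epsilon)$, and the middle term is killed by the hypothesis $\log H\ge C(n,d)(\log 3\Delta_L)^2$). But there is a genuine gap in the assembly step, exactly where you flagged that ``no margin is available.'' For a subfield $L\notin\mathbb{X}_{\textup{bad}}(F,\epsilon)$, Definition~\ref{defn:eps_bad} gives each faithful irreducible $\chi$ of $\Gal(L/F)$ the bound $\frac{H}{\log H}\exp(-c(\epsilon)\sqrt{\log H})$, which is \emph{identical} to the target with zero slack. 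Your proposal to swallow the factor $B(n,d)$ by enlarging $C(n,d)$ cannot work: $C(n,d)$ only appears in the hypothesis $\log H\ge C(n,d)(\log 3\Delta_L)^2$ for \emph{bad} $L$ and is invisible to the good subfields, which already saturate the bound. Concretely, with $a_\chi=\tfrac{|C|}{|G|}\overline{\chi(g)}$, the weight $|\chi(g)|\le\chi(1)$ gives $\sum_\chi|a_\chi|\le|C|\sqrt{m/|G|}$ ($m$ the class number of $G$), which exceeds $1$ already for $S_3$ with $C$ the transposition class, so a lossy factor $>1$ seems unavoidable from this angle.

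The missing ingredient is the paper's Lemma~\ref{lem:fun}: the indicator function $\mathbf{1}_C$, expanded in the irreducible basis as $\mathbf{1}_C=\sum_\chi a_\chi\chi$, satisfies $\sum_\chi|a_\chi|\le 1$. The proof rescales the nonzero $a_\chi$ by $b=|G|/|C|$, notes that the $ba_\chi$ are algebraic integers whose product is a nonzero rational integer, and applies AM--GM with $\sum|a_\chi|^2=b^{-1}$ to conclude that the support has size at most $b$; Cauchy--Schwarz then gives $\sum|a_\chi|\le 1$. With this in hand, the per-character bound you establish (for good subfields by definition, for bad ones via Theorem~\ref{thm:cheb_explicit} as you describe, with a gap between the exponents that absorbs the factor $C_0\cdot d^{\frac{3}{2}(d+2)}$ from Lemma~\ref{lem:to_positive_monomial} once $C(n,d)$ is large) sums up to exactly the right-hand side with no loss and no absorption needed. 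Without Lemma~\ref{lem:fun}, the approach you outlined does not close.
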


The most difficult aspect of the proof of this theorem is that its right hand side exactly matches the right hand side of Definition \ref{defn:eps_bad}, necessitating the following lemma.
\begin{lemma}
\label{lem:fun}
Take $C$ to be a conjugacy class of a finite group $G$, and take $\chi: G \to \C$ to equal $1$ on $C$ and $0$ outside $C$. Take $\chi_1, \dots, \chi_m$ to be the irreducible characters of $G$. Then there are coefficients $a_1, \dots, a_m \in \C$ such that
\[\chi = \sum_i a_i \chi_i \quad\text{and}\quad \sum |a_i| \le 1.\]
\end{lemma}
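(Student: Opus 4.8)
The plan is as follows. First I would pin down the coefficients: since $\chi_1,\dots,\chi_m$ are linearly independent, any decomposition $\chi=\sum_i a_i\chi_i$ forces $a_i=\langle\chi,\chi_i\rangle$. Fixing a representative $c\in C$ and using that each $\chi_i$ is constant on $C$,
\[
a_i=\frac1{|G|}\sum_{g\in C}\overline{\chi_i(g)}=\frac{|C|}{|G|}\,\overline{\chi_i(c)}=\frac{\overline{\chi_i(c)}}{|C_G(c)|},
\]
using $|C|=[G:C_G(c)]$; that $\sum_i a_i\chi_i$ really equals $\chi$ is exactly the column-orthogonality relation $\sum_i\overline{\chi_i(c)}\chi_i(x)=|C_G(c)|$ for $x$ conjugate to $c$ and $0$ otherwise. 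Consequently $\sum_i|a_i|=\frac1{|C_G(c)|}\sum_i|\chi_i(c)|$, so the lemma reduces to the purely character-theoretic inequality $\sum_i|\chi_i(c)|\le|C_G(c)|$.

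Next I would reduce this inequality to a counting statement via Cauchy--Schwarz. Evaluating the column-orthogonality relation at $x=c$ gives $\sum_i|\chi_i(c)|^2=|C_G(c)|$, so if $k$ denotes the number of irreducible characters of $G$ that do not vanish at $c$, then
\[
\sum_i|\chi_i(c)|\ \le\ k^{1/2}\Big(\sum_i|\chi_i(c)|^2\Big)^{1/2}=\big(k\,|C_G(c)|\big)^{1/2},
\]
and it suffices to show $k\le|C_G(c)|$.

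The hard part will be establishing $k\le|C_G(c)|$. Here is the structure I would try to exploit. Put $Z=\langle c\rangle$ of order $n$; because the values $\chi_i(c^a)$ for $\gcd(a,n)=1$ are exactly the Galois conjugates over $\QQ$ of $\chi_i(c)$, a character vanishes at $c$ iff it vanishes at every generator of $Z$. Hence $\Sigma_0=\{i:\chi_i(c)\neq0\}$, which has size $k$, is the common support of all the character-table columns indexed by generators of $Z$; grouped into their $G$-conjugacy classes these columns form pairwise orthogonal vectors, each of squared length $|C_G(c)|$ (every generator of $Z$ has centralizer exactly $C_G(c)$), all supported in $\Sigma_0$. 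Converting this structure into $k\le|C_G(c)|$ is the delicate point, since the triangle inequality alone only gives the useless bound $|C|$ and one must use genuine cancellation. The route I would pursue is to prove instead the sharper inequality $\sum_i|\chi_i(c)|\le\sum_{\theta\in\mathrm{Irr}(C_G(c))}\theta(1)$ — which suffices because $\sum_{\theta}\theta(1)\le\sum_\theta\theta(1)^2=|C_G(c)|$ — using that $c$ is central in $C_G(c)$ and that $\chi=\Ind_{C_G(c)}^G\mathbf 1_{\{c\}}$, i.e., showing that inducing the point-indicator up from the centralizer cannot increase its $\ell^1$ Fourier norm in the irreducible-character basis; if that resists, the bound on the number of non-vanishing irreducible characters can be quoted from the literature on zeros of characters. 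With $k\le|C_G(c)|$ in hand, the two displayed estimates finish the proof.
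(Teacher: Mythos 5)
Your reduction is exactly the one the paper uses: identify $a_i = \frac{|C|}{|G|}\overline{\chi_i(c)}$, observe $\sum_i |a_i|^2 = |C|/|G|$ by column orthogonality, and apply Cauchy--Schwarz to reduce everything to the claim that the number $k$ of irreducibles with $\chi_i(c)\neq 0$ is at most $|C_G(c)|$. Up to that point the two proofs coincide.

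The gap is in the step you yourself flag as ``the delicate point.'' The paper proves $k \le |C_G(c)|$ with a short, self-contained algebraic argument that your proposal does not supply: the values $\chi_i(c)$ are algebraic integers, and the Galois group $\Gal(\overline{\QQ}/\QQ)$ permutes the irreducible characters of $G$, hence permutes the multiset of nonzero values $\{\chi_i(c) : \chi_i(c) \neq 0\}$. It follows that $\prod_{\chi_i(c)\neq 0}\chi_i(c)$ is a Galois-fixed nonzero algebraic integer, i.e.\ a nonzero ordinary integer, so it has absolute value at least $1$. Feeding this into AM--GM applied to $\sum_i|\chi_i(c)|^2 = |C_G(c)|$ gives $|C_G(c)| \ge k\cdot |\prod\chi_i(c)|^{2/k}\ge k$. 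Your two suggested substitutes do not close the gap. The heuristic that ``inducing the point-indicator up from $C_G(c)$ cannot increase its $\ell^1$ Fourier norm'' is false as a general principle: inducing the trivial function from the trivial subgroup gives the regular character, whose $\ell^1$ Fourier norm is $\sum_i\chi_i(1)$, arbitrarily larger than $1$. So even if the specific inequality $\sum_i|\chi_i(c)|\le\sum_{\theta\in\mathrm{Irr}(C_G(c))}\theta(1)$ holds (it does seem to hold numerically), you would need a genuinely different argument for it, and none is given. ``Quote from the literature'' is a valid way out — the bound is classical — but as written the proposal contains no proof of the step on which the whole lemma rests.
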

\begin{proof}
Since the $\chi_i$ give a basis for the set of class functions of $G$, we have $\chi = \sum_i a_i \chi_i$ for a unique choice of $a_1, \dots, a_m$. We now need to show that $\sum |a_i| \le 1$.

Define the inner product $\langle\,,\,\, \rangle$ on class functions of $G$ as in Section \ref{ssec:first_reductions}. Take $b = |G|/|C|$. Then
\[b^{-1} = \langle\chi, \chi\rangle = |a_1|^2 + \dots + |a_m|^2.\]
Take $S$ to be the set of nonzero $a_i$. The numbers $ba_i$ are algebraic integers, and the multiset $\{ba_i\,:\,\, i \in S\}$ must be the set of roots of some monic integer polynomial $P$ satisfying $P(0) \ne 0$. So the AM-GM inequality gives
\[b = \sum_{i \in S} \left|ba_i\right|^2 \ge |S|\cdot  \prod_{i \in S}  |ba_i|^{2/|S|} = |S| \cdot |P(0)|^{2/|S|} \ge |S|. \]
The result now follows from the Cauchy--Schwarz inequality.
\end{proof}

\begin{proof}[Proof of Theorem \ref{thm:avg_cdt}]
By Lemma \ref{lem:fun}, it suffices to show that
\begin{equation}
\label{eq:chi_of_acdt}
\left|\sum_{N\mfp \le H} \chi(\mfp)\right| \le \frac{H}{\log H} \cdot \exp\left(- c(\epsilon)\cdot \sqrt{\log H}\right)
\end{equation}
for every irreducible character $\chi$ of $G$. For a given $\chi$, this claim is clear unless the subfield $L$ of $K$ fixed by the kernel of $\chi$ is $\epsilon$-bad.

Now suppose $L$ is $\epsilon$-bad. If $L$ lies in $\mathbb{X}_{\textup{exc}}(F)$, we have
\[H^{\beta(L)} \le H \cdot \exp\left(- \frac{\sqrt{\log H}}{40}\right)\]
by the assumptions on $\beta(L)$. Note that $c(\epsilon) < 1/41$, so
\[C_0 H^{\beta(L)} \le \frac{H}{2\log H} \exp\left( - c(\epsilon)\sqrt{\log H}\right)\]
so long as $H$ is larger than some absolute constant, where $C_0$ is defined as in Theorem \ref{thm:cheb_explicit}. The inequality \eqref{eq:chi_of_acdt} then follows from Theorem \ref{thm:cheb_explicit} for a proper choice of $C(n, d)$.
\end{proof}

We now turn to the proof of Proposition~\ref{prop:avg_pit}, which, like Theorem~\ref{thm:avg_cdt}, we prove in a slightly strengthened form.

\begin{theorem}\label{thm:avg_pit}
    Let $F$ be a number field of degree $n$ and let $m \geq 2$ be an integer.  Let $\epsilon>0$, let $L/F$ be an extension of degree $m$, let $K$ be the normal closure of $L/F$, and assume that $\Delta_K \leq \Delta$, where $\Delta$ is as above.  Let $G=\mathrm{Gal}(K/F)$.  Let $H > (\log \Delta)^{2 + \frac{|G|}{2\epsilon}}$ be such that for every $\epsilon$-bad extension $M/F$ not linearly disjoint from $L$ over $F$, we have
        \[
            \log H \geq C(n, |G|)\cdot (\log 3\Delta_M)^2,
        \]
    where $C(n, |G|)$ is as in Theorem~\ref{thm:avg_cdt}.  If this bad $M$ lies in $\mathbb{X}_\mathrm{exc}(F)$, we also assume that
        \[
            1 -\beta(L) \geq \frac{1}{40 \sqrt{\log H}}.
        \]
    Then we have
        \[
            \left| \pi_L(H) - \pi_F(H) \right|
                \leq \frac{H}{\log H} \cdot (m-1) \cdot \exp\left(-c(\epsilon) \sqrt{\log H}\right).
        \]    
\end{theorem}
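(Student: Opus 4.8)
The plan is to convert the prime count for the (possibly non-Galois) extension $L/F$ into a character sum over $G=\Gal(K/F)$, isolate a degree-$(m-1)$ class function, and estimate it using Theorem~\ref{thm:cheb_explicit} and Lemma~\ref{lem:to_positive_monomial} exactly as in the proof of Theorem~\ref{thm:avg_cdt}. Set $H_0=\Gal(K/L)$ and let $\rho=\Ind_{H_0}^G\mathbf 1$ be the degree-$m$ permutation character of $G$ on $G/H_0$. For a prime $\mfp$ of $F$ unramified in $K$, the number of primes of $L$ above $\mfp$ of residue degree $1$ over $F$ equals the number of fixed points of $\mathrm{Frob}\,\mfp$ on $G/H_0$, which is $\rho(\mathrm{Frob}\,\mfp)$. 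The primes of $L$ of norm at most $H$ that this count misses are either ramified in $K$ (lying over the $O_{n,|G|}(\log\Delta_K)$ ramified primes of $F$) or of residue degree $\ge 2$ over $F$ (lying over primes of $F$ of norm $\le\sqrt H$, so $O_{|G|}(\sqrt H/\log H)$ of them), whence
\[
\pi_L(H)\;=\;\sum_{\substack{N\mfp\le H\\ \mfp\text{ unr.\ in }K}}\rho(\mathrm{Frob}\,\mfp)\;+\;O_{n,|G|}\!\Big(\tfrac{\sqrt H}{\log H}+\log\Delta_K\Big).
\]
Writing $\rho=\mathbf 1+\psi$, so $\psi$ is a sum of nontrivial irreducible characters of $G$ with $\psi(1)=m-1$, and folding the ramified-prime discrepancy for the trivial constituent into the error term (and likewise the $O_{n,|G|}(\log\Delta_K)$ discrepancy between evaluating $\chi(\mfp)$ along $K/F$ versus along a subfield), this gives
\[
\pi_L(H)-\pi_F(H)\;=\;\sum_{N\mfp\le H}\psi(\mfp)\;+\;O_{n,|G|}\!\Big(\tfrac{\sqrt H}{\log H}+\log\Delta_K\Big),
\]
and since $H>(\log\Delta)^{2+|G|/2\epsilon}$ forces $\log\Delta_K\le\log\Delta$ to be small relative to $H$, the error term is $o\!\big(\tfrac{H}{\log H}\exp(-c(\epsilon)\sqrt{\log H})\big)$.

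The key structural point is that every nontrivial irreducible constituent $\chi$ of $\rho$ has kernel field not linearly disjoint from $L$ over $F$. Indeed, if $N=\ker\chi$, then $\langle\rho,\chi\rangle\ne0$ means $\res_{H_0}\chi$ contains $\mathbf 1$; but if $NH_0=G$ then $H_0\twoheadrightarrow G/N$, so $\res_{H_0}\chi$ is the faithful nontrivial character $\chi$ of $G/N$ pulled back, which does not contain $\mathbf 1$ — a contradiction. Hence $NH_0\ne G$, so the fixed field $L_\chi=K^N$, which is Galois over $F$ with $\chi$ a faithful irreducible character of $\Gal(L_\chi/F)$, satisfies $L_\chi\cap L\ne F$; moreover $L_\chi\subseteq K$, so $\Delta_{L_\chi}\le\Delta$ and $[L_\chi:F]\le|G|$, hence $[L_\chi:\QQ]\le[K:\QQ]$.

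It remains to bound $\sum_{N\mfp\le H}\psi(\mfp)$. Group the constituents of $\psi$ by their kernel field: $\psi=\sum_{L'}\psi_{L'}$, where $L'$ ranges over the Galois subextensions $F\subsetneq L'\subseteq K$ and $\psi_{L'}$ is a nonnegative integer combination of faithful irreducible characters of $\Gal(L'/F)$ with $\sum_{L'}\psi_{L'}(1)=m-1$; by the previous paragraph each $L'$ occurring is not linearly disjoint from $L$ over $F$. Fix such an $L'$. If $L'\notin\mathbb X_{\textup{bad}}(F,\epsilon)$, then since $\Delta_{L'}\le\Delta$, $[L':F]\le|G|$ and $[L':\QQ]\le[K:\QQ]$ (so that $H$ lies in the range of Definition~\ref{defn:eps_bad} for $L'$ and the constant of \eqref{eq:cepsilon} formed with $[L':\QQ]$ is $\ge c(\epsilon)$), Definition~\ref{defn:eps_bad} gives directly that $\big|\sum_{N\mfp\le H}\chi(\mfp)\big|<\tfrac{H}{\log H}\exp(-c(\epsilon)\sqrt{\log H})$ for every faithful irreducible constituent $\chi$ of $\psi_{L'}$. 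If instead $L'\in\mathbb X_{\textup{bad}}(F,\epsilon)$, then $L'$ is a bad extension not linearly disjoint from $L$, so the theorem's hypotheses apply to it, yielding $\log H\ge C(n,|G|)(\log 3\Delta_{L'})^2$ and, when $L'\in\mathbb X_{\textup{exc}}(F)$, the bound $1-\beta(L')\ge(40\sqrt{\log H})^{-1}$; writing each faithful irreducible constituent of $\psi_{L'}$ as a $\QQ$-combination of faithful monomial characters of $\Gal(L'/F)$ with coefficients and number of terms bounded in terms of $[L':F]\le|G|$ via Lemma~\ref{lem:to_positive_monomial}, and applying Theorem~\ref{thm:cheb_explicit} to each monomial piece over $L'/F$ (an extension of degree $\le|G|$ and discriminant $\le\Delta$), its three contributions are each $\le\tfrac{H}{\log H}\exp(-c(\epsilon)\sqrt{\log H})$: the $H^{\beta(L')}$ term by the exceptional-zero hypothesis (or $\beta(L')=\tfrac12$ when $L'$ is not a quadratic exceptional field), and the other two because $\log H\ge C(n,|G|)(\log 3\Delta_{L'})^2$ makes both of their exponents a large multiple of $\sqrt{\log H}$, using that $c(\epsilon)\le(29[K:\QQ]^{1/2})^{-1}$ leaves room over $(832n|G|)^{-1/2}$, provided $C(n,|G|)$ is chosen large enough — the same enlargement already needed in Theorem~\ref{thm:avg_cdt}. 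Summing over the constituents with their coefficients, then over $L'$, and using $\sum_{L'}\psi_{L'}(1)=m-1$ together with the slack in the exponents to absorb the error term from the first step, yields $\big|\pi_L(H)-\pi_F(H)\big|\le\tfrac{H}{\log H}(m-1)\exp(-c(\epsilon)\sqrt{\log H})$.

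I expect the main obstacle to be the bookkeeping in the last step: keeping the $[L':F]$-dependent constants from the monomial decomposition and from Theorem~\ref{thm:cheb_explicit} strictly under control so that, after summing $m-1$ pieces, one lands exactly on the stated right-hand side with the constant $c(\epsilon)$. This is precisely the estimate carried out in the proof of Theorem~\ref{thm:avg_cdt}, so the genuinely new ingredient is the elementary group-theoretic lemma of the second paragraph, which identifies the kernel fields of the constituents of $\rho$ as non-disjoint from $L$ and thereby lets the weaker hypothesis (bad fields \emph{not linearly disjoint from $L$}) do the work.
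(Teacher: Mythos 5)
Your proposal is correct and follows essentially the same route as the paper's proof. The paper writes $\chi_{H_0}=\Ind_{H_0}^G\mathbf 1-\mathbf 1$, claims the identity $\pi_L(H)-\pi_F(H)=\sum_{N\mfp\le H}\chi_{H_0}(\mfp)$, decomposes $\chi_{H_0}$ into at most $m-1$ irreducible constituents, observes that the kernel field $M=K^{\ker\chi}$ of each such constituent is not linearly disjoint from $L$ over $F$ (citing \cite[Lemma 3.9]{LOTZ}), and then invokes the exact per-character analysis from Theorem~\ref{thm:avg_cdt}. You do all of the same things. Two small differences are worth noting. First, you prove the ``kernel field is not linearly disjoint from $L$'' fact from scratch via Frobenius reciprocity (if $NH_0=G$ then $\res_{H_0}\chi$ is the inflation of a nontrivial irreducible of $G/N$ and so cannot contain $\mathbf 1$, contradicting $\langle\Ind_{H_0}^G\mathbf 1,\chi\rangle\ne0$), whereas the paper cites a lemma from prior work; your argument is correct and self-contained. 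Second, you are more careful than the paper about the passage from $\pi_L(H)-\pi_F(H)$ to $\sum_{N\mfp\le H}\chi_{H_0}(\mfp)$: the paper asserts this as an exact equality, but it omits the contribution from primes of $L$ lying over $\mfp$ ramified in $K/F$ and from primes of residue degree $\ge 2$ over $F$, so your explicit $O_{n,|G|}\!\big(\sqrt H/\log H+\log\Delta_K\big)$ correction is the more accurate bookkeeping. The one loose end in your version is that you then appeal to ``slack in the exponents'' to absorb this error term into the final bound without quantifying that slack; in the non-bad case the per-character bound in Definition~\ref{defn:eps_bad} is only a strict inequality with no explicit room, so technically one needs to either shave the $c(\epsilon)$ used for the constituents slightly or note that the bad-field case (via Theorem~\ref{thm:cheb_explicit} and the large choice of $C(n,|G|)$) leaves quantitative room, and that the non-bad-field bound can in fact be improved with a slightly larger exponent since $H\ge(\log\Delta_K)^{2+|G|/2\epsilon}$ is well inside the range. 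This is precisely the kind of step the paper glosses over by asserting the exact identity, so it is a shared subtlety rather than a flaw unique to your argument.
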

\begin{proof}
    Let $H = \mathrm{Gal}(K/L)$ and let $\chi_H = \mathrm{Ind}_H^G 1 - 1$.  We then have that
        \[
            \pi_L(H) - \pi_F(H)
                = \sum_{\mathrm{N}\mfp \leq H} \chi_H(\mfp).
        \]    
    Since $\chi_H(1) = m-1$, it follows that $\chi_H$ admits at most $m-1$ irreducible constituents $\chi$.  The result therefore follows if we show for each of these constituents $\chi$ that
        \[
            \left|\sum_{N\mfp \le H} \chi(\mfp)\right| \le \frac{H}{\log H} \cdot \exp\left(- c(\epsilon)\cdot \sqrt{\log H}\right).
        \]
    As in the proof of Theorem~\ref{thm:avg_cdt}, this is straightforward unless the kernel field of $\chi$ is $\epsilon$-bad.  Let $M = K^{\ker \chi}$ be this kernel field, and observe that $M$ and $L$ are not linearly disjoint (for example, this follows from \cite[Lemma 3.9]{LOTZ}).  Hence, proceeding exactly as in the proof of Theorem~\ref{thm:avg_cdt}, the result follows.
\end{proof}

To obtain Proposition~\ref{prop:avg_pit} in the form stated in the introduction, we require the following lemma, after which the proof of Proposition~\ref{prop:avg_pit} is routine.

\begin{lemma} \label{lem:galois-disc}
    Let $F$ be a number field, let $L/F$ be a finite extension, and let $K/F$ be its normal closure.  Let $G = \mathrm{Gal}(K/F)$ and $m = [L:F]$.  Then $\Delta_K \leq \Delta_L^{|G|/2} \Delta_F^{-|G|(m-2)/2}$.
\end{lemma}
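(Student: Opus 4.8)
The plan is to reduce the claimed inequality between absolute discriminants to a purely local statement about the relative discriminant ideals $\mathfrak{d}_{L/F}$ and $\mathfrak{d}_{K/F}$, and then to verify that local statement using the conductor--discriminant formula together with an elementary combinatorial lemma about permutation actions. First I would apply the standard tower formula $\Delta_E = \Delta_F^{[E:F]}\cdot \mathrm{Nm}_{F/\mathbb{Q}}(\mathfrak{d}_{E/F})$ for $E \in \{L,K\}$ to rewrite the target $\Delta_K \le \Delta_L^{|G|/2}\Delta_F^{-|G|(m-2)/2}$ as the equivalent assertion $\mathrm{Nm}_{F/\mathbb{Q}}(\mathfrak{d}_{K/F}) \le \mathrm{Nm}_{F/\mathbb{Q}}(\mathfrak{d}_{L/F})^{|G|/2}$ (here one uses only $\Delta_F\ge 1$, $[K:F]=|G|$, and $[L:F]=m$). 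Since the norm is multiplicative over primes of $F$, it suffices to prove $v_\mathfrak{p}(\mathfrak{d}_{K/F}) \le \tfrac{|G|}{2}\,v_\mathfrak{p}(\mathfrak{d}_{L/F})$ for every prime $\mathfrak{p}$ of $F$.

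Next I would fix $\mathfrak{p}$ and a prime $\mathfrak{P}$ of $K$ above it, let $D \le G$ be the corresponding decomposition group with its lower-numbering ramification filtration $D = D_{-1} \supseteq D_0 \supseteq D_1 \supseteq \cdots$, and set $H = \mathrm{Gal}(K/L)$, so $m = [G:H]$. By the conductor--discriminant formula, $\mathfrak{d}_{K/F}$ and $\mathfrak{d}_{L/F}$ are the Artin conductors of the permutation representations $\mathbb{C}[G]$ and $\mathbb{C}[G/H] = \mathrm{Ind}_H^G\mathbf{1}$ respectively; localizing at $\mathfrak{p}$ and restricting to $D$ gives $v_\mathfrak{p}(\mathfrak{d}_{K/F}) = a_D(\mathbb{C}[G])$ and $v_\mathfrak{p}(\mathfrak{d}_{L/F}) = a_D(\mathbb{C}[G/H])$, where $a_D(\rho) = \sum_{i\ge 0}\tfrac{|D_i|}{|D_0|}\bigl(\dim\rho - \dim\rho^{D_i}\bigr)$. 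Using $\dim \mathbb{C}[G]^{D_i} = [G:D_i]$ and $\dim \mathbb{C}[G/H]^{D_i} = |D_i\backslash G/H|$, the inequality $2v_\mathfrak{p}(\mathfrak{d}_{K/F}) \le |G|\,v_\mathfrak{p}(\mathfrak{d}_{L/F})$ follows term by term (each coefficient $\tfrac{|D_i|}{|D_0|}$ being nonnegative) from the claim that for every subgroup $P\le G$,
\[
    |P\backslash G/H| \;\le\; m - 2 + \frac{2}{|P|}.
\]

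Finally I would prove this combinatorial claim. Since $K$ is the normal closure of $L/F$, the subgroup $H$ is core-free in $G$, so $G$ — and hence every $P \le G$ — acts faithfully on the $m$-element set $G/H$. If $|P| = 1$ the bound is the trivial equality $m = m-2+2$; if $|P| = 2$, a faithful nontrivial action has a non-fixed point, hence an orbit of size $2$, giving at most $m - 1 = m-2+\tfrac22$ orbits; and if $|P| \ge 3$, a faithful action cannot have exactly $m-1$ orbits (that orbit pattern is a single transposition with all other points fixed, which forces $P\hookrightarrow S_2$), so it has at most $m-2 \le m-2+\tfrac{2}{|P|}$ orbits. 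Reassembling the weighted sum over $i$ (noting the $i=0$ term vanishes when $\mathfrak p$ is unramified) gives the local inequality, and unwinding the first reduction yields the lemma. I expect the main obstacle to be not the combinatorics but the conductor--discriminant bookkeeping in the middle paragraph: getting the two permutation-representation identities, the normalization of $a_D(\cdot)$, and the passage between global conductors and the decomposition-group-local filtration exactly right (and checking the trivially-unramified and trivial-inertia edge cases).
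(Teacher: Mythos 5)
The paper does not give its own argument here; it defers to \cite[Lemma 3.10]{LOTZ}. Your write-up is a correct, self-contained proof, so it is inherently a different route from the paper's one-line citation. The skeleton is sound: (i) the tower formula $\Delta_E = \Delta_F^{[E:F]}\mathrm{Nm}_{F/\mathbb{Q}}(\mathfrak{d}_{E/F})$ turns the claim into $\mathrm{Nm}(\mathfrak{d}_{K/F}) \le \mathrm{Nm}(\mathfrak{d}_{L/F})^{|G|/2}$; (ii) this localizes to $v_\mathfrak{p}(\mathfrak{d}_{K/F}) \le \tfrac{|G|}{2}\, v_\mathfrak{p}(\mathfrak{d}_{L/F})$; (iii) the conductor--discriminant formula identifies the two sides as $a_D(\mathbb{C}[G])$ and $a_D(\mathbb{C}[G/H])$, with $\dim\mathbb{C}[G]^{D_i}=[G:D_i]$ and $\dim\mathbb{C}[G/H]^{D_i}=|D_i\backslash G/H|$; and (iv) the termwise inequality reduces to $|P\backslash G/H| \le m-2+\tfrac{2}{|P|}$ for $P\le G$. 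Your case analysis of (iv) is correct and cleanly uses that $H$ is core-free (hence every $P$ acts faithfully on $G/H$): the orbit count $m-1$ forces exactly one $2$-orbit and an image in $S_2$, ruling it out for $|P|\ge 3$. Two tiny cosmetic points. First, the parenthetical ``one uses only $\Delta_F\ge1$'' in the first reduction is unnecessary: the exponent of $\Delta_F$ works out to exactly $|G|$ on both sides, so $\Delta_F$ cancels identically rather than being bounded. Second, the remark ``the $i=0$ term vanishes when $\mathfrak{p}$ is unramified'' is true but the relevant observation is simply that if $D_0=1$ then $D_i=1$ for all $i$ and every term vanishes, so the local inequality is $0\le 0$; no separate edge case is really needed beyond that.
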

\begin{proof}
    This is \cite[Lemma 3.10]{LOTZ}.
\end{proof}
\section{Arithmetic applications}
\label{sec:arithmetic}
In this section, we provide the proofs of the arithmetic applications of the averaged Chebotarev density theorem.
The following result makes clear the role that primitivity will play in these applications.

\begin{lemma} \label{lem:primitive-not-bad}
    Let $F$ be a number field, let $L/F$ be a primitive extension, and let $K/F$ be its normal closure.  Suppose for some $\epsilon>0$ that $K$ is not $\epsilon$-bad.  Then $L$ is linearly disjoint from every extension in $\mathbb{X}_\mathrm{bad}(F,\epsilon)$ contained in $K$.  In particular, $L$ is subject to Proposition~\ref{prop:avg_pit}.
\end{lemma}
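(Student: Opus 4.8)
The plan is to leverage the primitivity of $L/F$ to show that any intermediate Galois extension $M/F$ with $M \subseteq K$ that could witness $\epsilon$-badness must interact trivially with $L$. The key structural input is that subfields of $K$ containing $F$ correspond to subgroups of $G = \Gal(K/F)$ containing $H = \Gal(K/L)$, and linear disjointness of $L$ and $M$ over $F$ corresponds (for $M/F$ Galois, i.e. $\Gal(K/M)$ normal in $G$) to the statement that $\Gal(K/M) \cdot H = G$, equivalently that $M \cap L = F$.

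First I would recall that primitivity of $L/F$ means the only fields between $F$ and $L$ are $F$ and $L$ themselves; on the group side, this says $H$ is a maximal subgroup of $G$. Now suppose $M \in \mathbb{X}_\mathrm{bad}(F,\epsilon)$ with $M \subseteq K$. Then $M \cap L$ is a field between $F$ and $L$, so either $M \cap L = F$ or $M \cap L = L$. In the first case $L$ and $M$ are linearly disjoint over $F$ (using that $M/F$ is Galois, so linear disjointness over $F$ is equivalent to $M \cap L = F$ — this is standard, see e.g. the reference to \cite[Lemma 3.9]{LOTZ} already invoked in the excerpt). In the second case $M \subseteq L$. But then $K$, being the normal closure of $L/F$, contains $M$, and since $M/F$ is Galois with $M \subseteq L$ and $L/F$ primitive, we must have $M = F$ or $M = L$. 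If $M = F$ then $M$ is trivial, contradicting that $\epsilon$-bad extensions are nontrivial by Definition \ref{defn:eps_bad}. So $M = L$, meaning $L/F$ is itself Galois with $L = K$, and $L = K = M$ would be $\epsilon$-bad, contradicting the hypothesis that $K$ is not $\epsilon$-bad. Hence only the first case survives, giving linear disjointness. The final sentence then follows immediately: the hypothesis of Proposition~\ref{prop:avg_pit} is precisely that $L$ is linearly disjoint from every field in $\mathbb{X}_\mathrm{bad}(F,\epsilon)$ contained in $K$, which we have just verified.

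The main obstacle I anticipate is being careful about the case $M \subseteq L$ and correctly deducing the contradiction: one must use both that $L/F$ is primitive (to force $M = F$ or $M = L$) and that $M$ is nontrivial (from the definition of $\epsilon$-bad, which requires the extension to be nontrivial) and that $K$ is not $\epsilon$-bad (to rule out $M = L = K$). One subtlety worth double-checking is whether $M \subseteq L$ with $M/F$ Galois actually forces $K \subseteq L$ or just $M \subseteq L$; we only need $M \subseteq L$, and then primitivity applies directly to the subextension $M/F$ of $L/F$. A second, more minor point is the translation between "linearly disjoint over $F$" and "$M \cap L = F$" when $M/F$ is Galois; this is classical but should be cited rather than reproven. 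I expect the whole argument to be about a paragraph once written out in full.
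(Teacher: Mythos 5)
Your overall strategy is the right one and matches the paper's proof, which is simply the one-line remark that the claim is immediate from the definition of primitivity. However, there is a genuine logical error in your second case. You claim that $M \cap L = L$ gives $M \subseteq L$, but $M \cap L = L$ is equivalent to $L \subseteq M$ — the opposite inclusion. Your subsequent step ("since $M/F$ is Galois with $M \subseteq L$ and $L/F$ primitive, we must have $M = F$ or $M = L$") applies primitivity of $L/F$ to the tower $F \subseteq M \subseteq L$, which only makes sense under the incorrect direction. The correct argument from $L \subseteq M$ is in fact shorter and does not use primitivity at all in this case: since $M/F$ is Galois and contains $L$, it contains the normal closure $K$ of $L/F$, so $K \subseteq M \subseteq K$ forces $M = K$, and $K$ itself would then be $\epsilon$-bad, contradicting the hypothesis. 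You happened to land on the correct final contradiction ($M = K$ is $\epsilon$-bad), but via a false intermediate step and an unnecessary detour through ruling out $M = F$ (which is irrelevant under the correct direction, where $M \supseteq L \supsetneq F$ is automatic). The fix is a one-line correction of the inclusion direction, after which the first case, and thus the lemma, follows as you describe.
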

\begin{proof}
   This is immediate from the definition of a primitive extension. 
\end{proof}

\subsection{Bounds and moments for $\ell$-torsion subgroups: Proof of Corollary~\ref{cor:ellenberg-venkatesh} and Corollary~\ref{cor:koymans-thorner}}
We begin by recalling a lemma of Ellenberg and Venkatesh \cite[Lemma 2.3]{EllenbergVenkatesh} that will, together with Proposition~\ref{prop:avg_pit}, readily imply Corollary~\ref{cor:ellenberg-venkatesh}.

\begin{lemma}[Ellenberg--Venkatesh]
    \label{lem:ellenberg-venkatesh}
    Let $L/F$ be a degree $m$ extension of number fields, let $\ell$ be a positive integer, and let $\delta< \frac{1}{2\ell(m-1)}$.  Let $M$ be the number of prime ideals $\mathfrak{p}$ of $L$ with norm at most $\Delta_{L/F}^{\delta}$ that are not extensions of prime ideals from any proper subextension of $L/F$, where $\Delta_{L/F}$ denotes the norm of the relative discriminant of $L/F$.  Then for any $\varepsilon > 0$, there holds
        \[
            |\mathrm{Cl}(L)[\ell]|
                \ll_{F,m,\ell,\delta,\varepsilon} \Delta_L^{\frac{1}{2}+\varepsilon}/M.
         \]
\end{lemma}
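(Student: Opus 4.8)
\textbf{Proof proposal for Lemma~\ref{lem:ellenberg-venkatesh}.}

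The plan is to reproduce the argument of Ellenberg and Venkatesh essentially verbatim, since this is a cited lemma rather than a new result; I include the sketch so that the paper is self-contained and so that the reader can see exactly which hypothesis on $\delta$ is used. The key input is the standard ``counting number fields by factorization of small primes'' trick. First I would observe that if $\mathfrak{p}$ is a prime of $L$ of norm at most $\Delta_{L/F}^{\delta}$ that is not inert from, nor an extension of a prime in, any proper subextension, then $\mathfrak{p}$ has residue degree $1$ over the corresponding rational prime $p$ lying below it, and the product $\prod \mathfrak{p}$ over such primes gives many ideals of small norm in distinct ideal classes only after one controls how the class group can ``absorb'' them; the actual mechanism is different, as I describe next.

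The second and central step is the dimension-counting argument: consider the map that sends an $\ell$-torsion class $[\mathfrak{a}] \in \mathrm{Cl}(L)[\ell]$ to the ideal $\mathfrak{a}^\ell = (\alpha)$, i.e.\ to the element $\alpha$ up to units, where $\mathfrak{a}$ is chosen in its class to have small norm via Minkowski (norm $\ll \Delta_L^{1/2+\varepsilon}$). One then counts pairs: by the Minkowski bound the number of available ideals $\mathfrak{a}$ of norm at most $X$ is $O(X)$, while the number of principal ideals $(\alpha)$ with $\mathrm{N}\alpha \le X^\ell$ arising this way can be bounded using the $M$ small primes. The precise Ellenberg--Venkatesh device is: each prime $\mathfrak{p}$ of the special type splits completely from $\mathbb{Q}$ up through $L$ only along $L/F$ in a nontrivial way, so multiplying representatives by products of subsets of these $M$ primes produces $2^M$ (or $\binom{M}{k}$ for fixed small $k$) genuinely distinct small-norm representatives of the \emph{same} class, which forces the number of classes times $2^{M}$ (suitably) to be bounded by the count of ideals of norm $\le \Delta_L^{\delta \ell} \cdot \Delta_L^{1/2+\varepsilon}$, giving $|\mathrm{Cl}(L)[\ell]| \cdot M \ll_{F,m,\ell,\delta,\varepsilon} \Delta_L^{1/2+\varepsilon}$ once $\delta \ell (m-1) < 1/2$ ensures $\Delta_{L/F}^{\delta \ell}$ is a negligible power of $\Delta_L$. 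Rearranging gives the stated bound $|\mathrm{Cl}(L)[\ell]| \ll \Delta_L^{1/2+\varepsilon}/M$.

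I expect the main obstacle to be bookkeeping rather than conceptual: one must verify carefully that the $M$ primes counted in the hypothesis are exactly the ones for which the multiply-a-representative trick produces \emph{distinct} ideal classes' representatives (equivalently, that no nontrivial product of these primes is principal for cheap reasons coming from a subfield), and one must track the interaction between the relative discriminant $\Delta_{L/F}$ and the absolute discriminant $\Delta_L$ through the conductor-discriminant formula, using $\delta < \frac{1}{2\ell(m-1)}$ precisely to absorb the $\Delta_{L/F}^{\delta\ell}$ factor into $\Delta_L^{\varepsilon}$. Since this is verbatim \cite[Lemma 2.3]{EllenbergVenkatesh}, the cleanest route in the paper is to cite it directly and only remark on how the hypothesis on $\delta$ and the count $M$ feed into the subsequent application via Proposition~\ref{prop:avg_pit}.
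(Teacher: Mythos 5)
The paper does not prove this lemma; it is stated as a recalled result with the citation \cite[Lemma 2.3]{EllenbergVenkatesh}, which is exactly what you recommend in your final sentence, so your proposed treatment matches the paper's. Your heuristic sketch, though disclaimed and hence inessential, does contain a couple of inaccuracies worth noting: the Ellenberg--Venkatesh device yields a bound \emph{linear} in $M$ (not a $2^{M}$ count from products over subsets of primes --- your own closing inequality with $M$ rather than $2^{M}$ tacitly acknowledges this, so the middle paragraph is internally inconsistent), and the hypothesis $\delta < \frac{1}{2\ell(m-1)}$ does not render $\Delta_{L/F}^{\delta\ell}$ a ``negligible'' power of $\Delta_L$ (for $m=2$ it can be nearly $\Delta_L^{1/2}$); rather, that inequality is calibrated so that the geometry-of-numbers count of $\ell$-th power generators of $(\mathfrak{a}\mathfrak{p})^{\ell}$ beats the boundary term in the lattice-point estimate.
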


Using this, we now prove Corollary~\ref{cor:ellenberg-venkatesh}.

\begin{proof}[Proof of Corollary~\ref{cor:ellenberg-venkatesh}]
    Let $\ell,m \geq 2$ be integers as in the statement of the corollary and let $F$ be a number field.  Let $Q_0$ be the least real number such that: $Q_0 \geq \Delta_F^{2m} \exp\left( 4096 \ell^2 n^4 (m-1)^2 \cdot (m!)^6 \right)$; we may take $C(F,m!) = 400 n (m!)^2$ in Theorem~\ref{thm:sparse_bad} for every $\Delta \geq Q_0^{m!}$; and
        \[
            \pi_F(H) - \pi_F(H^{1/m})
                \geq \frac{99}{100} \frac{H}{\log H}
        \]
    for every $H \geq Q_0^{\frac{1}{8\ell(m-1)}}$.  
    Since $Q_0 \ll_{F,m,\ell} 1$, the statement of the theorem is trivial if $Q \leq Q_0$.  Thus, we may assume $Q > Q_0$, and we may similarly restrict our attention to those extensions $L$ such that $\Delta_L > Q_0$.  Now, for each positive integer $j \leq \log Q - \log Q_0 + 1$, let $Q_j := e^{j-1} Q_0$,  $\epsilon_j := 16 \ell(m-1)\cdot m!  \cdot \frac{\log\log Q_j}{\log Q_j}$, and $\mathcal{E}_j$ be the subset of $\mathscr{F}_{m,F}^\mathrm{prim}(Q)$ consisting of those $L$ with $Q_j < \Delta_L \leq e\cdot Q_j$ that are not linearly disjoint from every field in $\mathbb{X}_\mathrm{bad}(F,\epsilon_j)$.

    Fix some $L \in \mathscr{F}_{m,F}^\mathrm{prim}(Q)$ with $\Delta_L > Q_0$.  Then there is a unique $j$ such that $Q_j < \Delta_L \leq e Q_j$, and we suppose that $L \not\in \mathcal{E}_j$, i.e. that $L$ is linearly disjoint from every field in $\mathbb{X}_\mathrm{bad}(F,\epsilon_j)$.  We aim to show in this case that Proposition~\ref{prop:avg_pit} applies meaningfully in the range required by Lemma~\ref{lem:ellenberg-venkatesh}.  To this end, we first observe that by our choice of $Q_0$, we have that $\frac{|G|}{2} \log \Delta_L \leq (\log Q_j)^2$, and hence that
        \[
            \left(\frac{|G|}{2} \log \Delta_L\right)^{2 + \frac{|G|}{2\epsilon_j}}
                \leq (\log Q_j)^{4 + \frac{m!}{\epsilon_j}}
                = Q_j^{\frac{1}{16\ell (m-1)} + \frac{4 \log\log Q_j}{\log Q_j}}
                < Q_j^{\frac{1}{8\ell(m-1)}},
        \]
    since $\frac{\log\log Q_j}{\log Q_j} \leq (\log Q_j)^{-1/2} \leq (\log Q_0)^{-1/2} \leq \left(64 \ell n^2 (m-1) \cdot (m!)^3\right)^{-1}$.  In particular, we may apply Proposition~\ref{prop:avg_pit} with any $H \geq Q_j^{\frac{1}{8\ell(m-1)}}$.  Since $c(\epsilon_j) = \frac{\sqrt{\epsilon_j}}{18}$, we find for any $H \geq Q_j^{\frac{1}{8\ell(m-1)}}$ that
        \[
            m \exp\left( - c(\epsilon_j) \sqrt{\log H} \right)
                < m \exp\left(-\frac{(m-1) \cdot \sqrt{2}}{18} \sqrt{\log\log Q_0} \right)  < \frac{49}{50},
        \]
    since our assumptions imply that $\log Q_0 \geq 2^{20}$.  In particular, we conclude for any $H \geq Q_j^{\frac{1}{8\ell(m-1)}}$ that
        \begin{equation} \label{eqn:ev-pit-upshot}
            \left| \pi_L(H) - \pi_F(H) \right|
                < \frac{49}{50} \frac{H}{\log H}.
        \end{equation}
    Now, since the extension $L/F$ is primitive, the only prime ideals of $L$ that are the extension of an ideal from a proper subfield are those that are inert in the extension $L/F$.  There are at most $\pi_F(H^{1/m})$ such prime ideals of norm at most $H$, and by our assumptions on $Q_0$, we find from \eqref{eqn:ev-pit-upshot} that
        \begin{equation} \label{eqn:ev-non-inert}
            \pi_L(H) - \pi_F(H^{1/m})
                > \frac{1}{100} \frac{H}{\log H}
        \end{equation}
    for any $H \geq Q_j^{\frac{1}{8\ell(m-1)}}$.  Finally, since we have assumed that $Q_0 \geq \Delta_F^{2m}$, we find that $\Delta_{L/F} \geq \Delta_L^{1/2} > Q_j^{1/2}$.  Thus, for any fixed $\delta$ such that $\frac{1}{4\ell(m-1)} \leq \delta < \frac{1}{2\ell(m-1)}$, we find from Lemma~\ref{lem:ellenberg-venkatesh} and \eqref{eqn:ev-non-inert} that
        \[
            |\mathrm{Cl}(L)[\ell]|
                \ll_{F,m,\ell,\delta,\varepsilon} \Delta_L^{\frac{1}{2} - \delta + \varepsilon}.
        \]
    Letting $\delta$ tend to $\frac{1}{2\ell(m-1)}$ from below, we conclude that the bound
        \[
            |\mathrm{Cl}(L)[\ell]|
                \ll_{F,m,\ell,\varepsilon} \Delta_L^{\frac{1}{2} - \frac{1}{2\ell(m-1)} + \varepsilon}
        \]
    must hold provided that $L \not\in \mathcal{E}_j$.

    We therefore aim to bound the sizes of the sets $\mathcal{E}_j$.  If $Q_j < \Delta_L \leq e Q_j$, then by Lemma~\ref{lem:galois-disc}, we find that $\Delta_K \leq \Delta_L^{m!/2} < Q_j^{m!}$, where $K$ denotes the normal closure of $L/F$.  Hence, appealing to Theorem~\ref{thm:sparse_bad} with $\Delta = Q_j^{m!}$ and $d=m!$, we see that the number of possible extensions $K$ in $\mathbb{X}_\mathrm{bad}(F,\epsilon_j)$ is at most
        \[
            (\log Q_j)^{16\ell (m-1) \cdot (m!)^2 + \frac{6400 n\ell (m-1) (m!)^4}{(\log\log Q_j)^{1/2}} + 800 n (m!)^2} < (\log Q)^{2188 n \ell (m-1) \cdot (m!)^4},
        \]
    where we have once again used that $\log Q_0 > 2^{20}$.  The number of extensions $L$ with the same normal closure $K$ is at most the number of subgroups of the symmetric group $S_m$, which we may bound trivially by $2^{m!} < (\log Q)^{m!}$.  Accounting for the $\log Q - \log Q_0 + 1 < \log Q$ different values $j$, we conclude in sum that
        \[
            \left| \bigcup_{j \leq \log Q - \log Q_0 + 1} \mathcal{E}_j\right|
                < (\log Q)^{2200 n \ell (m-1) \cdot (m!)^4},
        \]
    which yields the corollary with the explicit value $A = 2200 \cdot n \ell (m-1) \cdot (m!)^4$.
\end{proof}

Turning to Corollary~\ref{cor:koymans-thorner}, the following proposition summarizes the methods of Koymans and Thorner \cite{KoymansThorner}.

\begin{proposition}[Koymans--Thorner]
    \label{prop:koymans-thorner}
    Let $F$ be a number field and let $\mathcal{S}$ be any set of extensions $L/F$, all of which have the same degree $m$.  For any $Q \geq 1$, let $\mathcal{S}(Q) := \{ L \in \mathcal{S} : \Delta_L \leq Q\}$.  Suppose for any $\varepsilon>0$, there are constants $c_1,c_2 > 0$ (depending on $F$, $\mathcal{S}$, and $\varepsilon$) such that for any $Q \geq 1$, there is a subset $\mathcal{E}(Q) \subseteq \mathcal{S}(Q)$ satisfying $|\mathcal{E}(Q)| = O_{F,\mathcal{S},\varepsilon}(Q^\varepsilon)$ such that whenever $L \in \mathcal{S}(Q) \setminus \mathcal{E}(Q)$, we have for any $x \geq (\log Q)^{c_1}$ that
        \[
            \pi_L(x) \geq c_2 \frac{x}{\log x}.
        \]
    Then for any integers $\ell \geq 2$, $r\geq 1$, and any $Q \geq 1$ and $\varepsilon > 0$, there holds
        \[
            \sum_{L \in \mathcal{S}(Q)} |\mathrm{Cl}(L)[\ell]|^r
                \ll_{F,\mathcal{S},\ell,r,\varepsilon} Q^{\frac{r}{2}+\varepsilon}\left( 1 + |\mathcal{S}(Q)|^{1-\frac{r}{\ell(m-1)+1}}\right).
        \]
\end{proposition}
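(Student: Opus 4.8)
The plan is to run the moment method of Heath-Brown--Pierce, Frei--Widmer, and Koymans--Thorner, using the prime-counting hypothesis on $\mathcal{S}(Q)\setminus\mathcal{E}(Q)$ as the only arithmetic input. Fix $\varepsilon>0$ and let $c_1,c_2$ be the associated constants. Two elementary facts will be used repeatedly: the Minkowski bound $|\mathrm{Cl}(L)|\ll_{F,m,\varepsilon}\Delta_L^{1/2+\varepsilon}$, and the crude field count $\#\{L/F:[L:F]=m,\ \Delta_L\le Y\}\ll_{F,m}Y^{C_m}$ for some $C_m=C_m(F,m)$.

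First I would discard the fields not controlled by the hypothesis. Put $\mathcal{E}'(Q)=\mathcal{E}(Q)\cup\{L\in\mathcal{S}(Q):\Delta_L\le(\log Q)^{C_0}\}$, where $C_0=C_0(F,m,\ell,\varepsilon)$ is a constant to be pinned down below; then $|\mathcal{E}'(Q)|\ll Q^\varepsilon$. For such $L$ we have $|\mathrm{Cl}(L)[\ell]|^r\le|\mathrm{Cl}(L)|^r\ll Q^{r/2+r\varepsilon}$, so these fields contribute $\ll Q^{r/2+(r+1)\varepsilon}$, which after renaming $\varepsilon$ is absorbed into the $Q^{r/2+\varepsilon}$ of the claimed bound.

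The substance of the argument is the set of good fields $\mathcal{G}(Q):=\mathcal{S}(Q)\setminus\mathcal{E}'(Q)$. For $L\in\mathcal{G}(Q)$ the hypothesis gives $\pi_L(x)\ge c_2 x/\log x$ for all $x\ge(\log Q)^{c_1}$; since the number of primes of $L$ of norm at most $x$ that are extensions of primes from a proper subextension is at most $\sum_{F\subsetneq L'\subsetneq L}\pi_{L'}(x^{1/[L:L']})\ll_m x^{1/2}$, after enlarging $c_1$ slightly we obtain that $L$ has $\gg x/\log x$ prime ideals of norm at most $x$ that are \emph{not} such extensions, uniformly for $x\ge(\log Q)^{c_1}$. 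Fix $\delta$ with $0<\delta<\tfrac{1}{2\ell(m-1)}$, say $\delta=\tfrac{1}{3\ell(m-1)}$, and choose $C_0$ large enough in terms of $c_1,\delta$ that $\Delta_{L/F}^{\delta}\ge(\log Q)^{c_1}$ throughout $\mathcal{G}(Q)$; then Lemma~\ref{lem:ellenberg-venkatesh} applied with this $\delta$ recovers the pointwise bound $|\mathrm{Cl}(L)[\ell]|\ll\Delta_L^{1/2-\delta+\varepsilon}$ on $\mathcal{G}(Q)$, which together with the trivial Minkowski bound shows that $|\mathrm{Cl}(L)[\ell]|$ ranges over $[1,O(Q^{1/2+\varepsilon})]$. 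A dyadic decomposition over the size of $|\mathrm{Cl}(L)[\ell]|$ therefore gives
\[
\sum_{L\in\mathcal{G}(Q)}|\mathrm{Cl}(L)[\ell]|^r\ \ll\ (\log Q)\,\max_{\substack{T\ \text{dyadic}\\ 1\le T\ll Q^{1/2+\varepsilon}}}T^r N_T,\qquad N_T:=\#\bigl\{L\in\mathcal{G}(Q):|\mathrm{Cl}(L)[\ell]|\ge T\bigr\}.
\]
The heart of the matter is the uniform count $N_T\ \ll_{F,m,\ell,\varepsilon}\ Q^{\varepsilon}\,(Q^{1/2}/T)^{\ell(m-1)+1}$, which is precisely the output of the Koymans--Thorner machinery (itself built on Heath-Brown--Pierce and Frei--Widmer). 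This is proved by running the geometry-of-numbers argument underlying Lemma~\ref{lem:ellenberg-venkatesh} (and Corollary~\ref{cor:ellenberg-venkatesh}) \emph{uniformly over the family}: the abundance of small non-inert primes on $\mathcal{G}(Q)$ lets one attach, to each $L$ counted by $N_T$ and each of its $\ge T$ classes in $\mathrm{Cl}(L)[\ell]$, an algebraic integer $\alpha$ with $F(\alpha)=L$, with relative norm and archimedean size of controlled magnitude, and with $(\alpha)$ of the prescribed shape $\mathfrak{p}^{\ell}\cdot(\text{small ideal})$; encoding $L$ by the minimal polynomial of such an $\alpha$ over $\mathcal{O}_F$ and estimating the number of these polynomials by a lattice-point (sphere-packing) count, the hypothesis $|\mathrm{Cl}(L)[\ell]|\ge T$ forces every $L$ to receive $\gg T^{\ell(m-1)+1}$ such encodings, which produces the stated saving.

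Granting this, one finishes by balancing $N_T\ll Q^{\varepsilon}(Q^{1/2}/T)^{\ell(m-1)+1}$ against the trivial bound $N_T\le|\mathcal{S}(Q)|$: the two agree at $T_\ast\asymp Q^{1/2}|\mathcal{S}(Q)|^{-1/(\ell(m-1)+1)}$, and using the trivial bound for $T\le T_\ast$ and the geometric bound for $T\ge T_\ast$, one checks that $\max_T T^r N_T\ll Q^{r/2+\varepsilon}|\mathcal{S}(Q)|^{\,1-r/(\ell(m-1)+1)}$ when $r\le\ell(m-1)+1$, while for $r>\ell(m-1)+1$ the maximum is attained near $T\asymp Q^{1/2}$ and is $\ll Q^{r/2+\varepsilon}$. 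Absorbing $\log Q$ into $Q^{\varepsilon}$ and adding the contribution of $\mathcal{E}'(Q)$ yields the proposition. The main obstacle is the uniform field count $N_T\ll_{F,m,\ell,\varepsilon}Q^{\varepsilon}(Q^{1/2}/T)^{\ell(m-1)+1}$: turning the condition ``$|\mathrm{Cl}(L)[\ell]|\ge T$'' into a genuine power saving of exponent $\ell(m-1)+1$ in the number of such $L$, rather than the useless $N_T\le\#\{L:\Delta_L\le Q\}\ll Q^{C_m}$, is exactly the geometry-of-numbers input of Koymans--Thorner and must be imported essentially wholesale; the remaining points — the subextension-prime count, the admissible range for $\delta$, and the cutoff $C_0$ ensuring the hypothesis applies throughout $\mathcal{G}(Q)$ — are routine.
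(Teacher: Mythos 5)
Your proposal takes essentially the same approach as the paper: both verify that the prime-counting hypothesis on $\mathcal{S}(Q)\setminus\mathcal{E}(Q)$ supplies the one family-specific input to the Koymans--Thorner machinery, and then cite that machinery as a black box. The paper's proof is more surgical about exactly what needs verifying: it observes that the only place \cite{KoymansThorner} uses the specific family is their Corollary~5.2, which is a lower bound on the number of \emph{degree-one} prime ideals $\pi_L^{(1)}(x)$, and it obtains this from $\pi_L(x)$ via the elementary estimate $\pi_L^{(1)}(x)\ge\pi_L(x)-m[F:\mathbb{Q}]\,x^{1/2}$. You instead reduce to prime ideals ``not extended from a proper subextension,'' which is the condition in Lemma~\ref{lem:ellenberg-venkatesh} rather than what Koymans--Thorner's machinery uses; these conditions are interchangeable by essentially the same $x^{1/2}$ correction, but it is worth flagging that you are verifying a different hypothesis than the one the cited work actually inputs. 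Your detour through a dyadic decomposition and a tail count $N_T\ll Q^{\varepsilon}(Q^{1/2}/T)^{\ell(m-1)+1}$ is a reasonable restatement of the geometry-of-numbers bound in the Heath-Brown--Pierce/Frei--Widmer/Koymans--Thorner line, and your balancing recovers the stated moment bound; and since you explicitly acknowledge that this tail count ``must be imported essentially wholesale,'' the substance of the argument is, as in the paper, a citation. The paper's version is preferable only because it names the specific intermediate results being reused and identifies the single modification needed, whereas your sketch of the internal lattice-point argument is heuristic and not self-contained; but as a proof-by-citation both say the same thing.
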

\begin{proof}
    No proposition of this form is stated explicitly in the work of Koymans and Thorner \cite{KoymansThorner}, but it is implicit and easily obtained from their work, as we now explain.  First, if we let $\pi_L^{(1)}(x)$ denote the number of degree $1$ prime ideals of $L$ with norm at most $x$, then we have
        \[
            \pi_L^{(1)}(x)
                \geq \pi_L(x) - m [F:\mathbb{Q}] \pi(x^{1/2})
                \geq \pi_L(x) - m[F:\mathbb{Q}] x^{1/2}.
        \]
    As a result, we obtain for $\varepsilon > 0$ and $Q$ sufficiently large that any $L \in \mathcal{S}(Q) \setminus \mathcal{E}(Q)$ satisfies
        \begin{equation} \label{eqn:KT-hypothesis}
            \pi_L^{(1)}(x) \geq \frac{c_2}{2} \frac{x}{\log x}
        \end{equation}
    provided that $x \geq (\log Q)^{c_1}$.  The claim then follows as in the proof of \cite[Theorem 1.1]{KoymansThorner}.  More specifically, the proof of their Theorem 1.1 relies on their Theorem 3.3, Lemma 4.1, and Corollary 5.2.  Of these, only Corollary 5.2 makes use of the specific families that Koymans and Thorner study.  The statement of their Corollary 5.2 is essentially equation \eqref{eqn:KT-hypothesis} but for the specific families of interest to them.  Thus, replacing Corollary 5.2 by \eqref{eqn:KT-hypothesis} in their proof, the result follows.
\end{proof}

Appealing to Proposition~\ref{prop:avg_pit} and Theorem~\ref{thm:sparse_bad}, we see that the hypothesis of Proposition~\ref{prop:koymans-thorner} is satisfied for the family $\mathcal{S}=\mathscr{F}_{m,F}^\mathrm{prim}$.  This immediately implies Corollary~\ref{cor:koymans-thorner}.  However, we note that it also implies that the hypothesis of Proposition~\ref{prop:koymans-thorner} is satisfied for finer sets of primitive extensions.  In particular, let $G$ be a primitive permutation group of degree $n$.  (Recall that a permutation group is called primitive if it preserves no nontrivial partition of the underlying set.)  Given any $L \in \mathscr{F}_{m,F}^\mathrm{prim}$, the Galois group $\mathrm{Gal}(\widetilde{L}/F)$ of its normal closure over $F$ acts on the $m$ embeddings of $L$ into $\widetilde{L}$, or, essentially equivalently, on the $m$ cosets of $\mathrm{Gal}(\widetilde{L}/L)$.  We let $\mathscr{F}_{m,F}^G$ be the subset of $\mathscr{F}_{m,F}^\mathrm{prim}$ for which this permutation action is isomorphic to $G$.  (Note that $\mathrm{Gal}(\widetilde{L}/F)$ must act primitively since the subgroup $\mathrm{Gal}(\widetilde{L}/L)$ is maximal for any $L \in \mathscr{F}_{n,F}^\mathrm{prim}$.)

We then have the following slight refinement of Corollary~\ref{cor:koymans-thorner}.

\begin{corollary}\label{cor:koymans-thorner-group}
    Let $G$ be a primitive permutation group of degree $m$ and let $F$ be a number field.  Then for any integers $\ell \geq 2$ and $r \geq 1$, any $Q \geq 1$, and any $\varepsilon > 0$, there holds
        \[
            \sum_{L \in \mathscr{F}_{m,F}^G(Q)} |\mathrm{Cl}(L)[\ell]|^r
                \ll_{F,m,\ell,r,\varepsilon} Q^{\frac{r}{2}+\varepsilon} \cdot \left( 1 + |\mathscr{F}_{m,F}^G(Q)|^{1 - \frac{r}{\ell(m-1)+1}}\right).
        \]
\end{corollary}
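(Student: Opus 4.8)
The plan is to deduce the corollary from Proposition~\ref{prop:koymans-thorner} applied to the family $\mathcal{S} = \mathscr{F}_{m,F}^G$, all of whose members have degree $m$ over $F$. Since $\mathscr{F}_{m,F}^G \subseteq \mathscr{F}_{m,F}^\mathrm{prim}$ by construction, and since the hypothesis of Proposition~\ref{prop:koymans-thorner} for a family $\mathcal{S}$ passes to any subfamily $\mathcal{S}' \subseteq \mathcal{S}$ — one simply replaces the exceptional set $\mathcal{E}(Q)$ by $\mathcal{E}(Q) \cap \mathcal{S}'(Q)$, which only shrinks it, while every surviving $L \in \mathcal{S}'(Q)\setminus(\mathcal{E}(Q)\cap\mathcal{S}'(Q))$ still lies in $\mathcal{S}(Q)\setminus\mathcal{E}(Q)$ and hence still satisfies $\pi_L(x)\geq c_2 x/\log x$ — it suffices to verify that hypothesis for $\mathcal{S}=\mathscr{F}_{m,F}^\mathrm{prim}$. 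This is precisely the verification alluded to in the discussion preceding the corollary, which I would spell out as the main step.

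Fix $\varepsilon>0$. First I would choose a small parameter $\epsilon = \epsilon(m,\varepsilon)>0$ proportional to $\varepsilon/m!$, with an explicit constant, the reasoning being parallel to the choice of $\epsilon_j$ in the proof of Corollary~\ref{cor:ellenberg-venkatesh}; then set $\mathcal{E}(Q)$ to be the set of $L \in \mathscr{F}_{m,F}^\mathrm{prim}(Q)$ whose normal closure $K/F$ lies in $\mathbb{X}_\mathrm{bad}(F,\epsilon)$. To bound $|\mathcal{E}(Q)|$, note that $[K:F]\leq m!$ and, by Lemma~\ref{lem:galois-disc}, $\Delta_K \leq \Delta_L^{m!/2} \leq Q^{m!/2}$; hence Theorem~\ref{thm:sparse_bad} with $d=m!$ and $\Delta = Q^{m!/2}$ shows that the number of possible normal closures $K \in \mathbb{X}_\mathrm{bad}(F,\epsilon)$ is at most $Q^{\frac{m!}{2}\epsilon(1+\delta)}(\log Q)^{O_{F,m}(1)}$ with $\delta\to 0$, which is $O_{F,m,\varepsilon}(Q^\varepsilon)$ once $\epsilon$ is small enough and $Q$ is large (for bounded $Q$ the family $\mathscr{F}_{m,F}^\mathrm{prim}(Q)$ is finite and the corollary is trivial). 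Since each such $K$ is the normal closure of at most $2^{m!}$ degree-$m$ extensions $L$, this gives $|\mathcal{E}(Q)|=O_{F,m,\varepsilon}(Q^\varepsilon)$. For $L \in \mathscr{F}_{m,F}^\mathrm{prim}(Q)\setminus\mathcal{E}(Q)$, Lemma~\ref{lem:primitive-not-bad} shows $L$ is linearly disjoint from every field of $\mathbb{X}_\mathrm{bad}(F,\epsilon)$ contained in its normal closure, so Proposition~\ref{prop:avg_pit} applies; taking $c_1 = c_1(F,m,\varepsilon)$ large enough that $(\log Q)^{c_1} \geq (\tfrac{m!}{2}\log Q)^{2 + \frac{m!}{2\epsilon}}$ and that, for $H \geq (\log Q)^{c_1}$, the prime ideal theorem gives $\pi_F(H)\geq \tfrac{99}{100}\tfrac{H}{\log H}$ while $(m-1)\exp(-c(\epsilon)\sqrt{\log H})\leq \tfrac{1}{100}$, Proposition~\ref{prop:avg_pit} yields $\pi_L(H) \geq \pi_F(H) - (m-1)\tfrac{H}{\log H}\exp(-c(\epsilon)\sqrt{\log H}) \geq \tfrac{98}{100}\tfrac{H}{\log H}$ for all such $H$. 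This is exactly the hypothesis of Proposition~\ref{prop:koymans-thorner} with $c_2 = \tfrac{98}{100}$.

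With the hypothesis verified for $\mathscr{F}_{m,F}^\mathrm{prim}$, hence for $\mathscr{F}_{m,F}^G$, I would conclude by invoking Proposition~\ref{prop:koymans-thorner} with $\mathcal{S}=\mathscr{F}_{m,F}^G$ and the given integers $\ell\geq 2$, $r\geq 1$, which produces precisely the claimed estimate; the dependence of the implied constant on $G$ is absorbed into its dependence on $m$, there being only finitely many primitive permutation groups of any fixed degree. I do not expect a genuine obstacle: the corollary is a packaging of Theorem~\ref{thm:sparse_bad}, Proposition~\ref{prop:avg_pit}, Lemmas~\ref{lem:primitive-not-bad} and \ref{lem:galois-disc}, and Proposition~\ref{prop:koymans-thorner}. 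The only delicate point — and it is entirely routine — is calibrating $\epsilon$ against $\varepsilon$ and $c_1$ against both, while checking that Proposition~\ref{prop:avg_pit} applies throughout the range $H \geq (\log Q)^{c_1}$, exactly as in the proof of Corollary~\ref{cor:ellenberg-venkatesh}.
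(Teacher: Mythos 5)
Your proposal is correct and follows essentially the same route as the paper: verify the hypothesis of Proposition~\ref{prop:koymans-thorner} for $\mathscr{F}_{m,F}^\mathrm{prim}$ via Theorem~\ref{thm:sparse_bad}, Lemma~\ref{lem:galois-disc}, Lemma~\ref{lem:primitive-not-bad}, and Proposition~\ref{prop:avg_pit}, observe that the hypothesis is inherited by the subfamily $\mathscr{F}_{m,F}^G$, and invoke Proposition~\ref{prop:koymans-thorner}. The paper records this proof in a single sentence pointing to the preceding discussion; you have merely made the implicit calibration of $\epsilon$, $c_1$, $c_2$ explicit, and your account of why the hypothesis passes to subfamilies (shrink $\mathcal{E}(Q)$ to $\mathcal{E}(Q) \cap \mathcal{S}'(Q)$) is the correct justification for the paper's unstated claim.
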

\begin{proof}
    This follows immediately from Theorem~\ref{thm:sparse_bad}, Proposition~\ref{prop:avg_pit} and Proposition~\ref{prop:koymans-thorner} as described above.
\end{proof}

\subsection{Generation of the class group: Proof of Theorem~\ref{thm:approximate-bach}}

   We begin with a general lemma that will be used to show that characters of the class group with order $\ell$ are typically irreducible and faithful when regarded as characters of the Galois group.

    \begin{lemma} \label{lem:induction-irreducible}
    Let $G$ be a finite group, $H$ a maximal subgroup of $G$, and $N$ the maximal normal subgroup of $G$ contained in $H$.  Let $\chi$ be an irreducible primitive character of $H$ (i.e., an irreducible character not induced from any proper subgroup of $H$). Suppose $\chi\big|_N$ is not the restriction of some character of $G$ to $N$ and that $|N|$ and $[G: N]$ are coprime. Then $\textup{Ind}^G_H \chi$ is an irreducible character.
    \end{lemma}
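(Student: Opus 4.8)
The plan is to use the standard criterion for irreducibility of an induced character via Mackey's theory: $\mathrm{Ind}_H^G \chi$ is irreducible if and only if $\chi$ is irreducible and, for every $g \in G \setminus H$, the characters $\chi$ and $\chi^g$ have no common irreducible constituent when both are restricted to $H \cap H^g$. So the bulk of the argument is to analyze the subgroups $H \cap H^g$ and to rule out common constituents using the hypotheses on $N$ (that $\chi|_N$ does not extend from $G$, and that $\gcd(|N|, [G:N]) = 1$). The coprimality hypothesis together with the Schur--Zassenhaus theorem gives a complement, and—more usefully—lets us invoke Gallagher-type / coprime-action results so that $\chi|_N$, being a character of $N$ that is $H$-invariant (since $\chi$ is a character of $H \supseteq N$), decomposes into an $H$-orbit of $G$-conjugates sitting inside an irreducible constituent structure controlled by $N \trianglelefteq G$.

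**First I would** record that $H$ maximal in $G$ forces $G = H H^g$ for any $g \notin H$ (since $\langle H, H^g\rangle = G$), and hence $[H : H \cap H^g] = [G : H^g] = [G:H]$; in particular $H \cap H^g$ is a proper subgroup of $H$ of index $[G:H]$, and it contains... well, not necessarily $N$, but it does contain $N \cap H^g = N$ because $N$ is normal in $G$, so $N \leq H \cap H^g$. This is the key structural point: every "overlap" subgroup $H \cap H^g$ still contains the full $N$. Now suppose toward a contradiction that for some $g \notin H$ the restrictions $\chi|_{H \cap H^g}$ and $\chi^g|_{H \cap H^g}$ share an irreducible constituent $\theta$. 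Restricting further to $N$, the constituents of $\theta|_N$ are constituents of both $\chi|_N$ and $\chi^g|_N = (\chi|_N)^g$. Because $|N|$ is coprime to $[G:N]$ and $g$ acts on the set $\mathrm{Irr}(N)$, Clifford theory combined with the coprimality (which makes the relevant cohomological obstructions vanish, so constituents of $\chi|_N$ lie in a single $H$-orbit whose stabilizer/extension behaviour is rigid) will show that $\chi|_N$ and $(\chi|_N)^g$ sharing a constituent forces $(\chi|_N)^g = \chi|_N$, i.e. $g$ fixes the whole $H$-orbit of constituents of $\chi|_N$. Since this holds for all $g \in G$ (the $g \in H$ case is automatic as $\chi|_N$ is $H$-invariant), the set of constituents of $\chi|_N$ is $G$-invariant, and then coprimality lets one extend/glue these into an actual character of $G$ restricting to $\chi|_N$ on $N$—contradicting the hypothesis. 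I would also have to separately handle the possibility that the common constituent $\theta$ restricts to $N$ trivially-ish, i.e. dispose of the case where $\chi|_N$ has the trivial character as a constituent, but irreducibility of $\chi$ plus $N \trianglelefteq H$ means $\chi|_N$ is a sum of $H$-conjugates of a single irreducible $\psi \in \mathrm{Irr}(N)$, so this is a uniform statement about that one orbit.

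**The hard part will be** the coprime-action step: turning "$\chi|_N$ and its $g$-conjugate share a constituent" into "$g$ fixes the orbit," and then "$G$-invariant orbit of constituents of $\chi|_N$" into "$\chi|_N$ extends from $G$." The first half should follow because distinct $H$-orbits of $\mathrm{Irr}(N)$-constituents give orthogonal pieces and $g$ permutes orbits; the second half is where primitivity of $\chi$ and the coprimality both genuinely enter—one wants to say that $\psi \in \mathrm{Irr}(N)$ extends to its stabilizer in $G$ (possible by coprimality, e.g. by the standard result that a $G$-invariant character of a normal Hall subgroup extends), and that $\chi$ being primitive pins down the inertia structure enough that the induced-up object on $G$ actually has $\chi|_N$ as its $N$-restriction. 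I would lean on \cite[Chapter 6 and 8]{Isaacs76} for the extension theorems in the coprime case and for the Mackey irreducibility criterion, and I expect the write-up to be a careful bookkeeping of Clifford correspondents rather than anything requiring a new idea; the one place to be cautious is making sure "$\chi|_N$ is not the restriction of some character of $G$" is used in exactly the form the argument produces, namely as the negation of "the $G$-orbit of $\psi$ consists of a single $H$-orbit which extends compatibly," so I would state and prove a small auxiliary claim translating between those two formulations before assembling the contradiction.
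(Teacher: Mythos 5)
Your high-level plan (Mackey's irreducibility criterion, observe $N\leq H\cap H^g$, restrict to $N$, derive $G$-invariance of $\chi|_N$, contradict the non-extension hypothesis via coprimality) is the right skeleton and matches the paper's proof in outline. However there is a genuine gap at the step you yourself flag as "the hard part," and it is not a bookkeeping issue: the hypothesis of \emph{primitivity} is what makes the step work, and you have not invoked it where it is needed.

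Specifically, you argue from Clifford's theorem that $\chi|_N$ is a sum of a single $H$-orbit of some $\psi\in\mathrm{Irr}(N)$, and then claim that if $\chi|_N$ and $(\chi|_N)^g$ share a constituent then $g$ must fix the whole orbit, because "distinct $H$-orbits give orthogonal pieces and $g$ permutes orbits." That last clause is false: $g$ acts on $\mathrm{Irr}(N)$ but sends $H$-orbits to $H^g$-orbits, not $H$-orbits. If $\chi|_N = e(\psi_1+\cdots+\psi_k)$ with $k\geq 2$, then $\psi_i=\psi_j^g$ for some $i,j$ does not force $\{\psi_1,\dots,\psi_k\}=\{\psi_1^g,\dots,\psi_k^g\}$, and neither Clifford theory nor the coprimality of $|N|$ and $[G:N]$ rescues this. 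The paper instead uses \cite[Corollary 6.12]{Isaacs76}: because $\chi$ is \emph{primitive} and $N\trianglelefteq H$, the restriction $\chi|_N$ is a multiple of a \emph{single} irreducible $\psi$ (i.e.\ $k=1$). Then "share a constituent" immediately gives $\psi=\psi^\tau$, so $\psi$ is invariant under $\langle H,\tau\rangle = G$ by maximality, and \cite[Corollary 8.16]{Isaacs76} (the coprime extension theorem) produces an extension of $\psi$ to $G$, whose $e$-fold multiple restricts to $\chi|_N$, giving the contradiction. You do mention primitivity at the end, but only vaguely and in the wrong place ("pins down the inertia structure"); the crucial use is this early collapse of $\chi|_N$ to an isotypic component, and your write-up would need to be reorganized around it to close the gap.
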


    \begin{proof}
    Suppose $\Ind_H^G \chi$ was not irreducible.
    By Mackey's criterion \cite[Theorem 17.4c]{Huppert98}, we have
    \[\langle \chi^{\tau}, \chi\rangle_{H \cap \tau^{-1} H\tau} \ne 0\]
    for some $\tau$ in $G\backslash H$, where $\chi^{\tau}$ denotes the conjugate representation to $\chi$.
    
    Since $\chi$ is primitive, its restriction to $N$ is a multiple of some irreducible character of $N$ \cite[Corollary 6.12]{Isaacs76}. So this expression can be nonzero only if  the restriction of $\chi$ and $\chi^{\tau}$ to $N$ are equal.
    In this case $\chi$ is preserved under conjugation by $\langle \tau, H\rangle$, which is $G$ since $H$ is maximal. But then \cite[Corollary 8.16]{Isaacs76} and the assumption that $|N|$ and $[G:N]$ are coprime imply that $\chi\big|_{N}$ equals the restriction of some character of $G$.
    \end{proof}

    We are now ready to prove Theorem~\ref{thm:approximate-bach}.

    \begin{proof}[Proof of Theorem~\ref{thm:approximate-bach}]
    Choose $m$, $\ell$, and $Q$ as in the theorem statement.  Fix a positive $\varepsilon < \frac{1}{4n^2 \ell^m m!}$.  Let $H \geq 1$, and suppose that the prime ideals of some $L \in \mathscr{F}_{m,F}^{\mathrm{prim}}(Q)$ with norm at most $H$ generate a proper subgroup of $\mathrm{Cl}(L)/\ell \mathrm{Cl}(L)$.  If so, then there is some class group character $\chi\colon \mathrm{Cl}(L) \to \mathbb{C}^\times$ of order $\ell$ that is trivial on this subgroup.  For this character $\chi$, we would then find that $\chi(\mathfrak{P}) = 1$ for every prime $\mathfrak{P}$ of $L$ with norm at most $H$, and hence
            \begin{equation} \label{eqn:char-sum-if-trivial}
                \sum_{\mathrm{Nm}_{L/\mathbb{Q}} \mathfrak{P} \leq H} \chi(\mathfrak{P})
                    = \pi_L(H).
            \end{equation}
        
        Our goal is therefore to show that \eqref{eqn:char-sum-if-trivial} does not hold for $H = (\log Q)^{A}$ and any character $\chi$ of order $\ell$ and all but $O_{F,n,\ell,\varepsilon}(Q^\varepsilon)$ fields $L \in \mathscr{F}_{n,F}^\mathrm{prim}(Q)$, where $A$ is taken to be  $3 (m!)^2 \ell^{2m} / \varepsilon$.

        As in the proof of Corollary~\ref{cor:ellenberg-venkatesh}, we may assume that $Q \geq Q_0$ for some $Q_0$ depending only on $F$, $n$, $\ell$, and $\varepsilon$.  In fact, let $Q_0$ be the least real number such that:
     $Q_0 \geq \exp( \varepsilon^{-2} \exp(160000 n^2 \ell^{4m} m!^4))$; we may take $C(F,m!) = 400 n (m!)^2$ and $C(F,\ell^m m!) = 400 n \ell^{2m} (m!)^2$ for every $\Delta \geq Q_0$ in Theorem \ref{thm:sparse_bad}; and, for every $H \geq \left(\log Q_0\right)^{\frac{3(m!)^2 \ell^{2m}}{\varepsilon}}$, we have both $\pi_F(H) \geq \frac{1}{2} \frac{H}{\log H}$ and 
        \[
            \left| \sum_{\mathrm{N}\mfp \leq H} \chi(\mfp) \right|
                \leq \frac{H}{4 \log H}
        \]
        for every class group character of $F$ with order $\ell$.                    
        Such a $Q_0$ exists, and depends only on $F$, $m$, $\ell$, and $\varepsilon$.  There are therefore at most $O_{F,m,\ell,\varepsilon}(1)$ fields in $\mathscr{F}_{m,F}^\mathrm{prim}(Q_0)$, all of which we may include in the exceptional set, so we may assume henceforth that all $L \in \mathscr{F}_{m,F}^\mathrm{prim}(Q)$ under consideration have $\Delta_L \geq Q_0$.    

        We begin by setting $\epsilon_1 = \varepsilon / 2 m!$ and letting $\mathcal{E}_1$ be the subset of those $L \in \mathscr{F}_{m,F}^\mathrm{prim}(Q)$ that are not linearly disjoint from the set $\mathbb{X}_\mathrm{bad}(F,\epsilon_1)$.  We begin by claiming that $|\mathcal{E}_1| \ll_m Q^{\varepsilon}$.  Indeed, there are $O_m(1)$ extensions $L \in \mathscr{F}_{m,F}^\mathrm{prim}$ not linearly disjoint from a fixed $K \in \mathbb{X}_\mathrm{bad}(F,\epsilon_1)$, and Theorem~\ref{thm:sparse_bad} with $\Delta = Q^{m!/2}$ and $d=m!$ shows that the number of bad $K$ is at most
            \[
                Q^{\frac{\varepsilon}{4} + \frac{100 n (m!)^2 \cdot \varepsilon}{\sqrt{\log\log Q_0}}} (\log Q)^{800 n (m!)^2}
                    \leq Q^{\frac{\varepsilon}{2} + 800 n (m!)^2 \frac{\log\log Q}{\log Q}}
                    \leq Q^{\varepsilon},
            \]
    where we have used in the first inequality that $\sqrt{\log\log Q_0} \geq 400 n (m!)^2$ and in the second that (say) $\frac{\log\log Q}{\log Q} \leq \frac{1}{\sqrt{\log Q_0}} \leq \varepsilon \cdot\left(1600 n (m!)^2\right)^{-1}$, both of which readily follow from our assumptions on $Q_0$.  Note that 
        \[
            \left(\frac{m!}{2} \log Q \right)^{2 + \frac{m!}{2\epsilon_1}} \leq (\log Q)^{\frac{3 (m!)^2}{\varepsilon}}.
        \]    
    Since we have assumed $\varepsilon < \frac{1}{4n^2 \ell^m m!}$, one computes that $c(\epsilon_1) = \epsilon_1^{1/2} / 18$, and we find for any $H \geq (\log Q)^{3 (m!)^2/\varepsilon}$ that
        \begin{align} \label{eqn:pit-error}
            (m-1) \exp(-c(\epsilon_1) \sqrt{\log H})
                & \leq (m-1) \exp\left( - \frac{\sqrt{3}}{18\sqrt{2}} \frac{1}{\sqrt{m!}} \sqrt{\log\log Q_0}\right) \\
                & \leq (m-1) \exp\left( - \frac{400 \sqrt{3}}{18 \sqrt{2}} (m!)^{3/2} \right)  \notag \\
                & < 10^{-33}. \notag
        \end{align}
    We thus find from Proposition~\ref{prop:avg_pit} and our assumptions on $Q_0$ that for $L \not\in \mathcal{E}_1$, we have
        \begin{equation} \label{eqn:bach-pit}
            \pi_L(H)
                > \left(\frac{1}{2} - 10^{-33}\right) \frac{H}{\log H}
        \end{equation}
    for every $H \geq (\log Q)^{\frac{3 (m!)^2 \ell^{2m}}{\varepsilon}}$.  As stated above, we now wish to contradict this lower bound for every class group character $\chi$ of $L$ with order $\ell$.  

    Thus, suppose that $\chi$ is a nontrivial class group character of order $\ell$ associated with some extension $L \in \mathscr{F}_{m,F}^\mathrm{prim}(Q) \setminus \mathcal{E}_1$.  Let $M/L$ be the associated cyclic degree $\ell$ extension, and let $\widetilde{M}$ denote the normal closure of $M$ over $F$.  Note that $[\widetilde{M}:F] \leq \ell^m m!$ and that $\Delta_{\widetilde{M}} \leq \Delta_F^{\ell^m m!/2}$.  Thus, let $\epsilon_2 = \frac{\varepsilon}{2 \ell^m m!}$ and let $\mathcal{E}_2$ be the subset of $L \in \mathscr{F}_{m,F}^\mathrm{prim}(Q) \setminus \mathcal{E}_1$ for which any of these associated extensions $\widetilde{M}$ lie in $\mathbb{X}_\mathrm{bad}(F,\epsilon_2)$.  As in our treatment of $\mathcal{E}_1$, we observe that $|\mathcal{E}_2| \ll_{m,\ell} Q^{\varepsilon}$.
    
    Thus, suppose that $L \in \mathscr{F}_{m,F}^\mathrm{prim}(Q) \setminus (\mathcal{E}_1 \cup \mathcal{E}_2)$.  Let $\chi$ and $M$ be as above.  Let $G_M = \mathrm{Gal}(\widetilde{M}/F)$ and $H_M = \mathrm{Gal}(\widetilde{M}/L)$.  Then by class field theory, we may regard $\chi$ as a nontrivial linear character of $H_M$, so in particular $\chi$ is a primitive character of $H_M$.  Moreover, the maximal normal subgroup of $G_M$ contained in $H_M$ (i.e., the core of $H_M$) is $\mathrm{Gal}(\widetilde{M}/K) =: N_M$, where $K$ is the normal closure of $L/F$.  In particular, $N_M \simeq C_\ell^r$ for some $r \leq m$, and thus $|N_M|$ and $[G_M:H_M]$ are relatively prime.  

    Finally, to apply Lemma~\ref{lem:induction-irreducible} (as is our goal), we must consider two possibilities.  In particular, either $M$ is not the extension to $L$ of a cyclic degree $\ell$ extension $M_0/F$ (in which $\chi\mid_{N_M}$ is not the restriction to $N_M$ from a character of $G$), or $M$ is such an extension (in which case $\chi$ is such a restriction).  If $M$ is not the extension to $L$ of a cyclic extension $M_0/F$, Lemma~\ref{lem:induction-irreducible} implies that the induction $\chi^* := \mathrm{Ind}_{H_M}^{G_M} \chi$ is irreducible, and it is a faithful character by construction.  Thus, exactly as in \eqref{eqn:pit-error} and the surrounding discussion, we conclude for any $H \geq (\log Q)^{3 \ell^{2m} (m!)^2 / \varepsilon}$ that
        \[
            \left| \sum_{\mathrm{N}\mathfrak{P} \leq H} \chi(\mathfrak{P}) \right|
                < 10^{-33} \frac{H}{\log H}.
        \]
    This contradicts \eqref{eqn:bach-pit}, so it remains to consider those $M$ that are the extension to $L$ of a cyclic extension $M_0/F$.

    In this situation, it will never be the case that the induction $\chi^*$ of $\chi$ to $G_M \simeq C_\ell \times G$ will be irreducible, since $\chi^*$ will simply be the twist of the permutation character $\pi$ of $G$ by a nontrivial cyclic character of $C_\ell$, and the permutation character is not irreducible.  However, by \cite[Lemma 3.9]{LOTZ}, each nontrivial irreducible constituent of $\pi$ is a faithful character of $G$, whence their twists are faithful irreducible characters of $G_M$.  For $L \in \mathscr{F}_{m,F}^\mathrm{prim}(Q) \setminus (\mathcal{E}_1 \cup \mathcal{E}_2)$, these constituents may therefore be treated as before.  The twist of the trivial character, meanwhile, may be regarded as a nontrivial character associated with the cyclic extension $M_0/F$.  Because we have assumed that $\ell \nmid |G|$ and that the extension $M/L$ is unramified, the extension $M_0/F$ must be unramified as well.  In particular, any nontrivial character of the extension $M_0/F$ is a class group character of $F$ of order $\ell$, so by our assumptions on $Q_0$, and analysis analogous to \eqref{eqn:pit-error}, we find in this case that whenever $H \geq (\log Q)^{{3 \ell^{2m} (m!)^2}/{\varepsilon}}$ that
        \[
             \left| \sum_{\mathrm{N}\mathfrak{P} \leq H} \chi(\mathfrak{P}) \right|
                < \left(\frac{1}{4}+10^{-33}\right) \frac{H}{\log H}.
        \]
    This is again sufficient to contradict \eqref{eqn:bach-pit}, completing the proof of the theorem.
    \end{proof}

\subsection{Bounds on Artin $L$-functions: Proof of Corollary~\ref{cor:artin-bound}}

    In this section, we prove Corollary~\ref{cor:artin-bound} concerning bounds on $L(1,\chi)$.

    \begin{lemma}\label{lem:l-one-approx}
        Let $\epsilon > 0$ and let $K/F$ be a nontrivial Galois extension of number fields such that $K$ is not in $\mathbb{X}_\mathrm{bad}(F,\epsilon)$.  Let $\chi$ be a faithful, irreducible character of $\mathrm{Gal}(K/F) =: G$, and let $L(s,\chi)$ denote the associated Artin $L$-function.  Then
            \begin{align*}
                \log L(1,\chi)
                    &= \sum_{\mathrm{N} \mfp \leq (\log |\mathrm{Disc}(K)|)^{2 + \frac{2|G|}{\epsilon}}} \log L_\mfp(1,\chi)  \\
                    & \quad\quad + O_{F,G,\epsilon}\left(\exp\left(-c(\epsilon) \left(2 + \frac{2|G|}{\epsilon}\right)^{1/2} \sqrt{\log\log \Delta_K}\right)\right),
            \end{align*}
        where $c(\epsilon)$ is as in \eqref{eq:cepsilon}
    \end{lemma}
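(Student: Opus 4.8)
The plan is to interpret $\log L(1,\chi)$ via the Euler product, isolate the genuinely ``prime'' part of its tail over $\mathrm{N}\mfp > H_0$ with $H_0 := (\log \Delta_K)^{2 + \frac{2|G|}{\epsilon}}$, and estimate that part by partial summation using the hypothesis $K \notin \mathbb{X}_{\textup{bad}}(F,\epsilon)$. First I would record that $L(1,\chi) \neq 0$ and that $\log L(1,\chi)$ is meaningfully given as the (conditionally convergent) sum $\sum_{\mfp} \log L_\mfp(1,\chi)$. Indeed, by Theorem~\ref{thm:faithful} we may write $\chi = \sum_i a_i \phi_i$ with $a_i \in \QQ$ and each $\phi_i$ a faithful monomial character; writing $\phi_i = \Ind_{H}^{G} \psi_i$, the linear character $\psi_i$ is necessarily nontrivial since $\phi_i$ has no trivial constituent, so by class field theory $L(s,\phi_i)$ is the Hecke $L$-function of a nontrivial ray class character, hence entire and nonvanishing on $\Re(s)\ge 1$. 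The classical prime ideal theorem together with this nonvanishing gives $\log L(1,\phi_i) = \lim_{X\to\infty}\sum_{\mathrm{N}\mfp\le X}\log L_\mfp(1,\phi_i)$, and one sets $\log L(1,\chi) = \sum_i a_i \log L(1,\phi_i)$. Thus
\[
    \log L(1,\chi) - \sum_{\mathrm{N}\mfp \le H_0} \log L_\mfp(1,\chi) = \sum_{\mathrm{N}\mfp > H_0} \log L_\mfp(1,\chi),
\]
and everything reduces to bounding the right-hand side.

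Next I would split the local factors. By \eqref{eqn:local-roots}, $\log L_\mfp(1,\chi) = \big(\sum_i \alpha_i(\mfp)\big)\mathrm{N}\mfp^{-1} + E_\mfp$ with $|E_\mfp| \le \sum_{k\ge 2} \frac{d}{k}\mathrm{N}\mfp^{-k} \le d\,\mathrm{N}\mfp^{-2}$; since there are at most $n := [F:\QQ]$ primes of any given norm, $\sum_{\mathrm{N}\mfp > H_0}|E_\mfp| \ll_{F,d} H_0^{-1}$. For unramified $\mfp$ one has $\sum_i \alpha_i(\mfp) = \chi(\mfp)$, while the ramified primes number at most $\log_2 \Delta_K$ and each contributes at most $d\,\mathrm{N}\mfp^{-1}$ to the $k=1$ part, for a total of $\ll_{F,d} (\log \Delta_K)\, H_0^{-1}$. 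Both contributions are $\ll (\log\Delta_K)^{-1-2|G|/\epsilon}$, which is negligible against the target error term $\exp\!\big(-c(\epsilon)(2+2|G|/\epsilon)^{1/2}\sqrt{\log\log\Delta_K}\big)$ once $\Delta_K$ is large (and only finitely many $K$ of degree $|G|$ lie below any fixed threshold, which we absorb into the implied constant). Hence it suffices to bound
\[
    T := \sum_{\substack{\mathrm{N}\mfp > H_0 \\ \mfp \text{ unramified}}} \chi(\mfp)\,\mathrm{N}\mfp^{-1}.
\]

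For $T$ I would use Abel summation. Since $K \notin \mathbb{X}_{\textup{bad}}(F,\epsilon)$ and $\chi$ is a faithful irreducible character, Definition~\ref{defn:eps_bad} gives $\big|\sum_{\mathrm{N}\mfp\le u}\chi(\mfp)\big| \le \frac{u}{\log u}\exp(-c(\epsilon)\sqrt{\log u})$ for all $u \ge (\log\Delta_K)^{2 + |G|/(2\epsilon)}$, and in particular for all $u \ge H_0$. Partial summation then yields
\[
    |T| \le \frac{\exp(-c(\epsilon)\sqrt{\log H_0})}{\log H_0} + \int_{H_0}^{\infty} \frac{\exp(-c(\epsilon)\sqrt{\log u})}{u \log u}\,du,
\]
and the substitution $w = \sqrt{\log u}$ bounds the integral by $2\int_{\sqrt{\log H_0}}^{\infty} w^{-1} e^{-c(\epsilon)w}\,dw \le \frac{2}{c(\epsilon)\sqrt{\log H_0}}\exp(-c(\epsilon)\sqrt{\log H_0})$. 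Since $\sqrt{\log H_0} = \big(2 + \tfrac{2|G|}{\epsilon}\big)^{1/2}\sqrt{\log\log\Delta_K}$, this gives $|T| \ll_{F,G,\epsilon} \exp\!\big(-c(\epsilon)(2+2|G|/\epsilon)^{1/2}\sqrt{\log\log\Delta_K}\big)$, which is the claimed estimate.

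The main obstacle is the preliminary step: attaching a well-defined value to $L(1,\chi)$ and $\log L(1,\chi)$ — and showing $L(1,\chi) \ne 0$ — for a faithful irreducible $\chi$ that is not known to be automorphic. This is exactly where Theorem~\ref{thm:faithful} is indispensable, reducing us to Hecke $L$-functions. Once that is in place, the estimate itself is a routine partial-summation argument fueled by the character-sum bound packaged in the definition of $\mathbb{X}_{\textup{bad}}(F,\epsilon)$.
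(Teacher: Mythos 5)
Your proof is correct and follows essentially the same strategy as the paper: expand $\log L_\mfp(1,\chi)$ into a linear term $\chi(\mfp)/\mathrm{N}\mfp$ plus an $O(\mathrm{N}\mfp^{-2})$ tail, bound the tail trivially, and handle the linear term by Abel summation using the character-sum bound built into Definition~\ref{defn:eps_bad}. The only differences are cosmetic: the paper works at $\sigma>1$ and lets $\sigma\to 1^+$ while you justify direct evaluation at $s=1$ by reducing to Hecke $L$-functions via Theorem~\ref{thm:faithful}, and you spell out the ramified-prime contribution which the paper elides; both routes land on the same estimate.
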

    \begin{proof}
        For convenience, set $c_\epsilon = 2 + \frac{2 |G|}{\epsilon}$.  For any $\sigma > 1$, we may write
            \[
                \log L_\mfp(\sigma, \chi)
                    = - \log\left( 1 - \rho(\sigma_\mfp)|V^{I_\mfp} (\mathrm{N}\mfp)^{-\sigma}\right)
                    = \frac{\chi(\mathrm{Frob}_\mfp)}{\mathrm{N}\mfp^\sigma} + O_G((\mathrm{N}\mfp)^{-2\sigma}).
            \]
        For primes $\mfp$ such that $\mathrm{N}\mfp > (\log |\mathrm{Disc}(K)|)^{c_\epsilon}$, we find by the definition of $\epsilon$-bad fields and partial summation that
            \begin{align*}
                \left|\sum_{\mathrm{N} \mfp > (\log \Delta_K)^{c_\epsilon}} \frac{\chi(\mathrm{Frob}_\mfp)}{\mathrm{N}\mfp^\sigma}\right|
                    & \leq \frac{ \exp\left( -c(\epsilon) c_\epsilon^{1/2} \sqrt{\log\log \Delta_K} \right)}{1-\exp\left( -c(\epsilon) c_\epsilon^{1/2} \sqrt{\log\log \Delta_K} \right)} \\
                    & \ll_{F,G,\epsilon} \exp\left( -c(\epsilon) c_\epsilon^{1/2} \sqrt{\log\log \Delta_K} \right),
            \end{align*}
        where the last inequality follows on observing that the quantity $c(\epsilon)c_\epsilon^{1/2} \sqrt{\log\log \Delta_K}$ may be bounded away from $0$ solely in terms of $F$, $G$, and $\epsilon$.  We also find that
            \[
                \sum_{\mathrm{N} \mfp > (\log \Delta_K)^{c_\epsilon}} \frac{1}{\mathrm{N}\mfp^{2\sigma}}
                    \ll_{F,G} \frac{1}{(\log \Delta_K)^{c_\epsilon}},
            \]
        which is smaller than the claimed bound.
        Upon taking the limit as $\sigma \to 1$, the result follows.
    \end{proof}

    Using this, we are able to provide a proof of Corollary~\ref{cor:artin-bound}.

    \begin{proof}[Proof of Corllary~\ref{cor:artin-bound}]
        Let $\epsilon = \frac{\varepsilon}{2}$, and assume that $K$ does not lie in $\mathbb{X}_\mathrm{bad}(F, \epsilon)$ and that $\chi$ is a faithful, irreducible character of $\mathrm{Gal}(K/F)=:G$.  By a slight abuse of notation, for any prime $\mfp$, we write $\chi(\sigma_\mfp)$ for the sum of the local roots of $L_\mfp(s,\chi)$.  (We caution that if $\mfp$ is ramified in $K/F$, $\chi(\sigma_{\mfp})$ does not have to be a literal character value of $\chi$.)  As in the proof of Lemma~\ref{lem:l-one-approx}, let $c_\epsilon = 2 + \frac{2|G|}{\epsilon}$.  By Lemma~\ref{lem:l-one-approx}, we then find
            \begin{align*}
                \log |L(1,\chi)|
                    &= \sum_{\mathrm{N}\mfp \leq (\log Q)^{c_\epsilon}} \log |L_\mfp(1,\chi)| + O_{F,G,\varepsilon}(\exp\left(-c(\epsilon) c_\epsilon^{1/2} \sqrt{\log \log \Delta_K}\right))\\
                    &= \sum_{\mathrm{N}\mfp \leq (\log Q)^{c_\epsilon}} \frac{\Re(\chi(\sigma_p))}{\mathrm{N}\mfp} + O_{F,G,\varepsilon}(1) \\
                    &\leq \chi(1) \log\log\log Q + O_{F,G,\varepsilon}(1),
            \end{align*}
        where in the last line we have used the prime ideal theorem (or really Mertens' theorem) for $F$.  The upper bound $L(1,\chi) \ll_{F,G,\varepsilon} (\log\log Q)^{\chi(1)}$ follows.  For the lower bound, we find it convenient to define $a(\chi) = \min\{ \Re(\chi(g)) : g \in G\}$.  Proceeding analogously to the above, we obtain
            \[
                \log |L(1,\chi)|
                    \geq a(\chi) \log\log\log Q + \sum_{\mfp \mid \mathfrak{D}_{K/F}} \frac{\Re(\chi(\sigma_\mfp) - a(\chi))}{\mathrm{N} \mfp} + O_{F,G,\varepsilon}(1).
            \]
        For ramified primes, we note that $\chi(\sigma_\mfp)$ is by definition the trace of $\sigma_\mfp$ acting on $V^{I_\mfp}$.  This is equal to the average of $\chi(g)$ over $g \in \sigma_\mfp I_\mfp$, so $\Re(\chi(\sigma_\mfp) - a(\chi)) \geq 0$.  Thus, we may omit the sum over ramified primes above, and the lower bound follows.  Thus, we have proven the claim for all $K \not\in \mathbb{X}_\mathrm{bad}(F,\epsilon)$, and as Theorem~\ref{thm:sparse_bad} shows that
            \[
                \#\{ K \in \mathbb{X}_\mathrm{bad}(F,\epsilon) : \Delta_K \leq Q\}
                    \leq Q^{\epsilon (1+\delta)} (\log Q)^{C(F,|G|)}
                    \ll_{F,G,\varepsilon} Q^{\varepsilon},
            \]
        the result follows.
    \end{proof}

\bibliographystyle{alpha}
\bibliography{references}

\end{document}